\documentclass[10pt]{article}

\RequirePackage{amsthm,amsmath,amsfonts,amssymb}
\RequirePackage[numbers,sort&compress]{natbib}
\RequirePackage[colorlinks,citecolor=blue,urlcolor=blue]{hyperref}
\RequirePackage{graphicx}
\usepackage[margin=0.8in]{geometry}

\theoremstyle{plain}

\newtheorem{theorem}{Theorem}[section]
\newtheorem{lemma}[theorem]{Lemma}
\newtheorem{proposition}[theorem]{Proposition}
\theoremstyle{remark}

\newtheorem*{remark}{Remark}


\usepackage[normalem]{ulem}
\usepackage{mathtools}
\usepackage{subcaption}
\usepackage{diagbox}
\usepackage{bm}
\usepackage{bbm}
\usepackage{multirow}
\usepackage{rotating}

\allowdisplaybreaks[1]

\newcommand{\calE}{\mathcal{E}}
\newcommand{\calC}{\mathcal{C}}
\newcommand{\calN}{\mathcal{N}}
\newcommand{\calX}{\mathcal{X}}
\newcommand{\calY}{\mathcal{Y}}
\newcommand{\calD}{\mathcal{D}}

\newcommand{\boldcalA}{\boldsymbol{\mathcal{A}}}
\newcommand{\calM}{\mathcal{M}}
\newcommand{\calB}{\mathcal{B}}
\newcommand{\calL}{\mathcal{L}}
\newcommand{\calS}{\mathcal{S}}

\newcommand{\calU}{\mathcal{U}}
\newcommand{\calF}{\mathcal{F}}
\newcommand{\bfV}{\mathbf{V}}
\newcommand{\bfX}{\mathbf{X}}
\newcommand{\bfW}{\mathbf{W}}
\newcommand{\bfR}{\mathbf{R}}
\newcommand{\bfS}{\mathbf{S}}
\newcommand{\bfA}{\mathbf{A}}
\newcommand{\bfM}{\mathbf{M}}
\newcommand{\bfT}{\mathbf{T}}
\newcommand{\bfP}{\mathbf{P}}
\newcommand{\bfB}{\mathbf{B}}
\newcommand{\bfD}{\mathbf{D}}
\newcommand{\bfC}{\mathbf{C}}

\newcommand{\bfL}{\mathbf{L}}

\newcommand{\bfI}{\mathbf{I}}
\newcommand{\bfF}{\mathbf{F}}
\newcommand{\bfG}{\mathbf{G}}
\newcommand{\rmc}{\mathrm{c}}
\newcommand{\rmd}{\mathrm{d}}

\newcommand{\Vol}{\mathrm{Vol}}
\newcommand{\fhat}{\widehat{f}}

\newcommand{\diag}{\text{diag}}
\newcommand{\Cov}{\mathrm{Cov}}
\newcommand{\Tr}{\mathrm{Tr}}
\newcommand{\Op}{\mathrm{Op}}
\newcommand{\Spec}{\mathrm{Spec}}
\newcommand{\AR}{\mathrm{AR}}
\newcommand{\nsplit}{\mathrm{split}}

\newcommand{\calib}{\mathrm{calib}}
\newcommand{\histo}{\mathrm{histo}}
\newcommand{\test}{\mathrm{test}}
\newcommand{\opt}{\mathrm{opt}}

\newcommand{\dt}{\mathrm{d}t}
\newcommand{\ith}{i\textsuperscript{th}}
\newcommand{\nath}{n^{\text{th}}_{\alpha}}

\newcommand{\ceil}[1]{\left\lceil#1\right\rceil}
\newcommand{\stkout}[1]{\ifmmode\text{\sout{\ensuremath{#1}}}\else\sout{#1}\fi}

\newcommand{\Pbb}[1]{\mathbb{P}\left(#1\right)}

\newcommand{\Pb}{\mathbb{P}}
\newcommand{\Rbb}{\mathbb{R}}

\newcommand{\adri}{\textcolor{black}}

\newcommand{\iain}{\textcolor{black}}

\usepackage{authblk}

\begin{document}

\title{Adaptive inference with random ellipsoids through Conformal Conditional Linear Expectation}





\date{}

\author[1]{Iain Henderson \thanks{iain.henderson@isae-supaero.fr}}
\author[2]{Adrien Mazoyer}
\author[2]{Fabrice Gamboa}
\affil[1]{Fédération ENAC ISAE-SUPAERO ONERA, Université de Toulouse, Toulouse, 31055, France}
\affil[2]{Institut de Mathématiques de Toulouse, Université de Toulouse, CNRS, UPS, 31062,
Toulouse, France}
\maketitle

\begin{abstract}
    We propose two new conformity scores for conformal prediction, in a general multivariate regression framework. The underlying score functions are based on a covariance analysis of the residuals and the input points. 
    We give theoretical guarantees on the prediction sets, which consist in explicit ellipsoids. We study the asymptotic properties of the ellipsoids, and show that their volume is reduced compared to that of classic balls, under ellipticity assumptions. Finally, we illustrate the effectiveness of all our results on an in-depth numerical study, including heavy-tailed as well as non-elliptical distributions.
\end{abstract}

\paragraph{Keywords}{Conformal prediction}, {distribution free inference}, {adaptive confidence}, {confidence ellipsoid}


\section{Introduction}
Uncertainty quantification aims to provide mathematical techniques to quantify uncertainties in computational and real worlds. One very common problem in uncertainty quantification is to provide a confidence set for a given prediction method.  When the prediction method is based on a statistical model that is not too complex, classical  statistical tools can lead to a  confidence set. In black box models, as those developed in machine learning or in complicated regression models the inference process to build such confidence sets is not so straightforward. One now classical way to build such confidence sets stands on a non-parametric learning of the fluctuations of the predictors. This technique called {\it conformal inference} allows the construction of a confidence set for a given trained predictor by only observing its behaviour on a calibration sample.

Conformal inference has a long story beginning early in the forty's by  the pioneer works in reliability of Wilks (\cite{wilks1941determination,wilks1942statistical}). Conformal prediction has then been subsequently developed in the early 2000s by a research group around Vapnik and Vovk (see e.g. \cite{vovk2005algorithmic}). With the spectacular progress and the massive implementation of black-box models in neural machine learning, there has been an impressive revival of conformal prediction for at least five or seven years. In a nutshell, conformal prediction is   
a general concept that includes many different means of providing confidence of pointwise predictions produced using a statistical model or machine learning, without any knowledge on the predictor and under mild assumptions. Indeed, the construction of a prediction set having a high probability of containing a quantity of interest is a general challenging problem in the uncertainty quantification paradigm.
More formally, let $\calX$ and $\calY$ be some state spaces. Let further $(X,Y)\in\calX\times\calY$ be a random vector. Given a sample $(X_i,Y_i)_{i=1,\ldots,n}$ and a predictor $\fhat:\calX\to\calY$ (independent of the sample),  for  $X_{n+1}\in\calX$ (a new draw of $X$), we aim to build a random measurable region $C_\alpha^n(X_{n+1})\subseteq\calY$ {\it as tight as possible},  such that for a given $\alpha$ (close to $0$ and set by the user), we have
$
\Pbb{Y_{n+1}\in C_\alpha^n(X_{n+1})} \geq 1-\alpha.$ 
In some ideal cases,
the known statistical properties or characteristics of the predictor $\fhat$ 
can be sole used to construct $C_\alpha^n(\cdot)$. However, this usually requires model assumptions on the data, that
are sometimes too restrictive or unrealistic. Even worse, it may happen that none of the properties of $\fhat$ are
useful for building $C_\alpha^n(\cdot)$.
The aim of Conformal Inference (CI) is to construct such regions, with a very appealing advantage: CI requires only mild assumptions on the sample distribution and on the predictor. Actually, the only hypothesis needed is an exchangeability one, that will be discussed later (see the beginning of Section \ref{sec:bg}). Roughly speaking, the intuition behind CI is 
that a relevant prediction for the unknown output $Y_{n+1}$ should be any $y\in\calY$ such that $(X_{n+1} , y,\fhat(X_{n+1}))$ seems to be \textit{conform} to the sample $(X_i,Y_i, \fhat(X_i))_{i=1,\ldots,n}$. The main ingredient here is to define and to work with an appropriate score of conformity. Of course, the appropriateness of a given score greatly depends on the context. In particular, the shape and accuracy of the prediction set is directly related both to the performance of the predictor $\fhat$ and to the conformity score.

Since its introduction in the early 2000s (see e.g. \cite{Vovketal05}), conformal prediction has been developed in many different contexts as for example classification \cite{DevetyarovNouretdinov10,Balasubramanianetal14}, regression \cite{LeiWasserman14,BurnaevVovk14,Romanoetal19}, functional data \cite{Diquigiovannietal22}, outlier detection \cite{Batesetal23,Liangetal24}, neural networks \cite{papadopoulos2008inductive}, etc. For more insights  and examples, we refer to the recent reviews  \cite{lei2018, AngelopoulosBates23, fontana2023, Katoetal23}.
In many works dealing with CI, the size and shape of $C_\alpha^n(X_{n+1})$ only mildly depend on $X_{n+1}$. This can make sense if for example the covariance of the residual $Y-\widehat{f}(X)$ is homogeneous. \adri{Dealing with heterogeneous data can be approached with different objectives in mind. The grail of conformal inference would be to obtain a conditional coverage property for $C_\alpha^n(X_{n+1})$, that is $\Pbb{Y_{n+1}\in C_\alpha^n(X_{n+1})|X_{n+1}} \geq 1- \alpha$. However, it is theoretically impossible to ensure the conditional coverage with a finite sample \cite{Vovk12}. A less demanding property could be then to be able to capture some geometric features of the conditional distribution $\mathbb{P}(Y|X)$, typically heteroscedasticity in data. The resulting prediction set should then adapt in terms of shape and size with respect to the input.
For example, \cite{lei2018} proposed to use some normalizing statistic in the score, based on the previous sample points. In this last work, the trick consists in normalizing the residuals, for example with respect to their median. Notice furthermore that many of the works in CI consider only the case where $\calY\subseteq\Rbb$, leading thus to prediction intervals. The direct extension to $\calY\subseteq\Rbb^{\ell}$ then consists in using a norm of the residual vector $R \coloneqq Y-\widehat{f}(X)$, leading to prediction balls centered on $\widehat{f}(X)$. However, our wish is that the geometry of the prediction set adapts with the input, which cannot be achieved by considering balls. The multidimensional framework is still widely studied, with some ongoing works considering responses taking values in separable metric spaces \cite{LugosiMatabuena2024}, exploring approaches based on transport tools \cite{ZhouMuller2024, Thurinetal2025}, or formulation in terms of minimum-volume coverage for arbitrary norm-based sets \cite{Braunetal2025}.}

 In this work, we propose and fully investigate CI with a score built on the empirical covariance
 of $(X,Y-\widehat{f}(X))$, where \textit{both} $X$ and $Y$ are multidimensional. The covariance estimate and score include the prediction point $X_{n+1}$ and the potential residual $y-\fhat(X_{n+1})$.
We show that our procedure amounts to estimating the conditional linear expectation of the residual $Y-\fhat(X)$ given the input $X$, formulated in a CI setting (see Remark in Section \ref{ssec:main_theo}). We will thus call our method the conformal conditional linear expectation (CCLE). The main advantage of the associated region $C_\alpha^n$ is that it is well tailored to multivariate outputs $Y\in\mathbb{R}^{\ell}$, while remaining easy to compute. Moreover, it leads to adaptive prediction sets in terms of size and shape. \adri{It is important to recall that despite the conditional aspect of the score proposed here, its main motivation is not to improve the conditional coverage. \iain{Rather, our primary goals are $(i)$ the adaptivity of the prediction set with reference to the distribution of $(X,R)$ and $(ii)$ decreasing the volume of the prediction set.} In Section~\ref{sec:asymp}, we discuss the necessary adjustments to recover the conditional coverage in the asymptotic regime.}

The following theorem summarises and gives in a nutshell our main results (Theorem \ref{thm:eq_ellipsoid}, Theorem \ref{thm:ellip_prime}, Proposition \ref{prop:barthe}, Proposition \ref{prop:gen_k} and Proposition \ref{prop:vol_fna}). 


\begin{theorem}\label{thm:intro}
Let $((X_1, Y_1), \dots , (X_{n+1},Y_{n+1}))$ be exchangeable pairs of random vectors and $\alpha\in(1/(n+1),1)$.
Then, 
\begin{itemize}
    \item There exists explicit $\boldcalA_n, \rho_{n,\alpha}$ and $\widetilde{Y}_{n+1}$, only depending on $((X_1, Y_1), \dots , (X_{n},Y_{n})), \ X_{n+1}$ and $\alpha$, such that the $\ell$-dimensional ellipsoid
\begin{align}
       E_{\alpha}^n = \{y\in\mathbb{R}^{\ell}: (y-\widetilde{Y}_{n+1})^\top \boldcalA_n^{-1}(y-\widetilde{Y}_{n+1}) \leq \rho_{n,\alpha}\}
\end{align}
satisfies $\mathbb{P}(Y_{n+1} \in E_{\alpha}^n) \geq 1 - \alpha.$ 
\item Set $R_1 = Y_1-\widehat{f}(X_1)$ and assume that $(X_1, R_1)$ follows an elliptical distribution. 
Then, as $n\rightarrow +\infty$ and under mild assumptions on the distribution of $(X_1, R_1)$, 
the volume $\Vol(E_{\alpha}^{n})$ converges in distribution towards a random variable 
$\Vol(E_{\alpha}^{\infty})$ 
satisfying
\begin{align} 
    \Vol(E_{\alpha}^{\infty}) \leq  \Vol(B^{\infty}_{\alpha}) \  a.s..
\end{align}
Here, $\Vol(B^{\infty}_{\alpha})$ is the (deterministic) limit of the volume of the ball obtained using the standard score $\|Y-\widehat{f}(X)\|$ (see Proposition~\ref{prop:asymptotic_ball} for its expression).
\end{itemize}
\end{theorem}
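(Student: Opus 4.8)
The statement collects the more detailed results announced in Section~\ref{ssec:main_theo} (Theorem~\ref{thm:eq_ellipsoid}, Propositions~\ref{prop:barthe}, \ref{prop:gen_k} and~\ref{prop:asymptotic_ball}); below I describe how I would obtain each building block and then assemble them.

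\emph{First bullet: validity and ellipsoidal shape.} I would run the usual full-conformal argument and then simplify algebraically. Fix a candidate $y\in\Rbb^\ell$, append $(X_{n+1},y)$ to the sample, set $R_i=Y_i-\fhat(X_i)$ and $r=y-\fhat(X_{n+1})$, and form the empirical covariance $\widehat\Sigma(y)$ of the $n+1$ augmented pairs $(X_i,R_i)$. Define the conformity score $s_i(y)$ as the Mahalanobis distance of $R_i$ (respectively of $r$) to its empirical best linear predictor given the $X$-block, in the metric given by the Schur complement extracted from $\widehat\Sigma(y)$. Since $(s_1(y),\dots,s_{n+1}(y))$ is a symmetric function of the augmented sample, exchangeability makes $s_{n+1}(Y_{n+1})$ exchangeable with $s_1(Y_{n+1}),\dots,s_n(Y_{n+1})$, so its rank among the $n+1$ scores is sub-uniform; keeping $y$ precisely when $s_{n+1}(y)$ does not exceed the $\lceil(1-\alpha)(n+1)\rceil$-th order statistic of $(s_i(y))_{i\le n+1}$ then yields $\Pb(Y_{n+1}\in E_\alpha^n)\ge 1-\alpha$. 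The only substantial step is to show this sublevel set is an ellipsoid in $y$ whose parameters do not depend on $y$: writing $\widehat\Sigma(y)$ as a rank-one update of the covariance of the first $n$ augmented pairs (up to the induced mean correction) and using the Sherman--Morrison and Schur-complement identities, $s_{n+1}(y)$ becomes, after a monotone reparametrisation that absorbs the $y$-dependence of $\widehat\Sigma(y)$, a genuine quadratic form in $y$; one then reads off the shape matrix $\boldcalA_n$, the centre $\widetilde Y_{n+1}=\fhat(X_{n+1})+(\text{empirical linear prediction of }R\text{ at }X_{n+1})$, and the radius $\rho_{n,\alpha}$ coming from the order statistic. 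This is the content of Theorem~\ref{thm:eq_ellipsoid}.

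\emph{Second bullet: asymptotics.} Assume $(X_1,R_1)$ elliptical with a density and moments sufficient to pin down the limits. By the strong law of large numbers and the continuous mapping theorem, $\widehat\Sigma$ converges a.s.\ to the population covariance, so $\boldcalA_n\to\boldcalA_\infty$ and $\widetilde Y_{n+1}\to\fhat(X_{n+1})+(\text{population linear prediction at }X_{n+1})$, while $\rho_{n,\alpha}$ converges to the population $(1-\alpha)$-quantile $\rho_\infty$ of the limiting score (the step I would package as Proposition~\ref{prop:gen_k}, using that the score is continuously distributed so empirical quantiles are consistent). For a \emph{non-Gaussian} elliptical law the limit $\boldcalA_\infty$ still carries, through $X_{n+1}$, the radial renormalisation that such a law imposes on the conditional dispersion of $R$ along the $X$-fibre; this is exactly why $\Vol(E_\alpha^\infty)$ is genuinely random rather than a constant. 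Writing $\Vol(E_\alpha^n)=\omega_\ell\,\rho_{n,\alpha}^{\ell/2}\sqrt{\det\boldcalA_n}$, where $\omega_\ell$ is the volume of the unit Euclidean ball in $\Rbb^\ell$, the continuous mapping theorem gives $\Vol(E_\alpha^n)\xrightarrow{d}\Vol(E_\alpha^\infty):=\omega_\ell\,\rho_\infty^{\ell/2}\sqrt{\det\boldcalA_\infty}$. Running the identical argument for the naive score $\|Y-\fhat(X)\|$ produces the limiting fixed radius and the deterministic volume $\Vol(B_\alpha^\infty)$ of Proposition~\ref{prop:asymptotic_ball}.

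\emph{The key inequality and the main obstacle.} What remains, and what I expect to be hard, is the a.s.\ bound $\Vol(E_\alpha^\infty)\le\Vol(B_\alpha^\infty)$, i.e.\ Proposition~\ref{prop:barthe}. After cancelling $\omega_\ell$ this is a comparison between $\rho_\infty^{\ell/2}\sqrt{\det\boldcalA_\infty}$ and the $\ell$-th power of the limiting ball radius, both expressible in terms of quantiles of quadratic forms of the common elliptical law and of the eigenvalues of the residual (conditional) covariance. I would reduce it to a deterministic statement using three ingredients: that conditioning on $X$ shrinks the residual covariance in the Loewner order, hence shrinks its determinant; an AM--GM/Hadamard bound of the form $\det(M)^{1/\ell}\le\tfrac1\ell\Tr(M)$ relating that determinant to the radius of the ball; and a Barthe-type extremal inequality controlling the $(1-\alpha)$-quantile of an isotropic quadratic form, which is also where the benefit of recentering the ellipsoid at $\fhat(X_{n+1})+(\text{linear prediction})$ — as opposed to merely at $\fhat(X_{n+1})$ — enters, with equality in the spherical, correctly centred case. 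The delicate point is to identify precisely the limiting radial and quadratic structure of both scores for a \emph{general} elliptical generator and to turn the quantile comparison into that clean matrix inequality; once this is in place, the rest is bookkeeping with the law of large numbers, Slutsky's lemma and the continuous mapping theorem.
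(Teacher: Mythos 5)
Your plan for the first bullet has a genuine gap at exactly the step you call ``the only substantial step''. The exact conformal region $\calC_\alpha^n=\{z: S_{n+1}(z)\le S_{(n_\alpha)}(z)\}$ is \emph{not} an ellipsoid, and no monotone reparametrisation makes it one: because the augmented empirical covariance $\widehat{\mathbf{\Sigma}}_\lambda(z)$ depends on the candidate $z$, all $n$ calibration scores $S_1(z),\dots,S_n(z)$ depend on $z$, so the threshold $S_{(n_\alpha)}(z)$ is itself a complicated function of $z$. The paper's resolution is a conservative relaxation (Lemma \ref{lemma:approx_score}): via the rank-one/Sherman--Morrison decomposition $\bfS(z)=n\bfC_n-nb^n(z)b^n(z)^\top/(1+d_n(z))$ one has the uniform bound $S_i(z)\le n(\bfC_n)_{ii}$ with $\bfC_n$ independent of $z$, so replacing the $z$-dependent order statistic by the $z$-free quantity $q_{n,\alpha}+1/(n+1)$ yields a superset $\widetilde{\calC}_\alpha^n\supset\calC_\alpha^n$ that inherits the coverage; only this enlarged set is shown to be an ellipsoid (Proposition \ref{prop:c_a_is_e_a_n}), since $S_{n+1}(z)$ alone is a monotone function of the quadratic form $d_n(z)$. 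Without the relaxation step your argument does not produce an ellipsoid. A second, smaller omission in your asymptotic step: consistency of $q_{n,\alpha}$ is not the textbook empirical-quantile result, because the $np_{i,n}=\|\widehat{\mathbf{\Sigma}}_{n,\lambda}^{-1/2}(V_i-\overline{V}_n)\|^2$ are \emph{not} independent (they share $\widehat{\mathbf{\Sigma}}_{n,\lambda}$ and $\overline{V}_n$); the paper devotes Lemma \ref{lemma:cv_quantile} to this, via almost-sure convergence of empirical characteristic functions and L\'evy's theorem.

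For the volume inequality, your proposed reduction (Loewner shrinkage of the conditional covariance, plus an AM--GM/Hadamard bound, plus a Barthe-type quantile inequality) would, if it closed, deliver an \emph{unconditional} bound $\Vol(\calE_\alpha^\infty)\le\Vol(\calB_\alpha^\infty)$ for all elliptical laws — and that is false. The correct decomposition (equation \eqref{eq:cond_vol_gen}) is $(\Vol(\calE_\alpha^\infty)/\Vol(\calB_\alpha^\infty))^{2/\ell}\le c_\alpha(k,\ell)\,(\det(\mathbf{\Sigma}/\mathbf{\Sigma}^{11})/\det(\mathbf{\Sigma}^{22}))^{1/\ell}$, where $c_\alpha(k,\ell)=q_{1-\alpha}(\sum_{i=1}^{k+\ell}T_i^2)/q_{1-\alpha}(\sum_{i=1}^{\ell}T_i^2)>1$ works \emph{against} you; the conclusion requires the tradeoff condition \eqref{eq:tradeoff}, and the paper notes that for centered spherical data with $k\ge1$ the ratio equals $c_\alpha(k,\ell)^\ell>1$, reversing the inequality. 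The actual key lemma (Proposition \ref{prop:barthe}) is an Anderson-type argument — among equal-volume ellipsoids, the centered ball maximizes the mass of a unimodal spherical density, proved by splitting $\int_Bg-\int_{E_\delta}g$ over $B\cap E_\delta^\rmc$ and $E_\delta\cap B^\rmc$ and using that $B$ is a superlevel set of $g$ — which you gesture at but do not combine with the $c_\alpha(k,\ell)$ bookkeeping needed for $k\ge1$.
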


In other words, we show that the prediction set obtained using CCLE is an ellipsoid $E_\alpha^n\subset\mathbb{R}^{\ell}$. Furthermore, the volume of this prediction set is asymptotically smaller than that of the classical conformal prediction ball, under mild ellipticity assumptions (Section \ref{sub:comparison_norm_score}). Elliptical distributions are those whose level sets are elliptical, such as the Gaussian ones. \iain{
As we will see later, we are actually able to identify two different $\rho_{n,\alpha}$ satisfying Theorem \ref{thm:intro}, each with different properties concerning the associated ellipsoid. The first one, corresponding to the \iain{joint} Mahalanobis score (Section \ref{ssec:mahalanobis_score}), leads to ellipsoid $E_{\alpha}^n$ such that $\mathbb{P}(E_{\alpha}^n =\varnothing) > 0$ and $\mathbb{P}(E_{\alpha}^n =\mathbb{R}^\ell) = 0$; whereas the second, corresponding to adjusted Mahalanobis score (Section \ref{ssec:adjusted_score}), leads to ellipsoid $F_{\alpha}^n$ such that $\mathbb{P}(F_{\alpha}^n =\varnothing) = 0$ and $\mathbb{P}(F_{\alpha}^n =\mathbb{R}^\ell) > 0$ (this probability tending to 0 as $n$ tends to $\infty$). Notably, the second ellipsoid reduces to the conditional prediction set with a minimal volume, in the case of Gaussian data, when $n$ tends to $\infty$ (Propositions \ref{prop:conditional_gaussian} and \ref{prop:barthe}).}

Let us note that to calculate the conditional linear expectation, we will use a ridge estimator for the covariance. This is motivated by three facts. First, it stabilizes the numerical procedure \cite{linear_cond_exp}, Section 4.4. Secondly, it simplifies several proofs (see e.g. Proposition \ref{prop:bound}). Last but not least, our method will be carried out in an infinite dimensions setting (see \cite{GHMR_forthcoming}) where finite size ellipsoids are rather well-understood (see e.g. Propositions 6.1 to 6.3 in \cite{dudley_compact}).

Before presenting the outline of our paper, let us further discuss the use of ellipsoidal confidence regions in CI. In \cite{pmlr-v152-johnstone21a} and \cite{messoudi22a} the authors construct local ellipsoidal confidence regions in a standard multivariate CI setting. While our method effectively relies on the conditional \emph{linear} covariance (see Section \ref{ssec:main_theo}), the local ellipsoids described in \cite{messoudi22a} correspond to a convex combination of empirical estimates of the global covariance of the residual $R$ and the \emph{true} conditional covariance $\Cov(R|X=x)$ (even though those local ellipsoids are not presented as estimates of $\Cov(R|X=x)$ in \cite{messoudi22a}). The latter are estimated by only selecting the observed residuals $r_i$ (realisation of $R_i$) such that the corresponding $x_i$ (realisation of $X_i$) are the $k$ closest to the observed realisation $x_{n+1}$ of $X_{n+1}$. A drawback of this nearest neighbour algorithm is that a sufficient amount of observed inputs $x_i$ close to $x_{n+1}$ is required to obtain meaningful estimates of $\Cov(R_{n+1}|X_{n+1}=x_{n+1})$.
  The authors of \cite{messoudi22a} then show the practical efficiency of their CI method, but do not further study its theoretical properties. CCLE can also be seen as an intermediate solution between two paradigms: relying on the global (unconditional) covariance matrix $\Cov(R)$, or on the local (fully conditional) covariance matrix $\Cov(R|X=x)$. CCLE is then an alternative solution to the convex combination proposed in \cite{messoudi22a}. The recent paper \cite{xu2024conformal} also proposes a CI on the residuals leading also to a confidence ellipsoid. The statistical frame therein is a single realisation of a time series. Here a stationarity assumption is replacing the exchangeability one. The method stands on the estimated empirical covariance of the residuals, estimated from the past. Notice that the method does not encompass any conditional information $R|X$. The authors show the practical advantage of their method when compared to copula-based CI \cite{MESSOUDI-copula}, and establish conditional coverage properties, in the limit of the large observational horizon (this would correspond to the large sample size in our framework). In the case of both \cite{messoudi22a} and \cite{xu2024conformal}, the authors do not study the theoretical properties of the size (volume) of the corresponding confidence regions. We believe that this feature is important in practical applications, along with the conditional coverage. In our work, we study the volumetric properties of $E_\alpha^n$ in Section \ref{sub:comparison_norm_score}. Finally, in both  \cite{messoudi22a} and \cite{xu2024conformal}, the ellipsoids are centered at the predictor $\widehat{f}(X_{n+1})$, while our method naturally corrects $\widehat{f}(X_{n+1})$ with the empirical conditional linear expectation of $R$ given $X$ (Section \ref{ssec:main_theo} and Proposition \ref{prop:pred_unbiased}). This feature is interesting as $\widehat{f}$ can be biased, which may happen e.g. when $\widehat{f}$ is a black-box neural network.

The paper is organized as follows. We recall  some preliminaries on split conformal prediction for regression in Section \ref{sec:bg}.  The heart of our work is the Section \ref{sec:score} where we discuss the construction of the conformal score and give some initial results on the induced conformal prediction. This is followed by the Section \ref{sec:asymp} where we study some asymptotic properties of the conformal ellipsoid, \iain{notably its volume under ellipticity assumptions}.  We illustrate and demonstrate the practical effectiveness of our results on simulated data in Section \ref{sec:numexp}. All the technical proofs and technical lemmas are postponed to Appendices \ref{app:lemmas} and \ref{app:proofs}.

\section{Background for Split Conformal Inference} \label{sec:bg}
Let $(U_i)_{i=1,\ldots,n+1}$
be $(n+1)$ exchangeable vectors of random variables, with $U_i=(X_i,Y_i)\in\calU=\calX\times\calY$. 
This means that  the joint distribution of  $(U_i)_{i=1,\ldots, n+1}$ is invariant by  any permutation of the symmetric group of order $(n+1)$. For example, an i.i.d. sample satisfies this assumption. In our framework we assume that $(U_i)_{i=1,\ldots, n}$, called the calibration set (or calibration sample), has been observed and that we get
$X_{n+1}$. We assume further that we have at hand a predictor $\widehat{f}(X_{n+1})$ of $Y_{n+1}$. In this paper, for the sake of simplicity we assume that this predictor is deterministic. For example, it had been built using a sample independent of $(U_i)_{i=1,\ldots, n+1}$ and so we are working conditionally to the training sample (this is the main assumption of \textit{split} CI, in contrast with \textit{full} CI where $\widehat{f}$ is retrained for each new sample point).   
The aim is to provide a prediction set for $Y_{n+1}$.
More precisely, as discussed in the Introduction,  our objective is  to construct a {\it conservatively} valid prediction set for $Y_{n+1}$ with a given level of confidence. Hence, we aim to build a set function $C_\alpha^n$ defined on $(\calX\times\calY)^n\times \calX$ that will return a measurable subset of $\calY$ for $Y_{n+1}$ such that
	\begin{equation}\label{eq:cov_classicCI}
		\Pbb{Y_{n+1}\in C_\alpha^n\left((X_i,Y_i)_{i=1\dots n};X_{n+1}\right)}\geq 1-\alpha\,.
	\end{equation}
Notice that in the above probability we have integrated over $(X_{i},Y_{i})_{i=1\dots n+1}$.
The construction of $C_\alpha^n$ stands on the notion of conformity. To begin with, we define a nonconformity score. It consists in a  map $S$ from $\calU^n\times \calU$ to $\Rbb$, symmetric in its first $n$ arguments. The empirical distribution built from $(S(U_1,\ldots,U_n;U_j))_{j=1,\ldots,n}$ will be the main tool to quantify how conformal to $U_1,\ldots, U_n$, $u\in\calU$ is. 
Archetypal scores in the usual regression framework with $\calU=\Rbb^k\times\Rbb^\ell$  are related to the magnitude of the empirical residual, the most popular example being
\begin{equation}\label{eq:basic_res_score}
    S((X_1,Y_1),\dots,(X_n,Y_n),(x,y)) = \|y-\fhat(x)\|,
\end{equation}
where $\|\cdot\|$ is the Euclidean norm of $\mathbb{R}^\ell$.
Notice that in this last example the score function only depends on $u=(x,y)$ but not on the calibration sample $(U_1, \ldots, U_n)$. This will not be the case in the framework that we are developing in this paper (see the next section).
%
%
%

\noindent
Let $\alpha\in(1/(n+1),1)$. For the sake of conciseness, we will use the notation $S(x,y) \coloneqq S((X_1,Y_1),\dots,(X_n,Y_n);(x,y))$. For $i=1,\ldots, n+1$, we will also write  $S_i \coloneqq S(X_i,Y_i)$.
Following for example \cite{AngelopoulosBates23}, the conformalized prediction set for $Y_{n+1}$ when observing $X_{n+1}$ is then
	\begin{equation} \label{eq:sci_pred_set}
		C_\alpha^n(X_{n+1})\equiv C_\alpha^n((X_1, Y_1)\dots (X_n, Y_n);X_{n+1}) = \left\{y\in\calY\;{:}\;S(X_{n+1},y) \leq S_{(n_\alpha)}\right\}.
	\end{equation}
Here, $n_\alpha  \coloneqq \ceil{(1-\alpha)(n+1)}$ and $S_{(1)}\leq\dots\leq S_{(n)}$ denotes the order statistics associated to $(S_i)_{i=1\dots n}$. By construction, the above set $C_\alpha^n(X_{n+1})$ satisfies
	\begin{align}\label{eq:coverage}
	\Pbb{Y_{n+1} \in C_\alpha^n(X_{n+1})} \geq 1-\alpha.
	\end{align}
	 The key property ensuring that equation \eqref{eq:coverage} holds is that $(S_i)_{i=1,\ldots, n+1}$ is exchangeable. Moreover, if the nonconformity scores $(S_i)_{i=1\dots n}$ have a continuous joint distribution, then we have the following upper bound,
\begin{align}\label{eq:reverse_conformal}
 	\Pbb{Y_{n+1} \in C_\alpha^n(X_{n+1})} \leq 1-\alpha+\frac{1}{n+1}\,.
\end{align}

%
Returning to the regression framework and considering the residual score \eqref{eq:basic_res_score}, the prediction set for $Y_{n+1}$ given the covariable $X_{n+1}$ is
\begin{align*}
C_\alpha^n(X_{n+1}) = 
B\left(\widehat{f}(X_{n+1}), S_{(n_\alpha)}\right)\, ,
\end{align*}
i.e. the ball  with center $\widehat{f}(X_{n+1})$ and radius $S_{(n_\alpha)}$.

\section{The Mahalanobis score and main results} \label{sec:score}
\subsection{Notations} 
We will assume that $\calX=\Rbb^k$ and $\calY=\Rbb^{\ell}$, ($\ell,k \in \mathbb{N}$).
For any positive integer $n_1$ and $n_2$,  $\calM_{n_1,n_2}$ denotes the set of all $n_1\times n_2$ real matrices. For, $i=1\ldots, n$, $e_i$ denotes the $\ith$ element of the canonical basis of $\Rbb^n$. 
Set $p=k+\ell$, for $v\in\Rbb^p\setminus\{0\}$, $\pi^{\perp}_v$ denotes the orthogonal projector onto $\text{Span}(v)^{\perp}$. In others words,  $\pi^{\perp}_v = \mathbf{I}_p - \|v\|^{-2}vv^\top $. $\mathbbm{1}$ denotes the vector compounded by ones on all its components (its dimension will be implicit from the context). For any $\alpha\in(0,1)$, we set $n_{\alpha} \coloneqq \lceil (1-\alpha)(n+1)\rceil$. All non-column matrices are denoted with bold letters. For any block matrix $\bfM = \bigl( \begin{smallmatrix}\bfA & \bfB\\ \bfC & \bfD\end{smallmatrix}\bigr)$, we will denote the Schur complement (whenever it exists ) by $\bfM/\bfA \coloneqq \bfD - \bfC\bfA^{-1}\bfB$. The abbreviation SLLN stands for the strong law of large numbers. Given $A\in\calM_{n_1,n_2}$ $(n_1,n_2\geq 1)$,  its Frobenius norm is denoted as $\|A\|_F^2\coloneqq \Tr(AA^\top)$. Given $A, B\in \calM_{n,n}$ two symmetric matrices, $A\preccurlyeq B$ means that $B-A$ is nonnegative definite. Let $\xi$ be real random variable and $\alpha\in(0,1)$, $q_{1-\alpha}(\xi)$ denotes its quantile of order $1-\alpha$, i.e. $q_{1-\alpha}(\xi)=\inf\{r\in\Rbb\,{:}\,\Pb(\xi>r) \leq  \alpha\}$. Recall also that we have a predictor $\widehat{f}$ independent of the calibration sample and that we denote the residual by $R\coloneqq Y-\widehat{Y},\, \widehat{Y} = \widehat{f}(X)$.
For a square integrable random vector $\xi$, $\mu_\xi$ and $\mathbf{\Sigma}_\xi$ denote its mean vector and covariance matrix, respectively. We abbreviate ``almost surely'' as ``a.s.''. $\xlongrightarrow[]{\mathbb{P}}$ stands for convergence in probability, $\stackrel{\rmd}{=}$ and $\xrightarrow[]{\rmd}$ stand for equality and convergence in distribution, respectively.
To finish, we represent all finite-dimensional vectors as column matrices.

\subsection{\iain{Joint} Mahalanobis score} \label{ssec:mahalanobis_score}
As discussed in the previous section, a useful score used in CI is the (squared) norm of the residual.
In this paper, we will consider  a natural generalization  of the previous score by considering some statistical evaluation of  $(R - \mu_R)^\top  \mathbf{\Sigma}_R^{-1} (R - \mu_R)$ (assuming that everything here is well defined). 
%
Before explaining how we will estimate and handle the previous quantity, let it first bulk up
by including $X$ in the score. For this purpose, let $V \coloneqq (X^\top\ R^\top)^\top \in\mathbb{R}^{p}$ and set
\begin{align}\label{eq:ellipsoid_fixed_theta}
    \calS = (V - \mu_{V})^\top  \mathbf{\Sigma}^{-1}_{V} (V - \mu_{V}).
\end{align}
As before, we assume here that everything is well defined. The intuition behind this score is to work with standardized quantities. One may recognize in \eqref{eq:ellipsoid_fixed_theta} the so-called Mahalanobis quadratic form, which in turn defines the Mahalanobis metric. This metric is classical in multivariate analysis and widely used for example for classification tasks (see \cite{mahalanobis2018generalized}).
\iain{The idea of including the covariate $X$ in the score is inspired by the CI framework for time series, where $X$ can contain the $k$ previous residuals (see e.g. \cite{Linetal22a}, Section 4.2, or the end of this subsection) : the joint distribution $(X^\top R^\top)^\top$ is relevant for predicting the future $\ell$ residuals (i.e. $R$).}

In the context of CI, the position and covariance parameters of $V$ are of course unknown or even undefined, and it is natural to replace them with their empirical estimators. 
Nevertheless,  in order to preserve exchangeability and so the finite sample theoretical guarantees provided by the classical CI procedure, one has to process such empirical estimators in an exchangeable fashion. 
Of course, another way to proceed would be to estimate these parameters separately, similarly to the pre-trained predictor, leading to a {\it double split} procedure. The first option is more challenging and this is our approach here. To begin with,
we now introduce precisely our working score. We first set
\begin{equation}
V_i=
\begin{pmatrix}
X_i\\
 R_i
\end{pmatrix} 
\; (i=1, \ldots,n)
\mbox{ and, for } z\in\Rbb^{\ell}, \;
V_{n+1}(z) =
\begin{pmatrix}
X_{n+1}\\
 z
\end{pmatrix}
.
\label{eq:decomp_Vi_Xi_Ri}
\end{equation}
Here,  the variable $z= y - \widehat{f}(X_{n+1})$ (resp. $y$) is  used to locate the {\it most likely} potential residuals (resp. predictions) in the CI machinery. 
We store the observation vectors $V_i,\; i=1\ldots,n$ and the guess $V_{n+1}(z)$ in a matrix $\bfV(z)\in \calM_{n+1,p}$ whose first $n$ row are the $V_i^\top, i=1,\ldots, n$ and the last one is  $V_{n+1}^\top(z)$.
Following equation \eqref{eq:decomp_Vi_Xi_Ri}, we decompose $\bfV(z)$ as $\bfV(z) = (\bfX \ \bfR(z))$ with $\bfX\in\calM_{n+1,k}$ and $\bfR(z)\in \calM_{n+1,\ell}$. The empirically centered counterpart of $\bfV(z)$ is  $\bfW(z) = \pi^{\perp}_{\mathbbm{1}}\bfV(z)$.
We further  compute the ridge empirical covariance matrix $\widehat{\mathbf{\Sigma}}_{\lambda}(z)\in\calM_{p,p}$,
\begin{align}\label{eq:def_sig_hat}
    \widehat{\mathbf{\Sigma}}_{\lambda}(z) = \frac{1}{n} \bfW(z)^\top \bfW(z) + \lambda \mathbf{I}.
\end{align}
Here,  $\lambda>0$ is the ridge parameter.
With all these supplementary notations, we are now able to define the score $S_i(z)\; (1\leq i \leq n+1)$ as the square of the empirical Mahanalobis norm of $W_i(z)$ (the $i^\mathrm{th}$ row of $\bfW(z)$ written in column).  Namely, we have
\begin{align}
    S_i(z) = \|\widehat{\mathbf{\Sigma}}_\lambda^{-1/2}(z)W_i(z)\|^2 = W_i(z)^\top \widehat{\mathbf{\Sigma}}_\lambda^{-1}(z)W_i(z).
\end{align}
It is readily seen that $S_i(z)$ is the $\ith$ diagonal element of the following matrix,
\begin{align}\label{eq:def_score_S}
    \bfS(z) &= \bfW(z)\bigg(\frac{1}{n} \bfW(z)^\top  \bfW(z) + \lambda \mathbf{I}\bigg)^{-1}\bfW(z)^\top \nonumber \\ &= n\bfW(z)\Big(\bfW(z)^\top  \bfW(z) + n\lambda \mathbf{I}\Big)^{-1}\bfW(z)^\top .
\end{align}
Remarkably, and contrarily to the norm residual score, the first $n$ scores $(S_1(z),\ldots,S_n(z))$ \textit{also depend} on $z$. This property induces a number of technical difficulties which are dealt with in Section \ref{sub:to_the_score}. To alleviate the notations we will sometimes omit the dependence in $z$ in the matrix valued functions. 

\paragraph*{\adri{First intuition on the confidence regions associated to $S(z)$}} \adri{Before continuing with the main results, let us consider the Gaussian case $(X^\top R^\top)^\top\sim\calN(0,\Sigma)$. Anticipating Section \ref{sec:asymp}, we replace the ridge empirical covariance with $\Sigma$ and set $\lambda = 0$. 
Then, setting $T(z) = z - \Sigma_{21}\Sigma_{11}^{-1}X$ \iain{with $z = y -\widehat{f}(X)$} and using the block inversion lemma (equation \eqref{eq:apply_block_lemma}), our score amounts to
\begin{align}
    S(z) &= \begin{pmatrix}
         X \\z
    \end{pmatrix}^\top\mathbf{\Sigma}^{-1}\begin{pmatrix}
         X \\z
    \end{pmatrix} = X^\top(\mathbf{\Sigma}^{11})^{-1}X + T(z)^\top(\mathbf{\Sigma}/\mathbf{\Sigma}^{11})^{-1}T(z).
\end{align}
\iain{Assume also that the quantile of order $1-\alpha$ of $S(R)$ (denoted by $q_{1-\alpha}^\calE$) is known, so that we can replace the empirical quantile usually computed in CI by its theoretical value $q_{1-\alpha}^\calE$.}
The confidence region becomes
\begin{align}
    \calE_\alpha^\infty &= \{z : X^\top(\mathbf{\Sigma}^{11})^{-1}X + T(z)^\top(\mathbf{\Sigma}/\mathbf{\Sigma}^{11})^{-1}T(z) \leq q_{1-\alpha}^\calE\}.
\end{align}
\iain{Under our Gaussianity assumption, $q_{1-\alpha}^\calE = q_{1-\alpha}(\chi^2(p))$. The ellipsoid $\calE_\alpha^\infty$ has two important features : it is centered on the conditional expectation $\mathbb{E}[R|X] = \Sigma_{21}\Sigma_{11}^{-1}X$ and its matrix is equal to the conditional covariance $\Cov(R|X) = \mathbf{\Sigma}/\mathbf{\Sigma}^{11}$. However, }
 we can see in this simple case that $\calE_\alpha^\infty = \varnothing \iff X^\top(\mathbf{\Sigma}^{11})^{-1}X \geq q_{1-\alpha}^\calE$. This observation suggests the introduction of $S'(z)$, an adjusted version of the score $S(z)$, defined as 
\begin{equation}\label{eq:def_sprime_infinite}
    S'(z) = S(z) -  X^\top(\mathbf{\Sigma}^{11})^{-1}X = T(z)^\top(\mathbf{\Sigma}/\mathbf{\Sigma}^{11})^{-1}T(z).
\end{equation}
The score $S'$ is studied in Section \ref{ssec:adjusted_score}.
This new score leads to a second confidence region
\begin{equation}
    \calF_\alpha^\infty = \{z :  T(z)^\top(\mathbf{\Sigma}/\mathbf{\Sigma}^{11})^{-1}T(z) \leq q_{1-\alpha}^\calF\},
\end{equation}
where $q_{1-\alpha}^\calF = q_{1-\alpha}(\chi^2(\ell))$ \iain{in the Gaussian case}. \iain{Contrarily to $\calE_\alpha^\infty$, this region is never empty as $\Sigma_{21}\Sigma_{11}^{-1}X\in\calF_\alpha^\infty$. In return, its finite sample counterpart $\calF_\alpha^n$ can equal to $\mathbb{R}^\ell$ (Section \ref{ssec:adjusted_score}).
Observe finally that when working with the score matrix \eqref{eq:def_score_S},
it is not obvious anymore that the prediction region built with the score matrix \eqref{eq:def_score_S} is an ellipsoid, although we expect to obtain a prediction set similar to $\calE_\alpha^\infty$.}
}

\paragraph*{Computation of the score} 
Let us comment the effective computation of equation \eqref{eq:def_score_S} before stating the upcoming Theorem \ref{thm:eq_ellipsoid}.
The evaluation of $\bfS(z)$ in \eqref{eq:def_score_S} is not directly tractable: for each candidate $z$,  both a matrix inversion and  matrix products have to be performed.  Fortunately, it turns out that explicit inversion tricks can be used to overcome this first issue. First, notice that the matrix $\bfV(z)$ is  a linear function of $z$. More explicitly, we may write $\bfV(z) =\bfV(0) + \bfA(z)$, where $\bfA(z)$ is the rank one matrix given by 
\begin{align}
\bfA(z) = e_{n+1}\bigg(\sum_{s=1}^{\ell}z_se_{k+s}\bigg)^\top  = e_{n+1}(\mathbf{L}z)^\top ,    
\end{align}
where $\mathbf{L} \in \calM_{p,\ell}$ is the matrix such that $\mathbf{L}^\top  = (\mathbf{0}_{\ell,k}\ \vline \ \mathbf{I}_{\ell})$. Thus,
\begin{align}\label{eq:W(z)_rank_one}
\bfW(z) = \pi^{\perp}_{\mathbbm{1}}\bfV(0) + \pi^{\perp}_{\mathbbm{1}}e_{n+1}(\mathbf{L}z)^\top  = \bfW(0) + v(\mathbf{L}z)^\top.
\end{align}
Here, 
\begin{align}\label{eq:def_v}
v = \pi^{\perp}_{\mathbbm{1}}e_{n+1} =e_{n+1} -\frac{1}{n+1}\mathbbm{1} = \frac{1}{n+1}(-1 -1 \hdots -1\ n)^\top, \ \ \ \text{with} \ \ \ \|v\|^2 = \frac{n}{n+1}.
\end{align}
\noindent
Equation \eqref{eq:W(z)_rank_one} shows that $\bfW(z)$ is a rank one perturbation of $\bfW(0)$. This crucial property is exploited in Lemma \ref{lemma:scores_transpose}, to prove our main Theorem \ref{thm:eq_ellipsoid}.

\begin{remark}[Time series framework]
 In the context of  time series \cite{Stankeviciuteetal21, Linetal22a}, $U_i$ takes the form $U_i = (U_i^t)_{t\in\mathbb{N}}$. Given a multi-horizon predictor $(\widehat{U}_{t+1}^i,\ldots,\widehat{U}_{t+\ell}^i) = \widehat{f}(U_{t-k+1}^i,\ldots, U_t^i)$ one then computes the time series of the residuals, $R_t^i = U_t^i-\widehat{U}_t^i$.
 At a given time $t$, we aim at localizing with high probability the next $\ell$ residuals, $Y = (R_{t+1}, \dots , R_{t+\ell})$  using the $k$ previous ones, $X = (R_{t-k+1}, \dots , R_{t})$. It may also happen that the time series and so the residuals are both  multidimensional ($R_s \in \Rbb^d$). This is e.g. the case for time series describing the evolution of a position in $\Rbb^3$ ($d=3$). In this case, $X$ is a vector in $\Rbb^{kd}$ and $Y$ as a vector in $\Rbb^{\ell d}$. The vector corresponding to the $\ith$ time series takes the form
 \begin{align}
     V_i = (X_i^\top \ Y_i^\top)^\top \in\Rbb^{dk+d\ell}.
 \end{align}
 This time series setting falls in the framework of this paper and will be developed extensively in 
 \cite{GHM_forthcoming1}.
\end{remark}
The most direct interpretation of the score vector $\diag(\bfS(z)) = (S_1(z),\ldots,S_{n+1}(z))$ is the empirical Mahalanobis distance. This metric is e.g. also the central tool in linear discriminant analysis (LDA)
    \cite{muirhead1982}, Chapter 9-10. This is closely related to the Karhunen-Loève decomposition, which consists in finding an orthonormal basis $(\xi_i\otimes\phi_i)_{i\in\mathbb{N}}\subset L^2(\mathbb{P})\otimes L^2(\Rbb^d)$ on which a random field is naturally decomposed.
\adri{It is worth to notice that other interpretations in terms of leverage score and shape theory can also be made. The interested reader will find some insights in Appendix \ref{app:score_interpretation}.}

\subsection{The first ellipsoid \texorpdfstring{$\calE_\alpha^n$}{E alpha n} : main theorem} \label{ssec:main_theo}
We first need to introduce some notations.
While the score $\bfS(z)$ is built on a procedure which is exchangeable in the $(n+1)$-sample, we will in fact need quantities that are obtained only using the $n$ sample $(V_1, \dots , V_n)$. For this, we introduce the matrices $\bfB_n \in\calM_{n,p}$, $\widehat{\mathbf{\Sigma}}_{n,\lambda}\in \calM_{p,p}$ and $\bfP_{n,\lambda} \in \calM_{n,n}$ such that
\begin{align}
    (\bfB_n)_{ij} &\coloneqq (V_i)_j - \frac{1}{n}\sum_{k=1}^n(V_k)_j, \ \
    \widehat{\mathbf{\Sigma}}_{n,\lambda} \coloneqq \frac{1}{n}\bfB_n^\top \bfB_n + \lambda \mathbf{I}_{p} = \begin{pmatrix}
                \widehat{\mathbf{\Sigma}}_{n,\lambda}^{11}   & \widehat{\mathbf{\Sigma}}_{n}^{12} \\
                \widehat{\mathbf{\Sigma}}_{n}^{21}    &\widehat{\mathbf{\Sigma}}_{n,\lambda}^{22}
\end{pmatrix}, \label{eq:def_Bn} \\
\bfP_{n,\lambda} &\coloneqq \bfB_n(\bfB_n^\top \bfB_n + n\lambda \mathbf{I}_p)^{-1}\bfB_n^\top = \frac{1}{n} \bfB_n \widehat{\mathbf{\Sigma}}_{n,\lambda}^{-1}\bfB_n^\top. \label{eq:def_Pnl}
\end{align}
Above,
$\widehat{\mathbf{\Sigma}}_{n,\lambda}^{11}\in\calM_{k,k},\ \widehat{\mathbf{\Sigma}}_{n,\lambda}^{22}\in\calM_{\ell,\ell}, \  \widehat{\mathbf{\Sigma}}_{n}^{12}\in\calM_{k,\ell},$ and $\widehat{\mathbf{\Sigma}}_{n}^{21} = (\widehat{\mathbf{\Sigma}}_{n}^{12})^\top\in\calM_{\ell,k}$.
$\bfB_n$ contains the data of the $n$-sample after being centered with their empirical mean, and $\widehat{\mathbf{\Sigma}}_{n,\lambda}$ is the corresponding empirical ridge covariance matrix. 
The matrix $\bfP_{n,\lambda}$ is a regularized orthogonal projector, that is, $\bfP_{n,0}$ is an orthogonal projector, $\bfP_{n,\lambda}^\top = \bfP_{n,\lambda}$ and $0 \preccurlyeq \bfP_{n,\lambda}\preccurlyeq \bfP_{n,0}$ for $\lambda > 0$. Denoting $p_{i,n} = (\bfP_{n,\lambda})_{ii}$ ($p_{i,n}\in [0,1]$) we set
\begin{align}
q_{n,\alpha} \coloneqq np_{(n_{\alpha})},
\end{align}
the $\nath$ order statistic of the $n$-tuple $(np_{1,n}, \dots , np_{n,n})$. Note that $p_{(n_{\alpha})}$ is the empirical quantile of order $(1-\alpha)(n+1)/n$ for the $n$-tuple $(p_{1,n}, \dots , p_{n,n})$, and not $1-\alpha$ (apply e.g. \cite{van2000asymptotic}, p. 305). Contrarily to standard $n$-samples, the $p_{i,n}$ are not independent. For example, they are constrained by the following deterministic inequality: $\sum_{i=1}^np_{i,n} = \Tr(\bfP_{n,\lambda}) \leq \Tr(\bfP_{n,0}) = \text{rank}(\bfP_{n,0}) \leq p$.

We can now state our first main result.
\begin{theorem}\label{thm:eq_ellipsoid}
Let $\alpha\in (1/(n+1),1)$. let $((X_1, Y_1), \dots , (X_{n+1},Y_{n+1}))$ \iain{$\in (\calX\times\calY)^{n+1}$} exchangeable pairs of random vectors. If $q_{n,\alpha} \geq n-1$, we set $\calE_{\alpha}^n\coloneqq \Rbb^{\ell}$. If $q_{n,\alpha} < n-1$, we define $\calE_{\alpha}^n$ as the following ellipsoid,
\begin{align}\label{eq:ellipsoid}
   \calE_{\alpha}^n = \{z\in\mathbb{R}^{\ell}: (z-Z_{0}^n)^\top \boldcalA_n^{-1}(z-Z_{0}^n) \leq \rho_{n,\alpha}\},
\end{align}
where, setting $\overline{X}_n \coloneqq n^{-1}\sum_{i=1}^nX_i, \ \overline{R}_n \coloneqq n^{-1}\sum_{i=1}^nR_i$ and $X_{n+1}^\rmc = X_{n+1} - \overline{X}_n$,
\begin{align}
    \boldcalA_n &= \widehat{\mathbf{\Sigma}}_{n,\lambda}/\widehat{\mathbf{\Sigma}}_{n,\lambda}^{11}, \label{eq:def_A} \\
    Z_{0}^n &=\widehat{\mathbf{\Sigma}}_{n}^{21}(\widehat{\mathbf{\Sigma}}_{n,\lambda}^{11})^{-1}X_{n+1}^\rmc + \overline{R}_n,\label{eq:def_z0n}\\
    \rho_{n,\alpha} &= \frac{q_{n,\alpha} + 1}{1- (q_{n,\alpha}+1)/n}-1 - (X_{n+1}^\rmc)^\top (\widehat{\mathbf{\Sigma}}_{n,\lambda}^{11})^{-1}X_{n+1}^\rmc. \label{eq:def_rho_n} 
\end{align}
Then the set $\calE_{\alpha}^n$ satisfies
\begin{align}
\mathbb{P}(Y_{n+1} - \widehat{Y}_{n+1}\in \calE_{\alpha}^n) \geq 1 - \alpha.
\end{align}
\end{theorem}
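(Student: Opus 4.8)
The plan is to exhibit a conformal prediction region, built from the scores $S_i(z)$, that enjoys the usual marginal guarantee and is \emph{contained} in $\calE_{\alpha}^n$; the stated bound then follows by monotonicity of probability. For a candidate residual $z\in\Rbb^{\ell}$ set $\calC_{\alpha}^n\coloneqq\{z:S_{n+1}(z)\le S_{(n_{\alpha})}(z)\}$, where $S_{(n_{\alpha})}(z)$ is the $n_{\alpha}$-th order statistic of $(S_1(z),\dots,S_n(z))$. Evaluated at $z=R_{n+1}$, the matrix $\bfV(R_{n+1})$ has rows $V_1,\dots,V_{n+1}$, and $(S_1(R_{n+1}),\dots,S_{n+1}(R_{n+1}))$ is a permutation-equivariant function of $(V_1,\dots,V_{n+1})$; exchangeability of the sample therefore makes these $n+1$ scores exchangeable, and the argument underlying \eqref{eq:coverage} yields $\Pbb{R_{n+1}\in\calC_{\alpha}^n}\ge 1-\alpha$. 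It is thus enough to prove $\calC_{\alpha}^n\subseteq\calE_{\alpha}^n$, after which $\Pbb{Y_{n+1}-\widehat{Y}_{n+1}\in\calE_{\alpha}^n}\ge\Pbb{R_{n+1}\in\calC_{\alpha}^n}\ge 1-\alpha$.

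To obtain the inclusion I would first make the scores explicit using the rank-one representation \eqref{eq:W(z)_rank_one}. With $\eta(z)\coloneqq(\,(X_{n+1}^{\rmc})^{\top}\ (z-\overline{R}_n)^{\top})^{\top}$, the decomposition of empirical covariance matrices under the addition of one observation gives $\widehat{\mathbf{\Sigma}}_{\lambda}(z)=\widehat{\mathbf{\Sigma}}_{n,\lambda}+\tfrac{1}{n+1}\eta(z)\eta(z)^{\top}$, together with $W_{n+1}(z)=\tfrac{n}{n+1}\eta(z)$ and $W_i(z)=\bfB_n^{\top}e_i-\tfrac{1}{n+1}\eta(z)$ for $i\le n$. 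Inverting via Sherman--Morrison then produces $S_{n+1}(z)=\dfrac{n^2\,m(z)}{(n+1)\,(n+1+m(z))}$ with $m(z)\coloneqq\eta(z)^{\top}\widehat{\mathbf{\Sigma}}_{n,\lambda}^{-1}\eta(z)$, while a parallel expansion of $S_i(z)$ collapses, after simplification, to the comparison
\[
S_i(z)\ \ge\ S_{n+1}(z)\quad\Longleftrightarrow\quad (n+1)\,np_{i,n}+1-m(z)\bigl(n-1-np_{i,n}\bigr)\ \ge\ \bigl(b_i(z)+1\bigr)^2\qquad(1\le i\le n),
\]
where $np_{i,n}=e_i^{\top}\bfB_n\widehat{\mathbf{\Sigma}}_{n,\lambda}^{-1}\bfB_n^{\top}e_i$ and $b_i(z)=e_i^{\top}\bfB_n\widehat{\mathbf{\Sigma}}_{n,\lambda}^{-1}\eta(z)$. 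This chain of identities is exactly what Lemma~\ref{lemma:scores_transpose} provides, and it is the step I expect to be the main obstacle: the rank-one perturbation must be carried consistently through both the $p\times p$ inverse and the $(n+1)\times(n+1)$ score matrix, with all cross terms shown to cancel.

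Granting the comparison, dropping the nonnegative term $(b_i(z)+1)^2$ gives the one-sided implication $S_i(z)\ge S_{n+1}(z)\Rightarrow(n+1)np_{i,n}+1\ge m(z)(n-1-np_{i,n})$, and inserting the closed form of $S_{n+1}(z)$ turns the right-hand inequality into $np_{i,n}+\tfrac{1}{n+1}\ge S_{n+1}(z)$. Therefore, if $z\in\calC_{\alpha}^n$, at least $n-n_{\alpha}+1$ of the $S_i(z)$ are $\ge S_{n+1}(z)$, so at least $n-n_{\alpha}+1$ of the $np_{i,n}$ obey $np_{i,n}\ge S_{n+1}(z)-\tfrac{1}{n+1}$; as the $(n-n_{\alpha}+1)$-th largest of $np_{1,n},\dots,np_{n,n}$ is $np_{(n_{\alpha})}=q_{n,\alpha}$, this forces $S_{n+1}(z)\le q_{n,\alpha}+\tfrac{1}{n+1}$.

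Finally I would identify the set $\{z:S_{n+1}(z)\le q_{n,\alpha}+\tfrac{1}{n+1}\}$. Since $m\mapsto n^2 m/((n+1)(n+1+m))$ is strictly increasing with supremum $n^2/(n+1)$, in the case $q_{n,\alpha}\ge n-1$ one has $q_{n,\alpha}+\tfrac{1}{n+1}\ge n^2/(n+1)>S_{n+1}(z)$ for all $z$, so the set is $\Rbb^{\ell}=\calE_{\alpha}^n$. When $q_{n,\alpha}<n-1$ the inequality translates into a finite bound on $m(z)$, and the identity expressing $m(z)=\eta(z)^{\top}\widehat{\mathbf{\Sigma}}_{n,\lambda}^{-1}\eta(z)$ through the Schur complement $\boldcalA_n=\widehat{\mathbf{\Sigma}}_{n,\lambda}/\widehat{\mathbf{\Sigma}}_{n,\lambda}^{11}$ gives $m(z)=(X_{n+1}^{\rmc})^{\top}(\widehat{\mathbf{\Sigma}}_{n,\lambda}^{11})^{-1}X_{n+1}^{\rmc}+(z-Z_{0}^n)^{\top}\boldcalA_n^{-1}(z-Z_{0}^n)$ with $Z_{0}^n$ as in \eqref{eq:def_z0n}; a short arithmetic check that the resulting bound on $(z-Z_{0}^n)^{\top}\boldcalA_n^{-1}(z-Z_{0}^n)$ is precisely $\rho_{n,\alpha}$ of \eqref{eq:def_rho_n} then identifies this set with $\calE_{\alpha}^n$. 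In both cases $\calC_{\alpha}^n\subseteq\calE_{\alpha}^n$, which completes the argument.
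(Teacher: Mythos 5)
Your proposal is correct, and its skeleton is the one the paper uses (Lemma \ref{lemma:approx_score} plus Proposition \ref{prop:c_a_is_e_a_n}): exchangeability of the scores at $z=R_{n+1}$ gives coverage of $\calC_\alpha^n$, one then enlarges $\calC_\alpha^n$ to the set $\{z: S_{n+1}(z)\le q_{n,\alpha}+\tfrac{1}{n+1}\}$, and finally identifies that set with $\calE_\alpha^n$ via the Schur complement. The computational middle, however, is genuinely different. The paper perturbs the $(n+1)\times p$ centered data matrix $\bfW(z)=\bfW(0)+v(\bfL z)^\top$ and applies Lemma \ref{lemma:scores_transpose} to the $(n+1)\times(n+1)$ hat-type matrix, obtaining $\bfS(z)=n\bfC_n - nb^n(z)b^n(z)^\top/(1+d_n(z))$; the inclusion $\calC_\alpha^n\subset\widetilde{\calC}_\alpha^n$ then follows from the crude but uniform bound $S_i(z)\le n(\bfC_n)_{ii}$ (the subtracted term is PSD) and an order-statistics count. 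You instead perturb the $p\times p$ ridge covariance, $\widehat{\mathbf{\Sigma}}_\lambda(z)=\widehat{\mathbf{\Sigma}}_{n,\lambda}+\tfrac{1}{n+1}\eta(z)\eta(z)^\top$, apply Sherman--Morrison there, and derive the \emph{exact} equivalence $S_i(z)\ge S_{n+1}(z)\iff (n+1)np_{i,n}+1-m(z)(n-1-np_{i,n})\ge (b_i(z)+1)^2$, from which dropping the square gives $np_{i,n}+\tfrac1{n+1}\ge S_{n+1}(z)$; I checked this identity and the subsequent arithmetic (including the equivalence of your bound on $m(z)$ with $\rho_{n,\alpha}+(X_{n+1}^\rmc)^\top(\widehat{\mathbf{\Sigma}}_{n,\lambda}^{11})^{-1}X_{n+1}^\rmc$ and the threshold $q_{n,\alpha}\ge n-1\iff q_{n,\alpha}+\tfrac1{n+1}\ge n^2/(n+1)$), and they are correct. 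Your route buys a sharper, per-index characterization of when $S_i\ge S_{n+1}$ (which makes transparent exactly what is discarded in the conservative approximation, namely the squares $(b_i+1)^2$), at the cost of a heavier pointwise computation; the paper's route is lighter because the uniform bound falls out of the PSD structure of the rank-one correction without ever expanding $S_i(z)$ for $i\le n$. Both are dual views of the same rank-one update, and both land on the same set $\widetilde{\calC}_\alpha^n=\calE_\alpha^n$.
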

The ellipsoid in Theorem \ref{thm:intro} is recovered by $E_{\alpha}^n = \{\widehat{Y}_{n+1}\} + \calE_{\alpha}^n$, in the sense of the usual Minkowski set addition.

An important feature of Lemma \ref{lemma:simplify} is that $\boldcalA_n$ does not depend on $V_{n+1}$. Likewise, $\rho_{n,\alpha}$ and $Z_0^n$ depend on $V_{n+1}$ only through $X_{n+1}$. Interestingly, when the distribution of $V_1$ is elliptical (see Section \ref{sub:comparison_norm_score}) and $V_1$ admits second order moment, the matrix $\boldcalA_n$ corresponds to an empirical estimate of the conditional covariance matrix $\Cov(R_1|X_1)$, up to a random positive multiplicative constant (see \cite{muirhead1982}, Theorem 1.5.4). 
In fact, one can see from equation \eqref{eq:ellipsoid} that only $\rho_{n,\alpha}\boldcalA_n$ is uniquely defined. This allows us to interpret $\boldcalA_n$ as an empirical conditional covariance matrix for such distributions, up to a rescaling of $\rho_{n,\alpha}$ with the said multiplicative constant.
\begin{remark}[Empirical linear conditional expectation]\label{rk:CCLE}
Observe that $ Z_0^n = \widehat{e}_n + \widehat{\bfA}_nX_{n+1}$, where
\begin{align}
    \widehat{\bfA}_n = \widehat{\mathbf{\Sigma}}_{n}^{21}(\widehat{\mathbf{\Sigma}}_{n,\lambda}^{11})^{-1}, \ \ \widehat{e}_n = \overline{R}_n -\widehat{\bfA}_n \overline{X}_n.
\end{align}
Interestingly, it is easily shown that
$(\widehat{e}_n, \widehat{\bfA}_n)$ is an M-estimator obtained by minimizing
    \begin{align}
        T_n(e,\bfA) = \frac{1}{n}\sum_{i=1}^n\|R_i - e - \bfA X_i\|^2 + \lambda \|\bfA\|_F^2,
    \end{align}
    which simply corresponds to ridge multivariate linear regression. The continuous counterpart of $T_n$ is $T(e,\bfA) = \mathbb{E}[\|R-e-\bfA X\|^2]+\lambda\|\bfA\|_F^2$, which is minimal for $\bfA_* = \boldsymbol{\Sigma}^{21}(\boldsymbol{\Sigma}_\lambda^{11})^{-1}$ and $e_* = \mathbb{E}[R_1] - \bfA_*\mathbb{E}[X_1]$. The pair $(e_*,A_*)$ yields the best affine approximation of $R$ in terms of $X$, in the sense of $L^2(\mathbb{P})$, and the resulting affine map is known as the linear conditional expectation (\cite{linear_cond_exp}, e.g. Theorem 4.14). In comparison, the (full) conditional expectation is obtained by minimizing the map $T(f) = \mathbb{E}[\|R-f(X)\|^2]$ over all measurable maps $f$. Finally,  $\boldcalA_n$ and $(\widehat{e}_n, \widehat{\bfA}_n)$ are linked together by $\min T_n = T_n(\widehat{e}_n, \widehat{\bfA}_n) = \Tr(\boldcalA_n) - \ell\lambda$.
\end{remark}

In the previous theorem, the introduced objects are not exchangeable in terms of the $(n+1)$-sample.
Netherveless, we will show in next section how the ellipsoid $\calE_{\alpha}^n$ can be exchangeably rewritten.

\subsection{From the score matrix \texorpdfstring{$\bfS(z)$}{S(z)} to the ellipsoid \texorpdfstring{$\calE_\alpha^n$}{E alpha n}}\label{sub:to_the_score}
An important first step consists in rewriting the score matrix $\bfS(z)$, using the fact that $\bfW(z)$ can be written as a rank-one perturbation (see eq.~\eqref{eq:W(z)_rank_one}). Applying then Lemma \ref{lemma:scores_transpose} to $\bfS(z)$, we obtain that $\bfS(z)$ is of the form
\begin{align}
    \bfS(z) &= n\bfC_n - n\frac{b^n(z)b^n(z)^\top }{1 + d_n(z)}.\label{eq:scores_ simple}
\end{align}
\iain{Detailed expressions of $\bfC_n, b^n(z)$ and $d_n(z)$ are given in Lemma \ref{lemma:expr_bn_dn_rn}.} Notably, the vector $b^n(z)$ is affine in $z$ and $d^n(z)$ is a positive quadratic form in $z$.
Recall that we are interested in the diagonal terms $(\bfS(z))_{ii}$, hereinafter denoted by $S_i(z)$ for short.
Equation \eqref{eq:scores_ simple} implies that the evaluation of the full score vector $(S_1(z),\ \ldots \ , S_{n+1}(z))$ requires only one matrix inversion ($\bfD_{\mu}^{-1}$, see Lemma \ref{lemma:scores_transpose}), instead of one inversion per $z$ candidate if using formula \eqref{eq:def_score_S}. The second step consists in applying Lemma \ref{lemma:simplify}, which shows that $\bfC_n$ in equation \eqref{eq:scores_ simple} is given by
\begin{align}
    \bfC_n = \begin{pmatrix}
             \bfP_{n,\lambda}  &\mathbf{0}_{n,1} \\
             \mathbf{0}_{1,n}  &0 
    \end{pmatrix} + ww^\top \in \calM_{n+1,n+1}, \ \ \ w \coloneqq \frac{v}{\|v\|}\,.\label{eq:decomp_Cn}
\end{align}
Here, $\bfP_{n,\lambda}$ is given in equation \eqref{eq:def_Pnl}. 
In particular, for $i\in\{1, \dots , n\}$, $(\bfC_n)_{ii} = p_{i,n} + 1/n(n+1)$. In fact, $\bfC_n$ does not depend on $V_{n+1}$, while $d_n(z)$ and $b^n(z)$ only depend on $V_{n+1}$ through $X_{n+1}$. 

Note that up until now, the expression for the score $\bfS(z)$ is exact. However, a limitation remains: the scores $S_i(z)$ of the first $n$ examples also depend on $z = y - \widehat{Y}^{n+1}$.
In particular, the conformal region $C_\alpha^n$ for our score is $C_\alpha^n = \{\widehat{Y}_{n+1}\} + \calC_{\alpha}^n$, where
\begin{align}\label{eq:conformal_region}
    \calC_{\alpha}^n = \{ z\in\mathbb{R}^{\ell}: S_{n+1}(z) \leq S_{(n_{\alpha})}(z) \}.
\end{align}
Here, $S_{(n_{\alpha})}(z)$ is the $\nath$ order statistic of the $n$-tuple $(S_1(z), \dots , S_n(z))$.
As such, the computation of the region $\calC_{\alpha}^n$ still requires that the scoring rule be tested for each $y\in \mathcal{Y}_{\mathrm{trial}}$. To alleviate this second computational difficulty, a simple approximation of the confidence region is given by the next lemma.

\begin{lemma}[Conservative approximation of $\calC_{\alpha}^n$]\label{lemma:approx_score}
Let $\alpha\in (1/(n+1), 1)$. Introduce the set $\widetilde{\calC}_{\alpha}^n$, defined as
\begin{align}\label{eq:approx_q}
\widetilde{\calC}_{\alpha}^n \coloneqq \bigg\{ z \in \mathbb{R}^{\ell}: S_{n+1}(z) \leq  q_{n,\alpha} + \frac{1}{n+1} \bigg\}.
\end{align}
Then $\calC_{\alpha}^n \subset \widetilde{\calC}_{\alpha}^n$; as a result, $\Pb(Y_{n+1} - \widehat{Y}_{n+1}\in  \widetilde{\calC}_{\alpha}^n) \geq 1 -\alpha$.
\end{lemma}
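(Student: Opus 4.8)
The plan is to bypass any explicit description of $\calC_\alpha^n$ and instead combine the exact formula for the score matrix from \eqref{eq:scores_ simple}--\eqref{eq:decomp_Cn} with the monotonicity of order statistics. The starting observation is a uniform (in $z$) upper bound on the $n$ calibration scores: from \eqref{eq:scores_ simple}, $S_i(z) = n(\bfC_n)_{ii} - n\,b^n(z)_i^2/(1+d_n(z))$ for $i \le n$; since $d_n(z)$ is a nonnegative quadratic form (Lemma \ref{lemma:expr_bn_dn_rn}), the subtracted term is nonnegative, so $S_i(z) \le n(\bfC_n)_{ii}$ for all $z \in \Rbb^{\ell}$. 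Substituting $(\bfC_n)_{ii} = p_{i,n} + 1/(n(n+1))$, read off from \eqref{eq:decomp_Cn}, yields $S_i(z) \le np_{i,n} + 1/(n+1)$ for every $i \in \{1,\dots,n\}$ and every $z$.

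Next I would push this bound through order statistics. Using the elementary fact that $a_i \le b_i$ for all $i$ forces $a_{(k)} \le b_{(k)}$ for all $k$ (e.g. via $a_{(k)} = \min_{|S|=k}\max_{i\in S} a_i$), applied with $a_i = S_i(z)$, $b_i = np_{i,n} + 1/(n+1)$ and $k = n_\alpha$, and using that translating all entries by a constant translates every order statistic by that constant, one gets $S_{(n_\alpha)}(z) \le np_{(n_\alpha)} + 1/(n+1) = q_{n,\alpha} + 1/(n+1)$ for every $z \in \Rbb^{\ell}$. Hence any $z$ with $S_{n+1}(z) \le S_{(n_\alpha)}(z)$ also satisfies $S_{n+1}(z) \le q_{n,\alpha} + 1/(n+1)$, which is exactly the inclusion $\calC_\alpha^n \subset \widetilde{\calC}_\alpha^n$.

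For the probabilistic conclusion, I would recall that $\calC_\alpha^n$ defined in \eqref{eq:conformal_region} is precisely the split-conformal prediction region for the residual $R_{n+1} = Y_{n+1} - \widehat{Y}_{n+1}$ built from the scores $(S_i(R_{n+1}))_{i=1,\dots,n+1}$, which form an exchangeable family because $(V_1,\dots,V_{n+1})$ is exchangeable and the map $\bfW \mapsto \diag\!\big(\bfW(\tfrac1n\bfW^\top\bfW + \lambda\mathbf{I})^{-1}\bfW^\top\big)$, together with the centering by $\pi^{\perp}_{\mathbbm{1}}$, is permutation-equivariant. Thus the general coverage bound \eqref{eq:coverage} gives $\Pb(R_{n+1} \in \calC_\alpha^n) \ge 1-\alpha$, and the inclusion just established upgrades this to $\Pb(Y_{n+1} - \widehat{Y}_{n+1} \in \widetilde{\calC}_\alpha^n) \ge 1-\alpha$.

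There is little genuine difficulty here; the one place to be careful — and where a sloppy argument would fail — is the index bookkeeping in the quantile. The quantity $q_{n,\alpha} = np_{(n_\alpha)}$ is the $n_\alpha$-th order statistic of $(np_{1,n},\dots,np_{n,n})$, not an exact $(1-\alpha)$-quantile, so the same index $n_\alpha$ must be used on both sides of the order-statistic comparison in the second step and must match the index defining $S_{(n_\alpha)}(z)$ in \eqref{eq:conformal_region}. Once these indices are aligned, the whole argument is order-theoretic and needs no further estimates.
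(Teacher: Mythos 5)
Your proposal is correct and follows essentially the same route as the paper: the uniform bound $S_i(z)\le n(\bfC_n)_{ii}=np_{i,n}+1/(n+1)$ obtained by dropping the nonnegative rank-one term in \eqref{eq:scores_ simple}, followed by a comparison of order statistics to get $S_{(n_\alpha)}(z)\le q_{n,\alpha}+1/(n+1)$ uniformly in $z$ (the paper phrases this as a counting argument on the number of indices below $C_{(n_\alpha)}$, which is the same monotonicity fact you invoke). Your explicit justification of the exchangeability of the scores for the coverage step is a small addition the paper leaves implicit, but nothing substantive differs.
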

Before commenting Lemma \ref{lemma:approx_score} above, we can already state our main result, which completes the proof of Theorem \ref{thm:eq_ellipsoid}.
\begin{proposition}\label{prop:c_a_is_e_a_n}
Under the assumptions of Theorem \ref{thm:eq_ellipsoid}, $\widetilde{\calC}_{\alpha}^n = \calE_{\alpha}^n$.
\end{proposition}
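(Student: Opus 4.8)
The goal is to show that the set $\widetilde{\calC}_{\alpha}^n = \{z : S_{n+1}(z) \leq q_{n,\alpha} + \tfrac{1}{n+1}\}$ coincides with the ellipsoid $\calE_{\alpha}^n$ of Theorem \ref{thm:eq_ellipsoid}. The plan is to obtain an explicit formula for $S_{n+1}(z)$ as a function of $z$, substitute it into the defining inequality, and then massage the resulting quadratic inequality into the canonical ellipsoid form. First I would use the rank-one representation \eqref{eq:scores_ simple}, namely $\bfS(z) = n\bfC_n - n\, b^n(z) b^n(z)^\top/(1 + d_n(z))$, and read off the $(n+1,n+1)$ entry. Using \eqref{eq:decomp_Cn}, the $(n+1,n+1)$ entry of $\bfC_n$ is $(ww^\top)_{n+1,n+1} = w_{n+1}^2 = \|v\|^{-2} v_{n+1}^2$; since $v = e_{n+1} - \tfrac{1}{n+1}\mathbbm{1}$ has $v_{n+1} = n/(n+1)$ and $\|v\|^2 = n/(n+1)$, this entry equals $n/(n+1)$, so $n(\bfC_n)_{n+1,n+1} = n^2/(n+1)$. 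Combining with the explicit expressions for $b^n$ and $d_n$ from Lemma \ref{lemma:expr_bn_dn_rn} — which I am entitled to assume — gives a closed form for $S_{n+1}(z)$, which will be a rational function whose numerator and denominator are both (at most) quadratic in $z$, with the quadratic part governed by $\bfL^\top \widehat{\mathbf{\Sigma}}_{n,\lambda}^{-1}\bfL = \boldcalA_n^{-1}$ (using the identity \eqref{eq:multiple_an}).

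Next I would rewrite the inequality $S_{n+1}(z) \leq q_{n,\alpha} + \tfrac{1}{n+1}$. Because $S_{n+1}(z) = n(\bfC_n)_{n+1,n+1} - n\, b^n(z)^2_{n+1}/(1+d_n(z))$ and the first term is a constant, the inequality becomes a lower bound on $n\,(b^n_{n+1}(z))^2/(1+d_n(z))$, equivalently (after clearing the positive denominator $1 + d_n(z)$) a single quadratic inequality in $z$. Here I expect $b^n_{n+1}(z)$ to be affine in $z$ with linear part encoding $\bfL z$ mapped through $\widehat{\mathbf{\Sigma}}_{n,\lambda}^{-1}$, and $d_n(z) = (\bfL z)^\top(\cdots)(\bfL z) + \text{affine}$, so that the combination $(b^n_{n+1}(z))^2 - c\,(1 + d_n(z))$ for the appropriate constant $c$ collapses — after completing the square — to $-\rho_{n,\alpha}^{-1}\cdot$(scalar)$\cdot(z - Z_0^n)^\top \boldcalA_n^{-1}(z - Z_0^n) + \text{const}$. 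The center $Z_0^n$ should emerge precisely as the minimizer, which by the M-estimator interpretation in Remark \ref{rk:CCLE} is $\widehat{e}_n + \widehat{\bfA}_n X_{n+1} = \widehat{\mathbf{\Sigma}}_n^{21}(\widehat{\mathbf{\Sigma}}_{n,\lambda}^{11})^{-1}X_{n+1}^\rmc + \overline{R}_n$, matching \eqref{eq:def_z0n}. The threshold $\rho_{n,\alpha}$ in \eqref{eq:def_rho_n} should then fall out of the algebra, with the term $-(X_{n+1}^\rmc)^\top(\widehat{\mathbf{\Sigma}}_{n,\lambda}^{11})^{-1}X_{n+1}^\rmc$ coming from the constant part of $d_n(z)$ and the term $(q_{n,\alpha}+1)/(1 - (q_{n,\alpha}+1)/n) - 1$ coming from rearranging $n^2/(n+1) - (q_{n,\alpha} + \tfrac{1}{n+1})$ against the denominator. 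I would also check the degenerate case: when $q_{n,\alpha} \geq n-1$ the quantity $\rho_{n,\alpha}$ would be non-positive or the denominator non-positive, which is exactly the regime where Theorem \ref{thm:eq_ellipsoid} declares $\calE_{\alpha}^n = \Rbb^{\ell}$, and I would verify that in that regime the inequality $S_{n+1}(z) \leq q_{n,\alpha} + \tfrac{1}{n+1}$ holds for all $z$ (since $S_{n+1}(z) \leq n(\bfC_n)_{n+1,n+1}$ is automatic and $n(\bfC_n)_{n+1,n+1} = n^2/(n+1)$ is then $\leq q_{n,\alpha} + \tfrac1{n+1}$).

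The main obstacle I anticipate is the bookkeeping in the completion of the square: one must show that the rational inequality, after clearing denominators, has \emph{no} residual first-order term in $z$ other than what is absorbed into $(z - Z_0^n)$, and that the scalar prefactors on both sides combine into exactly the constant $\rho_{n,\alpha}$ with the stated form. This requires carefully tracking the Schur-complement identity $\boldcalA_n = (\bfL^\top\widehat{\mathbf{\Sigma}}_{n,\lambda}^{-1}\bfL)^{-1} = \widehat{\mathbf{\Sigma}}_{n,\lambda}^{22} - \widehat{\mathbf{\Sigma}}_n^{21}(\widehat{\mathbf{\Sigma}}_{n,\lambda}^{11})^{-1}\widehat{\mathbf{\Sigma}}_n^{12}$ and the explicit forms of $b^n, d_n$ from Lemma \ref{lemma:expr_bn_dn_rn}; the block-inversion Lemma \ref{lemma:block_inversion} is the key tool translating between the $\widehat{\mathbf{\Sigma}}_{n,\lambda}^{-1}$ picture (natural for $S_{n+1}(z)$) and the Schur-complement picture (natural for $\calE_{\alpha}^n$). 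Once the algebraic identity $S_{n+1}(z) - (q_{n,\alpha} + \tfrac{1}{n+1}) = \kappa(z)\big[(z-Z_0^n)^\top\boldcalA_n^{-1}(z-Z_0^n) - \rho_{n,\alpha}\big]$ is established with a strictly positive scalar function $\kappa(z) = n/\big((1+d_n(z))\,\text{stuff}\big)$, the set equality $\widetilde{\calC}_{\alpha}^n = \calE_{\alpha}^n$ is immediate, and together with Lemma \ref{lemma:approx_score} this completes the proof of Theorem \ref{thm:eq_ellipsoid}.
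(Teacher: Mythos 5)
Your route is essentially the paper's: read off $S_{n+1}(z)$ from the rank-one representation \eqref{eq:scores_ simple}, reduce the inequality $S_{n+1}(z)\leq q_{n,\alpha}+\tfrac{1}{n+1}$ to a quadratic inequality, complete the square via the block-inversion Lemma \ref{lemma:block_inversion} and the Schur-complement identity \eqref{eq:multiple_an}, and dispose of the degenerate case $q_{n,\alpha}\geq n-1$ exactly as you describe. One inaccuracy in your anticipated structure: $b^n_{n+1}(z)$ is \emph{not} affine in $z$ with a nontrivial linear part — by Lemma \ref{lemma:simplify} the last row of $\bfB$ vanishes, so $b^n_{n+1}(z)=-w_{n+1}$ is constant with $(b^n_{n+1})^2=(\bfC_n)_{n+1,n+1}=n/(n+1)$, whence $S_{n+1}(z)=\tfrac{n^2}{n+1}\bigl(1-\tfrac{1}{1+d_n(z)}\bigr)$ depends on $z$ only through $d_n(z)$. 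This is precisely what makes the reduction clean: the inequality becomes simply $d_n(z)\leq (q_{n,\alpha}+\tfrac{1}{n+1})/(n-1-q_{n,\alpha})$, and the ellipsoid then comes from completing the square in $(n+1)d_n(z)=(\bfE X_{n+1}^\rmc+\bfL z_\rmc)^\top\widehat{\mathbf{\Sigma}}_{n,\lambda}^{-1}(\bfE X_{n+1}^\rmc+\bfL z_\rmc)$ alone; no cancellation between $(b^n_{n+1}(z))^2$ and $d_n(z)$ is needed, and had $b^n_{n+1}$ genuinely carried a linear part in $z$, the quadric's matrix would not have matched $\boldcalA_n^{-1}$. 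Since you cite the relevant lemmas and your plan is to verify the algebra against the stated $\boldcalA_n$, $Z_0^n$, $\rho_{n,\alpha}$, this would self-correct upon execution.
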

The approximation given in equation \eqref{eq:approx_q} corresponds to discarding the contribution $b^n(z)b^n(z)^\top$ in $S_i(z)$ for $i \neq n+1$ in equation \eqref{eq:scores_ simple}, observing that $S_i(z) \leq n(\bfC_n)_{ii}$ uniformly in $z$. This approximation is further justified by Proposition \ref{prop:bound} below, which states that under additional moment assumptions, the surplus volume converges to $0$ in probability. Note however that currently, we are not able to prove an upper bound similar to equation \eqref{eq:reverse_conformal} for $\widetilde{C}_\alpha^n$. 
\begin{proposition}\label{prop:bound}
 Assume that the $V_i$ are iid, that $\lambda>0$ and that $\mathbb{E}[\|V_1\|^{4q}] < + \infty$ for some $q>1$. Then for all compact set $K\subset \mathbb{R}^{\ell}$,
\begin{align}\label{eq:vol_surplus}
    \Vol\big((\calE_{\alpha}^n \setminus \calC_{\alpha}^n)\cap K\big) \xrightarrow[n\rightarrow \infty]{\mathbb{P}} 0.
\end{align}
\end{proposition}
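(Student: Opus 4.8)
The plan is to reduce \eqref{eq:vol_surplus} to a pointwise (in $z$) probability estimate, and then to exploit that, for each fixed $z$, the outer threshold $q_{n,\alpha}+\tfrac1{n+1}$ defining $\calE_\alpha^n=\widetilde\calC_\alpha^n$ and the order statistic $S_{(n_\alpha)}(z)$ defining $\calC_\alpha^n$ have the same asymptotic behaviour, so that the two regions coalesce. By Lemma~\ref{lemma:approx_score} and Proposition~\ref{prop:c_a_is_e_a_n}, $\calC_\alpha^n\subseteq\calE_\alpha^n$, and \eqref{eq:conformal_region}--\eqref{eq:approx_q} give $\calE_\alpha^n\setminus\calC_\alpha^n=\{z\in\Rbb^\ell:\ S_{n+1}(z)\le q_{n,\alpha}+\tfrac1{n+1}\ \text{and}\ S_{n+1}(z)>S_{(n_\alpha)}(z)\}$. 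Each $z\mapsto S_i(z)$ is continuous and $\omega$-measurable, so the indicator of this set is jointly measurable in $(z,\omega)$ and, by Tonelli, $\mathbb E\big[\Vol\big((\calE_\alpha^n\setminus\calC_\alpha^n)\cap K\big)\big]=\int_K\Pb\big(z\in\calE_\alpha^n\setminus\calC_\alpha^n\big)\,\mathrm dz$. Since the integrand is bounded by $1$, it suffices to prove that $\Pb(z\in\calE_\alpha^n\setminus\calC_\alpha^n)\to0$ for Lebesgue-almost every $z$; dominated convergence then gives $\mathbb E[\Vol((\calE_\alpha^n\setminus\calC_\alpha^n)\cap K)]\to0$, and Markov's inequality yields \eqref{eq:vol_surplus}.

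Fix $z\in\Rbb^\ell$. Set $\mu_V=(\mu_X^\top,\mu_R^\top)^\top$, $\boldsymbol\Sigma=\Cov(V_1)$, $\boldsymbol\Sigma_\lambda=\boldsymbol\Sigma+\lambda\mathbf I_p$ (positive definite since $\lambda>0$), and $L_i\coloneqq(V_i-\mu_V)^\top\boldsymbol\Sigma_\lambda^{-1}(V_i-\mu_V)$; the $L_i$ are i.i.d. By the strong law of large numbers, a.s.\ $\overline V_n\to\mu_V$, $\tfrac1n\bfW(z)^\top\bfW(z)\to\boldsymbol\Sigma$ and $\tfrac1n\bfB_n^\top\bfB_n\to\boldsymbol\Sigma$, hence $\widehat{\mathbf\Sigma}_\lambda(z)\to\boldsymbol\Sigma_\lambda$ and $\widehat{\mathbf\Sigma}_{n,\lambda}\to\boldsymbol\Sigma_\lambda$. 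Therefore, for every \emph{fixed} index $i$, \emph{both} $np_{i,n}=B_{n,i}^\top\widehat{\mathbf\Sigma}_{n,\lambda}^{-1}B_{n,i}$ and $S_i(z)=W_i(z)^\top\widehat{\mathbf\Sigma}_\lambda(z)^{-1}W_i(z)$ converge almost surely to the \emph{same} limit $L_i$, while $S_{n+1}(z)-T_n(z)\to0$ a.s., where $T_n(z)\coloneqq\big((X_{n+1},z)-\mu_V\big)^\top\boldsymbol\Sigma_\lambda^{-1}\big((X_{n+1},z)-\mu_V\big)$ satisfies $T_n(z)\stackrel{d}{=}T_1(z)$ for every $n$ since $X_{n+1}\stackrel{d}{=}X_1$.

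The crux of the proof---and where the full moment hypothesis $\mathbb E[\|V_1\|^{4q}]<\infty$ with $q>1$ is consumed---is to make these limits uniform over the index: one shows that almost surely $\max_{1\le i\le n}|np_{i,n}-L_i|\to0$ and $\max_{1\le i\le n}|S_i(z)-np_{i,n}|\to0$. For the first, writing $u_i=V_i-\mu_V$ and bounding $|np_{i,n}-L_i|\le\|\widehat{\mathbf\Sigma}_{n,\lambda}^{-1}-\boldsymbol\Sigma_\lambda^{-1}\|_{\mathrm{op}}\|u_i\|^2+\tfrac2\lambda\|\overline V_n-\mu_V\|\,\|u_i\|+\tfrac1\lambda\|\overline V_n-\mu_V\|^2$, one combines the iterated-logarithm rates $\|\widehat{\mathbf\Sigma}_{n,\lambda}^{-1}-\boldsymbol\Sigma_\lambda^{-1}\|_{\mathrm{op}},\ \|\overline V_n-\mu_V\|=O\big(n^{-1/2}\sqrt{\log\log n}\big)$ a.s.\ (which need only the weaker $\mathbb E\|V_1\|^{4}<\infty$) with $\max_{i\le n}\|u_i\|^2=o\big(n^{1/(2q)}\big)$ a.s.\ (Borel--Cantelli, using $\mathbb E\|V_1\|^{4q}<\infty$); the dominating product is $o\big(n^{1/(2q)-1/2}\sqrt{\log\log n}\big)$, which vanishes \emph{precisely because $q>1$}. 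The second maximum is treated analogously, using that $W_i(z)$ and $B_{n,i}$, respectively $\widehat{\mathbf\Sigma}_\lambda(z)$ and $\widehat{\mathbf\Sigma}_{n,\lambda}$, differ by $O(1/n)$ terms coming from the change of empirical mean upon adjoining $V_{n+1}(z)$, all inverses being controlled by $1/\lambda$; bounds of this type belong to Appendix~\ref{app:lemmas}. Since order statistics are $1$-Lipschitz for $\|\cdot\|_\infty$, it follows that $q_{n,\alpha}+\tfrac1{n+1}-S_{(n_\alpha)}(z)\to0$ a.s., and Glivenko--Cantelli for the i.i.d.\ sequence $(L_i)$ gives $q_{n,\alpha}\to q_\infty\coloneqq q_{1-\alpha}(L_1)$ a.s., a deterministic constant.

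Work now on the almost sure event where all the above hold; in particular $q_{n,\alpha}<n-1$ eventually, so the degenerate case $\calE_\alpha^n=\Rbb^\ell$ of Theorem~\ref{thm:eq_ellipsoid} does not arise. If $z\in\calE_\alpha^n\setminus\calC_\alpha^n$, then $S_{(n_\alpha)}(z)<S_{n+1}(z)\le q_{n,\alpha}+\tfrac1{n+1}$, and combining this with the previous paragraph yields $|T_n(z)-q_\infty|\le\varepsilon_n$ for some $\varepsilon_n\to0$ a.s. Hence, for every $\eta>0$, $\Pb(z\in\calE_\alpha^n\setminus\calC_\alpha^n)\le\Pb(|T_n(z)-q_\infty|\le\varepsilon_n)\le\Pb(|T_n(z)-q_\infty|\le\eta)+\Pb(\varepsilon_n>\eta)=\Pb(|T_1(z)-q_\infty|\le\eta)+o(1)$, using $T_n(z)\stackrel{d}{=}T_1(z)$; letting $n\to\infty$ and then $\eta\downarrow0$ gives $\limsup_n\Pb(z\in\calE_\alpha^n\setminus\calC_\alpha^n)\le\Pb(T_1(z)=q_\infty)$. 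It remains to see that $\Pb(T_1(z)=q_\infty)=0$ for Lebesgue-a.e.\ $z$: writing $(X_1,z)-\mu_V=c(x)+\bfL z$ with $c(x)=(x-\mu_X,-\mu_R)$, Tonelli gives $\int_{\Rbb^\ell}\Pb(T_1(z)=q_\infty)\,\mathrm dz=\int\!\Big(\int_{\Rbb^\ell}\mathbbm{1}\big[(c(x)+\bfL z)^\top\boldsymbol\Sigma_\lambda^{-1}(c(x)+\bfL z)=q_\infty\big]\,\mathrm dz\Big)\mathrm d\Pb_{X_1}(x)$, and for each $x$ the inner set is the $q_\infty$-level set of a quadratic form in $z$ with positive definite Hessian $2\bfL^\top\boldsymbol\Sigma_\lambda^{-1}\bfL$ (cf.\ \eqref{eq:multiple_an}), an ellipsoidal surface, hence Lebesgue-null; so the inner integral vanishes identically. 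This proves $\Pb(z\in\calE_\alpha^n\setminus\calC_\alpha^n)\to0$ for a.e.\ $z$ and, with the reduction of the first paragraph, establishes \eqref{eq:vol_surplus}. The one genuinely delicate step is the uniform-in-index control $\max_{i\le n}|np_{i,n}-L_i|\to0$ (and its $S_i$-analogue): it is there that $q>1$ is needed, and it is what propagates the pointwise convergences up to the empirical quantile $q_{n,\alpha}$ and the order statistic $S_{(n_\alpha)}(z)$.
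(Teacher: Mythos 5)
Your proof is correct in its essentials, but it follows a genuinely different route from the paper's. The paper sandwiches $\calC_\alpha^n\cap K$ between the ellipsoid $\calE_\alpha^n\cap K$ and a shrunken ellipsoid $\overline{\calE}_\alpha^n\cap K$ obtained by lowering the threshold $q_{n,\alpha}$ by $b^n(K)\coloneqq\sup_{z\in K}\max_{i\le n}nb_i^n(z)^2$, proves $b^n(K)\to0$ in $L^1$ (hence in probability) from the moment hypothesis, and then bounds the \emph{deterministic-in-$z$} volume gap $\Vol(\calE_\alpha^n)-\Vol(\overline{\calE}_\alpha^n)$ via the explicit volume formula and a mean-value estimate on $x\mapsto x^{\ell/2}$. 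You instead integrate the indicator over $K$ by Tonelli, reducing the claim to $\Pb(z\in\calE_\alpha^n\setminus\calC_\alpha^n)\to0$ for a.e.\ fixed $z$, and then show that membership in the symmetric difference forces the population score $T_n(z)\stackrel{d}{=}T_1(z)$ into a window of vanishing width around $q_\infty$, whose limiting probability $\Pb(T_1(z)=q_\infty)$ vanishes for a.e.\ $z$ because level surfaces of a nondegenerate quadratic (Hessian $2\bfL^\top\boldsymbol{\Sigma}_\lambda^{-1}\bfL=2\boldcalA_\infty^{-1}$) are Lebesgue-null. Both arguments consume $q>1$ in the same place — controlling a max over the index $i$ of $\|V_i\|^2$-type quantities against a $n^{-1/2}$-type rate — but you need \emph{almost-sure} rates (LIL for $\widehat{\mathbf{\Sigma}}_{n,\lambda}$ and the Borel--Cantelli bound $\max_{i\le n}\|V_i-\mu_V\|^2=o(n^{1/(2q)})$), whereas the paper gets away with an $L^1$ bound uniform in $z\in K$; in exchange your Fubini reduction only requires pointwise-in-$z$ control and dispenses with the volume formula and the positive-part Lipschitz manipulations. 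One caveat applies to both proofs equally: the convergence $q_{n,\alpha}\to q_{1-\alpha}^\infty$ (your Glivenko--Cantelli step, the paper's Lemma \ref{lemma:cv_quantile}) requires continuity of the quantile function of $V_\rmc^\top\boldsymbol{\Sigma}_\lambda^{-1}V_\rmc$ near $1-\alpha$, an assumption stated in Proposition \ref{prop:asymptotics} but not in the statement of Proposition \ref{prop:bound}; you inherit this gap rather than create it. Finally, when writing this up you should make explicit the ``treated analogously'' step for $\max_{i\le n}|S_i(z)-np_{i,n}|$: via equation \eqref{eq:scores_ simple} it amounts to $\max_{i\le n}nb_i^n(z)^2\to0$ a.s.\ for fixed $z$, which follows from $\max_{i\le n}nb_i^n(z)^2\lesssim\lambda^{-2}n^{-1}\max_{i\le n}\|V_i^\rmc\|^2\,\|r_n(z)\|^2+n^{-1}=o(n^{1/q-1})$, again using $q>1$.
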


\adri{The interested reader will find in Appendix \ref{app:rem_prop:bound} discussions on the settings of the above Proposition, in particular the case $\lambda=0$ and the finite $4q$-th moment assumption.}


We conclude this section with the following lemma, which provides a sufficient condition on $n$ and $p=k+\ell$ so that full space ellipsoids ($\calE_{\alpha}^n = \mathbb{R}^{\ell}$) never occur.
\begin{lemma}[Sufficient condition for bounded ellipsoids]\label{lemma:q_bound} For all $\lambda \geq 0$, we have $q_{n,\alpha} < n-1$ almost surely as soon as $n > p +1$ and
\begin{align}\label{eq:cond_non_empty}
    \alpha 
    \geq \frac{p+1}{n+1}.
\end{align}
\end{lemma}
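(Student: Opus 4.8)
The plan is to combine a deterministic rank bound on the regularized projector $\bfP_{n,\lambda}$ of \eqref{eq:def_Pnl} with a pigeonhole argument on the order statistics $p_{(1)}\le\cdots\le p_{(n)}$. Note that what we prove is in fact deterministic, which is stronger than the almost sure statement of Lemma \ref{lemma:q_bound}.

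First I would bound the sum of the $p_{i,n}$. As recalled just before the lemma, $\sum_{i=1}^n p_{i,n} = \Tr(\bfP_{n,\lambda}) \le \Tr(\bfP_{n,0}) = \rank(\bfP_{n,0})$, using $0\preccurlyeq \bfP_{n,\lambda}\preccurlyeq \bfP_{n,0}$ and trace monotonicity. Since the columns of $\bfB_n$ are empirically centered, they all lie in $\mathbbm{1}^\perp\subset\Rbb^n$, so the column space of $\bfB_n$ is contained in $\mathbbm{1}^\perp$ and $\rank(\bfB_n)\le\min(n-1,p)$; as $\bfP_{n,0}$ is the orthogonal projector onto the column space of $\bfB_n$, this gives $\rank(\bfP_{n,0})=\rank(\bfB_n)\le\min(n-1,p)$. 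Under the hypothesis $n>p+1$ this yields the clean bound $\sum_{i=1}^n p_{i,n}\le p$.

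Next I would translate the assumption on $\alpha$ into a bound on the index $n_{\alpha}$. From $\alpha\ge(p+1)/(n+1)$ we get $(1-\alpha)(n+1)\le n-p$, and since $n-p$ is an integer, $n_{\alpha}=\ceil{(1-\alpha)(n+1)}\le n-p$; equivalently the number of order statistics that are at least $p_{(n_{\alpha})}$ is $n-n_{\alpha}+1\ge p+1$. Then the pigeonhole step: $(n-n_{\alpha}+1)\,p_{(n_{\alpha})}\le\sum_{j=n_{\alpha}}^{n}p_{(j)}\le\sum_{i=1}^{n}p_{i,n}\le p$, hence $p_{(n_{\alpha})}\le p/(n-n_{\alpha}+1)\le p/(p+1)$, so $q_{n,\alpha}=n\,p_{(n_{\alpha})}\le np/(p+1)$.

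Finally I would conclude with a one-line algebraic check: $np/(p+1)<n-1$ is equivalent to $np<(n-1)(p+1)=np+(n-p-1)$, i.e. to $n>p+1$, which is exactly our hypothesis. Therefore $q_{n,\alpha}<n-1$, so $\calE_{\alpha}^n$ is a bona fide bounded ellipsoid. There is no genuine obstacle here; the only points requiring a little care are the drop by one in $\rank(\bfB_n)$ caused by the centering $\pi^{\perp}_{\mathbbm{1}}$, and the handling of the ceiling in $n_{\alpha}$, both of which are elementary.
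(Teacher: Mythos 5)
Your proof is correct and follows essentially the same route as the paper's: both rest on the deterministic trace bound $\sum_i p_{i,n}=\Tr(\bfP_{n,\lambda})\le\Tr(\bfP_{n,0})\le p$ combined with the pigeonhole inequality $(n+1-n_\alpha)\,p_{(n_\alpha)}\le\sum_{i=n_\alpha}^n p_{(i)}\le p$. The only difference is bookkeeping of the integer parts — you convert the hypothesis on $\alpha$ directly into $n_\alpha\le n-p$, while the paper first derives the general bound $q_{n,\alpha}\le np/\lfloor\alpha(n+1)\rfloor$ and then unwinds the floors — so your version is a slightly cleaner write-up of the same argument.
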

A sharper bound, where $\lambda$ and $\max \Spec(\widehat{\mathbf{\Sigma}}_{n,\lambda})$ appear, can be obtained by writing an SVD of $\bfP_{n,\lambda}$.
Equation \eqref{eq:cond_non_empty} is to be compared with the standard requirement in conformal inference that $\alpha > 1/(n+1)$. Note also that $n+1$ and $p$ are the number of rows and columns of $\bfV$, respectively.
In practice, we observe that $q_{n,\alpha} < n-1$ may hold even if $p+1\geq n$, the proof of Lemma \ref{lemma:q_bound} being a worst-case analysis.
\paragraph*{Metrics of the ellipsoid} An important
metric related to the shape of an ellipsoid is its principal eccentricity. Given an ellipsoid $ \calE = \{ z \in\mathbb{R}^{\ell}: (z-z_0)\boldcalA^{-1}(z-z_0)\leq 1\}$, this corresponds to
\begin{align}
    e = \sqrt{1-\lambda_m/\lambda_M} \in [0,1),
\end{align}
where $\lambda_m$ is the smallest eigenvalue of $\boldcalA$ and $
\lambda_M$ is the largest one. Roughly speaking, this metric measures how ``different'' is a given ellipsoid from a ball, the latter corresponding to $e = 0$. Likewise, if $e$ is close to $1$ then $ \calE$ is accordingly flat.
The volume of $\calE$ above is
\begin{align}
\text{Vol}( \calE) = v_{\ell} \sqrt{\det(\boldcalA)}, \quad \text{where}  \quad  v_{\ell} = {\pi^{\ell/2}}/{\Gamma({\ell}/{2}+1)}.
\end{align}
Here, $ v_{\ell}$ corresponds to the volume of the $\ell$-dimensional unit ball.
In our case, the determinant of $\boldcalA_n$ can be further computed as (see e.g. \cite{horn}, Section 0.8.5)
\begin{align}
\det(\boldcalA_n) &= \det\big(\widehat{\mathbf{\Sigma}}_{n,\lambda}/\widehat{\mathbf{\Sigma}}_{n,\lambda}^{11}\big) = \det\big(\widehat{\mathbf{\Sigma}}_{n,\lambda}\big)/\det\big(\widehat{\mathbf{\Sigma}}_{n,\lambda}^{11}\big).
\end{align}
\paragraph*{Case where $\calE_{\alpha}^n$ is empty}
Empty confidence regions $\calE_{\alpha}^n$ correspond to $\rho_{n,\alpha} < 0$. This event has a non-null probability, as e.g. visible from equation \eqref{eq:proba_empty} (still, $\mathbb{P}(\calE_{\alpha}^n = \varnothing) \leq \alpha$ by construction). To understand the event $\{\calE_{\alpha}^n = \varnothing\}$, observe that $\calE_{\alpha}^n$ can be written as $\calE_{\alpha}^n = \mathcal{V}(X_{n+1})\cap \calE$, where $\mathcal{V}(X_{n+1})$ is an $\ell$-dimensional affine subspace of $\mathbb{R}^{k+\ell}$ and $\calE$ is a $(k+\ell)$-dimensional ellipsoid. They are given by, for some explicit $\beta_n>0$,
\begin{align*}
\mathcal{V}(X_{n+1}) &\coloneqq \{v = (x^\top r^\top)^\top\in\mathbb{R}^{k+\ell} : x = X_{n+1}\},\\
\calE &\coloneqq \{v\in\mathbb{R}^{k+\ell} : (v-\overline{V}_n)^\top(\widehat{\mathbf{\Sigma}}_{n,\lambda})^{-1}(v-\overline{V}_n) \leq \beta_n \}, \ \ \overline{V}_n = \frac{1}{n}\sum_{i=1}^nV_i.
\end{align*}
 (to obtain $\calE$, combine equations \eqref{eq:rewrite_cna} and \eqref{eq:recover_rnz}). In this setting, $\calE_{\alpha}^n$ is empty precisely when $\mathcal{V}(X_{n+1})\cap\calE = \varnothing$. \iain{This is interpreted as $X_{n+1}$ being so abnormal (w.r.t. the score $S(z)$) that no value of $z$ can make the pair $(X_{n+1}^\top \  z^\top)^\top$ conform to the samples $(V_1, \ldots, V_n)$. While this is unexpected from a UQ perspective, this is natural in the context of outlier detection : $X_{n+1}$ is so abnormal that $(X_{n+1}^\top \  z^\top)^\top$ is always an outlier, whatever the value of $z$.} 

Empty confidence regions can be problematic from an applied perspective. Moreover, empty $\calE_{\alpha}^n$ are not devoid of information, since $\boldcalA_n$ and $Z_0^n$ are still defined when $\rho_{n,\alpha}<0$. To capitalize on $\boldcalA_n$ and $Z_0^n$ as well as avoid empty confidence regions, we can replace $\rho_{n,\alpha}$ with $\rho_{n,\alpha}^{\varepsilon} = \max(\rho_{n,\alpha},\varepsilon^2)$, where $\varepsilon>0$ is a length associated to a minimum user set volume $V_{\min}$. The ellipsoid is then given by
\begin{align}
    (z-Z_0^n)^\top (\rho_{n,\alpha}^{\varepsilon}\boldcalA_n)^{-1}(z-Z_0^n) &\leq 1.
\end{align}
$\varepsilon$ is found by solving $V_{\min} = v_{\ell}\sqrt{\det(\rho_{n,\alpha}^{\varepsilon}\boldcalA_n)} = v_{\ell}{\varepsilon}^{\ell}\sqrt{\det(\boldcalA_n)}$, i.e.
\begin{align}
    \varepsilon = \Big(V_{\min}/\big(v_{\ell}\det(\boldcalA_n)^{1/2}\big)\Big)^{1/\ell}.
\end{align}
By construction, this procedure leads to overcoverage, but if $V_{\min}$ is chosen small enough (e.g. if $V_{\min} \ll \mathbb{E}[\Vol(\calE_{\alpha}^n)]$), it will not affect the empirical average volume of the confidence ellipsoid $\calE_{\alpha,\varepsilon}^n$, since the cutoff $\rho_{\varepsilon} = \max(\rho,\varepsilon^2)$ is only activated when the volume of $\calE_{\alpha}^n$ is very small or null.

\iain{A more mathematically sound way of avoiding empty regions is found by using the score $S'(z)$ introduced below.}

\subsection{The adjusted Mahalanobis score and the second ellipsoid \texorpdfstring{$\calF_\alpha^n$}{F alpha n} } \label{ssec:adjusted_score}
\iain{Following equation \eqref{eq:def_sprime_infinite}, we can correct the score matrix \eqref{eq:def_score_S} to prevent empty confidence regions, by subtracting the $\bfX$ part of $\bfS(z)$ to $\bfS(z)$. Explicitly, define $\bfF \coloneqq \pi_{\mathbbm{1}}^\perp\bfX \in \calM_{n+1,k}$, where the $\ith$ row of $\bfX$ is $X_i^\top$, and introduce
\begin{align}\label{eq:def_Sp_Sx}
    \bfS'(z)  \coloneqq \bfS(z) - \bfS_X, \ \ \ \bfS_X = \bfF\bigg(\frac{1}{n} \bfF^\top  \bfF + \lambda \mathbf{I}_k\bigg)^{-1}\bfF^\top.
\end{align}
As with $\bfS(z)$, the associated score for $V_i$ is $S_i'(z) = \bfS'(z)_{ii}$ and the associated conformal set is $\{z : S_{n+1}'(z) \leq S_{(n_\alpha)}'(z)\}$, where $S_{(n_\alpha)}'(z)$ is an order statistic built from $(S_1'(z), \ldots, S_n'(z))$. 
Because $\bfX$ depends on $X_{n+1}$ through a rank-one perturbation, namely $\bfX = (X_1 \ \ldots \ X_n \ 0_{k,1})^\top  + e_{n+1}X_{n+1}^\top$,
we can use Lemmas \ref{lemma:scores_transpose} and \ref{lemma:simplify} to rewrite $\bfS_X$ as
\begin{align}
    \frac{1}{n}\bfS_X = \begin{pmatrix}
             \bfP_{n,\lambda}^{XX}  &\mathbf{0}_{n,1} \\
             \mathbf{0}_{1,n}  &0 
    \end{pmatrix} + ww^\top -  \frac{b_X^n(b_X^n)^\top}{1+d_n^X},
\end{align}
for some matrix $P_{n,\lambda}^{XX} \in \calM_{n,n}$, some vector $b_X^n \in \calM_{n+1,1}$ and some $d_n^X \in \mathbb{R}^+$ (see Lemma \ref{lemma:expr_bnx_dnx_rnx} for expressions). Notably,
\begin{align*}
    d_n^X &= \frac{1}{n+1}(X_{n+1}^\rmc)^\top (\mathbf{\Sigma}_{n,\lambda}^{11})^{-1}X_{n+1}^\rmc, \ \ \ X_{n+1}^\rmc = X_{n+1} - \overline{X}_n.
    \end{align*}
The matrix $\bfS'(z)$ is finally written as
\begin{align*}
    \bfS'(z) = n\begin{pmatrix}
             \bfP_{n,\lambda}-\bfP_{n,\lambda}^{XX}  &\mathbf{0}_{n,1} \\
             \mathbf{0}_{1,n}  &0 
    \end{pmatrix} - n \frac{b^n(z)b^n(z)^\top}{1+d_n(z)} + n \frac{b_X^n(b_X^n)^\top}{1+d_n^X}.
\end{align*}
As in Lemma \ref{lemma:approx_score}, we can discard the $b^n(z)$ term in $S_i'(z)$, for $i\neq n+1$, leading to a second ellipsoidal set $\calF_\alpha^n$. To describe this set, we finally need to introduce
\begin{align}\label{eq:def_pn_qn_prime}
    p_{i,n}' &\coloneqq (\bfP_{n,\lambda} - \bfP_{n,\lambda}^{XX})_{ii} + \frac{(b_X^n)_i^2}{1+d_n^X}, \ \ \ \text{and} \ \ \  q_{n,\alpha}' \coloneqq np_{(n_\alpha)}',
\end{align}
where $p_{(n_\alpha)}'$ is the order statistic of order $n_\alpha$ of $(p_{1,n}', \ldots, p_{n,n}')$.
As in Lemma \ref{lemma:approx_score}, $p_{i,n}'$ corresponds to $\bfS'(z)_{ii}$ after discarding the $b^n(z)_i$ term. The analog of Theorem \ref{thm:eq_ellipsoid} for the score $S'(z)$ is the following, which describes a second ellipsoid $\calF_\alpha^n.$
\begin{theorem}\label{thm:ellip_prime}
Take the assumptions of Theorem \ref{thm:eq_ellipsoid}, and define 
\begin{align}\label{eq:def_tna}
t_{n} \coloneqq \bigg(\frac{n+1}{n}\bigg)(1+d_n^X).
\end{align}
If $t_{n}q_{n,\alpha}'\geq n$, we set $\calF_\alpha^n = \mathbb{R}^\ell$. Else we set
\begin{align}
\calF_\alpha^n \coloneqq \{z\in\mathbb{R}^{\ell}: (z-Z_{0}^n)^\top \boldcalA_n^{-1}(z-Z_{0}^n) \leq \rho_{n,\alpha}'\},    
\end{align}
where $Z_0^n$ and $\boldcalA_n$ are given in Theorem \ref{thm:eq_ellipsoid}, and where
\begin{align}
    \rho_{n,\alpha}' \coloneqq t_{n}^2\frac{q_{n,\alpha}'}{1-t_{n}q_{n,\alpha}'/n}.
 \label{eq:def_rho_n_prime}
\end{align}Then
\begin{align}
    \mathbb{P}(Y_{n+1} - \widehat{Y}_{n+1}\in \calF_{\alpha}^n) \geq 1 - \alpha.
\end{align}
\end{theorem}
As for $E_\alpha^n$ and $\calE_\alpha^n$, the ellipsoid for $Y_{n+1}$ is recovered by $F_\alpha^n = \{\widehat{Y}_{n+1}\} + \calF_\alpha^n$.
The only difference between $\calE_\alpha^n$ and $\calF_\alpha^n$ is the parameter $\rho_{n,\alpha}'$.
The next proposition shows that $\calF_\alpha^n$ is never empty. However, there is a chance that $\calF_\alpha^n=\mathbb{R}^\ell$: there is no equivalent of Proposition \ref{prop:bound} for $\calF_\alpha^n$, except when $n$ tends to $\infty$.
\begin{proposition}\label{prop:calF_full_space}
Under the assumptions of Theorem \ref{thm:eq_ellipsoid}, $\rho_{n,\alpha}' \geq 0$. In particular, $Z_0^n\in \calF_\alpha^n$ and $\calF_\alpha^n$ is never empty. Finally, under the assumptions of Proposition \ref{prop:asymptotics},
\begin{align*}
    \mathbb{P}(\calF_\alpha^n = \mathbb{R}^\ell) \xlongrightarrow[n\rightarrow \infty]{} 0.
\end{align*}
\end{proposition}
\begin{remark}[Behaviour of $\calE_\alpha^n$ and $\calF_\alpha^n$ w.r.t. large $d_n^X$]
Observe that a large $d_n^X$ means that $X_{n+1}$ is far away from $\overline{X}_n$, in the data metric $\mathbf{\Sigma}_{n,\lambda}$. Interestingly, $\calE_\alpha^n$ and $\calF_\alpha^n$ have opposite behaviours with reference to large $d_n^X$. From equation \eqref{eq:def_rho_n}, large values of $d_n^X$ correspond to small $\calE_\alpha^n$ ($\calE_\alpha^n = \varnothing$ in the worst case, see Proposition \ref{thm:eq_ellipsoid}), while they correspond to large $\calF_\alpha^n$ ($\calF_\alpha^n = \mathbb{R}^\ell$ in the worst case, see Proposition \ref{thm:ellip_prime}). In the first case, this is interpreted as $X_{n+1}$ being so abnormal that no value of $z$ can make the pair $(X_{n+1}^\top \  z^\top)^\top$ conform to the samples $(V_1, \ldots, V_n)$. In the second case, this is interpreted as the whole calibration sample $(V_1, \ldots, V_n)$ being uninformative with reference to the current sample $(X_{n+1}^\top \ R_{n+1}^\top)^\top$, when the new input $X_{n+1}$ is too abnormal when compared to $(X_1, \ldots, X_n)$. 
\end{remark}
}
\section{Asymptotic analysis}
\label{sec:asymp}
\subsection{The first asymptotic ellipsoid \texorpdfstring{$\calE_\alpha^\infty$}{E alpha infinity}} \label{ssec:asymp_ellips}
In the limit where $n\rightarrow \infty$, we are able to state the following result. 
\begin{proposition}\label{prop:asymptotics}
Assume that $V_1$ has a well-defined covariance matrix $\mathbf{\Sigma}$, and that the vectors $V_i,\ i\in\mathbb{N}$ are iid. Denote $V_\rmc \coloneqq V_1 - \mathbb{E}[V_1]$, with the decomposition $V_\rmc = (X_\rmc^\top R_\rmc^\top)^\top, \ X_\rmc\in\mathbb{R}^k, \ R_\rmc\in\mathbb{R}^\ell$. 
Assume also that $\lambda>0$ and that the quantile function of $\|(\mathbf{\Sigma} + \lambda \mathbf{I}_p)^{-1/2}V_\rmc\|^2$ is continuous on a neighbourhood of $1-\alpha$.
Write $\mathbf{\Sigma} + \lambda \mathbf{I}_p$ in blockwise fashion, according to the decomposition $p = k + \ell$:
\begin{align}\label{eq:decomp_sigma}
    \mathbf{\Sigma}_{\lambda} &\coloneqq \mathbf{\Sigma} + \lambda \mathbf{I}_{p} = \begin{pmatrix}
                \mathbf{\Sigma}_{\lambda}^{11}   &\mathbf{\Sigma}^{12} \\
                \mathbf{\Sigma}^{21}    &\mathbf{\Sigma}_{\lambda}^{22}
\end{pmatrix}.
\end{align}
($\mathbf{\Sigma}_{\lambda}^{11}\in\calM_{k,k},\ \mathbf{\Sigma}_{\lambda}^{22}\in\calM_{\ell,\ell}, \ \mathbf{\Sigma}^{12}\in\calM_{k,\ell}$.) Denote $q_{1-\alpha}^{\calE} \coloneqq q_{1-\alpha}(V_\rmc^\top \mathbf{\Sigma}_{\lambda}^{-1}V_\rmc)$. Then, as $n\rightarrow \infty$,
\begin{enumerate}
\item (Asymptotic ellipsoid)
\begin{align}
q_{n,\alpha} &\xlongrightarrow[n\rightarrow \infty]{a.s.} q_{1-\alpha}^{\calE},\\
\boldcalA_n &\xlongrightarrow[n\rightarrow \infty]{a.s.} \boldcalA_{\infty} \coloneqq  \mathbf{\Sigma}_{\lambda}/\mathbf{\Sigma}_{\lambda}^{11},\label{eq:cv_An}\\
\rho_{n,\alpha} &\xlongrightarrow[n\rightarrow \infty]{\mathrm{d}} \rho_{\infty,\alpha} \coloneqq q_{1-\alpha}^{\calE} - X_\rmc^\top \big(\mathbf{\Sigma}_{\lambda}^{11}\big)^{-1}X_\rmc,\label{eq:cv_rho_n}\\
Z_0^n &\xlongrightarrow[n\rightarrow \infty]{\mathrm{d}} Z_0^{\infty} \coloneqq \mathbf{\Sigma}^{21}(\mathbf{\Sigma}_{\lambda}^{11})^{-1}X_\rmc + \mathbb{E}[R_1]. \label{eq:pred_unbiased}
\end{align}
    \item (Asymptotic volume and probability of empty regions) 
    \begin{align}
        \mathrm{Vol}(\calE_{\alpha}^n) &\xlongrightarrow[n\rightarrow \infty]{\rmd} v_{\ell}\sqrt{\det(\mathbf{\Sigma}_{\lambda}/\mathbf{\Sigma}_{\lambda}^{11})}\big(q_{1-\alpha}^{\calE} - X_\rmc^\top (\mathbf{\Sigma}_{\lambda}^{11})^{-1}X_\rmc\big)^{\ell/2}_+,\label{eq:asymptotic_vol} \\
        \mathbb{P}(\calE_{\alpha}^n = \varnothing) &\xlongrightarrow[n\rightarrow \infty]{} \mathbb{P}(X_\rmc^\top (\mathbf{\Sigma}_{\lambda}^{11})^{-1}X_\rmc > q_{1-\alpha}^{\calE}) \leq \alpha \label{eq:proba_empty}.
        \end{align}
\end{enumerate}
Finally, if $\mathbf{\Sigma}$ is invertible, then all the results above remain true when $\lambda = 0$.
\end{proposition}
In equation \eqref{eq:asymptotic_vol}, we underline that $(x)_+^s = \max(0,x)^s\neq \max(0,x^s)$; in particular, $(x)_+^s = 0$ if $x\leq 0$. We now define the random ellipsoid $\calE_{\alpha}^{\infty}$ as
\begin{align}
    \calE_{\alpha}^{\infty} \coloneqq \{ z\in\mathbb{R}^{\ell} : (z - Z_0^{\infty})^\top\boldcalA_{\infty}^{-1}(z-Z_0^{\infty}) \leq \rho_{\infty,\alpha} \}.
\end{align}
Observe that $\calE_{\alpha}^{\infty}$ is a confidence region of level $1-\alpha$ for $R_1$: from equation \eqref{eq:apply_block_lemma} and the expressions of $\boldcalA_{\infty}$, $Z_0^{\infty}$ and $\rho_{\infty,\alpha}$, we can show that (see equations \eqref{eq:decomp_sigma_lambda_proof} and \eqref{eq:facto_quad_conditionnel})
\begin{align}\label{eq:conditional_ellipsoid_asymptotic}
    V_\rmc^\top \mathbf{\Sigma}_{\lambda}^{-1}V_\rmc \leq q_{1-\alpha}^{\calE} \iff (R_\rmc - Z_0^{\infty})^\top\boldcalA_{\infty}^{-1}(R_\rmc-Z_0^{\infty}) \leq \rho_{\infty,\alpha},
\end{align}
and the left-hand side has probability $1-\alpha$ from the definition of $q_{1-\alpha}^{\calE}$.
Next, the distribution of the volume $\Vol(\calE_{\alpha}^{\infty})$ volume is equal to that of the right-hand side of \eqref{eq:asymptotic_vol}. In this paper, we do not tackle the question of random set convergence, i.e in what sense does the convergence $\calE_{\alpha}^n \rightarrow \calE_{\alpha}^{\infty}$
hold. This study would require the use of random set theory \cite{molchanov}, a perspective that we leave for future work.

Note that the limit volume $\Vol(\calE_{\alpha}^{\infty})$ is bounded almost surely, hence all its moments are finite (equation \eqref{eq:part_pos_control}). However, there is no guarantee that the moments of $\mathrm{Vol}(\calE_{\alpha}^n)$ converge toward those of $\mathrm{Vol}(\calE_{\alpha}^{\infty})$. Such a property requires additional uniform integrability properties on $\mathrm{Vol}(\calE_{\alpha}^n)$, which have to be checked on a case-by-case basis. For example, this property holds in the Gaussian case (Proposition \ref{prop:cas_gaussien}).
The main obstacle in the proof \iain{of Proposition \ref{prop:asymptotics}} is that of the convergence of the empirical quantile $q_{n,\alpha}$ to $q_{1-\alpha}^{\calE}$, as the diagonal elements of $\bfC_n$ are not independent. Our proof of this result relies on the study of empirical characteristic functions and an almost sure application of Lévy's theorem (Lemma \ref{lemma:cv_quantile}).
Note that the asymptotic matrix $\boldcalA_{\infty}$ is the covariance matrix of $\pi^{\perp}_XR$, where $\pi^{\perp}_XR$ is the orthogonal projection of $R = (R_1, \dots , R_{\ell})$ onto $[\mathrm{Span}(X_1, \dots , X_k)]^{\perp}$ in $L^2(\mathbb{P})$.

\adri{
It is worth noticing that the numerical experiments highlight that the asymptotic ellipsoid (as well as its volume) may still exist when the data are heavy tailed. A starting point to study that case is to note that, according to equation \eqref{eq:ellipsoid}, the region $\calE_{\alpha}^n$ is mostly determined by the product $\rho_{n,\alpha}\boldcalA_{n}$. Then scalings differing from the one used in Proposition \ref{prop:asymptotics} (i.e.  $(\rho_{n,\alpha}/u_n)(u_n\boldcalA_n)$ with $u_n \neq 1$) may be introduced, in the hope that $u_n\boldcalA_n$ and $\rho_{n,\alpha}/u_n$ may converge in some sense. The interested reader will find first insights in Appendix~\ref{app:infinite_var} for a further study of heavy-tailed data.
}

\begin{remark}[Conditional coverage]\label{rk:conditional_coverage}
Although our method is built on a form of conditional procedure, it cannot fulfill the conditional coverage \iain{described in the introduction,} even when $n\rightarrow \infty$. Indeed, if $X_1 = x$, then $\calE_\alpha^\infty$ is empty as soon as $q_{1-\alpha}^{\calE} < (x-\mathbb{E}[X_1])^\top(\mathbf{\Sigma}^{11})^{-1}(x-\mathbb{E}[X_1])$ (Proposition \ref{prop:asymptotics}). In particular, for all such $x$,
\begin{align}
    \mathbb{P}(R_1\in\calE_\alpha^\infty|X_1=x) = \mathbb{P}(R_1\in\varnothing|X_1=x) = 0 \neq 1-\alpha.
\end{align}
    To recover conditional coverage, the asymptotic quantile $q_{1-\alpha}^{\calE} = q_{1-\alpha}(V_\rmc^\top\boldsymbol{\Sigma}^{-1}V_\rmc)$ should be replaced with $q_{1-\alpha}^{\calE}(x) \coloneqq q_{1-\alpha}(V_\rmc^\top\boldsymbol{\Sigma}^{-1}V_\rmc|X_1=x)$ in equation \eqref{eq:cv_rho_n}. Indeed, let us define
    \begin{align}
    \rho_{\infty,\alpha}(x) \coloneqq q_{1-\alpha}^{\calE}(x) - (x-\mathbb{E}[X_1])^\top(\mathbf{\Sigma}^{11})^{-1}(x-\mathbb{E}[X_1]),\label{eq:def_rho_of_x}
    \end{align}
    and define $\calE_\alpha^\infty(x)$ to be the set $\calE_\alpha^\infty$ where $\rho_{\infty,\alpha}$ has been replaced with $\rho_{\infty,\alpha}(x)$. Then, from equation \eqref{eq:facto_quad_conditionnel}, we recover conditional coverage in the limit where $n\rightarrow \infty$:
    \begin{align}\label{eq:conditional_ellipsoid_asymptotic_2}
    \mathbb{P}(R_1\in\calE_\alpha^\infty(x)|X=x) &=
    \mathbb{P}((R_\rmc - Z_0^{\infty})^\top\boldcalA_{\infty}^{-1}(R_\rmc-Z_0^{\infty}) \leq \rho_{\infty,\alpha}(x)|X_1=x)\nonumber \\
    &=
\mathbb{P}(V_\rmc^\top \mathbf{\Sigma}_{\lambda}^{-1}V_\rmc \leq q_{1-\alpha}^{\calE}(x)|X_1=x) = 1-\alpha.
\end{align}

This solution also solves the problem of empty confidence regions described in Section \ref{sec:score}. Of course, the main difficulty is that of the estimation of the conditional quantile $q_{1-\alpha}(x)$. This can be done e.g. using conformalized quantile regression \cite{Romanoetal19}, although the application of this method has to be further studied in our case.
\end{remark}

\paragraph*{Gaussian data} In this paragraph, we assume that $V_1$ is a Gaussian random vector. In this case, $Z_0^\infty = \mathbb{E}[R_1|X_1]$ and $\boldcalA_{\infty} = \Cov(R_1|X_1)$. In fact, the limit ellipsoid $\calE_\alpha^\infty$ is \iain{an $\ell$-dimensional section of the $k+\ell$-dimensional confidence ellipsoid} obtained under Gaussianity assumptions over the random vector $(X_1, R_1)$ (this is also true under general ellipticity assumptions, see equation \eqref{eq:ellip_assumption} for a definition). CCLE can thus be understood as a ``non asymptotic conformal \iain{generalisation}'' of the regions provided by standard Gaussian ellipsoids. In the Gaussian case, we can further describe the volume of $\calE_{\alpha}^\infty$ when $\lambda=0$, notably its moments.
\begin{proposition}\label{prop:cas_gaussien}
Assume that the vectors $V_i$ are iid, the vector $V_1$ is Gaussian, $\lambda = 0$ and $\min\Spec(\mathbf{\Sigma})>0$. Denoting $F_{\chi^2(m)}$ the CDF of the $\chi^2(m)$ distribution, we have $q_{1-\alpha}^{\calE} = F^{-1}_{\chi^2(k+\ell)}(1-\alpha)$. Next, denoting $B(x,y)$ the Euler Beta function, we set $C_{k,\ell,q} \coloneqq 2^{-k/2}v_{\ell}^qB({k}/{2},{q\ell}/{2}+1)/\Gamma(k/2)$. Then for all $q>0$,
        \begin{align}
          \mathbb{E}[\mathrm{Vol}(\calE_{\alpha}^{n})^q] &\xrightarrow[n\rightarrow \infty]{}  \mathbb{E}[\mathrm{Vol}(\calE_{\alpha}^{\infty})^q] =
            C_{k,\ell,q}\det(\mathbf{\Sigma}/\mathbf{\Sigma}^{11})^{q/2} (q_{1-\alpha}^{\calE})^{(k+q\ell)/2}
          {_1F_1}\bigg(\frac{k}{2}, \frac{k+q\ell}{2}+1, -\frac{q_{1-\alpha}^{\calE}}{2}\bigg), \label{eq:exp_vol_gauss} \\[2ex]
          \mathbb{P}(\calE_{\alpha}^{n} = \varnothing) &\xrightarrow[n\rightarrow \infty]{}\mathbb{P}(\calE_{\alpha}^\infty = \varnothing) = 1 - F_{\chi^2(k)}\Big(F^{-1}_{\chi^2(k+\ell)}(1-\alpha)\Big), \label{eq:proba_empty_gauss}
        \end{align}
where $_1F_1$ is the Kummer confluent hypergeometric function of the first kind. In particular,
\begin{align}\label{eq:expected_vol_gaussian_calE}
     \mathbb{E}[\mathrm{Vol}(\calE_{\alpha}^{n})] &\xlongrightarrow[n\rightarrow \infty]{} \frac{2^{-k/2}\pi^{\ell/2}}{\Gamma(p/2+1)}\det(\mathbf{\Sigma}/\mathbf{\Sigma}^{11})^{1/2}(q_{1-\alpha}^{\calE})^{p/2} {_1F_1}\bigg(\frac{k}{2}, \frac{p}{2}+1, -\frac{q_{1-\alpha}^{\calE}}{2}\bigg).
\end{align}
\end{proposition}
Formula \eqref{eq:exp_vol_gauss} is empirically verified in Table \ref{tab:expectations}, \iain{and Formula \eqref{eq:proba_empty_gauss} is illustrated in Figure \ref{fig:empty_gaussian}}.
\adri{One may wish to understand the properties of $\calE_{\alpha}^n$ for finite sample size $n$. The interested reader will find in Appendix~\ref{app:non_asymp_gauss} some discussions on that point, where \iain{we identify the distributions of $\boldcalA_n$ and $d_n^X$, and approximations of those of $\bfP_{n,0}$ and $p_{i,n}$}. However we did not manage to identify that of $\rho_{n,\alpha}$.}
\subsection{The second asymptotic ellipsoid \texorpdfstring{$\calF_\alpha^\infty$}{F alpha infinity}}
\iain{
Under the assumptions of Proposition \ref{prop:asymptotics}, the center $Z_0^n$ and ``matrix'' $\boldcalA_n$ describing the ellipsoid $\calF_\alpha^n$ obviously still converge to the limits given in equations \eqref{eq:cv_An} and \eqref{eq:pred_unbiased}, as they are the same as those of $\calE_\alpha^n$. The limit of $\rho_{n,\alpha}'$ is given in the following proposition.
\begin{proposition}\label{prop:calf_asymptotic}
Take the assumptions of Proposition \ref{prop:asymptotics}, and set 
\begin{align}\label{eq:def_qaf}
 q_{1-\alpha}^\calF \coloneqq q_{1-\alpha}(T_\rmc^\top\boldcalA_\infty^{-1}T_\rmc), \ \ \text{where} \ \ \ T_\rmc = R_\rmc - \mathbf{\Sigma}^{21}(\mathbf{\Sigma}_\lambda^{11})^{-1}X_\rmc.
\end{align}
If $\mathbb{E}[\|X_1\|^{4q}]<+\infty$ for some $q>1$, then
\begin{align}
    q_{n,\alpha}' &\xlongrightarrow[n\rightarrow\infty]{\mathbb{P}} q_{1-\alpha}^\calF,\ \ \ \
    \rho_{n,\alpha}'\xlongrightarrow[n\rightarrow\infty]{\mathbb{P}} q_{1-\alpha}^\calF, \ \ \ \ \text{ and } \ \ \ \
    \Vol(\calF_\alpha^n) \xlongrightarrow[n\rightarrow\infty]{\mathbb{P}} v_\ell\det(\mathbf{\Sigma}_\lambda/\mathbf{\Sigma}_\lambda^{11})^{1/2}(q_{1-\alpha}^\calF)^{\ell/2}.
\end{align}
\end{proposition}
We denote by $\calF_\alpha^\infty$ the ellipsoid with center $Z_0^\infty$, matrix $\boldcalA_\infty$ and ``squared radius'' $q_{1-\alpha}^\calF$.
While the convergence of $q_{n,\alpha}'$ is weaker than that of $q_{n,\alpha}$, the convergence of $\rho_{n,\alpha}'$ is as strong as that of $\rho_{n,\alpha}$ (because the limit of  $\rho_{n,\alpha}'$ is constant, convergence in probability is equivalent to convergence in distribution).}

\iain{Observe also that contrarily to Proposition \ref{prop:calF_full_space}, we additionally require that $\mathbb{E}[\|X_1\|^{4q}]<+\infty$. While this assumption is not necessarily tight, it suggests that the convergence of $\calF_\alpha^n$ towards $\calF_\alpha^\infty$ may be slower than that of $\calE_\alpha^n$ towards $\calE_\alpha^\infty$, or that it should hold in a weaker sense. This is consistent with the numerical experiments on Cauchy data (Section \ref{sub:num_cauchy}).}

\iain{The remark on the conditional coverage of $\calE_\alpha^\infty$ in the previous section also holds for $\calF_\alpha^\infty$ if replacing $q_{1-\alpha}^{\calF}$ with $q_{1-\alpha}^{\calF}(x) \coloneqq q_{1-\alpha}(T_\rmc^\top\boldcalA_\infty^{-1}T_\rmc|X_1=x)$. In fact, it is equivalent to estimate $q_{1-\alpha}^{\calF}(x)$ or $q_{1-\alpha}^{\calE}(x)$, as from equation \eqref{eq:facto_quad_conditionnel},
\begin{align*}
    q_{1-\alpha}^{\calE}(x) = q_{1-\alpha}^{\calF}(x) + (x-\mathbb{E}[X_1])^\top(\mathbf{\Sigma}^{11})^{-1}(x-\mathbb{E}[X_1]).
\end{align*}
In particular, $\rho_{\infty,\alpha}(x) = q_{1-\alpha}^{\calF}(x)$, and the conditional ellipsoids $\calE_\alpha^\infty(x)$ and $\calF_\alpha^\infty(x)$ are the same.
As opposed to $\calE_\alpha^\infty$ though, we recover the conditional coverage for Gaussian data, when using $\calF_\alpha^\infty$.
\begin{proposition}\label{prop:conditional_gaussian}
Assume that $V_1\sim\calN(m,\mathbf{\Sigma})$, that $\min\Spec \mathbf{\Sigma} >0$, and set $\lambda = 0$. Then
\begin{align*}
    \forall x \in \mathbb{R}^k, \ \ \ \mathbb{P}(Y_{1} - \widehat{Y}_{1} \in \calF_\alpha^\infty|X_{1} = x) = \mathbb{P}(Y_{1} - \widehat{Y}_{1} \in \calF_\alpha^\infty) = 1-\alpha.
\end{align*}
\end{proposition}
This result is an immediate consequence of the fact that $R_{1} - \mathbf{\Sigma}^{21}(\mathbf{\Sigma}^{11})^{-1}X_{1}$ is independent from $X_{1}$, when $V_1$ is Gaussian with $\Cov(V_1) = \mathbf{\Sigma}$. Anticipating Section \ref{sub:comparison_norm_score}, this property is unique to Gaussian distributions, among the family of elliptical distributions (see equation \eqref{eq:ellip_assumption} for a definition). In the general elliptical case, the conditional distributions $R_1|X_1=x$ and $R_1-\mathbf{\Sigma}^{21}(\mathbf{\Sigma}^{11})^{-1}X_{1}$ still have the same dispersion matrix $\mathbf{\Sigma}/\mathbf{\Sigma}^{11}$, but they differ in their generator. Coming back to the Gaussian case, numerical evaluations of formula \eqref{eq:expected_vol_gaussian_calE} show that if $V_1$ is Gaussian and $\lambda=0$, then on average, $\calF_\alpha^\infty$ is better than $\calE_\alpha^\infty$ in terms of volume (see Figure \ref{fig:ratio_vol_gaussian} for a plot) :
\begin{align}
    \forall \alpha\in (0,1), \ \ \ \Vol(\calF_\alpha^\infty) \leq  \mathbb{E}[\Vol(\calE_\alpha^\infty)].
\end{align}
It remains possible that $\Vol(\calE_\alpha^\infty) \leq \Vol(\calF_\alpha^\infty)$, which corresponds to the event  $\{X_\rmc^\top (\mathbf{\Sigma}^{11})^{-1}X_\rmc \geq q_{1-\alpha}^\calE - q_{1-\alpha}^\calF\}$.
}

\paragraph*{The corrected predictor}
 An interesting feature of the ellipsoid\adri{s} for $Y_{n+1}$ is that \adri{they are} not centered at the predictor $\widehat{Y}_{n+1}$. \adri{Their common} center is $\widetilde{Y}_{n+1} = Z_0^n + \widehat{Y}_{n+1}$ which is effectively a correction of $\widehat{Y}_{n+1}$. In fact, there is no guaranty that $\widehat{Y}_{n+1}$ lies in $E_\alpha^n$ \adri{or $F_\alpha^n$}, which is desirable as the predictor may e.g. be biased. On the contrary, the next proposition shows that under same moment assumptions as for Proposition~\ref{prop:asymptotics}, the corrected predictor $\widetilde{Y}_{n+1}$ is asymptotically unbiased.
\begin{proposition}\label{prop:pred_unbiased}If $\mathbb{E}[\|Y_1\|]<+\infty$ then
under the assumptions of Proposition \ref{prop:asymptotics}, $\widetilde{Y}_{n+1}$ is asymptotically unbiased, i.e.
\begin{align}\label{eq:unbiased}
    \mathbb{E}[\widetilde{Y}_{n+1}] \xrightarrow[n\rightarrow \infty]{} \mathbb{E}[Y_1].
\end{align}
Moreover, introducing the matrix $M_{\lambda} \coloneqq \mathbf{\Sigma}_{\lambda}/\mathbf{\Sigma}_{\lambda}^{11} - \lambda (\mathbf{I}_{\ell} + \mathbf{\Sigma}^{21}(\mathbf{\Sigma}_{\lambda}^{11})^{-2}\mathbf{\Sigma}^{12})$, and assuming there exists $q>1$ such that $\mathbb{E}[\|V_1\|^{4q}] < + \infty$, we have the asymptotic covariance and mean squared error,
\begin{align}
     \Cov(\widetilde{Y}_{n+1}-{Y}_{n+1})&\xrightarrow[n\rightarrow \infty]{}M_{\lambda}, \label{eq:cov_assymptotic} \ \ \ \mathbb{E}\Big[\|\widetilde{Y}_{n+1}-{Y}_{n+1}\|_2^2\Big] \xrightarrow[n\rightarrow \infty]{} \Tr(M_{\lambda}).
\end{align}
\end{proposition}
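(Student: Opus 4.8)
\noindent\emph{Proof plan.} Both parts rest on the identity $\widetilde Y_{n+1}-Y_{n+1}=Z_0^n-R_{n+1}$, which is immediate from $E_\alpha^n=\{\widehat Y_{n+1}\}+\calE_\alpha^n$ and the fact that $\calE_\alpha^n$ is centered at $Z_0^n$, together with the representation $Z_0^n=\widehat{\bfA}_n(X_{n+1}-\overline X_n)+\overline R_n$ of Remark~\ref{rk:CCLE}. For the asymptotic unbiasedness \eqref{eq:unbiased}, exchangeability gives $\mathbb E[\widehat Y_{n+1}]=\mathbb E[\widehat Y_1]=\mathbb E[Y_1]-\mathbb E[R_1]$ (this is where $\mathbb E\|Y_1\|<+\infty$ enters, $\mathbb E\|R_1\|<+\infty$ following from the second-moment assumption), and $\mathbb E[\overline R_n]=\mathbb E[R_1]$, so
\[
\mathbb E[\widetilde Y_{n+1}]-\mathbb E[Y_1]=\mathbb E[Z_0^n]-\mathbb E[R_1]=\mathbb E\big[\widehat{\bfA}_n(X_{n+1}-\overline X_n)\big].
\]
Conditioning on $\mathcal F_n:=\sigma(V_1,\dots,V_n)$ and using that $X_{n+1}$ is independent of $\mathcal F_n$ while $\widehat{\bfA}_n,\overline X_n\in\mathcal F_n$, the right-hand side equals $-\mathbb E[\widehat{\bfA}_n(\overline X_n-\mathbb E[X_1])]$. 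I would control $\widehat{\bfA}_n$ via the ridge-optimality identity of Remark~\ref{rk:CCLE}: since $(\widehat e_n,\widehat{\bfA}_n)$ minimizes $T_n$, one has $\lambda\|\widehat{\bfA}_n\|_F^2\le T_n(\widehat e_n,\widehat{\bfA}_n)\le T_n(\overline R_n,0)=n^{-1}\sum_i\|R_i-\overline R_n\|^2$, so that by Cauchy--Schwarz
\begin{align*}
\big\|\mathbb E[\widehat{\bfA}_n(\overline X_n-\mathbb E[X_1])]\big\|
&\le \lambda^{-1/2}\Big(\mathbb E\Big[\tfrac{1}{n}\sum_i\|R_i-\overline R_n\|^2\Big]\Big)^{1/2}\big(\mathbb E\|\overline X_n-\mathbb E[X_1]\|^2\big)^{1/2}\\
&\le \lambda^{-1/2}\,(\Tr\mathbf{\Sigma}^{22})^{1/2}\,(n^{-1}\Tr\mathbf{\Sigma}^{11})^{1/2}\ \xrightarrow[n\to\infty]{}\ 0,
\end{align*}
which proves \eqref{eq:unbiased}; only $\mathbb E\|V_1\|^2<+\infty$ and $\mathbb E\|Y_1\|<+\infty$ are needed here.

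For the covariance and mean-squared-error statements I would argue by almost sure convergence plus uniform integrability. Exactly as in the proof of Proposition~\ref{prop:asymptotics} (the step leading to \eqref{eq:cv_An}), the SLLN and continuous mapping yield, almost surely, $\widehat{\bfA}_n\to\bfA^\ast:=\mathbf{\Sigma}^{21}(\mathbf{\Sigma}_\lambda^{11})^{-1}$, $\overline X_n\to\mathbb E[X_1]$ and $\overline R_n\to\mathbb E[R_1]$, hence
\[
\widetilde Y_{n+1}-Y_{n+1}=Z_0^n-R_{n+1}\ \xrightarrow[n\to\infty]{a.s.}\ \zeta:=\bfA^\ast(X_{n+1}-\mathbb E[X_1])+\mathbb E[R_1]-R_{n+1}.
\]
Since $\mathbb E[X_{n+1}]=\mathbb E[X_1]$ we get $\mathbb E[\zeta]=0$, and since $(X_{n+1},R_{n+1})\stackrel{d}{=}(X_1,R_1)$, expanding $\Cov(\zeta)=\Cov(\bfA^\ast X_1-R_1)$ and substituting $\bfA^\ast=\mathbf{\Sigma}^{21}(\mathbf{\Sigma}_\lambda^{11})^{-1}$ gives
\[
\Cov(\zeta)=\mathbf{\Sigma}^{22}-\mathbf{\Sigma}^{21}(\mathbf{\Sigma}_\lambda^{11})^{-1}\big(\mathbf{\Sigma}^{11}+2\lambda\mathbf{I}_k\big)(\mathbf{\Sigma}_\lambda^{11})^{-1}\mathbf{\Sigma}^{12};
\]
expanding the definition of $M_\lambda$ and using that $(\mathbf{\Sigma}_\lambda^{11})^{-1}$ and $\mathbf{\Sigma}^{11}+2\lambda\mathbf{I}_k$ commute (both are polynomials in $\mathbf{\Sigma}^{11}$) shows $\Cov(\zeta)=M_\lambda$, whence also $\mathbb E\|\zeta\|^2=\Tr\Cov(\zeta)=\Tr(M_\lambda)$.

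It then remains to upgrade this to $L^2$ convergence, which follows once $\sup_n\mathbb E[\|Z_0^n-R_{n+1}\|^{2q}]<+\infty$, so that $\{\|Z_0^n-R_{n+1}\|^2\}_n$ is uniformly integrable. The crucial ingredient is again the ridge bound $\|\widehat{\bfA}_n\|_F\le\lambda^{-1/2}\big(n^{-1}\sum_i\|R_i-\overline R_n\|^2\big)^{1/2}$ from Remark~\ref{rk:CCLE}; combining it with $\|X_{n+1}-\overline X_n\|\le\|X_{n+1}\|+n^{-1}\sum_i\|X_i\|$, $\|\overline R_n\|\le n^{-1}\sum_i\|R_i\|$, Jensen's inequality applied to the empirical averages, the Cauchy--Schwarz inequality, and the independence of $X_{n+1}$ from $(V_1,\dots,V_n)$, every resulting term is bounded by a finite multiple of $\mathbb E\|R_1\|^{4q}$ and $\mathbb E\|X_1\|^{4q}$, which are finite under $\mathbb E\|V_1\|^{4q}<+\infty$. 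With $L^2$ convergence in hand, $\mathbb E[(Z_0^n-R_{n+1})(Z_0^n-R_{n+1})^\top]\to\mathbb E[\zeta\zeta^\top]$ and $\mathbb E[Z_0^n-R_{n+1}]\to\mathbb E[\zeta]=0$, hence $\Cov(\widetilde Y_{n+1}-Y_{n+1})\to\Cov(\zeta)=M_\lambda$ and, by the same convergence, $\mathbb E\|\widetilde Y_{n+1}-Y_{n+1}\|_2^2\to\mathbb E\|\zeta\|^2=\Tr(M_\lambda)$. The main obstacle is precisely this uniform-integrability estimate: it is essential to use the ridge-optimality bound on $\|\widehat{\bfA}_n\|_F$ rather than a naive operator-norm bound through $(\widehat{\mathbf{\Sigma}}_{n,\lambda}^{11})^{-1}\preccurlyeq\lambda^{-1}\mathbf{I}_k$, which would be too lossy and force more than $4q$ moments; a secondary point is the purely algebraic verification that $\Cov(\bfA^\ast X_1-R_1)$ equals $M_\lambda$.
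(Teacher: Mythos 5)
Your proposal is correct and follows the same architecture as the paper's proof: for \eqref{eq:unbiased}, reduce to $\mathbb{E}[\widehat{\bfA}_n(\overline{X}_n-\mathbb{E}[X_1])]$ via the independence of $\widehat{\bfA}_n$ and $X_{n+1}$ and kill it by Cauchy--Schwarz; for the second part, identify the distributional limit $S-\mathbb{E}[S]$ with $S=\mathbf{\Sigma}^{21}(\mathbf{\Sigma}_\lambda^{11})^{-1}X_1-R_1$, prove $\sup_n\mathbb{E}\|\widetilde{Y}_{n+1}-Y_{n+1}\|_2^{2q}<\infty$, and invoke the uniform-integrability/convergence-of-moments result (van der Vaart, Example 2.21), finishing with the algebraic identification $\Cov(S)=M_\lambda$ (your form $\mathbf{\Sigma}^{22}-\mathbf{\Sigma}^{21}(\mathbf{\Sigma}_\lambda^{11})^{-1}(\mathbf{\Sigma}^{11}+2\lambda\mathbf{I}_k)(\mathbf{\Sigma}_\lambda^{11})^{-1}\mathbf{\Sigma}^{12}$ does match $M_\lambda$). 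The one substantive difference is the device used to control $\widehat{\bfA}_n$: the paper factors $\widehat{\bfA}_n=\widehat{\mathbf{\Sigma}}_n^{21}(\widehat{\mathbf{\Sigma}}_{n,\lambda}^{11})^{-1/2}\cdot(\widehat{\mathbf{\Sigma}}_{n,\lambda}^{11})^{-1/2}$ and bounds the first factor through $\Tr(\widehat{\mathbf{\Sigma}}_n^{21}(\widehat{\mathbf{\Sigma}}_{n,\lambda}^{11})^{-1}\widehat{\mathbf{\Sigma}}_n^{12})\le\Tr(\widehat{\mathbf{\Sigma}}_{n,\lambda}^{22})$ (positivity of the empirical Schur complement) and the second through $\|(\widehat{\mathbf{\Sigma}}_{n,\lambda}^{11})^{-1/2}\|_F^2\le k/\lambda$, whereas you use the variational bound $\lambda\|\widehat{\bfA}_n\|_F^2\le T_n(\overline{R}_n,0)=n^{-1}\sum_i\|R_i-\overline{R}_n\|^2$ from the ridge characterization of Remark \ref{rk:CCLE}. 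These are two faces of the same estimate (both reduce to $\Tr(\widehat{\mathbf{\Sigma}}_{n,\lambda}^{22})/\lambda$-type control, both need $\lambda>0$, and both keep the moment requirement at $4q$ rather than $8q$), so your route is a legitimate and slightly more self-contained alternative. One small imprecision: since $X_{n+1}$ and $R_{n+1}$ change with $n$, the convergence to your $\zeta$ is in distribution (as in Proposition \ref{prop:asymptotics}), not almost sure; this is all the moment-convergence argument needs, so nothing breaks.
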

\subsection{Comparison with the norm residual score}\label{sub:comparison_norm_score}
In regression, the standard score is the norm of the residual, $S_i'(y) = \|y-\widehat{Y}_{n+1}\|$, where $\| \cdot\|$ is the Euclidean norm in $\Rbb^{\ell}$. The corresponding confidence region for $Y_{n+1}$ is a ball $B_{\alpha}^n$ centered at $\widehat{Y}_{n+1}$, with squared radius $\beta_{n,\alpha}$ given by the $\nath$ order statistic of $(\|R_1\|^2, \dots ,\|R_n\|^2)$. As for $E_{\alpha}^n$ and $\calE_{\alpha}^n$, we introduce $\calB_{\alpha}^n$ the centered ball with squared radius $\beta_{n,\alpha}$, so that $B_{\alpha}^n = \{\widehat{Y}_{n+1}\} + \calB_{\alpha}^n$. We begin with describing the asymptotic behaviour of $\calB_{\alpha}^n$. 
\begin{proposition}\label{prop:asymptotic_ball}
Denote $\beta_{n,\alpha}$ the $\nath$ order statistic of $(\|R_1\|^2, \dots , \|R_n\|^2)$, which are assumed iid, and assume that the quantile function of $\|R_1\|^2$ is continuous on a neighbourhood of $1-\alpha$. Then
\begin{align}\label{eq:cv_quantile_iid}
    \beta_{n,\alpha} \xlongrightarrow[]{a.s.} q_{1-\alpha}(\|R_1\|^2).
\end{align}
As a result, the asymptotic volume of the associated ball is deterministic, and given by
\begin{align}\label{eq:vol_ball}
    \mathrm{Vol}(\calB_{\alpha}^n) &\xlongrightarrow[]{a.s.} v_\ell q_{1-\alpha}(\|R_1\|^2)^{\ell/2}.
\end{align}
\end{proposition}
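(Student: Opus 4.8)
The plan is to treat this as the almost sure convergence of a sample quantile to its population counterpart. Write $Z_i \coloneqq \|R_i\|^2$ for the iid sequence, let $F$ be the common cumulative distribution function of $Z_1$, and set $q \coloneqq q_{1-\alpha}(\|R_1\|^2) = F^{-1}(1-\alpha)$. Since $n_\alpha = \lceil (1-\alpha)(n+1)\rceil$, we have $n_\alpha/n \to 1-\alpha$, and in particular $q_{n,\alpha}'$ is, for $n$ large enough, a finite order statistic of $(Z_1,\dots,Z_n)$ (because $(1-\alpha)(n+1)<n$ once $\alpha>1/(n+1)$).

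First I would record two elementary facts. The definition $q = \inf\{t:\, F(t)\ge 1-\alpha\}$ gives immediately $F(q-\varepsilon) < 1-\alpha$ for every $\varepsilon>0$. On the other hand, the continuity of the quantile function $F^{-1}$ at $1-\alpha$ forces $F(q+\varepsilon) > 1-\alpha$ for every $\varepsilon>0$: otherwise $F$ would be flat at level $1-\alpha$ on a nondegenerate interval with left endpoint $q$, which would make $F^{-1}$ jump at $1-\alpha$.

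Next, for fixed $\varepsilon>0$, I would apply the SLLN to the bounded iid variables $\mathbf{1}\{Z_i\le q+\varepsilon\}$ and $\mathbf{1}\{Z_i\le q-\varepsilon\}$, obtaining that, almost surely, as $n\to\infty$,
\begin{align*}
\tfrac1n\,\#\{1\le i\le n:\, Z_i\le q+\varepsilon\} &\longrightarrow F(q+\varepsilon) > 1-\alpha,\\
\tfrac1n\,\#\{1\le i\le n:\, Z_i\le q-\varepsilon\} &\longrightarrow F(q-\varepsilon) < 1-\alpha.
\end{align*}
On this almost sure event, since $n_\alpha/n\to 1-\alpha$, for all $n$ large we have $\#\{i:\, Z_i\le q+\varepsilon\}\ge n_\alpha$ and $\#\{i:\, Z_i\le q-\varepsilon\}< n_\alpha$; by the definition of the $n_\alpha$-th order statistic this means $q-\varepsilon < q_{n,\alpha}'\le q+\varepsilon$. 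Intersecting these almost sure events over $\varepsilon = 1/m$, $m\in\mathbb{N}$, yields $q_{n,\alpha}'\to q$ almost surely, which is \eqref{eq:cv_quantile_iid}.

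Finally, $\calB_\alpha^n$ is the centered Euclidean ball of $\Rbb^\ell$ with squared radius $q_{n,\alpha}'$, hence $\Vol(\calB_\alpha^n) = v_\ell\,(q_{n,\alpha}')^{\ell/2}$; since $x\mapsto v_\ell x^{\ell/2}$ is continuous on $[0,+\infty)$, the convergence \eqref{eq:vol_ball} follows by continuous mapping from the convergence just established. The only genuinely delicate point is matching the order-statistic index $n_\alpha$ with the nominal level $1-\alpha$ through $n_\alpha/n\to 1-\alpha$, combined with the use of continuity of $F^{-1}$ (rather than merely of $F$) to upgrade $F(q+\varepsilon)\ge 1-\alpha$ to the strict inequality; everything else is routine bookkeeping, and one could alternatively invoke Glivenko–Cantelli in place of the SLLN step.
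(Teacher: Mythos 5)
Your proof is correct. It reaches the same classical fact by a slightly different route than the paper: the paper's proof applies the Glivenko--Cantelli theorem to get $\|F_n-F\|_\infty\to 0$, deduces $F(q_{n,\alpha}')\to 1-\alpha$ almost surely via the identity $F_n(q_{n,\alpha}')\approx n_\alpha/n$, and then inverts using the continuity of the quantile function near $1-\alpha$; you instead apply the pointwise SLLN to the indicators $\mathbf{1}\{Z_i\le q\pm\varepsilon\}$ and sandwich the order statistic directly between $q-\varepsilon$ and $q+\varepsilon$. Your version is marginally more elementary (no uniform empirical-process result is needed) and, to its credit, makes explicit exactly how the continuity of $F^{-1}$ at $1-\alpha$ is used — namely to rule out a flat stretch of $F$ at level $1-\alpha$ to the right of $q$, giving the strict inequality $F(q+\varepsilon)>1-\alpha$ — a step the paper's proof leaves implicit in the phrase ``from the continuity of the quantile function.'' The paper's Glivenko--Cantelli route, conversely, packages the two limits ($\|F_n-F\|_\infty\to 0$ and $n_\alpha/n\to 1-\alpha$) into a single two-line estimate. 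Both arguments handle the index mismatch $n_\alpha=\lceil(1-\alpha)(n+1)\rceil$ versus level $1-\alpha$ correctly, and both conclude with the continuous mapping theorem for the volume, so there is nothing to fix.
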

Equation \eqref{eq:cv_quantile_iid} is quite intuitive and natural, though we did not manage to find it stated as such in standard textbooks. We thus provide a proof in the appendix\footnote{This proof is a copy of that of Théorème 8.9, p. 90 of the lecture notes \cite{agreg_quantiles} (in French).}. Similarly to $\calE_{\alpha}^{\infty}$, we define $\calB_{\alpha}^{\infty}$ to be the deterministic ball centered at $0$ and with radius $q_{1-\alpha}(\|R_1\|)$.
Note that, even in the case of Gaussian residuals, $q_{1-\alpha}(\|R_1\|^2)$ cannot be expected to be further simplified, as $\|R_1\|^2$ would follow a generalized chi-squared distribution.

\paragraph*{Elliptical distributions}
We can now compare the volumes $\Vol(\calE_{\alpha}^{\infty})$ and $\Vol(\calB_{\alpha}^{\infty})$. For this, our main assumption is that the vector $V_1$ follows an absolutely continuous elliptical distribution. This means that there exists a nonnegative function $g : \mathbb{R}_+ \rightarrow\mathbb{R}_+$ such that the density $f$ of $V_1$ is of the form 
\begin{align}\label{eq:ellip_assumption}
    f(v) \propto g((v-\mu)^\top\mathbf{\Sigma}^{-1}(v-\mu)),
\end{align}
for some $\mu\in\mathbb{R}^p$ and $\mathbf{\Sigma}\in\calM_{p,p}, \ \mathbf{\Sigma} \succ 0$ (\cite{muirhead1982}, Section 1.5). \iain{Above, $\mu, \ \mathbf{\Sigma}$ and $g$ are respectively the location parameter, the dispersion matrix and the generator of the distribution \eqref{eq:ellip_assumption} ($\mathbf{\Sigma}$ is defined up to a multiplicative constant, which can be absorbed in $g$).} Elliptical distributions include multivariate Gaussian and Cauchy distributions. Under the assumptions of Proposition \ref{prop:asymptotics}, $\mathbf{\Sigma} = \Cov(V_1)$ up to a deterministic constant (\cite{muirhead1982}, p. 34), \iain{although elliptical distributions do not necessarily have finite second order moments}. As argued in Appendix \ref{app:infinite_var}, this constant is irrelevant to define $\calE_\alpha^\infty$ (in particular its volume) in an unique way, so that in the rest of this section, we can assume that $\mathbf{\Sigma} = \Cov(V_1)$. This ellipticity assumption is equivalent to the existence of a random vector $T = (T_1, \dots , T_{p})^\top\in\mathbb{R}^{p}$ with an absolutely continuous spherical distribution, i.e. with a density of the form $f(x) = g(\|x\|^2)$, such that $V_1 = \mu + \mathbf{\Sigma}^{1/2}T$.

We first consider the case where $k = 0$ and $\lambda = 0$. In this case, $\mathbf{\Sigma}_{\lambda} = \mathbf{\Sigma} = \mathbf{\Sigma}^{22}$, \iain{and $\calE_\alpha^\infty = \calF_\alpha^\infty$}. Furthermore, denoting $\lambda_1, \dots , \lambda_{\ell}$ the eigenvalues of $\mathbf{\Sigma}$, we have $\det(\mathbf{\Sigma}) = \lambda_1 \dots  \lambda_{\ell}$. Under such assumptions, $V_\rmc^\top\mathbf{\Sigma}^{-1}V_\rmc = \sum_{i=1}^{\ell}T_i^2$ and $\|R_1\|^2$ is equal to $\sum_{i=1}^{\ell}\lambda_i(T_i+s_i)^2$ in distribution, for some $s = (s_1,\ldots,s_\ell)^\top\in\mathbb{R}^\ell$ (see the upcoming Lemma \ref{lemma:distrib_norm_R1} for a more general result). In particular, 
\begin{align}
     \bigg(\frac{\Vol(\calE_{\alpha}^{\infty})}{\Vol(\calB_{\alpha}^{\infty})}\bigg)^{2/\ell} &= \det(\mathbf{\Sigma})^{1/\ell}\frac{q_{1-\alpha}(\sum_{i=1}^{\ell} T_i^2)}{q_{1-\alpha}(\sum_{i=1}^{\ell}\lambda_i (T_i+s_i)^2)} \nonumber\\
      &= \frac{q_{1-\alpha}(\sum_{i=1}^{\ell} T_i^2)}{{(\lambda_1 \cdots  \lambda_{\ell})^{-1/\ell}}q_{1-\alpha}(\sum_{i=1}^{\ell}\lambda_i (T_i+s_i)^2)} = \frac{q_{1-\alpha}(\sum_{i=1}^{\ell} T_i^2)}{q_{1-\alpha}(\sum_{i=1}^{\ell}\delta_i (T_i+s_i)^2)},\label{eq:case_k_0_ratio_vol}
\end{align}
where $\delta_i = \lambda_i/(\lambda_1 \cdots  \lambda_{\ell})^{1/\ell}>0$ verify $\delta_1\cdots\delta_{\ell} = 1$. The comparison of $\Vol(\calE_{\alpha}^{\infty})$ and $\Vol(\calB_{\alpha}^{\infty})$ is then settled by the next proposition.
\begin{proposition}\label{prop:barthe}
Let $t>0$ and $(T_1, \dots , T_{\ell})$ be a random vector with an absolutely continuous spherical distribution, i.e. with a density of the form $f(x) = g(\|x\|^2),\ x \in \mathbb{R}^\ell$; assume furthermore that $g$ is non-increasing. For a fixed $t$, introduce $F_t(s, \delta) \coloneqq \mathbb{P}(\sum_{i=1}^{\ell} \delta_i (T_i+s_i)^2 \leq t)$, where $s = (s_1, \ldots, s_\ell)^\top\in\mathbb{R}^\ell$ and $\delta = (\delta_1, \ldots, \delta_\ell)^\top\in\mathbb{R}_+^\ell$. Then
\begin{align}\label{eq:argmin_barthe}
    ((0, \ldots, 0), (1,\dots, 1)) \in \underset{\substack{(s,\delta)\in\mathbb{R}^\ell\times \mathbb{R}_+^\ell\\ \prod_i \delta_i=1 }}{\arg\max} F_t(s,\delta)\,.
\end{align}
In particular, when $\lambda = 0$, $k = 0$ and under the ellipticity assumption \eqref{eq:ellip_assumption}, $\Vol(\calE_{\alpha}^{\infty}) \leq \Vol(\calB_{\alpha}^{\infty})$.
\end{proposition}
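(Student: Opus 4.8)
\textbf{Proof plan for Proposition \ref{prop:barthe}.}
The plan is to merge the two optimizations --- over the shift $s$ and over the anisotropy $\delta$ --- into a single application of a rearrangement (bathtub) inequality, using that the constraint $\prod_i\delta_i=1$ forces all the competing ellipsoids to have the same volume. For $s\in\mathbb{R}^\ell$ and $\delta\in\mathbb{R}_+^\ell$ write
\begin{align}
F_t(s,\delta)=\mathbb{P}\Big(\textstyle\sum_i\delta_i(T_i+s_i)^2\le t\Big)=\mathbb{P}(T\in E_\delta-s)=\int_{E_\delta-s}f,
\end{align}
where $E_\delta=\{y\in\mathbb{R}^\ell:\sum_i\delta_iy_i^2\le t\}$ is the origin-centered axis-aligned ellipsoid with semi-axes $\sqrt{t/\delta_i}$, $E_\delta-s$ is its translate, and $f(x)=g(\|x\|^2)$ is the density of $T$. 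The crucial point is that
\begin{align}
\Vol(E_\delta-s)=\Vol(E_\delta)=v_\ell\prod_i\sqrt{t/\delta_i}=v_\ell\,t^{\ell/2}\Big(\prod_i\delta_i\Big)^{-1/2}=v_\ell\,t^{\ell/2}
\end{align}
does not depend on the admissible pair $(s,\delta)$, and equals the volume of the centered ball $B:=\{x:\|x\|^2\le t\}=E_{(1,\dots,1)}$. Thus the proposition reduces to the statement that, among all measurable sets of a given volume, the centered ball maximizes the probability $\mathbb{P}(T\in\cdot)$.

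I would prove this last statement by a layer-cake argument. Since $g$ is non-increasing, every superlevel set $B_c:=\{f>c\}=\{x:g(\|x\|^2)>c\}$ is an origin-centered ball, of finite volume for $c>0$ because $f\in L^1$. For any measurable $A$ with $\Vol(A)=\Vol(B)=v_\ell t^{\ell/2}$, Tonelli's theorem gives
\begin{align}
\int_Af=\int_0^\infty\Vol(A\cap B_c)\,dc\le\int_0^\infty\min\big(\Vol(A),\Vol(B_c)\big)\,dc=\int_0^\infty\Vol(B\cap B_c)\,dc=\int_Bf,
\end{align}
the middle equality holding because $B$ and $B_c$ are nested centered balls with $\Vol(B)=\Vol(A)$. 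Taking $A=E_\delta-s$ yields $F_t(s,\delta)\le F_t(0,(1,\dots,1))$, which is \eqref{eq:argmin_barthe} in the sense that $(0,(1,\dots,1))$ is a maximizer; when $g$ is in addition strictly decreasing, the equality case of the Hardy--Littlewood rearrangement inequality forces $A$ to be a centered ball, which on the constraint set means $s=0$ and $\delta=(1,\dots,1)$, so the maximizer is unique.

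For the volumetric corollary, recall from the computation preceding the proposition that, when $k=0$, $\lambda=0$ and under the ellipticity and moment hypotheses, there are $s\in\mathbb{R}^\ell$ and $\delta\in\mathbb{R}_+^\ell$ with $\prod_i\delta_i=1$ such that
\begin{align}
\bigg(\frac{\Vol(\calE_\alpha^\infty)}{\Vol(\calB_\alpha^\infty)}\bigg)^{2/\ell}=\frac{q_{1-\alpha}\big(\sum_i T_i^2\big)}{q_{1-\alpha}\big(\sum_i\delta_i(T_i+s_i)^2\big)}.
\end{align}
Applying the first part with an arbitrary threshold $t$ gives $\mathbb{P}(\sum_i\delta_i(T_i+s_i)^2\le t)\le\mathbb{P}(\sum_i T_i^2\le t)$ for every $t>0$, i.e. $\sum_i T_i^2$ is stochastically dominated by $\sum_i\delta_i(T_i+s_i)^2$, hence their $(1-\alpha)$-quantiles satisfy $q_{1-\alpha}(\sum_i\delta_i(T_i+s_i)^2)\ge q_{1-\alpha}(\sum_i T_i^2)$. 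Therefore the displayed ratio is at most $1$, and $\Vol(\calE_\alpha^\infty)\le\Vol(\calB_\alpha^\infty)$. The only step requiring care is the reduction: one has to notice that the superlevel sets of $f$ are centered balls (immediate from the monotonicity of $g$) and then invoke the rearrangement inequality (and its equality case) correctly, the remaining manipulations being routine; the volume cancellation $\prod_i\delta_i=1$ is precisely what collapses the constrained optimization to this classical inequality.
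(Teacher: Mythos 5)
Your proposal is correct and follows essentially the same route as the paper: the constraint $\prod_i\delta_i=1$ makes every competing ellipsoid $E_\delta-s$ have the same volume as the centered ball $B$, which is a superlevel set of the spherical density, and a bathtub/rearrangement comparison then shows $B$ maximizes the probability; the stochastic-dominance-to-quantile step and the volume corollary are identical to the paper's. The only (cosmetic) difference is that you derive the comparison via the layer-cake formula over all superlevel sets, whereas the paper compares $\int_B g$ and $\int_{E_\delta}g$ directly on the symmetric difference using the single bound $g\gtrless g(t)$ on $B$ versus $B^\rmc$ — and you rightly note that strict uniqueness of the argmax (which neither proof fully establishes, and which is not needed for the volume conclusion) would require $g$ strictly decreasing.
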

The property that the generator $g$ is nonincreasing is an unimodality assumption (see \cite{brandwein1978}, Definition 3.1.1). Proposition \ref{prop:barthe} relies on the observation that centered balls correspond to level sets of spherical distributions. Note that no moment assumptions are required for equation \eqref{eq:argmin_barthe} to hold.

We now consider the case where $k\geq 1$, still with $\lambda = 0$. \iain{We start with $\calE_\alpha^\infty$.}
Introduce a random vector $T = (T_1, \dots , T_{p})^\top$ with a spherical distribution, such that $V_1 = \mu + \mathbf{\Sigma}^{1/2}T$. To compare the volumes of $\calE_\alpha^\infty$ and $\calB_\alpha^\infty$, we will need the following lemma which describes the distribution of $\|R_1\|^2$ in terms of that of $T$.
\begin{lemma}\label{lemma:distrib_norm_R1} Assume that $T\in\mathbb{R}^p$ has a spherical distribution and $V_1 = \mu + \mathbf{\Sigma}^{1/2}T$ for some $\mu \in\mathbb{R}^{p}$ and $\mathbf{\Sigma} \in \calM_{p,p}, \ \mathbf{\Sigma} \succ 0$. Denote $(\lambda_1, \ldots, \lambda_\ell)$ the eigenvalues of $\mathbf{\Sigma}^{22}$. Then, there exists $s\in\mathbb{R}^\ell$ such that we have the equality in distribution
\begin{align}
    \|R_1\|^2 \stackrel{\rmd}{=} \sum_{i=1}^\ell \lambda_i(T_i+s_i)^2.
\end{align}
\end{lemma}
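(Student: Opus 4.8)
\emph{Proof plan.} Write $V_1 = \mu + \mathbf{\Sigma}^{1/2}T$ in block form along $p = k+\ell$, so that the lower $\ell$ coordinates give $R_1 = \mu_R + M T$, where $\mu_R\in\mathbb{R}^{\ell}$ is the lower block of $\mu$ and $M\in\calM_{\ell,p}$ is the matrix formed by the last $\ell$ rows of $\mathbf{\Sigma}^{1/2}$. The first point to record is that $MM^\top$ is precisely the $(2,2)$-block of $\mathbf{\Sigma}^{1/2}(\mathbf{\Sigma}^{1/2})^\top = \mathbf{\Sigma}$, i.e. $MM^\top = \mathbf{\Sigma}^{22}$, which is positive definite since $\mathbf{\Sigma}\succ 0$; in particular all the $\lambda_i$ are strictly positive.

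Next I would introduce a singular value decomposition $M = U D W^\top$ with $U\in\calM_{\ell,\ell}$ orthogonal, $W\in\calM_{p,\ell}$ satisfying $W^\top W = \mathbf{I}_\ell$, and $D = \diag(d_1,\ldots,d_\ell)$ with $d_i\geq 0$. Then $MM^\top = U D^2 U^\top = \mathbf{\Sigma}^{22}$, so up to reordering $d_i = \sqrt{\lambda_i}$. Using that $U$ is orthogonal, $\|R_1\|^2 = \|U^\top\mu_R + D W^\top T\|^2 = \sum_{i=1}^{\ell}\lambda_i\big((W^\top T)_i + s_i\big)^2$, where $s_i \coloneqq (U^\top\mu_R)_i/\sqrt{\lambda_i}$ and $s = (s_1,\ldots,s_\ell)^\top$.

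It remains to identify the law of $W^\top T$. Completing $W$ to a full orthogonal matrix $\widetilde{W} = (W \mid W')\in\calM_{p,p}$, rotational invariance of the spherical distribution gives $\widetilde{W}^\top T \stackrel{d}{=} T$; since $W^\top T$ consists of the first $\ell$ coordinates of $\widetilde{W}^\top T$, we get $W^\top T \stackrel{d}{=} (T_1,\ldots,T_\ell)$. Applying the measurable map $(u_1,\ldots,u_\ell)\mapsto \sum_{i=1}^{\ell}\lambda_i(u_i+s_i)^2$ to both sides yields $\|R_1\|^2 \stackrel{d}{=} \sum_{i=1}^{\ell}\lambda_i(T_i+s_i)^2$, which is the claim. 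There is no genuine obstacle in this argument; the only step requiring a little care is the last one, namely checking that the coordinates of $T$ in an orthonormal basis of an $\ell$-dimensional subspace have the same joint law as any $\ell$ of the coordinates $T_1,\ldots,T_p$, which is exactly where sphericity is used.
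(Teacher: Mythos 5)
Your proof is correct, and it takes a genuinely different route from the paper's. The paper works through characteristic functions: writing $\Phi_T(t)=\psi(\|t\|^2)$, it first shows $\Phi_{R_1}(t') = e^{i(t')^\top\mu_R}\psi((t')^\top\mathbf{\Sigma}^{22}t')$ by evaluating $\Phi_{V_1}$ at $(0,t')$, concludes that $R_1 \stackrel{d}{=} \mu_R + (\mathbf{\Sigma}^{22})^{1/2}T'$ with $T'=(T_1,\ldots,T_\ell)$, and only then diagonalizes $\mathbf{\Sigma}^{22} = \bfP\bfD\bfP^\top$ and uses $\bfP^\top T'\stackrel{d}{=}T'$ to reach the sum of squares. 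You instead bypass the intermediate identification of the law of $R_1$ entirely: you take the SVD $M=UDW^\top$ of the relevant $\ell\times p$ block of $\mathbf{\Sigma}^{1/2}$, observe $MM^\top=\mathbf{\Sigma}^{22}$ so that $d_i^2=\lambda_i$, absorb the left factor $U$ into the norm, and invoke orthogonal invariance of $T$ in $\mathbb{R}^p$ (via completion of $W$ to a full orthogonal matrix) to identify $W^\top T\stackrel{d}{=}(T_1,\ldots,T_\ell)$. Both arguments hinge on the same underlying fact — sphericity is invariance under the orthogonal group — but yours is more self-contained linear algebra and isolates exactly where sphericity enters, whereas the paper's version additionally records along the way that $R_1$ is itself elliptical with dispersion $\mathbf{\Sigma}^{22}$ and the same generator $\psi$, a fact it reuses in the surrounding discussion. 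Your explicit $s = D^{-1}U^\top\mu_R$ differs from the paper's $s=\bfD^{-1/2}\bfP^\top\mu_R$, but the lemma only asserts existence of some $s$, so this is immaterial.
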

Above, the vector $s$ is given by $s= \bfD^{-1/2}\bfP^\top\mu$, where $\mathbf{\Sigma}^{22} = \bfP\bfD\bfP^\top$ is an eigendecomposition of $\mathbf{\Sigma}^{22}$. To compare $\Vol(\calE_\alpha^\infty)$ and $\Vol(\calB_\alpha^\infty)$, we will mimic the case $k=0$ (see equation \eqref{eq:case_k_0_ratio_vol}). To do so, we will require the bound
\begin{align}\label{eq:part_pos_control}
   0 \leq  (q_{1-\alpha}^{\calE} - X_\rmc^\top (\mathbf{\Sigma}^{11})^{-1}X_\rmc)_+ \leq q_{1-\alpha}^{\calE} = q_{1-\alpha}\bigg(\sum_{i=1}^{k+\ell}T_i^2\bigg)
\end{align}
as well as the ratio $c_{\alpha}(k,\ell)$ defined as
\begin{align}
     c_{\alpha}(k,\ell) &\coloneqq \frac{q_{1-\alpha}\left(\sum_{i=1}^{k+\ell}T_i^2\right)}{q_{1-\alpha}\left(\sum_{i=1}^{\ell}T_{i}^2\right)}>1. 
\end{align}
From these two equations as well as Proposition \ref{prop:barthe} and Lemma \ref{lemma:distrib_norm_R1}, we are able to prove Proposition \ref{prop:gen_k} below.
\begin{proposition}\label{prop:gen_k} Denote the dependence of $\calE_\alpha^\infty$ on $\lambda$ as $\calE_{\alpha,\lambda}^\infty$. Under the assumptions of Proposition \ref{prop:asymptotics}, if
\begin{align}\label{eq:tradeoff}
c_{\alpha}(k,\ell)\bigg(\frac{\det(\mathbf{\Sigma}/\mathbf{\Sigma}^{11})}{\det(\mathbf{\Sigma}^{22})}\bigg)^{1/\ell} \leq 1,    
\end{align}
then $\Vol(\calE_{\alpha,0}^{\infty}) \leq \Vol(\calB_{\alpha}^{\infty})$. Moreover, if the inequality in equation \eqref{eq:tradeoff} is strict, then there exists $\lambda_0>0$ such that for all $\lambda\in[0,\lambda_0]$, $\Vol(\calE_{\alpha,\lambda}^{\infty}) \leq \Vol(\calB_{\alpha}^{\infty})$. Finally, $\lambda_0$ can be chosen as the unique solution to the algebraic equation 
\begin{align}
\det(\mathbf{\Sigma}_{\lambda_0}) = c_{\alpha}(k,\ell)^{-\ell}\det(\mathbf{\Sigma}_{\lambda_0}^{11})\det(\mathbf{\Sigma}^{22})\,.
\end{align}
\end{proposition}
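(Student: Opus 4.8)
The first claim, for $\lambda=0$, has already been established in equations \eqref{eq:first_tradeoff}--\eqref{eq:cond_vol_gen}, so the plan is to prove the $\lambda>0$ statement while recovering the $\lambda=0$ case uniformly. The overall strategy is to bound $\Vol(\calE_{\alpha,\lambda}^\infty)$ crudely from above (discarding the $X_\rmc$-dependence), reduce the desired a.s.\ comparison to a single deterministic scalar inequality, and then analyse that inequality as a function of $\lambda$.

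First I would note that, for any $\lambda\geq0$, the set $\calE_{\alpha,\lambda}^\infty$ is (pointwise in $X_\rmc$) an ellipsoid of volume $v_\ell\det(\mathbf{\Sigma}_\lambda/\mathbf{\Sigma}_\lambda^{11})^{1/2}(\rho_{\infty,\alpha})_+^{\ell/2}$, and since $\rho_{\infty,\alpha}=q_{1-\alpha}^\infty-X_\rmc^\top(\mathbf{\Sigma}_\lambda^{11})^{-1}X_\rmc\leq q_{1-\alpha}^\infty$ with $q_{1-\alpha}^\infty\geq0$,
\[
\Vol(\calE_{\alpha,\lambda}^\infty)\ \leq\ v_\ell\,\det\!\big(\mathbf{\Sigma}_\lambda/\mathbf{\Sigma}_\lambda^{11}\big)^{1/2}\big(q_{1-\alpha}^\infty\big)^{\ell/2}\qquad\text{a.s.}
\]
By \eqref{eq:vol_ball}, the bound $\Vol(\calE_{\alpha,\lambda}^\infty)\leq\Vol(\calB_\alpha^\infty)$ a.s.\ then follows from the deterministic inequality $\det(\mathbf{\Sigma}_\lambda/\mathbf{\Sigma}_\lambda^{11})^{1/\ell}q_{1-\alpha}^\infty\leq q_{1-\alpha}(\|R_1\|^2)$. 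To estimate the two sides I would use ellipticity: writing $V_\rmc=\mathbf{\Sigma}^{1/2}T$ in distribution with $T$ spherical, $V_\rmc^\top\mathbf{\Sigma}_\lambda^{-1}V_\rmc=T^\top\mathbf{\Sigma}(\mathbf{\Sigma}+\lambda\mathbf{I}_p)^{-1}T\leq\|T\|^2$ a.s.\ (as $\mathbf{\Sigma}(\mathbf{\Sigma}+\lambda\mathbf{I}_p)^{-1}\preccurlyeq\mathbf{I}_p$), whence $q_{1-\alpha}^\infty\leq q_{1-\alpha}(\sum_{i=1}^{k+\ell}T_i^2)=c_\alpha(k,\ell)\,q_{1-\alpha}(\sum_{i=1}^{\ell}T_i^2)$; and, by Lemma \ref{lemma:distrib_norm_R1} followed by Proposition \ref{prop:barthe} (which is where unimodality of the elliptical generator enters), $q_{1-\alpha}(\|R_1\|^2)=q_{1-\alpha}(\sum_{i=1}^\ell\lambda_i(T_i+s_i)^2)\geq\det(\mathbf{\Sigma}^{22})^{1/\ell}q_{1-\alpha}(\sum_{i=1}^\ell T_i^2)$, exactly as in \eqref{eq:first_tradeoff}--\eqref{eq:cond_vol_gen}. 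Substituting, the deterministic inequality is implied by $c_\alpha(k,\ell)\big(\det(\mathbf{\Sigma}_\lambda/\mathbf{\Sigma}_\lambda^{11})/\det(\mathbf{\Sigma}^{22})\big)^{1/\ell}\leq1$, which at $\lambda=0$ is precisely \eqref{eq:tradeoff}.

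It remains to show this last condition survives for small $\lambda$ and to pin down $\lambda_0$. I would set $\varphi(\lambda)\coloneqq\det(\mathbf{\Sigma}_\lambda/\mathbf{\Sigma}_\lambda^{11})=\det\!\big(\mathbf{\Sigma}^{22}+\lambda\mathbf{I}_\ell-\mathbf{\Sigma}^{21}(\mathbf{\Sigma}^{11}+\lambda\mathbf{I}_k)^{-1}\mathbf{\Sigma}^{12}\big)$. The matrix $\mathbf{\Sigma}^{22}+\lambda\mathbf{I}_\ell-\mathbf{\Sigma}^{21}(\mathbf{\Sigma}^{11}+\lambda\mathbf{I}_k)^{-1}\mathbf{\Sigma}^{12}$ has derivative $\mathbf{I}_\ell+\mathbf{\Sigma}^{21}(\mathbf{\Sigma}^{11}+\lambda\mathbf{I}_k)^{-2}\mathbf{\Sigma}^{12}\succ0$, so it is strictly increasing in the Loewner order; since the determinant is strictly monotone on the positive-definite cone, $\varphi$ is continuous and strictly increasing, with $\varphi(0)=\det(\mathbf{\Sigma}/\mathbf{\Sigma}^{11})$ and $\varphi(\lambda)\to+\infty$ as $\lambda\to\infty$ (indeed $\varphi(\lambda)\sim\lambda^\ell$). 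Under the strict form of \eqref{eq:tradeoff}, $c_\alpha(k,\ell)^\ell\varphi(0)<\det(\mathbf{\Sigma}^{22})$, so the intermediate value theorem gives a unique $\lambda_0>0$ with $c_\alpha(k,\ell)^\ell\varphi(\lambda_0)=\det(\mathbf{\Sigma}^{22})$, i.e.\ $\det(\mathbf{\Sigma}_{\lambda_0})=c_\alpha(k,\ell)^{-\ell}\det(\mathbf{\Sigma}_{\lambda_0}^{11})\det(\mathbf{\Sigma}^{22})$; and for every $\lambda\in[0,\lambda_0]$ the condition of the previous paragraph holds, giving $\Vol(\calE_{\alpha,\lambda}^\infty)\leq\Vol(\calB_\alpha^\infty)$.

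Most of this is bookkeeping, as Lemma \ref{lemma:distrib_norm_R1} and Propositions \ref{prop:asymptotics}, \ref{prop:barthe} carry the analytic content. The point I expect to require the most care is the first bound: replacing $\rho_{\infty,\alpha}$ by $q_{1-\alpha}^\infty$ throws away all dependence on $X_\rmc$, and one has to be confident this is not too lossy --- it is not, since the slack needed for a genuinely positive $\lambda_0$ is supplied entirely by the strict inequality \eqref{eq:tradeoff}. The matrix-analytic monotonicity of $\varphi$ is routine but is the only ingredient genuinely specific to this proposition.
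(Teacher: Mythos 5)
Your proof is correct and follows essentially the same route as the paper's: bound $(\rho_{\infty,\alpha})_+$ by $q_{1-\alpha}^\infty$, use $\mathbf{\Sigma}_\lambda^{-1}\preccurlyeq\mathbf{\Sigma}^{-1}$ together with $c_\alpha(k,\ell)$ and Proposition \ref{prop:barthe} to reduce everything to the scalar condition $c_\alpha(k,\ell)\det(\mathbf{\Sigma}_\lambda/\mathbf{\Sigma}_\lambda^{11})^{1/\ell}\leq\det(\mathbf{\Sigma}^{22})^{1/\ell}$, and then exploit strict monotonicity of $\lambda\mapsto\det(\mathbf{\Sigma}_\lambda/\mathbf{\Sigma}_\lambda^{11})$ plus the intermediate value theorem. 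The only (equally valid) deviation is your proof of that monotonicity via the Loewner-order derivative $\mathbf{I}_\ell+\mathbf{\Sigma}^{21}(\mathbf{\Sigma}_\lambda^{11})^{-2}\mathbf{\Sigma}^{12}\succ0$ of the Schur complement, where the paper instead factors the determinant ratio through the eigenvalues of $\mathbf{\Sigma}$ and $\mathbf{\Sigma}^{11}$ and invokes the interlacing theorem.
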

The condition \eqref{eq:tradeoff} can be relaxed to having the first part of equation \eqref{eq:cond_vol_gen} smaller than one. The algebraic equation for finding $\lambda_0$ is correspondingly changed as
\begin{align}
    \det(\mathbf{\Sigma}_{\lambda_0}) = \Bigg(\frac{q_{1-\alpha}(\sum_{i=1}^{k+\ell} T_i^2)}{q_{1-\alpha}(\sum_{i=1}^{\ell}\delta_i (T_{i}+s_i)^2)}\Bigg)^{-\ell}\det(\mathbf{\Sigma}_{\lambda_0}^{11})\det(\mathbf{\Sigma}^{22}).\label{eq:lambda_not_tight}
\end{align}
The condition \eqref{eq:tradeoff} enables to identify two distinctive features of the data, when aiming at optimizing the volume of $\calE_\alpha^\infty$: the intrinsic distribution of the $T_i$ and the values of $k$ and $\ell$ on the one hand, and the structure of $\mathbf{\Sigma}$ on the other hand.
In particular it shows that in order to minimize the volume of $\calE_{\alpha}^{\infty}$, we are  faced with a tradeoff when using many explanatory variables. As $k$ increases, the determinant of the Schur complement decreases \iain{(this can be seen from the block inversion lemma)}, but the ratio $c_{\alpha}(k,\ell)$ increases at the same time. 
This tradeoff is illustrated in Section \ref{sub:num_gaussian} (Table \ref{tab:optimal_k}).
\begin{remark}[Optimal choice of explanatory variables]\label{rk:optimal_k} In view of this tradeoff, a natural question is that of the selection of the best variables within $X_{1}$ to explain the response variable $R_{1}$, especially if $k$ is large. For example, if $R_{1}$ is independent of $(X_{1})_1$, the first coordinate of $X_{1}$, then taking it into account will not decrease the determinant of the Schur complement, as (when $\lambda = 0$) it is equal to the conditional covariance $\Cov(R_1|X_1)$. It will, however, increase the ratio $c_{\alpha}(k,\ell)$, thus increasing the volume of $\calE_{\alpha}^{\infty}$. Likewise, if $(X_{1})_1 = (X_{1})_2$, then only one of those random variables is required to explain $R_{1}$.
In our context, this question boils down to that of the research of active subspaces, which is e.g. studied in \cite{active_sbspace}. The use of active subspace methods in our framework is left for future work.
\end{remark}

To deal with the case $k\geq 1$, we used the bound \eqref{eq:part_pos_control}, which may seem very conservative. This is not entirely true, in the following sense. Setting $S = X_\rmc^\top (\mathbf{\Sigma}^{11})^{-1}X_\rmc$, the expected volume $\mathbb{E}[\text{Vol}(\calE_{\alpha}^{\infty})]$ is given by $\mathbb{E}[(q_{1-\alpha}^{\calE} - S)_+^{\ell/2}] = (q_{1-\alpha}^{\calE})^{\ell/2}\mathbb{E}[(1 - S/q_{1-\alpha}^{\calE})_+^{\ell/2}]$, up to a deterministic constant. Assuming that $q_{1-\alpha}^{\calE} \rightarrow + \infty$ when $\alpha \rightarrow 0^+$, we then deduce from the bound $0\leq (1-S/q)_+\leq 1 \ (q>0)$ and the dominated convergence theorem that 
\begin{align}
    \mathbb{E}[(q_{1-\alpha}^{\calE} - X_\rmc^\top (\mathbf{\Sigma}^{11})^{-1}X_\rmc)_+^{\ell/2}] \underset{\alpha \rightarrow 0^+}{\sim} (q_{1-\alpha}^{\calE})^{\ell/2}.\label{eq:hardy_littlewood}
\end{align}
In particular, the right-hand side in equation \eqref{eq:hardy_littlewood} corresponds to the bound one would obtain using the conservative bound \eqref{eq:part_pos_control}.

\begin{remark}[Conditional coverage for elliptical distributions]
If the distribution of $V_1$ is elliptical, it is possible to obtain the conditional distribution $R_1|X_1$ from the observation of $\beta \mapsto q_\beta(V_\rmc^\top\boldsymbol{\Sigma}^{-1}V_\rmc)$, as we can deduce the spherical density $g$ appearing in equation \eqref{eq:ellip_assumption} from this quantile map. For all $x$, using the definition of the conditional density and the knowledge (or estimates) of $g, \mu$ and $\boldsymbol{\Sigma}$, one can do derive the conditional distribution $R|X=x$, and $q_{1-\alpha}(V_\rmc^\top\boldsymbol{\Sigma}^{-1}V_\rmc|X_1=x)$ in particular, in integral/implicit form. As remarked in Section \ref{ssec:asymp_ellips}, this conditional quantile is the required quantity to recover conditional coverage using our methodology. The use of this observation in a practical setting is left for future work.
\end{remark}
\iain{Contrarily to $\calE_\alpha^\infty$, the case $k\geq 1, \lambda = 0$ is easily dealt with for $\calF_\alpha^\infty$.
\begin{proposition}\label{prop:vol_fna} Set $\lambda = 0$, and assume that $V_1$ follows and elliptical distribution with a non-increasing generator $g : \mathbb{R}^+\rightarrow \mathbb{R}^+$. Then we have, almost surely,
\begin{align}
    \Vol(\calF_\alpha^\infty) \leq \Vol(\calB_\alpha^\infty).
\end{align}
\end{proposition}
The key to Proposition \ref{prop:vol_fna}  is the observation that $T_\rmc^\top\boldcalA_\infty^{-1}T_\rmc \stackrel{\rmd}{=} \sum_{i=1}^\ell T_i^2$. This fact can be used together with Lemma \ref{lemma:distrib_norm_R1} and Proposition \ref{prop:barthe} to obtain the desired result. As opposed to Proposition \ref{prop:gen_k}, the dependence on $k$ of the asymptotic volume of $\calF_\alpha^\infty$ is milder than that of $\calE_\alpha^\infty$, when $V_1$ is elliptical. In particular, minimizing $\Vol(\calF_\alpha^\infty)$ as a function of $k$ is equivalent to minimizing the determinant
\begin{align}
    \det(\mathbf{\Sigma}/\mathbf{\Sigma}^{11}) = \det(\mathbf{\Sigma}^{22} - \mathbf{\Sigma}^{21}(\mathbf{\Sigma}^{11})^{-1}\mathbf{\Sigma}^{12}),
\end{align}
which can be shown to be a decreasing function of $k$ using the block-inversion lemma. Thus, one can only hope the this quantity becomes stationary for large $k$, and large values of $k$ (a form of complexity) are not penalised, contrarily to $\calE_\alpha^\infty$.
}
\paragraph*{Non elliptical distributions} In this paragraph, we assume that $k=0$ and $\lambda = 0$, so that $\mathbf{\Sigma}_{\lambda} = \mathbf{\Sigma} = \mathbf{\Sigma}^{22} = \Cov(R_1)$ \iain{and $\calE_\alpha^\infty = \calF_\alpha^\infty$}. Without ellipticity assumptions, the comparison of $\Vol(\calB_{\alpha}^{\infty})$ and $\Vol(\calE_{\alpha}^{\infty})$ can still be studied for small $\alpha$ by comparing the tail behaviour of the distributions of $\|\mathbf{\Sigma}^{-1/2}(R_1-\mathbb{E}[R_1])\|^2$ and $\|R_1\|^2/\det(\mathbf{\Sigma})^{1/\ell}$. Indeed, if
\begin{align}\label{eq:good_score}
    \mathbb{P}(\|\mathbf{\Sigma}^{-1/2}(R_1-\mathbb{E}[R_1])\|^2>t) \leq \mathbb{P}(\|R_1\|^2/\det(\mathbf{\Sigma})^{1/\ell} >t) \ \ \ \text{for large $t$},
\end{align}
then we can show that $\Vol(\calE_{\alpha}^{\infty})\leq\Vol(\calB_{\alpha}^{\infty})$ for $\alpha$ small enough (e.g. adapt the proof of Proposition \ref{prop:contre_exemple}, equations \eqref{eq:compare_logs} and \eqref{eq:compare_quantiles_mimica}). In this context, the precise decay of the tail distribution of nonnegative random variables is studied in \cite{mimica}, when the decay is either exponential or polynomial. More precisely, \cite{mimica} proves Tauberian theorems, which link this decay with properties of the Laplace transform of the said distribution. The full use of the results of \cite{mimica} in our context is left for future work. For the time being we simply observe that, without ellipticity assumptions, it is unreasonable to expect that a scoring rule solely based on a covariance analysis of the residuals will always yield smaller confidence regions than those of the norm residual score. In that sense, the next proposition provides a counterexample for which our method underperforms when compared to the norm residual score. This proposition relies on \cite{mimica}, Theorem 1.2.
\begin{proposition}\label{prop:contre_exemple}
Let $\lambda_1,  \ldots,\lambda_{\ell}>0$ be such that there exists $i\neq j$ such that $\lambda_i\neq\lambda_j$, and define $\delta_i \coloneqq \lambda_i/(\lambda_1 \cdots \lambda_{\ell})^{1/\ell}$. Then there exists a random vector $T\in\mathbb{R}^{\ell}$ with $\mathbb{E}[T]=0$ and $\Cov(T) = \mathbf{I}_{\ell}$  such that for large $t$,
\begin{align}
    \mathbb{P}\bigg(\sum_{i=1}^{\ell}T_i^2>t\bigg) > \mathbb{P}\bigg(\sum_{i=1}^{\ell}\delta_iT_i^2 >t\bigg).    
\end{align}
In particular, if $R_1 = \bfD T$ where $\bfD$ is diagonal such that $\bfD_{ii} = \lambda_i^{1/2}$, then $\mathbb{E}[R_1] = 0$, $\mathbf{\Sigma} = \Cov(R_1) = \bfD^2$ and for large $t$,
\begin{align}\label{eq:bad_score}
    \mathbb{P}(\|\mathbf{\Sigma}^{-1/2}R_1\|^2>t) > \mathbb{P}(\|R_1\|^2/\det(\mathbf{\Sigma})^{1/\ell} >t).
\end{align}
Thus, if $k=0$ and $\lambda = 0$, then for $\alpha$ small enough , $\Vol(\calB_{\alpha}^{\infty}) < \Vol(\calE_{\alpha}^{\infty})$.
\end{proposition}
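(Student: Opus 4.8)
The three displayed claims are nested, and I would dispatch the last two first. The ``in particular'' statement about $R_1=\bfD T$ is immediate from the first claim: with $\bfD_{ii}=\lambda_i^{1/2}$ one has $\mathbb{E}[R_1]=0$, $\mathbf{\Sigma}=\Cov(R_1)=\bfD^2$, $\|\mathbf{\Sigma}^{-1/2}R_1\|^2=\|T\|^2=\sum_i T_i^2$ and $\|R_1\|^2/\det(\mathbf{\Sigma})^{1/\ell}=\sum_i\delta_i T_i^2$. The volume statement then follows from this together with Propositions \ref{prop:asymptotics} and \ref{prop:asymptotic_ball}: when $k=0$ and $\lambda=0$, $\boldcalA_\infty=\mathbf{\Sigma}$ and $\rho_{\infty,\alpha}=q_{1-\alpha}^\infty=q_{1-\alpha}(\|\mathbf{\Sigma}^{-1/2}R_1\|^2)$, so that $\big(\Vol(\calE_\alpha^\infty)/\Vol(\calB_\alpha^\infty)\big)^{2/\ell}=q_{1-\alpha}(\|\mathbf{\Sigma}^{-1/2}R_1\|^2)/q_{1-\alpha}(\|R_1\|^2/\det(\mathbf{\Sigma})^{1/\ell})$, exactly as in the discussion around equation \eqref{eq:good_score}. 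A standard argument shows that if a nonnegative random variable $U$ with continuous, unbounded-support distribution strictly tail-dominates another such variable $U'$ for large $t$ (i.e. $\mathbb{P}(U>t)>\mathbb{P}(U'>t)$), then $q_u(U)>q_u(U')$ for all $u$ close enough to $1$; applying this with $U=\sum_i T_i^2$, $U'=\sum_i\delta_i T_i^2$ and $u=1-\alpha$ gives $\Vol(\calB_\alpha^\infty)<\Vol(\calE_\alpha^\infty)$ for $\alpha$ small. So it remains to construct $T$.

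The key observation is that since $\prod_i\delta_i=1$ while the $\delta_i$ are not all equal, there is an index $j_0$ with $\delta_{j_0}=\min_i\delta_i<1$, i.e. the map $\delta\mapsto\sum_i\delta_i T_i^2$ \emph{contracts} the $j_0$-th axis. I would therefore build $T$ so that its heavy tail is carried entirely by coordinate $j_0$: let $T_{j_0}$ be a symmetric, finite-variance random variable whose squared tail $t\mapsto\mathbb{P}(T_{j_0}^2>t)$ is strictly decreasing for $t$ large (e.g. a symmetrised Pareto with tail index larger than $2$), normalised so that $\mathbb{E}[T_{j_0}^2]=1$; let the remaining $T_i$ ($i\neq j_0$) be bounded, symmetric, and mutually independent and independent of $T_{j_0}$, each normalised to unit variance. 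Then $\mathbb{E}[T]=0$ and $\Cov(T)=\mathbf{I}_\ell$, and all distribution functions that appear are continuous (as needed to invoke Propositions \ref{prop:asymptotics} and \ref{prop:asymptotic_ball}) since $T_{j_0}$ has a density. The precise polynomial (or stretched-exponential) decay of $\mathbb{P}(T_{j_0}^2>t)$ — which is what makes the strict inequality below transparent, and could even be used to exhibit $T_{j_0}$ through its Laplace transform — is supplied by the Tauberian theorem \cite{mimica}, Theorem~1.2.

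With this $T$, write $B\coloneqq\sum_{i\neq j_0}T_i^2\le M^2$ almost surely and set $C\coloneqq(\max_i\delta_i)M^2$. On one hand $\sum_i T_i^2\ge T_{j_0}^2$, so $\mathbb{P}(\sum_i T_i^2>t)\ge\mathbb{P}(T_{j_0}^2>t)$. On the other hand $\sum_i\delta_i T_i^2\le\delta_{j_0}T_{j_0}^2+C$, so $\mathbb{P}(\sum_i\delta_i T_i^2>t)\le\mathbb{P}(\delta_{j_0}T_{j_0}^2>t-C)=\mathbb{P}\big(T_{j_0}^2>(t-C)/\delta_{j_0}\big)$. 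Since $\delta_{j_0}<1$, one has $(t-C)/\delta_{j_0}>t$ as soon as $t>C/(1-\delta_{j_0})$, and then, the tail of $T_{j_0}^2$ being strictly decreasing there, $\mathbb{P}\big(T_{j_0}^2>(t-C)/\delta_{j_0}\big)<\mathbb{P}(T_{j_0}^2>t)$. Chaining these three inequalities gives $\mathbb{P}(\sum_i T_i^2>t)>\mathbb{P}(\sum_i\delta_i T_i^2>t)$ for all large $t$, which is the first claim.

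I expect the construction to be the only real obstacle. One must notice that an i.i.d.\ choice of the $T_i$ can \emph{never} work: by the single-big-jump principle for subexponential tails, or by a convexity / power-mean computation of the relevant large-deviation rates (using $\prod_i\delta_i=1$), amplifying any one coordinate by the factor $\max_i\delta_i>1$ always makes $\sum_i\delta_i T_i^2$ the heavier-tailed variable, so the heavy mass must be placed anisotropically along the contracted axis $j_0$. Everything downstream — the tail chain above, the reduction to quantiles and volumes, and the appeal to \cite{mimica} for the exact decay — is then routine.
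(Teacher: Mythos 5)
Your proof is correct, but the construction is genuinely different from the paper's. The paper takes $T_i$ independent with $T_i^2\sim\gamma(\delta_i,\delta_i^{-1})$ (random signs), so that every coordinate has an exponential tail but with coordinate-dependent rate $\delta_i$; it then reads off the abscissae of convergence of the Laplace transforms of $\sum_iT_i^2$ and $\sum_i\delta_iT_i^2$ (namely $-\min_i\delta_i$ and $-1$) and invokes the Tauberian theorem of \cite{mimica}, Theorem 1.2, to compare the logarithmic decay rates, using $\min_i\delta_i<1$. You instead concentrate the entire heavy tail on the single coordinate $j_0$ with $\delta_{j_0}=\min_i\delta_i<1$, keep the other coordinates bounded, and sandwich: $\mathbb{P}(\sum_iT_i^2>t)\geq\mathbb{P}(T_{j_0}^2>t)>\mathbb{P}(T_{j_0}^2>(t-C)/\delta_{j_0})\geq\mathbb{P}(\sum_i\delta_iT_i^2>t)$. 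Both arguments hinge on the same observation (since $\prod_i\delta_i=1$ and the $\delta_i$ are not all equal, some $\delta_{j_0}<1$, and the $T_i$ must not be identically distributed), but yours is entirely elementary — with an explicit Pareto tail your appeal to \cite{mimica} is decorative and can be dropped — whereas the paper's gamma construction keeps all coordinates "active" and fits the Tauberian framework it discusses around equation \eqref{eq:good_score}. Your handling of the downstream steps (the identification $\|\mathbf{\Sigma}^{-1/2}R_1\|^2=\sum_iT_i^2$, $\|R_1\|^2/\det(\mathbf{\Sigma})^{1/\ell}=\sum_i\delta_iT_i^2$, and the passage from strict tail dominance to quantile and volume comparison for small $\alpha$) matches the paper's. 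One minor point to make rigorous: the continuity/strict monotonicity of the relevant quantile functions near $1-\alpha$, needed to apply Propositions \ref{prop:asymptotics} and \ref{prop:asymptotic_ball} and to reverse the CDF inequality, should be checked explicitly; it does hold in your construction because $T_{j_0}^2$ has a positive density on its whole tail.
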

We see that the inequality \eqref{eq:bad_score} is reversed when compared to \eqref{eq:good_score}.
The vector $T$ above is built using independent gamma distributions : $T_i^2\sim\gamma(\delta_i,\delta_i^{-1})$. The proof of Proposition \ref{prop:contre_exemple} relies on the fact that, despite $T_i$ being centered with unit variance, the tails of $T_i^2$ do not decay at the same rate (in particular, the $T_i$ are not identically distributed). Note that in general, such a property cannot hold if the $T_i$ are built from a one dimensional family of distributions, as fixing the variance would determine the decay rate. This is the reason why we used general gamma distributions $\gamma(k,\theta)$. In the case $k\geq 1$, \iain{the same counterexample where $\ell$ is replaced by $p$ works for $\calF_\alpha^\infty$. Indeed, from the independence of the $T_i$, $\mathbf{\Sigma}$ is diagonal, hence $Z_0^\infty = 0$ a.s.,  $\mathbf{\Sigma}/\mathbf{\Sigma}^{11} = \mathbf{\Sigma}^{22}$ and the conclusion of Proposition \ref{prop:contre_exemple} applies}. Counter-examples in expectation \iain{with $\calE_\alpha^\infty$} are easily built for $\alpha \rightarrow 0^+$ using centered spherical distributions ($\mu = 0, \ \boldsymbol{\Sigma} = \boldsymbol{I}_p$). Indeed, adapting equation \eqref{eq:first_tradeoff} applied to such distributions and using equation \eqref{eq:hardy_littlewood}, we have  $\mathbb{E}[\Vol(\calE_{\alpha}^{\infty})]/\Vol(\calB_{\alpha}^{\infty}) \sim_{\alpha\rightarrow 0^+} c_\alpha(k,\ell)^{\ell/2} >1$.



\section{Numerical application} \label{sec:numexp}

The code for generating both data and figures is given in
    \url{https://github.com/iain-pl-henderson/PyConfCov}.
The workflow is as follow. We first train the predictor on $n_{\nsplit}$ data points $(X_i, Y_i)$. Next, for each of the $n_{\test}$ test points, we generate $n_{\calib}$ calibration points and perform CCLE. By averaging over those $n_{\test}$ experiments, we obtain a realisation of the empirical average volume and coverage of the resulting ellipsoids, denoted by $\widehat{\mathbb{E}}_{n_{\test}}[\Vol(\calE_{\alpha}^{n_{\calib}})]$ and $1-\widehat{\mathbb{E}}_{n_{\test}}[\alpha_{\calE_{\alpha}^{n_\calib}}]$ (replace $\calE_\alpha^{n_\calib}$ with $\calF_\alpha^{n_\calib}$ if using $\calF_\alpha^{n_\calib}$). We use the same notations for the balls of the norm residual score, replacing $\calE$ with $\calB$ in those notations. We perform this complete procedure (except for the training of the predictor) $n_{\histo}$ times, to obtain a histogram of the distributions of $\widehat{\mathbb{E}}_{n_{\test}}[\Vol(\calE_{\alpha}^{n_{\calib}})]$ and $1-\widehat{\mathbb{E}}_{n_{\test}}[\alpha_{\calE_{\alpha}^{n_\calib}}]$. We denote the empirical means of those histograms as $\widehat{\mathbb{E}}_{n_{\histo}}\widehat{\mathbb{E}}_{n_{\test}}[\Vol(\calE_{\alpha}^{n_{\calib}})]$, and so forth.
\subsection{Gaussian data}\label{sub:num_gaussian}
We consider the case of iid observations of Gaussian random vectors $U_1, \dots  , U_{n+1},\ U_i\in\Rbb^{p}$, where $k = 6,\ \ell = 3$ and $p = k+ \ell = 9$, following a $\calN(0,\mathbf{\Sigma})$ distribution. Here, we choose $\mathbf{\Sigma}$ such that $\mathbf{\Sigma}_{i,j} = k_{3/2}(i-j)$, where $k_{\nu}$ is a Matérn covariance function of order $\nu$, with variance $\sigma^2=1$ and lengthscale $L = 5$ (see equation \eqref{eq:matern-3/2}). Such a Gaussian vector is a sampling of a centered one dimensional Gaussian process $(U_t)_{t > 0}$ at integer grid points $\{1, \dots , 9\}$, where $(U_t)_{t>0}$ has the said $3/2-$Matérn covariance function. The considered problem is thus that of the \iain{simultaneous} conformal inference for the $\ell = 3$ next time steps, given the previous $k = 6$ time steps. In the numerical experiments, we will represent the results $(U_{7}, U_{8}, U_{9})$ in $(x,y,z)$ coordinates.

The $3/2$-Matérn Gaussian process $(U_t)_{t > 0}$ is a continuous $\AR(2)$ process, meaning that it is a solution to a second order SDE \cite{gpml}, Appendix B.2. As a consequence, the knowledge of $U_{t-\dt}$ and $U_t$ determines the value of $U_{t+\dt}$ up to a noise of variance of order $O(\dt)$. If the lengthscale $L$ is large, then the vector $(U_1, \ldots, U_9)$ can be understood as a sampling of the Gaussian vector $(U_t)$ with a small sampling period $T_{\mathrm{sample}}$ compared to the characteristic length of the process ($T_{\mathrm{sample}} = 1 \ll L$). As such, using our covariance based score, we expect that for large lengthscales $L$, $U_{7}$ ($x$ coordinate) will hold low uncertainty, $U_{8}$ ($y$ coordinate) medium uncertainty and $U_{9}$ ($z$ coordinate) will hold most of the uncertainty.

The predictor $\widehat{f}$ is obtained by ridge regression trained on previous split data, that is $\widehat{Y}_i = \widehat{\beta}^\top X_i$ with $\widehat{\beta} = (X_{\mathrm{s}}^\top X_{\mathrm{s}} + \mu_0 I)^{-1}X_{\mathrm{s}}^\top Y_{\mathrm{s}}$, where $(X_{s} \ Y_{\mathrm{s}})\in\calM_{n_\nsplit,p}$ denotes the training data, stored row-wise. Following the split conformal inference framework, the matrix $\widehat{\beta}$ will be considered deterministic in the numerical experiments.
All the results correspond to $\alpha = 0.1$. 

\paragraph*{Comparison with the norm residual score}

\begin{figure}[ht!]\centering
\begin{subfigure}[b]{0.2\textwidth}
    \includegraphics[width=\textwidth]{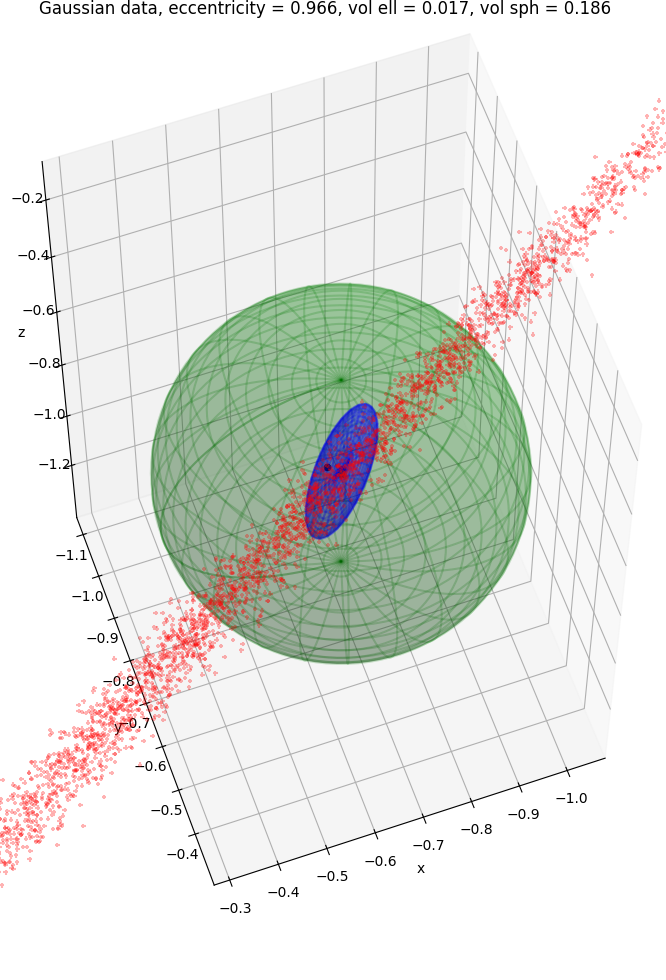}
    \caption{Top view}
\end{subfigure}\hspace{2cm}
\begin{subfigure}[b]{0.2\textwidth}
    \includegraphics[width=\textwidth]{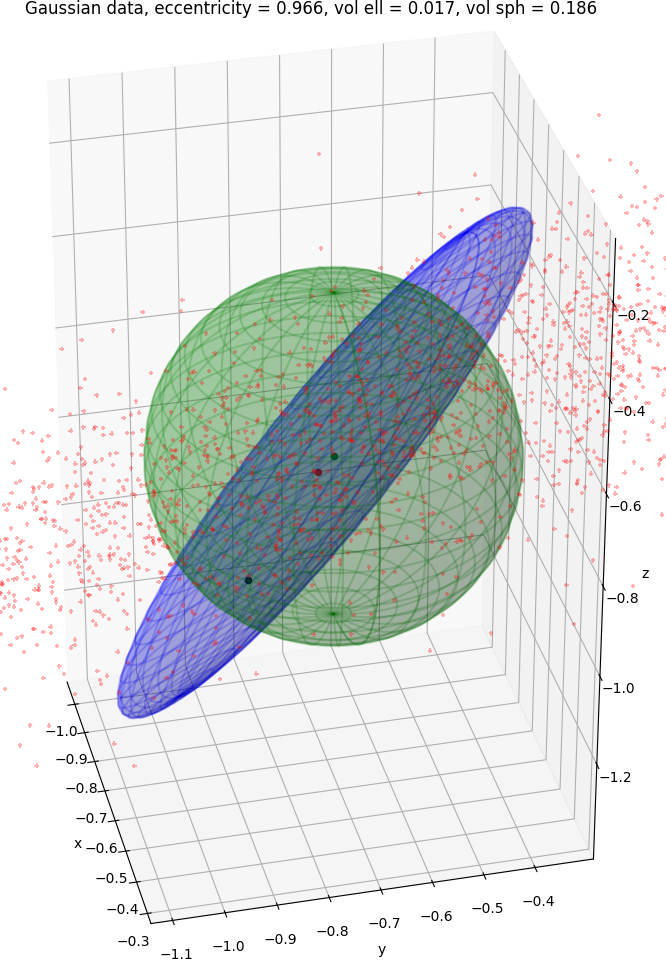}
    \caption{Side view (same example)}
\end{subfigure}
    \caption{Sample example of $\calE_\alpha^n$, Gaussian data. Black dot: real value position. Large red dot: center of ellipsoid. Green dot: predictor $\widehat{Y}_{n+1}$ (also the center of the sphere). Small red dots: other residuals ($n = 2000$).}
    \label{fig:gaussian}
\end{figure}
The histograms in Figure \ref{fig:histo_gaussian} correspond to the Gaussian data described above. They correspond to $n_{\nsplit} = 5000, \ n_{\calib} = 200, \ n_{\test} = 100$ and $n_{\histo} = 2000$.
We see that $\widehat{\mathbb{E}}_{n_{\histo}}\widehat{\mathbb{E}}_{n_{\test}}[\Vol(\calE_{\alpha}^{n_{\calib}})] = 1.54$ \iain{and $\widehat{\mathbb{E}}_{n_{\histo}}\widehat{\mathbb{E}}_{n_{\test}}[\Vol(\calF_{\alpha}^{n_{\calib}})] = 0.895$} while we have $\widehat{\mathbb{E}}_{n_{\histo}}\widehat{\mathbb{E}}_{n_{\test}}[\Vol(\calB_{\alpha}^{n_{\calib}})] = 9.35$, meaning that the volume of the confidence regions is, on average, divided by $6$ using the score $S(z)$ \iain{and divided by $10$ using the score $S'(z)$}. We also see that our method slightly overcovers (top left and middle histograms): the average coverage for \iain{$\calE_{\alpha}^{n_{\calib}}$ and $\calF_{\alpha}^{n_{\calib}}$} is $1-\widehat{\mathbb{E}}_{n_{\test}}[\alpha_{\calE_{\alpha}^{n_\calib}}] \simeq 1-\widehat{\mathbb{E}}_{n_{\test}}[\alpha_{\calF_{\alpha}^{n_\calib}}] \simeq 0.905$, while it is $1-\widehat{\mathbb{E}}_{n_{\test}}[\alpha_{\calB_{\alpha}^{n_\calib}}] = 0.9004$ for the sphere, for a target of $0.9$. This is expected from the construction of \iain{$\calE_{\alpha}^{n_{\calib}}$ and $\calF_{\alpha}^{n_{\calib}}$}, since $n_{\calib} = 200$ is far from the asymptotic regime (see Lemma \ref{lemma:approx_score} and Proposition \ref{prop:bound}). Still, concerning the volume of $\calF_\alpha^n$ (middle column of Figure \ref{fig:histo_gaussian}, bottom row), we can see that the convergence of $\Vol(\calF_\alpha^n)$ to its limit  $\Vol(\calF_\alpha^\infty)$ is already quite advanced, from the width of the associated histogram. 

A random realisation of $\calE_\alpha^n$ is given in Figure \ref{fig:gaussian} \iain{(the corresponding  $\calF_\alpha^n$ has the same appearance, the only difference with $\calE_\alpha^n$ being the squared radius $\rho_{n,\alpha}$)}. This figure shows that that the calibration residuals (red dots) are not distributed along the blue ellipsoid. This is also expected : the calibration residuals have a covariance $\Cov(R_1) = \mathbf{\Sigma}^{22}$, while the matrix associated to $\calE_{\alpha}^{\infty}$ is $\Cov(R_1|X_1) = \mathbf{\Sigma}_{\lambda}^{22} - \mathbf{\Sigma}^{21}(\mathbf{\Sigma}_{\lambda}^{11})^{-1}\mathbf{\Sigma}^{12}$. 
\begin{figure}[ht!]
\centering
\begin{subfigure}[b]{\textwidth}
    \centering
\includegraphics[width=0.7\textwidth]{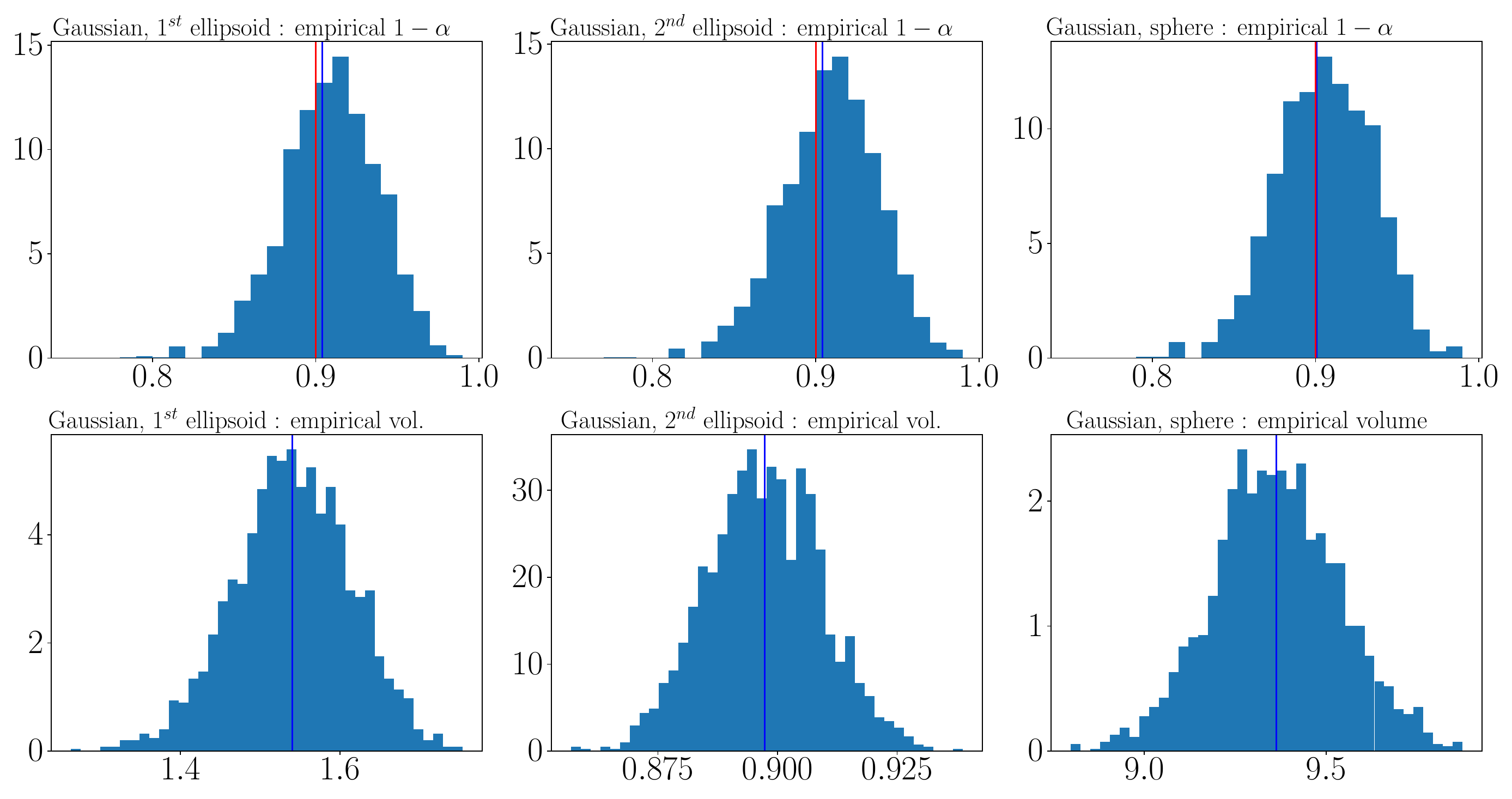}
    \caption{Histograms of empirical volumes and coverage ($1^\text{st}$ row : 25 bins; $2^\text{nd}$ row : 40 bins). Red lines : $x = 1-\alpha = 0.9$. Blue lines : mean of each histogram. $1^\text{st}$ row (left to right) : histograms of $1-\widehat{\mathbb{E}}_{n_{\test}}[\alpha_{\calE_{\alpha}^{n_\calib}}], 1-\widehat{\mathbb{E}}_{n_{\test}}[\alpha_{\calF_{\alpha}^{n_\calib}}], 1-\widehat{\mathbb{E}}_{n_{\test}}[\alpha_{\calB_{\alpha}^{n_\calib}}]$.  $2^\text{nd}$ row (left to right) : histograms of  $\widehat{\mathbb{E}}_{n_{\test}}[\Vol(\calE_{\alpha}^{n_{\calib}})], \widehat{\mathbb{E}}_{n_{\test}}[\Vol(\calF_{\alpha}^{n_{\calib}})], \widehat{\mathbb{E}}_{n_{\test}}[\Vol(\calB_{\alpha}^{n_{\calib}})]$.}
    \label{fig:histo_gaussian}
\end{subfigure}\\
\begin{subfigure}[b]{0.4\textwidth}
    \includegraphics[width=\textwidth]{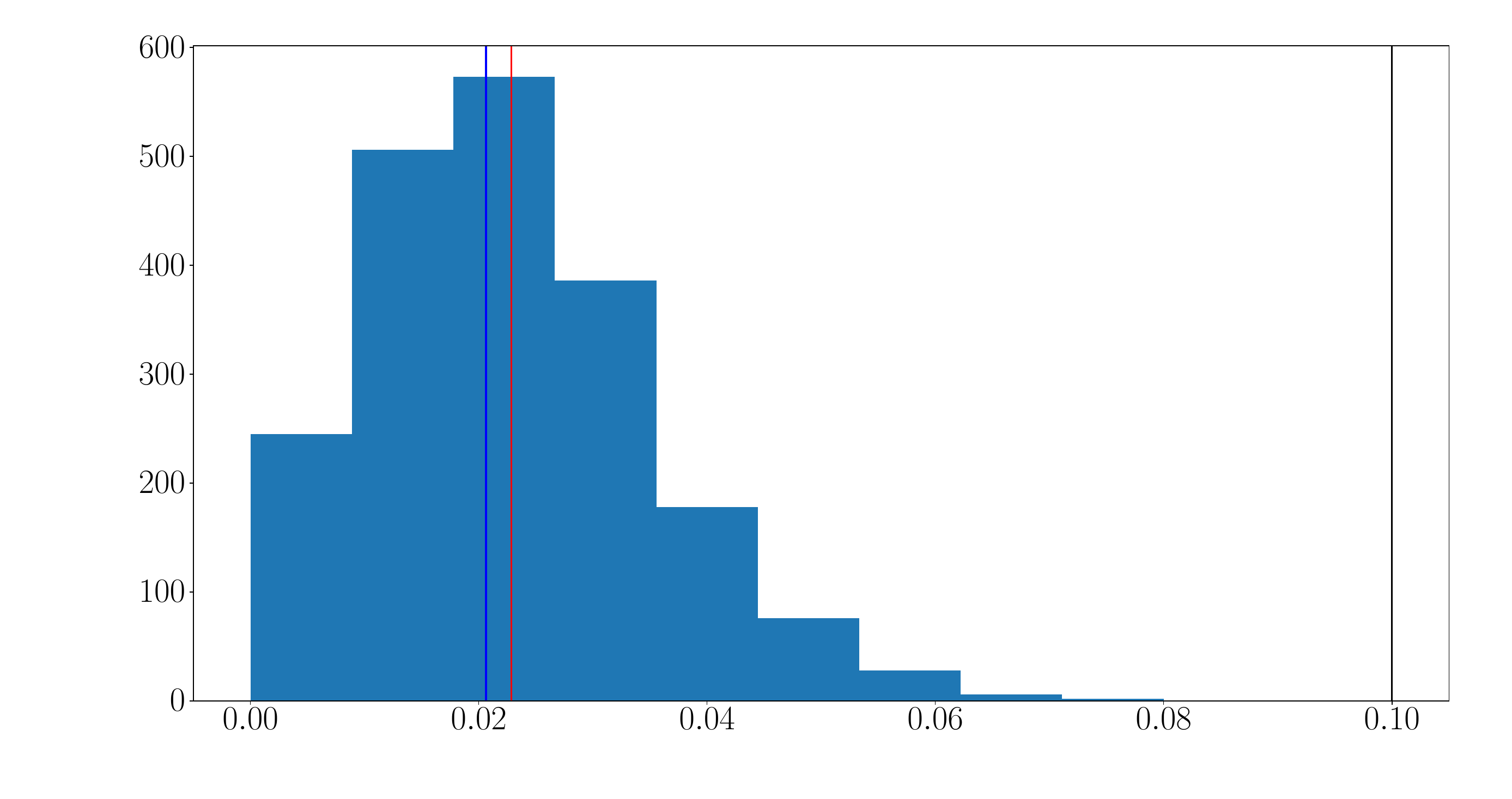}
    \caption{Histogram of the proportion of empty $\calE_\alpha^{n_\calib}$. Blue line : mean of the histogram. red line : asymptotic value given by eq. \eqref{eq:proba_empty_gauss}. Black line : $\alpha = 0.1$.}
    \label{fig:empty_gaussian}
\end{subfigure}\hspace{1.5cm}
\begin{subfigure}[b]{0.4\textwidth}
    \includegraphics[width=\textwidth]{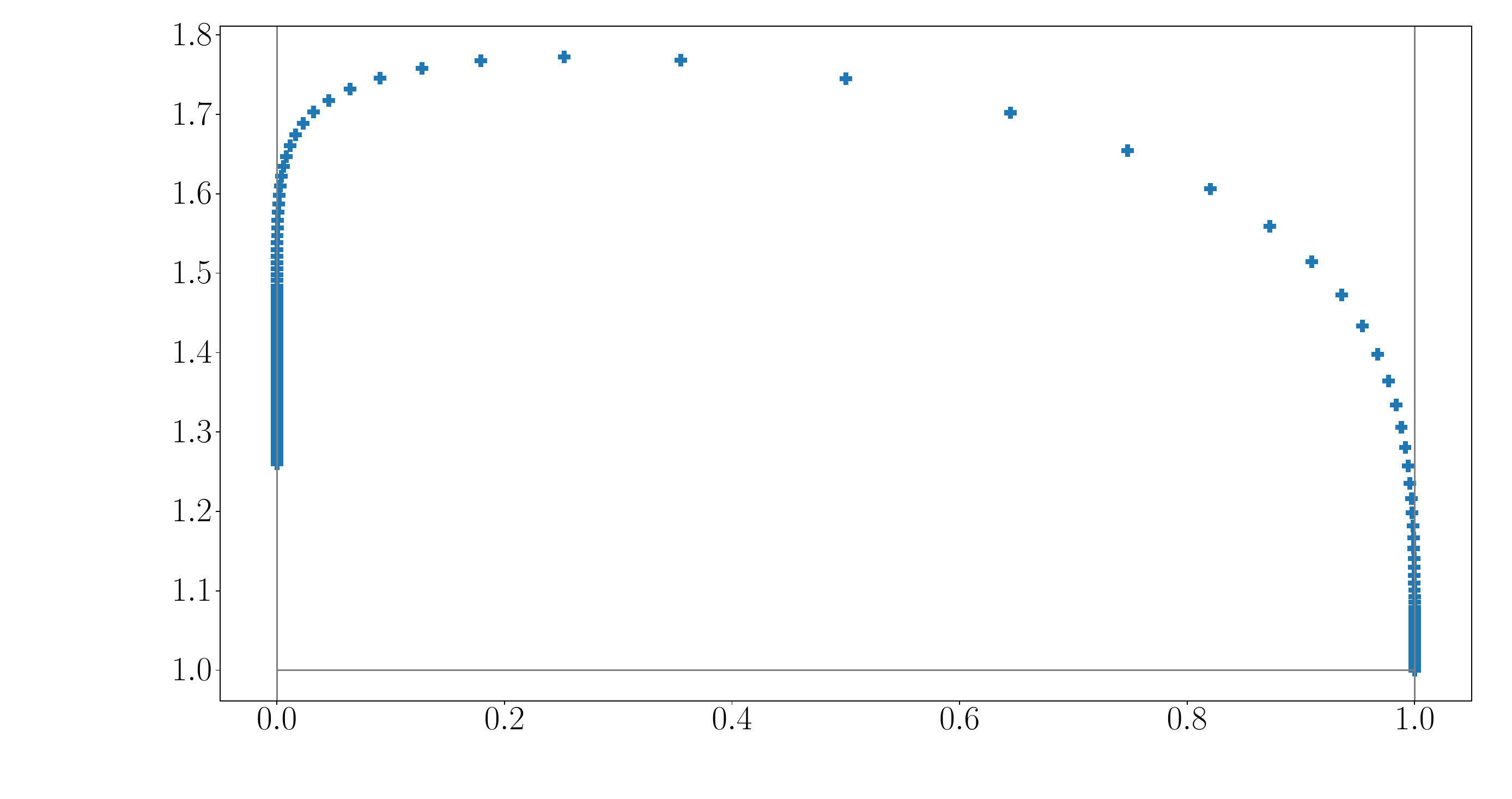}
    \caption{Plot of the ratio $\mathbb{E}[\Vol(\calE_\alpha^\infty)]/\Vol(\calF_\alpha^\infty)$, as a function of $\alpha \in [10^{-15},1-10^{-15}]$, for Gaussian data. This ratio is always greater than $1$.}
    \label{fig:ratio_vol_gaussian}
\end{subfigure}
\caption{Numerical results, Gaussian data with $\alpha = 0.1$, \ $n_{\nsplit} = 5000, \ n_{\calib} = 200, \ n_{\test} = 500$ and $n_{\histo} = 1000$.}
\end{figure}

\paragraph*{Study of the moments $\mathbb{E}[\Vol(\calE_{\alpha}^n)^q]$}
Table \ref{tab:expectations} provides an empirical validation of formula \eqref{eq:exp_vol_gauss}. These results were obtained on a centered $3/2$-Matérn Gaussian random vector with variance $1$ and lengthscale $L=5$. The sample sizes are $n_{\nsplit} = 5000,\ n_{\calib} = 50000,\ n_{\test} = 40,\ n_{\histo} = 1000$. The relative error corresponds to 
\begin{align}
    e_{\mathrm{rel}} =\big|\mathbb{E}[\Vol(\calE_{\alpha}^{\infty})^q] -\widehat{\mathbb{E}}_{n_{\histo}}\widehat{\mathbb{E}}_{n_{\test}}[\Vol(\calE_{\alpha}^{n_\calib})^q]\Big|\Big/{\mathbb{E}[\Vol(\calE_{\alpha}^{\infty})^q]}.
\end{align}

\begin{table}[b!]
    \centering\footnotesize
    \begin{tabular}{c|ccc}\hline
         $q$ & $\mathbb{E}[\Vol(\calE_{\alpha}^{\infty})^q]$ & $\widehat{\mathbb{E}}_{n_{\histo}}\widehat{\mathbb{E}}_{n_{\test}}[\Vol(\calE_{\alpha}^{n_\calib})^q]$ & Rel. err. $e_{\mathrm{rel}}$ \\ \hline
         1 &  1.4007 & 1.4039 & 2.3e-3\\
         2 &  2.4183 & 2.4295 & 4.6e-3\\
         3 &  4.5815 & 4.6096 & 6.1e-3\\ \hline
    \end{tabular}
    \caption{Empirical validation of formula \eqref{eq:exp_vol_gauss}.}
    \label{tab:expectations}
\end{table}

\paragraph*{Optimal number of input dimensions $k$} In this paragraph, we illustrate the tradeoff principle \iain{concerning $\calE_\alpha^n$}, as discussed following Proposition \ref{prop:gen_k}. we consider a variance $\sigma^2 = 1$ and a lengthscale $L = 5$. Following \cite{gpml}, Section 4.2, Gaussian processes with covariance function $k_{q-1/2}$ are $\AR(q)$ processes. As such, we expect the optimal value of $k$ to be $k_{\opt}=q$. Indeed, seeing the $\AR(q)$ relation as a linear recursive sequence of order $q$, one should at least use $(U_{6}, \ldots, U_{7-q})$ to predict $U_7$ accurately, and the minimal reasonable value of $k$ is thus equal to $q$. This fact is confirmed by Table \ref{tab:optimal_k}, which provides empirical estimates of $\mathbb{E}[\Vol(\calE_{\alpha}^n)]$. Do note that in numerical experiments, especially for $q=3$ and $q=4$, it may happen that the optimal value identified with such empirical estimates be $k_{\opt} = q+1$ and not $q$. Indeed, the corresponding values in the table below are 0.135 versus 0.142 ($q=3$) and 2.21e-02 versus 2.29e-02 ($q=4$). In fact it should be observed in this example that, after $k_{\opt}$ is reached, the average volume only slightly increases for $k>k_{\opt}$. On the contrary, until $k_{\opt}$ is reached, the average volume is potentially much poorer.

\begin{table}[b!]
    \centering\footnotesize
    \begin{tabular}{c|ccccccc}\hline
         \diagbox{Type}{$k$} &0 & 1& 2 & 3  & 4 & 5 & 6\\ \hline
$q = 1$ &22.9	&\textbf{15.2}	&17.1	&19.1	&20.5	&22.4	&23.8\\ \hline

$q = 2$ &4.96	&1.32	&\textbf{1.12}	&1.23	&1.33	&1.43	&1.53\\ \hline

$q = 3$ &2.31	&3.02e-01	&1.49e-01	&\textbf{1.35e-01}	&1.42e-01	&1.54e-01	&1.63e-01\\ \hline

$q = 4$ &1.61	&1.25e-01	&3.69e-02	&2.37e-02	&\textbf{2.21e-02}	&2.29e-02	&2.41e-02\\ \hline
    \end{tabular}
    \caption{Empirical volume of the confidence ellipsoid $\calE_\alpha^n$, for $\alpha = 0.1$, for Gaussian data.}
    \label{tab:optimal_k}
\end{table}

\begin{remark}
Even though CCLE is formulated in the standard regression framework, the presented example is essentially a time-series example. However, we do not address the typical issues encountered in conformal inference for time series, such as the balance between longitudinal and transversal coverage \cite{Linetal22a}. The analysis of our method in the time-series framework is the topic of an upcoming paper.
\end{remark}
\subsection{Cauchy data}\label{sub:num_cauchy}
Proposition \ref{prop:asymptotics} suggests that the confidence regions $\calE_{\alpha}^n$ may be expected to blow up in volume for heavy tailed distributions, as the limiting ellipsoid is expressed in terms of the covariance of the data, which is undefined in this case. On the contrary, since the limit sphere from the standard score $S_i' = \|Y^i-\widehat{Y}^i\|^2$ have a deterministic asymptotic volume, one could expect the standard score to beat our covariance based score in terms of volume. Here we consider a central multivariate Cauchy distribution, $U_1\sim C(0, \mathbf{\Sigma})$ \cite{kulik2020}, where $\mathbf{\Sigma}$ is the same as that of Section \ref{sub:num_gaussian}, i.e. $\mathbf{\Sigma}_{ij} = k_{3/2}(i-j), \ \sigma^2 = 1, \ L = 5$ ($\mathbf{\Sigma}$ cannot be interpreted as a covariance matrix anymore). In particular, $C(0,\mathbf{\Sigma})$ is an elliptical distribution. The numerical results are given in Figure \ref{fig:histo_cauchy}, which corresponds to $n_{\nsplit}=3\times 10^6, \ n_\calib = 10^4, \  n_\test = 800$ and $n_\histo = 10^3$ (we have used high values for $n_\nsplit$ and $n_\calib$, hoping that the corresponding numerical simulations may illustrate a form of convergence of our method for Cauchy data).
For these parameters, we see that our method still exhibits an empirical average volume of $\widehat{\mathbb{E}}_{n_{\histo}}\widehat{\mathbb{E}}_{n_{\test}}[\Vol(\calE_{\alpha}^{n_\calib})] \simeq 790$, whereas that of the residual score is $\widehat{\mathbb{E}}_{n_{\histo}}\widehat{\mathbb{E}}_{n_{\test}}[\Vol(\calB_{\alpha}^{n_\calib})] \simeq 1560$. Interestingly, these volumes also seem to exhibit a form of asymptotic normality, suggesting that $\calE_\alpha^n$ is indeed robust to distributions with infinite variance. 

It also seems that our method is able to recover the matrix $\mathbf{\Sigma}/\mathbf{\Sigma}^{11}$ (which dictates the dispersion of $R_1 - \mathbf{\Sigma}^{21}( \mathbf{\Sigma}^{21})^{-1}X_1$), as the shape and orientation of the ellipsoid for Cauchy data is similar to that of the ellipsoid for Gaussian data (Figures \ref{fig:gaussian} and \ref{fig:cauchy}). We refer to Appendix \ref{app:infinite_var} for further comments on this behaviour. Of course, the volumes for the Cauchy distribution are several orders of magnitude larger than that of the Gaussian data, but this is expected as Cauchy distributions are heavy tailed. 

\iain{Let us turn to $\calF_\alpha^n$ : because Cauchy data are heavy-tailed and Proposition \ref{prop:calF_full_space} assumes a finite $4q^{\mathrm{th}}$ moment, we expect that arbitrarily large ellipsoids occur even in the regime of large $n_\calib$ 
(for example we obtained 9 full space ellipsoids out of $200\times 500 = 10^5$ tests, for $n_\calib = 1000$). For heavy-tailed distributions, histograms are not easily interpreted because of the large disparity of the observed values. Instead we present in Table \ref{tab:cauchy_fna} with empirical quantiles of the distribution of $\widehat{\mathbb{E}}_{n_{\test}}[\Vol(\calF_{\alpha}^{n_\calib})]$ compared to those of $\widehat{\mathbb{E}}_{n_{\test}}[\Vol(\calB_{\alpha}^{n_\calib})]$, as a function of $n_\calib$. This table shows that, from a quantile point of view, the use of $\calF_\alpha^n$ is increasingly interesting as $n_\calib$ increases, although empirical mean and variance still suffer from the heavy-tailed nature of Cauchy distributions. Note that $\calE_\alpha^n$ does not seem to suffer from this property, as shown in Figure \ref{fig:histo_cauchy} (and as could be conjectured from Proposition \ref{thm:eq_ellipsoid} and Lemma \ref{lemma:q_bound}).}
\begin{figure}[t!]\centering
\begin{subfigure}[b]{0.25\textwidth}
    \includegraphics[width=\textwidth]{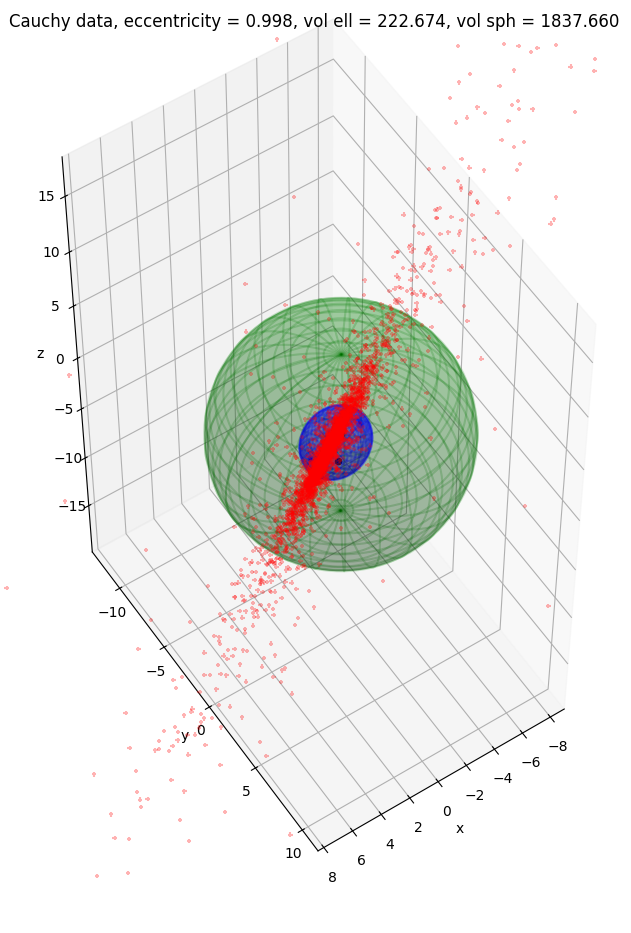}
    \caption{Top view}
\end{subfigure}\hspace{2cm}
\begin{subfigure}[b]{0.25\textwidth}
    \includegraphics[width=\textwidth]{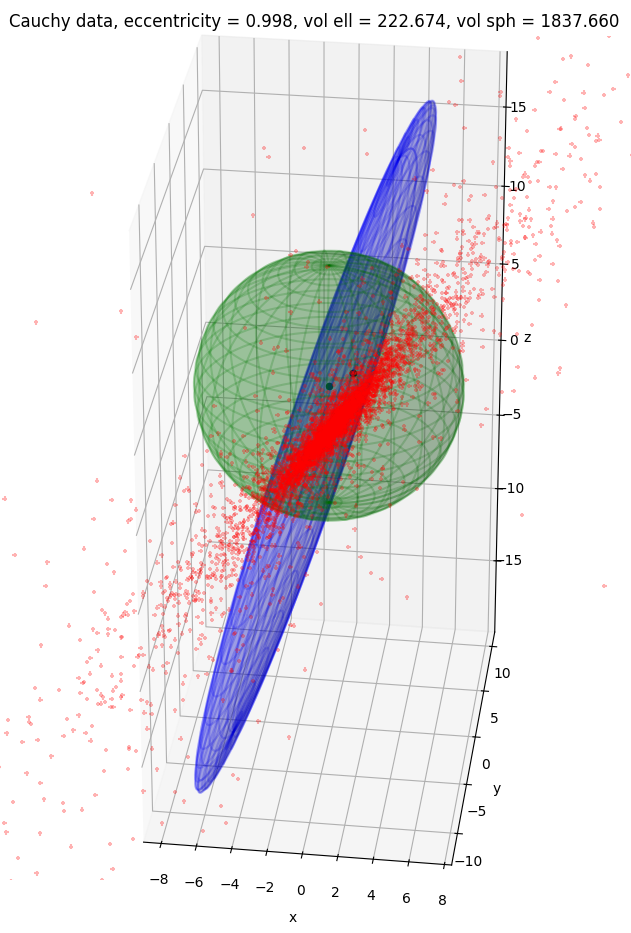}
    \caption{Side view (same example)}
\end{subfigure}
    \caption{Sample example of $\calE_\alpha^n$, Cauchy data with $\rho = 0.8$. Black dot: real value position. Large red dot: center of ellipsoid. Green dot: predictor $\widehat{Y}_{n+1}$ (also the center of the sphere). Small red dots: other residuals ($n = 6000$).}
    \label{fig:cauchy}
\end{figure}
\begin{table}[hb!]
    \centering\footnotesize
    \begin{tabular}{c||c|ccccccc||ccc}\hline
          CI set & \diagbox{$n_{\calib}$}{Quantile} & 0.1 & 0.25 & 0.5 & 0.75 & 0.95 & 0.99 & 1 & Mean & Std. dev. & Nb. inf.\\ \hline $\calF_\alpha^n$ & \multirow{2}{*}{$500$} &  \textbf{3.6e2} & \textbf{3.9e2} & \textbf{4.3e2} & 2.4e4 & $\infty$ & $\infty$ & $\infty$ & $\infty$ & $\infty$ & 36 \\
           $\calB_\alpha^n$ & & 9.3e3 & 9.5e3 & 9.7e3 & \textbf{9.9e3} & \textbf{1e4} & \textbf{1e4} & \textbf{1.1e4} & \textbf{9.7e3} & \textbf{3.1e2} & \textbf{0} \\ \hline $\calF_\alpha^n$ & \multirow{2}{*}{$10^3$} &  \textbf{3.4e2} & \textbf{3.5e2} & \textbf{3.7e2} & \textbf{4.3e2} & 3.3e5 & $\infty$ & $\infty$ & $\infty$ & $\infty$ & 9 \\ 
           $\calB_\alpha^n$ & & 8.84e3 & 8.94e3 & 8.61e3 & 9.07e3 & \textbf{9.2e3} & \textbf{9.38e3} & \textbf{9.51e3} & \textbf{9.07e3} & \textbf{1.8e2} & \textbf{0} \\ \hline
           $\calF_\alpha^n$ & \multirow{2}{*}{$10^4$} &  \textbf{3.1e2} & \textbf{3.3e2} & \textbf{3.4e2} & \textbf{3.6e2} & \textbf{4e2} & \textbf{8.2e3} & 9.2e8 & 1.8e6 & 4e7 & \textbf{0} \\ 
           $\calB_\alpha^n$ & & 8.54e3 & 8.58e3 & 8.61e3 & 8.65e3 & 8.69e3 & 8.73e3 & \textbf{8.75e3} & \textbf{8.61e3} & \textbf{5.2e1} & \textbf{0} \\ \hline
    \end{tabular}
    \caption{Cauchy data : Empirical quantiles of the distribution of $\widehat{\mathbb{E}}_{n_{\test}}[\Vol(\calF_{\alpha}^{n_\calib})]$ (empirical volume average over $n_\test = 200$), estimated from a sample of size $n_\histo = 500$,  as a function of $n_\calib$. Last column : number of full space $\calF_\alpha^n$ over the $n_\test \times n_\histo = 10^5$ test samples used to estimate the  quantiles of $\widehat{\mathbb{E}}_{n_{\test}}[\Vol(\calF_{\alpha}^{n_\calib})]$.}
    \label{tab:cauchy_fna}
\end{table}

\begin{figure}
    \centering
    \includegraphics[width=0.7\textwidth]{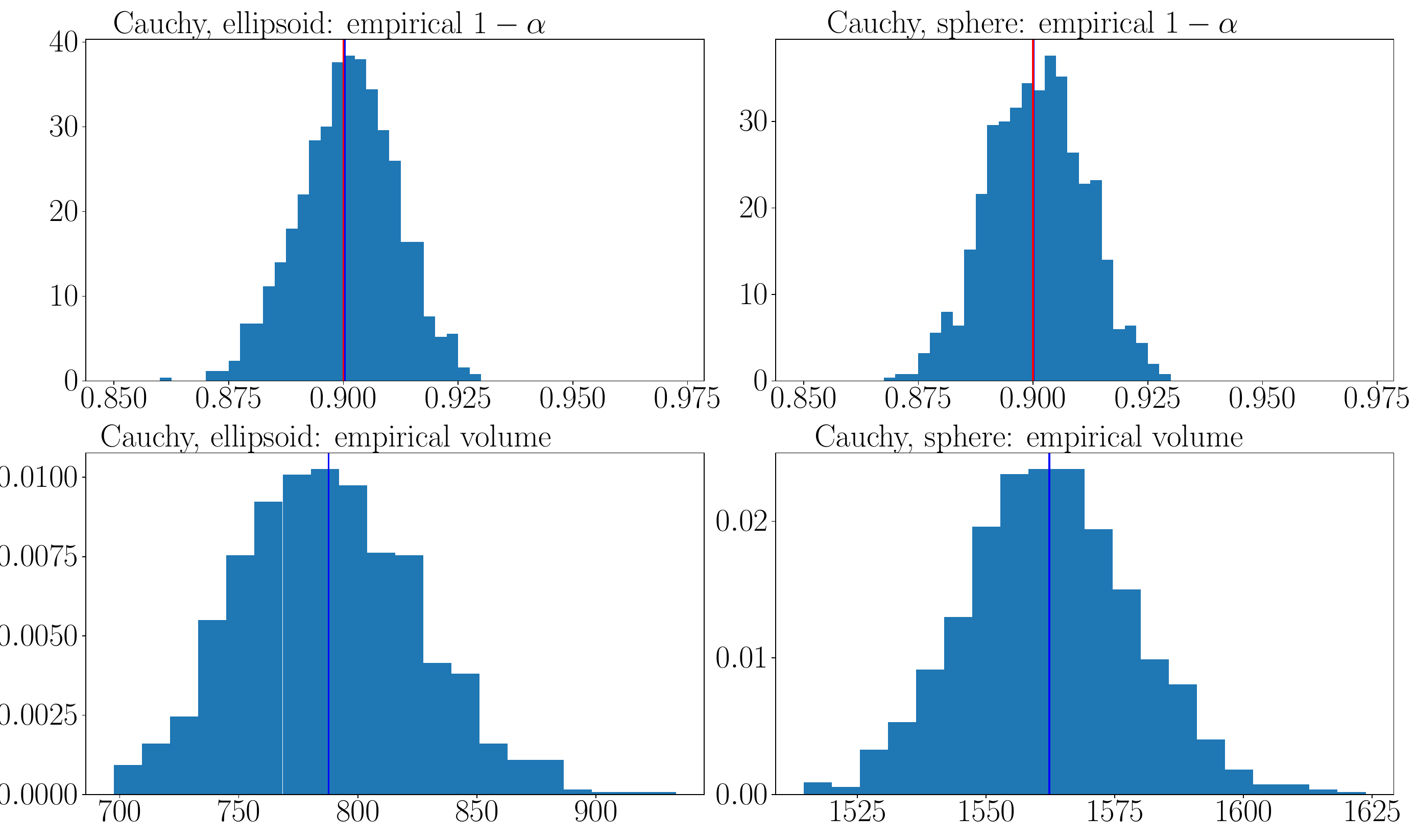}

    \caption{Empirical volumes and coverage, Cauchy data. $n_{\nsplit} = 3 \times 10^6, \ n_{\calib} = 10^4, \ n_{\test} = 800$ and $n_{\histo} = 10^3$. $50$ bins were used. Red vertical lines correspond to $x = 0.9$. Blue vertical lines correspond to the mean of each histogram.}
    \label{fig:histo_cauchy}
\end{figure}



\subsection{Non elliptical data : the inverse Dirichlet distribution}
\iain{
In this section we assume that the distribution of $(X_1^\top \ Y_1^\top)^\top$ is supported on the positive orthant $\mathbb{R}_+^p$, with a density of the form 
\begin{align}\label{eq:def_inv_dirichlet}
    f(v) \propto \frac{1}{(1 + |v|_1)^{b + |a|_1}} \prod_{i=1}^pv_i^{a_i-1}, \ \ \ v\in\mathbb{R}_+^p.
\end{align}
In equation \eqref{eq:def_inv_dirichlet}, $b>0$, $(a_1, \ldots, a_p)$ is a vector of parameters with $a_i>0$, and $|x|_1 = \sum_{i=1}^p |x_i|$. $V_1$ is said to follow the inverted Dirichlet distribution which we denote by $V_1 \sim \text{IDirichlet}(a_1, \ldots, a_p;b)$ (\cite{dirichlet_livre}, Example 8.6). Depending on its parameters, this distribution is mildly to heavily non elliptical, as is visible from FIgure \ref{fig:dirichlet_examples}. Observe also that $b$ controls the decay of the distribution. We next use a linear regressor trained on $n_\nsplit = 5000$ examples. }
\iain{Practical experiments show that our method is efficient when the distribution of $V_1$ is approximately elliptical. This can be seen in Figure \ref{fig:histo_dirichlet_good}, which corresponds to $k=1, \ \ell = 3$, $b = 3$ and $a = (1, 1, 10^{-1}, 10^{-2})$. Its bottom histograms show that for this distribution, the volume of $\calF_\alpha^n$ is in average much than that of $\calB_\alpha^n$. A sample corresponding to this histogram is given in Figure \ref{fig:dirichlet_examples}, subfigures (a) and (b). These figures show that the set $\calF_\alpha^n$ was able to capture part of the directionality associated to the parameters $b$ and $a$ above. We next consider an unfavourable case, where $b = 1$ and $a = 10^{-2}\times(1, 1, 1, 1)$. A sample example in given in Figure \ref{fig:dirichlet_examples}, subfigures (c) and (d). This figure shows that the calibration residuals are, for the most part, oriented along one of the three $x, y$ or $z$ axes. on this example, compared to the ball $B_\alpha^n$, the ellipsoid $\calF_\alpha^n$ seems to be overly sensitive to the observation of $X_1$. As for the Cauchy data (Table \ref{tab:cauchy_fna}), unfavourable test cases are best studied with a table of empirical quantiles which we provide in Table \ref{tab:dirichlet_fna}. This table shows that, even for $n_\calib = 10^4$, CCLE is severely outperformed by the norm-residual score (at least two order of magnitudes between corresponding quantiles, even for the empirical quantile of order $0.1$).}
\begin{figure}[ht!]
\centering
\begin{subfigure}[b]{0.3\textwidth}
    \includegraphics[width=\textwidth]{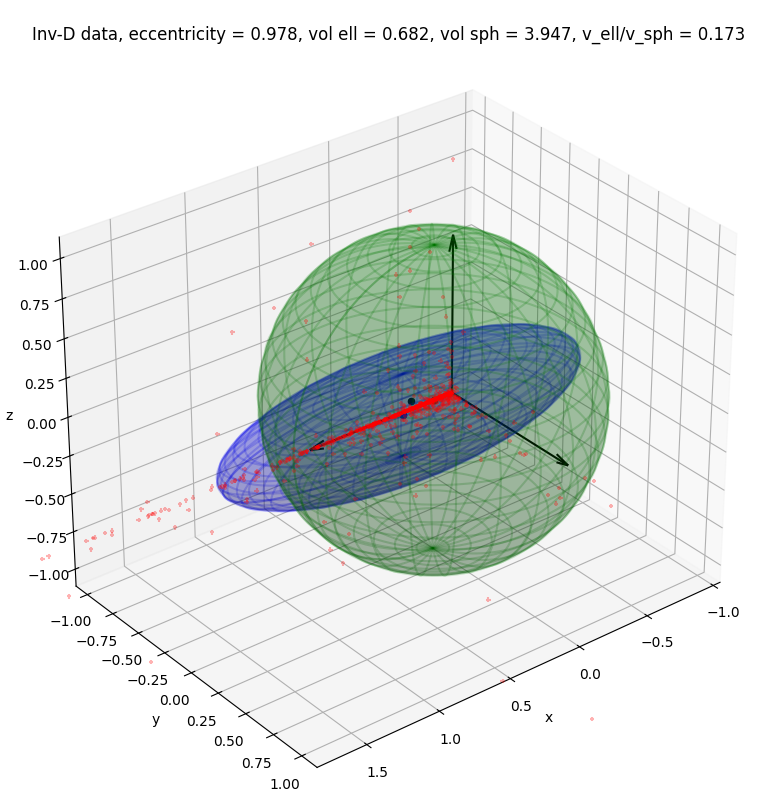}
    \caption{\centering Favourable inv. Dirichlet distribution (side)}
\end{subfigure}\hspace{2cm}
\begin{subfigure}[b]{0.3\textwidth}
    \includegraphics[width=\textwidth]{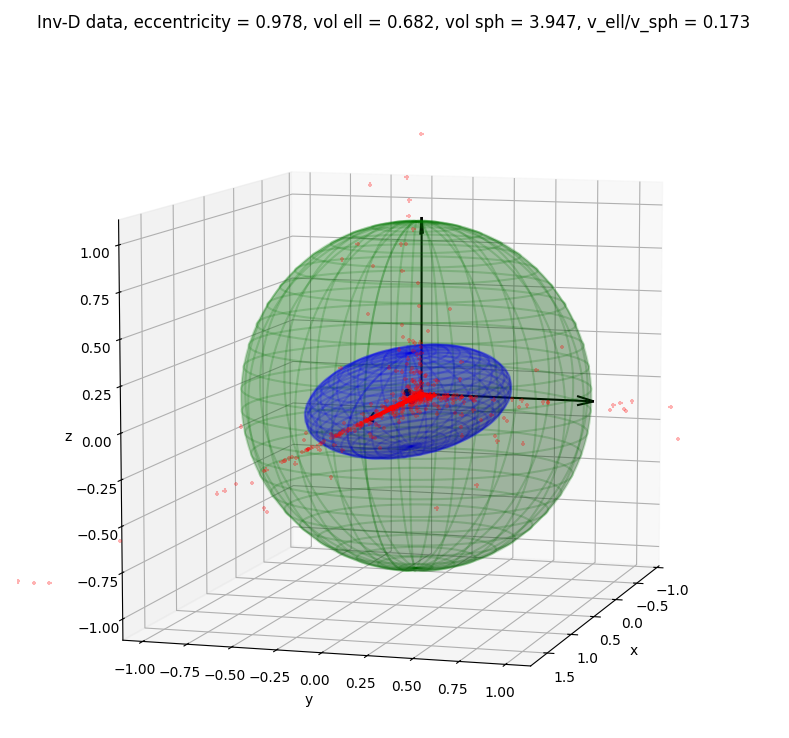}
    \caption{\centering Favourable inv. Dirichlet distribution (side)}
\end{subfigure}\\
\begin{subfigure}[b]{0.3\textwidth}
    \includegraphics[width=\textwidth]{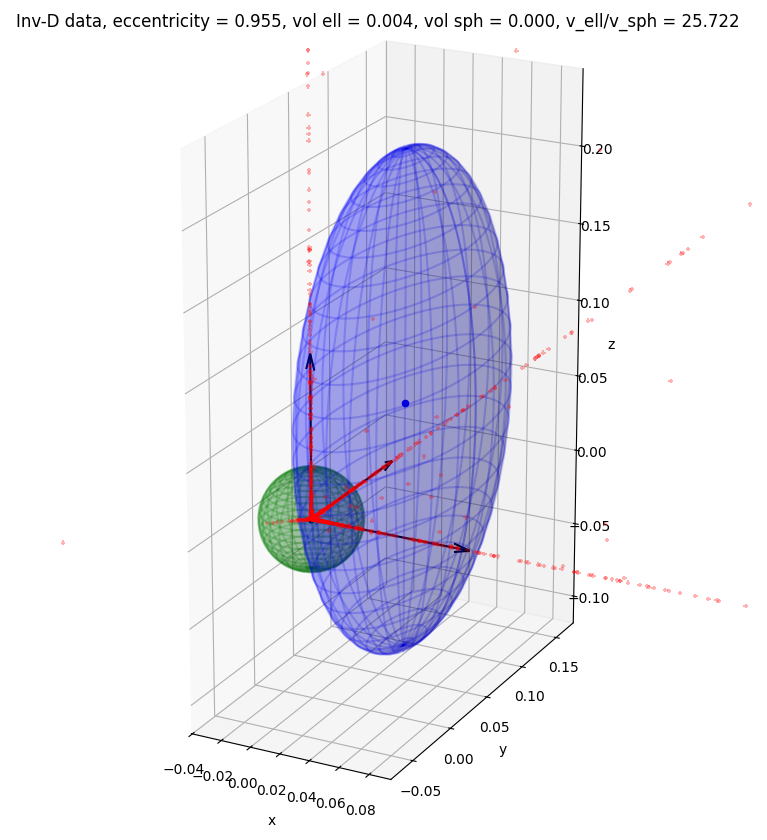}
    \caption{\centering Unfavourable inv. Dirichlet distribution (side)}
\end{subfigure}\hspace{2cm}
\begin{subfigure}[b]{0.3\textwidth}
    \includegraphics[width=\textwidth]{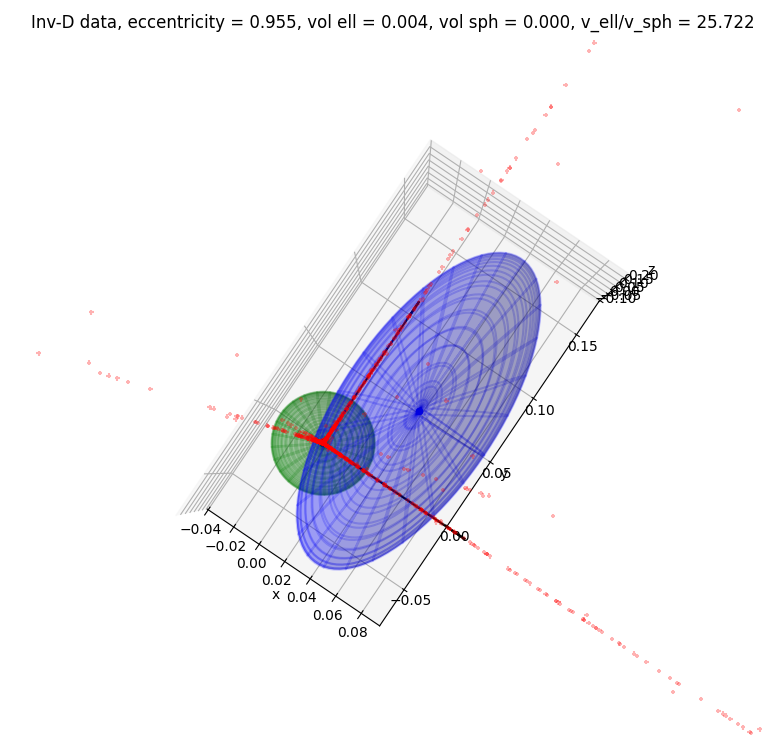}
    \caption{\centering Unfavourable inv. Dirichlet distribution (top)}
\end{subfigure}
    \caption{Two sample examples, inverse Dirichlet data. Black dot: real value position. Blue dot: center of ellipsoid. Green dot: predictor $\widehat{Y}_{n+1}$ (also the center of the sphere). Small red dots: other residuals ($6000$). Subfigures (a) and (b) : side views of an example simulated with a favourable inv. Dirichlet distribution (parameters $b=3$, $a = (1, 1, 10^{-1}, 10^{-2}).$). Subfigures (c) and (d) : side and top views of an example simulated with an unfavourable inv. Dirichlet distribution (parameters $b=1, \ a = 10^{-2} \times (1, 1, 1, 1)$).}
    \label{fig:dirichlet_examples}
\end{figure}
\begin{figure}[!ht]
    \centering
    \includegraphics[width=0.7\textwidth]{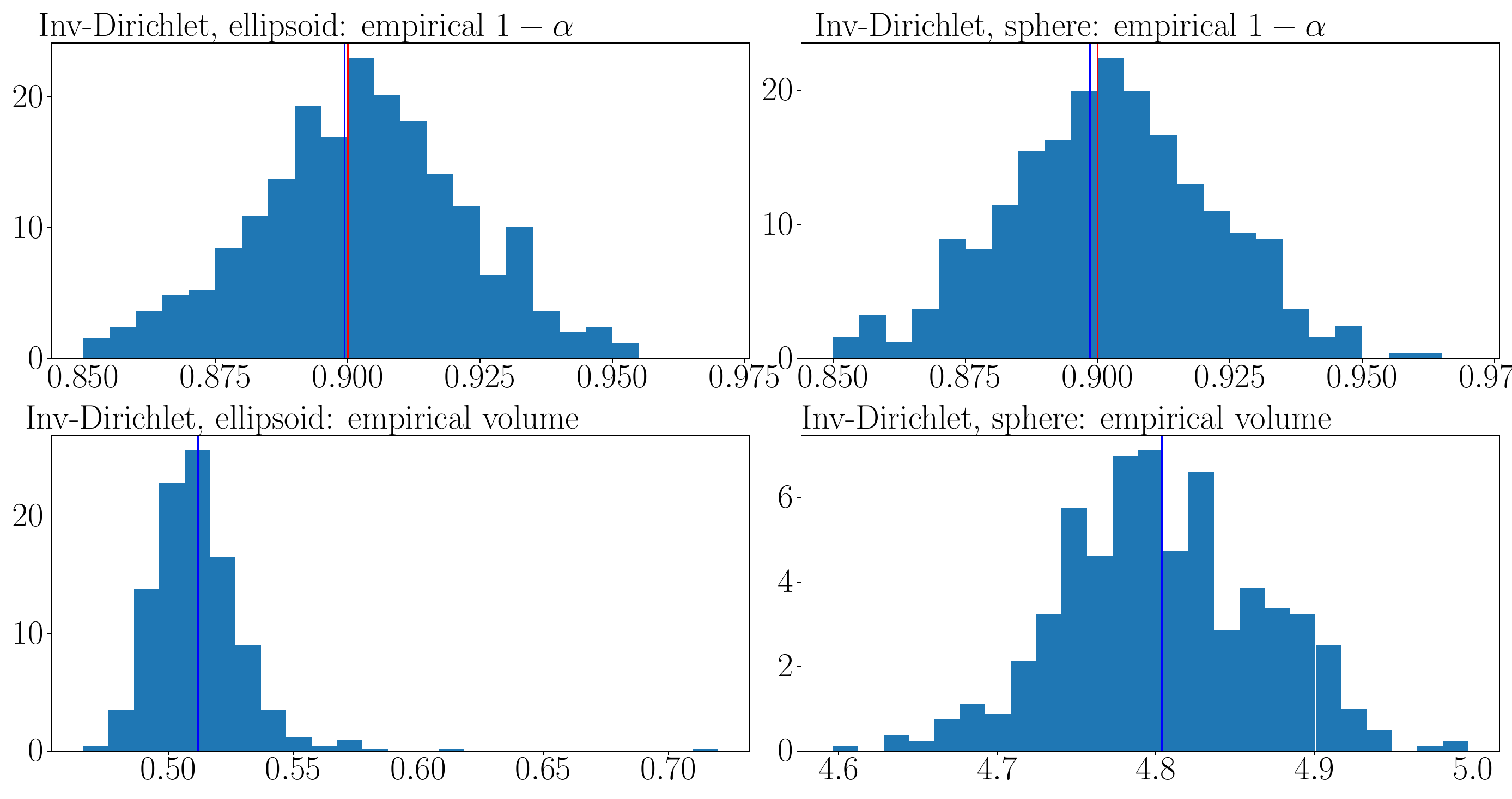}
    \caption{Empirical volumes and coverage, Inverse Dirichlet data with $b=3, a = (1, 1, 10^{-1}, 5.10^{-2})$. $n_{\nsplit} = 5000, \ n_{\calib} = 1000, \ n_{\test} = 200$ and $n_{\histo} = 500$. $25$ bins were used. Red vertical lines correspond to $x = 0.9$. Blue vertical lines correspond to the mean of each histogram. }
    \label{fig:histo_dirichlet_good}
\end{figure}
\begin{table}[!ht]
    \centering\footnotesize
    \begin{tabular}{c||c|ccccccc||ccc}\hline
          CI set & \diagbox{$n_{\calib}$}{Quantile} & 0.1 & 0.25 & 0.5 & 0.75 & 0.95 & 0.99 & 1 & Mean & Std. dev. & Nb. inf.\\ \hline  $\calF_\alpha^n$ & \multirow{2}{*}{$500$} & 8.2e-3 & 1.1e-2 & 1.8e-2 & 4.2e-2 & 4.2 & 4.7e4 & $\infty$ & $\infty$ & $\infty$ & 3 \\ 
           $\calB_\alpha^n$ & &  \textbf{5.4e-4} & \textbf{5.8e-4} & \textbf{6.3e-4} & \textbf{6.9e-4} & \textbf{7.7e-4} & \textbf{8.6e-4} & \textbf{9.2e-4} & \textbf{6.3e-4} & \textbf{7.7e-5} & \textbf{0} \\ \hline
           $\calF_\alpha^n$ & \multirow{2}{*}{$10^3$} & 1.2e-2 & 1.5e-2 & 2.3e-2 & 4.8e-2 & 8.2e-1 & 6.7e2 & $\infty$ & $\infty$ & $\infty$ & 1 \\ 
           $\calB_\alpha^n$ & &  \textbf{2.3e-4} & \textbf{2.4e-4} & \textbf{2.6e-4} & \textbf{2.7e-4} & \textbf{2.9e-4} & \textbf{3.1e-4} & \textbf{3.5e-4} & \textbf{2.6e-4} & \textbf{2.3e-5} & \textbf{0} \\ \hline
           $\calF_\alpha^n$ & \multirow{2}{*}{$10^4$} &  4e-2 & 4.9e-2 & 6.8e-2 & 1.1e-1 & 4.9e-1 & 2.1 & 1e3 & 2.3 & 45.7 & \textbf{0} \\ 
           $\calB_\alpha^n$ & & \textbf{3.3e-4} & \textbf{3.3e-4} & \textbf{3.4e-4} & \textbf{3.4e-4} & \textbf{3.5e-4} & \textbf{3.5e-4} & \textbf{3.6e-4} & \textbf{3.4e-4} & \textbf{7.1e-6} & \textbf{0} \\ \hline
    \end{tabular}
    \caption{Inverse Dirichlet data with parameters $b=1$ and $a=10^{-2} \times (1,1,1,1)$: empirical quantiles of the distribution of $\widehat{\mathbb{E}}_{n_{\test}}[\Vol(\calF_{\alpha}^{n_\calib})]$ (empirical volume average over $n_\test = 200$), estimated from a sample of size $n_\histo = 500$,  as a function of $n_\calib$. Last column : number of full space $\calF_\alpha^n$ over the $n_\test \times n_\histo = 10^5$ test samples used to estimate the  quantiles of $\widehat{\mathbb{E}}_{n_{\test}}[\Vol(\calF_{\alpha}^{n_\calib})]$.}
    \label{tab:dirichlet_fna}
\end{table}
\section{Conclusion}
In this article, we have introduced two covariance based scores, for conformal inference in multivariate regression. We have shown that the resulting confidence regions are conservatively and accurately approximated with an explicit ellipsoid. We have also studied their asymptotic properties, and compared them with the regions stemming from the standard conformal score function (norm of the residual) as used in the literature. Our first set of numerical experiments shows that our scores \iain{typically} perform better than that of the standard norm score, in the sense that the corresponding volume is typically much lower than that of the spheres associated with the residual norm score, provided that the residuals $R_i = Y_i - \widehat{Y}_i$ are not uncorrelated with the input $X_i$. The next step will be to apply our methodology on time series, where the adaptivity of our score function may be expected to improve the performance of CI for time series, especially concerning the longitudinal coverage (coverage within a single sample of the time series); this study will be accompanied by a Python package. We will soon extend our methodology to functional data \cite{GHMR_forthcoming}.
Finally, a limitation of our scoring rule is that is may not be expected to be optimal for multimodal or non elliptical distributions of the residuals: the shape of $\widetilde{\calC}_{\alpha}^n$ is bound to be an ellipsoid. As such, generalizations of our score may be sought to include moments of order $3$, to take asymmetry into account, although the matrix algebra may become intractable.

%

\begin{appendix}
\section{Additional remarks}
\subsection{Interpretations of the score vector \texorpdfstring{$\diag(\bfS(z))$}{diag S(z)}} \label{app:score_interpretation}
 \paragraph*{Leverage score}
    Consider a multivariate linear model between the matrices $\bfR$ and $\bfX$, $\bfR = \bfX\beta + \varepsilon$.  The ridge estimator in this linear regression model is $\widehat{\beta} = (\bfX^\top \bfX + \mu \mathbf{I})^{-1}\bfX^\top \bfR$. Let us denote $\widehat{\bfR} = \bfX\widehat{\beta} = \bfX(\bfX^\top \bfX + \mu \mathbf{I})^{-1}\bfX^\top \bfR = \bfM\bfR$. The leverage score of the $\ith$ example is the diagonal coefficient $\bfM_{ii}\in[0,1]$. 
     As $M=\nabla_R \widehat{R}$, the leverage score can be interpreted as a self-influence score. A high leverage score $M_{ii}$ means that the $\ith$ example $(X_i, R_i)$ is very influential in the behaviour of the linear model \cite{kim2020sources, leverage1986}. 
     The leverage score  has been used, for example, for outliers detection  \cite{leverage1986}.  This last task can also be tackled using conformal inference \cite{Batesetal23}.  Note further that the leverage score  does not depend on any underlying model nor on $Y$. Hence, no observational model is required for computing the leverage score. 
    Our score is thus a form of leverage score, where 
    we replaced the matrix $\bfM$ 
    by $\bfM' = \bfW(\bfW^\top \bfW + \mu \mathbf{I}_p)^{-1}\bfW^\top$. Hence, we jointly consider both the explanatory variables and the residual errors in the leverage score. Further, there is  
    another difference with the classical leverage score in the centering step, as we consider $\bfW$ instead of $\bfV$. 
    \paragraph*{Shape theory} 
    In shape theory \cite{dryden2016}, iid random vectors are observed and stacked, say row-wise, in a matrix $\bfV$. In shape theory, one then typically studies the rectangular matrix $\bfV$ according to its polar decomposition, $\bfV = \bfP\bfT$, where $\bfT$ is the {\it modulus} and $\bfP$ is the orientation ($\bfT = (\bfV^\top \bfV)^{1/2}$ and $\bfP = \bfV(\bfV^\top \bfV)^{-1/2}$). The matrices $\bfP$ and $\bfT$ are then studied separately (\cite{downs1972}, \cite{chikuse2003}, Section 1.3.3). In particular, the random orthogonal projector $\bfP\bfP^\top$ is an orientation statistic \cite{chikuse2003}, Section 1.3.3, encoding directional information of the data. In our case, when $\lambda = 0$, the connection between our score \eqref{eq:def_score_S} and the orientation statistic $\bfP\bfP^\top$ is the equality $\bfV(\bfV^\top \bfV)^{-1}\bfV^\top  = \bfP \bfP^\top$. As for the leverage score though, the data is usually not centered in shape theory: our score in equation \eqref{eq:def_score_S} is, when $\lambda=0$, $n^{-1}\bfS = \bfW(\bfW^\top \bfW)^{-1}\bfW^\top \neq \bfV(\bfV^\top \bfV)^{-1}\bfV^\top$. In our framework the centering step is useful to deal with potentially biased predictors $\widehat{f}$, i.e. non centered residuals.

\subsection{Different assumptions for Proposition \ref{prop:bound}} \label{app:rem_prop:bound}
\paragraph*{The case $\lambda=0$}
Our proof of Proposition \ref{prop:bound} uses the fact that $\lambda > 0$ (equation \eqref{eq:sn_interm}): if $\lambda = 0$, we should require that $\min \Spec(\mathbf{\Sigma}) >0$. However, even if we can show that $\min \Spec(\widehat{\mathbf{\Sigma}}_n)$ tends to $\min \Spec(\mathbf{\Sigma})$ almost surely (proof of Proposition \ref{prop:asymptotics}), adapting the proof of Proposition \ref{prop:bound} to $\lambda=0$ would require the control of the moments of $\min \Spec(\widehat{\mathbf{\Sigma}}_n)^{-1}$, which is a difficult task in itself. We leave the details of the corresponding proof to the interested reader, e.g. using that almost surely, $\min \Spec(\widehat{\mathbf{\Sigma}}_{n,0}) \rightarrow \min \Spec(\mathbf{\Sigma})$, from the SLLN on $\widehat{\mathbf{\Sigma}}_{n,0}$ and the continuity of the spectrum for symmetric matrices (see e.g. \mbox{\cite[Problem 1, p. 198]{horn}}). 

\paragraph*{Finite moment of $V_1$} \iain{The assumption that $\mathbb{E}[\|V_1\|^{4q}] < +\infty$ (which is also used in several other places) enables us to use Jensen's inequality, to show that $\sup_{z\in K}\max_{1\leq i\leq n}nb_i^n(z)^2 \rightarrow 0$ in probability. Under the weaker assumption that} $\mathbb{E}[\|V_1\|^{2}] < + \infty$, we can still show that the first $n$ coordinates of $b^n$ verify $ n\sum_{i=1}^nb_i^n(z)^2 = (1 + \|z\|^2)O_{\Pb}(1)$,
where the $O_{\mathbb{P}}(1)$ term is uniform in $z$ (e.g. adapt equation \eqref{eq:control_sum_bnx_dnx}). Unfortunately, this result alone is not strong enough to imply equation \eqref{eq:vol_surplus}.

\subsection{Infinite variance case} \label{app:infinite_var}
From equation \eqref{eq:ellipsoid}, $\calE_{\alpha}^{\infty}$ displays a scaling invariance, which we expect to have important implications for the analysis of our method in the case of heavy tailed data \cite{tyler}, Condition 1.1. For the moment, let us view $\mathbf{\Sigma}_{\lambda}$ and $V_1$ as independent parameters. Observing that $\boldcalA_\infty, \ Z_0^\infty$ and $\rho_{\infty,\alpha}$ are functions of $\mathbf{\Sigma}_{\lambda}$, we denote $\calE_{\alpha}^\infty = \calE_{\alpha}(\mathbf{\Sigma}_{\lambda}, V_1)$. We now prove that, as sets, $\calE_{\alpha}(\mathbf{\Sigma}_{\lambda}, V_1) = \calE_{\alpha}(\delta\mathbf{\Sigma}_{\lambda}, V_1)$ for all $\delta>0$. Indeed, assume that $\mathbf{\Sigma}_\lambda$ is replaced with $\delta\mathbf{\Sigma}_\lambda$ for some $\delta>0$, then from equation \eqref{eq:pred_unbiased} $Z_0^\infty$ is left unchanged. Likewise, $\boldcalA_\infty$ is changed to $\delta \boldcalA_{\infty}$, while
\begin{align}
    \frac{\rho_{\alpha,\infty}}{\delta} &= \frac{1}{\delta}q_{1-\alpha}(V_\rmc^\top\mathbf{\Sigma}_\lambda^{-1}V_\rmc) - \frac{1}{\delta}X_\rmc^\top(\mathbf{\Sigma}_\lambda^{11})^{-1}X_\rmc \nonumber\\ &= q_{1-\alpha}(V_\rmc^\top(\delta\mathbf{\Sigma}_\lambda)^{-1}V_\rmc) - X_\rmc^\top(\delta\mathbf{\Sigma}_\lambda^{11})^{-1}X_\rmc,
\end{align}
so that the product $\rho_{\alpha,\infty}\boldcalA_{\infty} =(\delta^{-1}\rho_{\alpha,\infty})(\delta\boldcalA_{\infty})$ is also unchanged when $\mathbf{\Sigma}_\lambda$ is replaced with $\delta\mathbf{\Sigma}_\lambda$. Hence, following equation \eqref{eq:conditional_ellipsoid_asymptotic}, $\calE_{\alpha}(\mathbf{\Sigma}_{\lambda}, V_1) = \calE_{\alpha}(\delta\mathbf{\Sigma}_{\lambda}, V_1)$. This scaling invariance is, in fact, a key property for building the Tyler dispersion matrix estimator \cite{tyler} (see equation \eqref{eq:ellip_assumption} and its comments for the dispersion matrix), which is known to be robust to infinite variance in the case of elliptical distributions. \iain{The same scaling arguments can also be made for $\calF_\alpha^\infty$, but the convergence of $\rho_{n,\alpha}'$ to $q_{1-\alpha}^\calF$ is not at all clear (see e.g. the assumptions of Proposition \ref{prop:calf_asymptotic}), as can be seen from the numerical experiments with Cauchy data.} The further study of our method for heavy-tailed data is left for future work.

\subsection{Non asymptotic analysis in the Gaussian case} \label{app:non_asymp_gauss}
One may wish to understand the properties of $\calE_{\alpha}^n$ for finite sample size $n$. For example, assuming that $\lambda=0$, it is known that $\boldcalA_n = \widehat{\mathbf{\Sigma}}_n/\widehat{\mathbf{\Sigma}}_n^{11}$ follows the Wishart distribution $W_{\ell}(n-k-1,n^{-1}\mathbf{\Sigma}/\mathbf{\Sigma}^{11})$ \cite{muirhead1982}, Theorem  3.2.10. We now turn to the quantities appearing in $\rho_{n,\alpha}$. Observing that $\sqrt{n/(n+1)}X_{n+1}^\rmc\sim\calN(0,\mathbf{\Sigma}^{11})$, that $n\widehat{\mathbf{\Sigma}}_n^{11}\sim W_k(n-1,\mathbf{\Sigma}^{11})$ and that both are independent \cite{mardia_kent}, Corollary 3.3.3.2, we obtain that $[(n-1)/(n+1)](X^{n+1}_\rmc)^\top (\widehat{\mathbf{\Sigma}}_{n}^{11})^{-1}X_{n+1}^\rmc$ follows the Hotelling $T^2$ distribution $T^2(k,n-1)$ \cite{mardia_kent}, Theorem 3.5.1. Let us now also assume that $\mathbb{E}[V_1]$ is known. In this case, it is reasonable to replace $n^{-1}\sum_{i=1}^nV_i$ with $\mathbb{E}[V_1]$ in $\bfB_n$ (equation \eqref{eq:def_Bn}), so that $\bfB_n$ follows the matrix normal distribution $\bfB_n \sim \calN(0, \mathbf{I}_n, \mathbf{\Sigma})$ \cite{chikuse2003}, Section 1.5.3. In this case, $\bfP_{n,0}$ is a random orthogonal projector whose distribution is uniform over the so-called ``special manifold'' $\bfP_{p,n-p} \subset \calM_{n,n}$ of orthogonal projectors of rank $p$, which is isomorphic to the Grassmanian manifold $G_{p, n}$ \cite{chikuse2003}, Theorem 2.4.9. Finally, building on
\cite{muirhead1982}, Exercise 3.15 p. 117, we can show that $({\bfP_{n,0}})_{ii}$ follows a Beta distribution $B(p/2,(n-p)/2)$. Note though that those results, while instructive, are not sufficient to describe the distribution of $\rho_{n,\alpha}$.

When $\mathbb{E}[V_1]$ is unknown, we expect to obtain perturbations of the distributions above for $\bfP_{n,0}$ and $({\bfP_{n,0}})_{ii}$. Still, to our knowledge, there are no simple expressions available in this case. Indeed, the procedure for building $\bfB_n$ correlates the columns of $\bfB_n$ through equation \eqref{eq:B_ij}, and $\bfB_n \sim \calN(0, \pi^{\perp}_{\mathbbm{1}}, \mathbf{\Sigma})$ (here, $\mathbbm{1}\in\mathbb{R}^n$). As such, the distribution of $\bfP_{n,0}$ does not seem to be available in closed form since neither $\pi^{\perp}_{\mathbbm{1}}$ nor $\mathbf{\Sigma}$ are equal to the identity matrix \cite{chikuse2003}, Chapter 2.

\section{Technical lemmas}  \label{app:lemmas}
The following lemma will be very useful in several proofs.
\begin{lemma}[Block matrix inversion, \cite{horn},  equation (7.7.5) p. 472]\label{lemma:block_inversion}
If $\bfM = \begin{pmatrix}
             \bfA &\bfB \\
             \bfB^\top &\bfC
    \end{pmatrix}$ is invertible and $\bfA$ is invertible, then $\bfM/\bfA = \bfC - \bfB^\top \bfA^{-1}\bfB$ is invertible and
\begin{align}
    \bfM^{-1} = \begin{pmatrix}
             \bfA^{-1} + \bfA^{-1}\bfB(\bfM/\bfA)^{-1}\bfB^\top \bfA^{-1} &-\bfA^{-1}\bfB(\bfM/\bfA)^{-1} \\
             -(\bfM/\bfA)^{-1}\bfB^\top \bfA^{-1} & (\bfM/\bfA)^{-1}
    \end{pmatrix}.
\end{align}
\end{lemma}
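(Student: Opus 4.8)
The plan is to prove the formula by exhibiting the classical block $LDU$-type factorization of $\bfM$ and inverting it factorwise. Since $\bfA$ is invertible, I would first write
\begin{align}
\bfM = \begin{pmatrix} \mathbf{I} & \mathbf{0} \\ \bfB^\top\bfA^{-1} & \mathbf{I} \end{pmatrix}\begin{pmatrix} \bfA & \mathbf{0} \\ \mathbf{0} & \bfM/\bfA \end{pmatrix}\begin{pmatrix} \mathbf{I} & \bfA^{-1}\bfB \\ \mathbf{0} & \mathbf{I} \end{pmatrix},
\end{align}
an identity verified by a one-line block multiplication, the $(2,2)$ block on the right being $\bfB^\top\bfA^{-1}\bfB + (\bfC-\bfB^\top\bfA^{-1}\bfB) = \bfC$. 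Taking determinants gives $\det\bfM = \det\bfA\,\det(\bfM/\bfA)$, so the hypotheses that $\bfM$ and $\bfA$ are invertible force $\bfM/\bfA$ to be invertible, which is the first assertion of the lemma.

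Next I would invert the three factors: the two unit block-triangular factors are inverted simply by negating their off-diagonal block, and the block-diagonal factor inverts to $\operatorname{diag}(\bfA^{-1},(\bfM/\bfA)^{-1})$ (now legitimate by the previous step). Reversing the order, this yields
\begin{align}
\bfM^{-1} = \begin{pmatrix} \mathbf{I} & -\bfA^{-1}\bfB \\ \mathbf{0} & \mathbf{I} \end{pmatrix}\begin{pmatrix} \bfA^{-1} & \mathbf{0} \\ \mathbf{0} & (\bfM/\bfA)^{-1} \end{pmatrix}\begin{pmatrix} \mathbf{I} & \mathbf{0} \\ -\bfB^\top\bfA^{-1} & \mathbf{I} \end{pmatrix},
\end{align}
and carrying out the (routine) block product of these three matrices produces exactly the stated expression for $\bfM^{-1}$.

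A shorter alternative, which I would mention for completeness, is to skip the factorization and simply check by direct block multiplication that $\bfM$ times the claimed right-hand side equals the identity, using only $\bfA\bfA^{-1}=\mathbf{I}$ and the definition of $\bfM/\bfA$. In either route there is no genuine obstacle; the only point needing care is the bookkeeping in the block products and pinpointing where invertibility of $\bfM/\bfA$ is used (it is exactly what makes the right-hand side well defined, and it is supplied by the determinant identity above). As the statement is quoted verbatim from \cite{horn}, equation (7.7.5), one may alternatively just cite it; the argument above is recorded only to keep the paper self-contained.
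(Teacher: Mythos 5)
Your argument is correct: the block $LDU$ factorization is verified by direct multiplication, the determinant identity $\det\bfM=\det\bfA\,\det(\bfM/\bfA)$ gives invertibility of the Schur complement, and inverting the three factors in reverse order yields exactly the stated formula. The paper itself offers no proof of this lemma — it is quoted verbatim from the cited reference — so there is nothing to compare against; your proof is the standard one and is complete.
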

\iain{A central consequence of this lemma is that if $\bfM$ is partitioned as above and $(x^\top y^\top)^\top$ is a similarly partitioned vector, then setting $t = y - \bfA^{-1}\bfB x$,
\begin{align}\label{eq:apply_block_lemma}
    \begin{pmatrix}
         x\\y
    \end{pmatrix}^\top\bfM^{-1}\begin{pmatrix}
         x\\y
    \end{pmatrix} = x^\top\bfA^{-1}x + t^\top(\bfM/\bfA)^{-1}t.
\end{align}
}
Next, the efficient computation of the score matrix is enabled by Lemma \ref{lemma:scores_transpose} below, which provides an ``explicit'' representation of equation \ref{eq:def_score_S}, in the case where $W(z)$ is a rank one perturbation of a reference matrix.
\begin{lemma}\label{lemma:scores_transpose}
If $\mu\geq 0$, $\bfM\in \calM_{n,p}$ is of the form $\bfM = \bfA + wu^\top $ where $w \in\calM_{n,1}$, $\|w\| = 1$ and $u\in\calM_{p,1}$, then there exists $\bfC\in\calM_{n,n}$, $b\in\calM_{n,1}$ and $d\in\Rbb^+$ such that
\begin{align}\label{eq:matrix_inversion}
    \bfM(\bfM^\top \bfM+\mu \mathbf{I})^{-1}\bfM^\top  = \bfC - \frac{bb^\top }{1+d}.
\end{align}
$\bfC, b$ and $d$ are given by
\begin{align}
\bfC = \bfB \bfD_{\mu}^{-1}\bfB^\top  + ww^\top , \ \ b = \bfB \bfD_{\mu}^{-1}r-w, \ \
    d = r^\top \bfD_{\mu}^{-1}r, 
\end{align}
where
\begin{align}
     \bfB = (I-ww^\top )\bfA = \pi^{\perp}_{w}\bfA,\ \ \bfD_{\mu} = \bfB^\top \bfB+\mu \mathbf{I}, \ \ r = u + \bfA^\top w.
\end{align}
\end{lemma}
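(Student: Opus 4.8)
The plan is to reduce \eqref{eq:matrix_inversion} to a rank-one perturbation that can be handled by the Sherman--Morrison formula. The starting observation is that, since $\|w\| = 1$, the matrix $\pi^{\perp}_w = \mathbf{I} - ww^\top$ is the orthogonal projector onto $\mathrm{Span}(w)^{\perp}$, so $\bfB = \pi^{\perp}_w\bfA$ satisfies $\bfB^\top w = \bfA^\top\pi^{\perp}_w w = 0$. A one-line computation then shows that
\begin{align}
\bfB + wr^\top = (\mathbf{I} - ww^\top)\bfA + w\big(u + \bfA^\top w\big)^\top = \bfA + wu^\top = \bfM,
\end{align}
that is, $\bfM = \bfB + wr^\top$ with $\bfB$ and $w$ orthogonal in the sense $\bfB^\top w = 0$. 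Expanding $\bfM^\top\bfM$ and using $w^\top w = 1$ together with $w^\top\bfB = 0$, all the cross terms vanish and we obtain
\begin{align}
\bfM^\top\bfM + \mu\mathbf{I} = \bfB^\top\bfB + rr^\top + \mu\mathbf{I} = \bfD_{\mu} + rr^\top.
\end{align}

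Next I would invert this rank-one update. Assuming $\bfD_{\mu}$ is invertible (which is automatic when $\mu>0$, and when $\mu = 0$ requires additionally that $\bfB$ have full column rank, which is precisely the case in which the left-hand side of \eqref{eq:matrix_inversion} is defined), the Sherman--Morrison identity gives
\begin{align}
(\bfM^\top\bfM + \mu\mathbf{I})^{-1} = \bfD_{\mu}^{-1} - \frac{\bfD_{\mu}^{-1}rr^\top\bfD_{\mu}^{-1}}{1 + r^\top\bfD_{\mu}^{-1}r} = \bfD_{\mu}^{-1} - \frac{\bfD_{\mu}^{-1}rr^\top\bfD_{\mu}^{-1}}{1 + d},
\end{align}
with $d = r^\top\bfD_{\mu}^{-1}r \geq 0$ since $\bfD_{\mu}\succ 0$. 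Multiplying on the left by $\bfM$ and on the right by $\bfM^\top$, this becomes
\begin{align}
\bfM(\bfM^\top\bfM + \mu\mathbf{I})^{-1}\bfM^\top = \bfM\bfD_{\mu}^{-1}\bfM^\top - \frac{1}{1+d}\big(\bfM\bfD_{\mu}^{-1}r\big)\big(\bfM\bfD_{\mu}^{-1}r\big)^\top.
\end{align}

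The last step is to substitute $\bfM = \bfB + wr^\top$ back in and collect the rank-one terms, again using $\bfB^\top w = 0$. Writing $b = \bfB\bfD_{\mu}^{-1}r - w$, so that $\bfB\bfD_{\mu}^{-1}r = b + w$, one finds $\bfM\bfD_{\mu}^{-1}r = b + (1+d)w$ and $\bfM\bfD_{\mu}^{-1}\bfM^\top = \bfB\bfD_{\mu}^{-1}\bfB^\top + bw^\top + wb^\top + (d+2)ww^\top$; plugging these two identities into the previous display and simplifying (the coefficient of $ww^\top$ collapses via $d+2-(1+d) = 1$, and the $bw^\top$, $wb^\top$ terms cancel) yields exactly $\bfB\bfD_{\mu}^{-1}\bfB^\top + ww^\top - bb^\top/(1+d) = \bfC - bb^\top/(1+d)$, which is the claim. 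The whole argument is elementary linear algebra; the one genuinely useful idea is the reparametrization $\bfM = \bfB + wr^\top$ with $\bfB^\top w = 0$, which is what turns $\bfM^\top\bfM + \mu\mathbf{I}$ into a rank-one perturbation of the positive definite matrix $\bfD_{\mu}$, and the only place that needs care is the bookkeeping of the several rank-one terms in this final collection; there is no real obstacle.
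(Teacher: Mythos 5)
Your proof is correct and follows essentially the same route as the paper's: the decomposition $\bfM = \bfB + wr^\top$ with $\bfB^\top w = 0$, the identity $\bfM^\top\bfM + \mu\mathbf{I} = \bfD_\mu + rr^\top$, and the Sherman--Morrison formula, followed by a collection of rank-one terms. Your bookkeeping in the final step (via $\bfM\bfD_\mu^{-1}r = b + (1+d)w$) is slightly more streamlined than the paper's direct expansion, and your remark on the invertibility of $\bfD_\mu$ when $\mu=0$ is a welcome precision, but these are cosmetic differences.
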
 
Above, $\bfB_{\mu}^+ \coloneqq \bfB\bfD_{\mu}^{-1}$ is a regularized pseudo-inverse of $\bfB^\top$: when $\mu = 0$, $\bfB_{\mu}^+\bfB^\top $ is the orthogonal projector onto the range of $\bfB$. Note also that $\bfB^\top w = \bfA^\top (I-ww^\top )w = 0$, hence $\bfC = \bfB_{\mu}^+\bfB^\top  + ww^\top $ is also a ($\mu$-regularized) orthogonal projector. Due to the fact that the score matrix is built upon centered data, the quantities in equation \eqref{eq:matrix_inversion} can be further simplified, as stated in the following lemma.
\begin{lemma}\label{lemma:simplify}
Assume that the matrix $\bfA$ in Lemma \ref{lemma:scores_transpose} lies in $\calM_{n+1,p}$, that it is of the form $\bfA = \pi_{\mathbbm{1}}^{\perp}\bfV$ for some matrix $V\in\calM_{n+1, p}$, and that $w$ in Lemma \ref{lemma:scores_transpose} is $w = v/\|v\|$, where $v$ is given in equation \eqref{eq:def_v}. Then $\bfB$ is given by
\begin{align}
\bfB_{ij} &= \bfV_{ij} - \frac{1}{n}\sum_{l=1}^n\bfV_{il}, \ \ \ e_{n+1}^\top \bfB = 0 \ \ \ \text{(null last row).}    \label{eq:B_ij}
\end{align}
The last equation implies that  for $\bfC$ and $b$ given in Lemma \ref{thm:eq_ellipsoid},
\begin{align}
    (bb^\top )_{n+1,n+1} = b_{n+1}^2 = \bfC_{n+1,n+1} =  w_{n+1}^2 = {n}/{(n+1)}.
\end{align} 
\end{lemma}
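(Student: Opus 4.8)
The plan is to recognize that $\bfB = \pi^{\perp}_w\pi^{\perp}_{\mathbbm{1}}\bfV$ (with $w = v/\|v\|$) is just the image of $\bfV$ under a single orthogonal projector applied column-wise, and then to read off both assertions from the resulting coordinate formula.

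First I would use that, by \eqref{eq:def_v}, $v = \pi^{\perp}_{\mathbbm{1}}e_{n+1}$ satisfies $\mathbbm{1}^\top v = 0$; hence $\text{Span}(v)$ and $\text{Span}(\mathbbm{1})$ are mutually orthogonal and $\text{Span}(\mathbbm{1},v) = \text{Span}(\mathbbm{1},e_{n+1})$, since $e_{n+1} = v + (n+1)^{-1}\mathbbm{1}$. Using $\mathbbm{1}^\top v = 0$ again, $v^\top\pi^{\perp}_{\mathbbm{1}}y = v^\top y$ for every $y$, so $\pi^{\perp}_w\pi^{\perp}_{\mathbbm{1}}y = y - (n+1)^{-1}(\mathbbm{1}^\top y)\mathbbm{1} - \|v\|^{-2}(v^\top y)v$. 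As $\{(n+1)^{-1/2}\mathbbm{1},\ \|v\|^{-1}v\}$ is an orthonormal basis of $\text{Span}(\mathbbm{1},e_{n+1})$, the right-hand side is the orthogonal projection $P$ of $y$ onto $\text{Span}(\mathbbm{1},e_{n+1})^{\perp} = \{x\in\Rbb^{n+1}:x_{n+1}=0,\ \sum_{l=1}^n x_l = 0\}$. Since $\bfA = \pi^{\perp}_{\mathbbm{1}}\bfV$ and $\bfB = \pi^{\perp}_w\bfA$, this gives $\bfB = P\bfV$.

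Next I would plug $v = e_{n+1} - (n+1)^{-1}\mathbbm{1}$ and $\|v\|^2 = n/(n+1)$ into the formula for $P$ and simplify, obtaining $(Py)_i = y_i - n^{-1}\sum_{l=1}^n y_l$ for $i\le n$ and $(Py)_{n+1} = 0$; applied to the columns of $\bfV$ this is exactly \eqref{eq:B_ij}, in particular $e_{n+1}^\top\bfB = 0$. For the final corner identities I would simply feed $e_{n+1}^\top\bfB = 0$ into the expressions of Lemma \ref{lemma:scores_transpose}: left-multiplying $\bfC = \bfB\bfD_{\mu}^{-1}\bfB^\top + ww^\top$ and $b = \bfB\bfD_{\mu}^{-1}r - w$ by $e_{n+1}^\top$ kills the $\bfB$-terms, so $\bfC_{n+1,n+1} = w_{n+1}^2$ and $b_{n+1} = -w_{n+1}$, hence $(bb^\top)_{n+1,n+1} = b_{n+1}^2 = w_{n+1}^2$; and $w_{n+1} = v_{n+1}/\|v\| = \sqrt{n/(n+1)}$ identifies all four quantities with $n/(n+1)$.

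I do not expect a genuine obstacle, since the statement is elementary linear algebra. The only point requiring care is the merging of the two successive orthogonal projectors $\pi^{\perp}_{\mathbbm{1}}$ and $\pi^{\perp}_w$ into one orthogonal projector, which works precisely because $v\perp\mathbbm{1}$; after that everything reduces to careful index and sign bookkeeping in the coordinate formula for $P$.
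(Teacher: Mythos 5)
Your proposal is correct and follows essentially the same route as the paper's proof: both merge $\pi^{\perp}_{w}\pi^{\perp}_{\mathbbm{1}}$ into a single orthogonal projector using $v\perp\mathbbm{1}$ (you identify its range abstractly, the paper computes $\mathbf{I}_{n+1}-(n+1)^{-1}\mathbbm{1}\mathbbm{1}^\top-ww^\top$ blockwise, which amounts to the same calculation), read off \eqref{eq:B_ij} and the null last row, and then obtain the corner identities by noting that $e_{n+1}^\top\bfB=0$ annihilates the $\bfB$-terms in $\bfC$ and $b$.
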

The key property for this lemma to hold is that $\mathbbm{1}^\top v = 0$. Applying this lemma to our score, we can further describe the elements of the score matrix $\bfS(z)$.
\begin{lemma}\label{lemma:expr_bn_dn_rn}
In the expression of the score matrix
\begin{align}
    \bfS(z) = n\bfC_n - n\frac{b^n(z)b^n(z)^\top}{1+d_n(z)},
\end{align}
the matrix $\bfC_n$, the vector $b^n(z)$ and the scalar $d_n(z)$ are given by 
\begin{align}
\bfC_n &= \begin{pmatrix}
             \bfP_{n,\lambda}  &\mathbf{0}_{n,1} \\
             \mathbf{0}_{1,n}  &0 
    \end{pmatrix} + ww^\top, \\
 d_n(z) &= \frac{1}{n} r_n(z)^\top \widehat{\mathbf{\Sigma}}_{n,\lambda}^{-1}r_n(z), \ \     b^n(z) = \frac{1}{n} \begin{pmatrix}
         \bfB_n \widehat{\mathbf{\Sigma}}_{n,\lambda}^{-1}r_n(z) \\ 
         0
\end{pmatrix} - w. \label{eq:d_n_b_n}
\end{align}
Here, $r_n(z)$ is given by
\begin{align}\label{eq:expr_rn}
r_n(z)  = \|v\|\bigg[\begin{pmatrix}
         X_{n+1} \\ z
\end{pmatrix} -  \frac{1}{n}\sum_{i=1}^nV_i\bigg]\in \calM_{p,1}.
\end{align}
\end{lemma}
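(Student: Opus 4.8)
The plan is to obtain Lemma~\ref{lemma:expr_bn_dn_rn} by combining Lemma~\ref{lemma:scores_transpose} and Lemma~\ref{lemma:simplify} and then reading off the identifications; the argument is essentially bookkeeping. Recall from \eqref{eq:def_score_S} that $\bfS(z) = n\bfW(z)\big(\bfW(z)^\top\bfW(z) + n\lambda\mathbf{I}\big)^{-1}\bfW(z)^\top$, and from \eqref{eq:W(z)_rank_one} that $\bfW(z) = \bfW(0) + v(\bfL z)^\top$ with $\bfW(0) = \pi^{\perp}_{\mathbbm{1}}\bfV(0)$. Thus $\bfW(z)$ has exactly the rank-one-perturbation form $\bfA + wu^\top$ of Lemma~\ref{lemma:scores_transpose}, with $\bfA = \bfW(0)$, $w = v/\|v\|$, $u = \|v\|\,\bfL z$ and $\mu = n\lambda$. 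So, writing $\bfB = \pi^{\perp}_w\bfA$, $\bfD_\mu = \bfB^\top\bfB + n\lambda\mathbf{I}$ and $r = u + \bfA^\top w$, Lemma~\ref{lemma:scores_transpose} gives $\bfS(z) = n\big(\bfC - (1+d)^{-1}bb^\top\big)$ with $\bfC = \bfB\bfD_\mu^{-1}\bfB^\top + ww^\top$, $b = \bfB\bfD_\mu^{-1}r - w$, $d = r^\top\bfD_\mu^{-1}r$.

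The next step is to identify $\bfB$. Since $\bfA = \pi^{\perp}_{\mathbbm{1}}\bfV(0)$ and $w = v/\|v\|$ with $v$ as in \eqref{eq:def_v}, Lemma~\ref{lemma:simplify} applies and tells us that $\bfB_{ij} = \bfV(0)_{ij} - \tfrac1n\sum_{l=1}^n\bfV(0)_{il}$ for $i\le n$ and that $\bfB$ has a vanishing last row; as the first $n$ rows of $\bfV(0)$ are the $V_i^\top$ (see \eqref{eq:decomp_Vi_Xi_Ri}), this means $\bfB$ is $\bfB_n$ of \eqref{eq:def_Bn} padded with a zero row. Consequently $\bfD_\mu = \bfB_n^\top\bfB_n + n\lambda\mathbf{I} = n\widehat{\mathbf{\Sigma}}_{n,\lambda}$, and substituting this block form into the expressions for $\bfC$, $b$, $d$ and using the definition \eqref{eq:def_Pnl} of $\bfP_{n,\lambda}$ yields
\[
\bfC = \begin{pmatrix}\bfP_{n,\lambda} & \mathbf{0}_{n,1}\\ \mathbf{0}_{1,n} & 0\end{pmatrix} + ww^\top, \qquad b = \frac1n\begin{pmatrix}\bfB_n\widehat{\mathbf{\Sigma}}_{n,\lambda}^{-1}r\\ 0\end{pmatrix} - w, \qquad d = \frac1n\,r^\top\widehat{\mathbf{\Sigma}}_{n,\lambda}^{-1}r,
\]
which are precisely $\bfC_n$, $b^n(z)$ and $d_n(z)$ of \eqref{eq:d_n_b_n}, provided $r = r_n(z)$.

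The one computation requiring a little care — and the part I expect to be the main (if modest) obstacle — is identifying $r = u + \bfA^\top w$ with $r_n(z)$, essentially because one must juggle the normalisation $\|v\|^2 = n/(n+1)$ and the affine shift $\bfL z$. Since $v = \pi^{\perp}_{\mathbbm{1}}e_{n+1}$ is already orthogonal to $\mathbbm{1}$, we have $\bfA^\top w = \|v\|^{-1}(\pi^{\perp}_{\mathbbm{1}}\bfV(0))^\top v = \|v\|^{-1}\bfV(0)^\top v$; expanding $v = e_{n+1} - (n+1)^{-1}\mathbbm{1}$ and using that the rows of $\bfV(0)$ are $V_1^\top,\dots,V_n^\top,V_{n+1}(0)^\top$ gives $\bfV(0)^\top v = \tfrac{n}{n+1}V_{n+1}(0) - \tfrac1{n+1}\sum_{i=1}^n V_i$. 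Combining this with $u = \|v\|\bfL z$, the identities $\|v\|^{-1}\tfrac{n}{n+1} = \|v\|$ and $\|v\|^{-1}\tfrac1{n+1} = \|v\|/n$, and the fact that $\bfL z + V_{n+1}(0) = V_{n+1}(z) = (X_{n+1}^\top\ z^\top)^\top$, everything collapses to $r = \|v\|\big(V_{n+1}(z) - \tfrac1n\sum_{i=1}^n V_i\big)$, which is exactly \eqref{eq:expr_rn}. Reassembling the pieces reproduces the decomposition \eqref{eq:scores_ simple} of $\bfS(z)$ together with the formulas \eqref{eq:d_n_b_n}, completing the proof.
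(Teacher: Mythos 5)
Your proposal is correct and follows essentially the same route as the paper's proof: apply Lemma \ref{lemma:scores_transpose} with $\bfA = \bfW(0)$, $u=\|v\|\bfL z$, $w=v/\|v\|$, $\mu=n\lambda$, identify $\bfB$ via Lemma \ref{lemma:simplify} as $\bfB_n$ padded with a zero row so that $\bfD_{n\lambda}=n\widehat{\mathbf{\Sigma}}_{n,\lambda}$, and then simplify $r=u+\bfA^\top w$ using $\|v\|^2=n/(n+1)$. The only cosmetic difference is that you carry out the simplification of $r$ on the whole vector $V_{n+1}(z)$ at once, whereas the paper splits it into its $X$ and $R$ blocks; the computation is the same.
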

Above, $r_n(z)$ corresponds to $r$ in Lemma \ref{lemma:scores_transpose}.
Do note that above, $v$ and $w$ also depend on $n$. \iain{We can use the same arguments (lemmas \ref{lemma:scores_transpose} and \ref{lemma:simplify}) to obtain a similar expression for the matrix $\bfS_X$ defined in equation \eqref{eq:def_Sp_Sx}.
\begin{lemma}\label{lemma:expr_bnx_dnx_rnx}
We can write the matrix $\bfS_X$ as
\begin{align}
    \bfS_X = n\bfC_n^X - n\frac{b_X^n(b_X^n)^\top}{1+d_n^X},
\end{align}
where the vector $b_X^n$ and the scalar $d_n^X$ are given by 
\begin{align}
 d_n^X &= \frac{1}{n+1} (X_{n+1}^\rmc)^\top (\widehat{\mathbf{\Sigma}}_{n,\lambda}^{11})^{-1}X_{n+1}^\rmc, \ \     b_X^n = \frac{\|v\|}{n} \begin{pmatrix}
         \bfB_n^X (\widehat{\mathbf{\Sigma}}_{n,\lambda}^{11})^{-1}X_{n+1}^\rmc \\
         0
\end{pmatrix} - w. \label{eq:d_nX_b_nX}
\end{align}
In $b_X^n$, $\bfB_n^X$ is the left $n\times k$ block of $\bfB_n$ and $\bfC_n^X$ is given by
\begin{align*}
\bfC_n^X &= \begin{pmatrix}
             \bfP_{n,\lambda}^{XX}  &\mathbf{0}_{n,1} \\
             \mathbf{0}_{1,n}  &0 
    \end{pmatrix} + ww^\top, \ \ \
    \text{where} \ \ \ \bfP_{n,\lambda}^{XX} = \bfB_n^X\Big((\bfB_n^X)^\top\bfB_n^X + n \lambda \bfI_k\Big)^{-1}(\bfB_n^X)^\top.
\end{align*}
\end{lemma}
}
Above, be wary that $\bfP_{n,\lambda}^{XX}$ is not a sub-block of $\bfP_{n,\lambda}$.
Our last lemma is pivotal to Section \ref{sec:asymp}.
\begin{lemma}\label{lemma:cv_quantile}
Take the assumptions of Proposition \ref{prop:asymptotics}. Then we have that almost surely,
\begin{align}
    \forall t \in \mathbb{R}^p, \ \frac{1}{n}\sum_{j=1}^n \exp\left({i t^\top \widehat{\mathbf{\Sigma}}_{n,\lambda}^{-1/2}(V_i - \overline{V}_n)}\right) \xlongrightarrow[n\rightarrow \infty]{} \mathbb{E}\big[\exp\big({i t^\top \mathbf{\Sigma}_{\lambda}^{-1/2}V_\rmc}\big)\big],
\end{align}
where $\overline{V}_n = n^{-1}\sum_{i=1}^nV_i$ and $V_\rmc = V_1 -\mathbb{E}[V_1]$.
In particular, under the assumptions of Proposition \ref{prop:asymptotics}, 
\begin{align}\label{eq:cvqna_e}
    q_{n,\alpha} \xlongrightarrow[n\rightarrow \infty]{a.s.} q_{1-\alpha}^{\calE}.
\end{align}
\iain{
Next, for all $i\in\{1, \ldots, n\},$ set $ p_{i,n}'' \coloneqq (\bfP_{n,\lambda} - \bfP_{n,\lambda}^{XX})_{ii}$ and define $q_{n,\alpha}''$ as the order statistic of order $n_\alpha$ of the $n$-tuple $(np_{1,n}'', \ldots, np_{n,n}'')$.  Then under the assumptions of Proposition \ref{prop:asymptotics},
\begin{align}\label{eq:cvqna_second}
    q_{n,\alpha}'' \xlongrightarrow[n\rightarrow \infty]{a.s.} q_{1-\alpha}^{\calF}.
\end{align}
Finally, if additionally $\mathbb{E}[\|X_1\|^{4q}] < + \infty$ for some $q>1$ and taking the definition of $q_{n,\alpha}'$ in equation \eqref{eq:def_pn_qn_prime}, then
\begin{align}\label{eq:cvqna_prime}
    q_{n,\alpha}' \xlongrightarrow[n\rightarrow \infty]{\mathbb{P}} q_{1-\alpha}^{\calF}.
\end{align}
}
\end{lemma}
The main difficulty of this lemma is the fact that $q_{n,\alpha}$ \iain{(resp. $q_{n,\alpha}'$)} is an order statistic built from $(np_{1,n},  \ldots, np_{n,n})$ \iain{(resp. $(np_{1,n}',  \ldots, np_{n,n}')$)} , which are identically distributed but not independent.
\paragraph*{Matérn covariance functions}
We conclude this section with the expressions of the Matérn covariance functions that are used in Section \ref{sec:numexp}. Setting $H = |h|\sqrt{2\nu}/L$, $k_{\nu}$ is given by \cite{gpml}, Section 4.2 p 85,
\begin{align}
    k_{\nu=1/2}(h) &= \sigma^2\exp(-H),\\
    k_{\nu=3/2}(h) &= \sigma^2(1+H)\exp(-H),\label{eq:matern-3/2}\\
    k_{\nu=5/2}(h) &= \sigma^2(1+H + H^2/3)\exp(-H),\\
    k_{\nu=7/2}(h) &= \sigma^2(1+H + 2H^2/5 + H^3/15)\exp(-H).
\end{align}

\section{Proofs} \label{app:proofs}

\begin{proof}[Proof of Lemma \ref{lemma:scores_transpose}]
We have
\begin{align}
    \bfM^\top \bfM &= \bfA^\top \bfA + (uw^\top \bfA+\bfA^\top wu^\top ) + uu^\top = \bfA^\top \bfA + (u + \bfA^\top {w})(u + \bfA^\top {w})^\top  - \bfA^\top ww^\top \bfA\nonumber \\
    &= (\bfA^\top \bfA - \bfA^\top ww^\top \bfA) + rr^\top  = \bfB^\top \bfB + rr^\top .
\end{align}
With $\bfD_{\mu} \coloneqq \bfB^\top \bfB + \mu \mathbf{I}$, the Sherman-Morrison formula yields
\begin{align}
    (\bfM^\top \bfM + \mu \mathbf{I})^{-1} = (\bfB^\top \bfB + \mu \mathbf{I} + rr^\top )^{-1} = \bfD_{\mu}^{-1} - \frac{\bfD_{\mu}^{-1}rr^\top \bfD_{\mu}^{-1}}{1 + r^\top \bfD_{\mu}^{-1}r} =  \bfD_{\mu}^{-1} - \frac{\bfD_{\mu}^{-1}rr^\top \bfD_{\mu}^{-1}}{1 + d}.
\end{align}
Moreover,
\begin{align}
    \bfM = \bfA + wu^\top  = (I-ww^\top )\bfA + ww^\top \bfA + wu^\top  = \bfB + w(\bfA^\top w+u)^\top  = \bfB + wr^\top .
\end{align}
Next, denoting $\bfB_{\mu}^+ \coloneqq \bfB\bfD_{\mu}^{-1}$,
\begin{align*}
    \bfM(\bfM^\top \bfM+\mu \mathbf{I})^{-1}\bfM^\top  &= \bfM\bigg(\bfD_{\mu}^{-1} - \frac{\bfD_{\mu}^{-1}rr^\top \bfD_{\mu}^{-1}}{1 + d}\bigg)(\bfB^\top +rw^\top )\\
    &= \bfM\bigg(\bfD_{\mu}^{-1}\bfB^\top  + \bfD_{\mu}^{-1}rw^\top  - \frac{\bfD_{\mu}^{-1}rr^\top \bfD_{\mu}^{-1}\bfB^\top }{1+d} - \frac{\bfD_{\mu}^{-1}r(r^\top \bfD_{\mu}^{-1}r)w^\top }{1+d}\bigg) \\
    &= \bfM\bigg(\bfD_{\mu}^{-1}\bfB^\top  + \bfD_{\mu}^{-1}rw^\top  - \frac{\bfD_{\mu}^{-1}rr^\top \bfD_{\mu}^{-1}\bfB^\top }{1+d} - \frac{d}{1+d}\bfD_{\mu}^{-1}rw^\top \bigg)\nonumber\\
    &= (\bfB+wr^\top )\bigg(\bfD_{\mu}^{-1}\bfB^\top  + \frac{\bfD_{\mu}^{-1}rw^\top }{1+d} - \frac{\bfD_{\mu}^{-1}rr^\top \bfD_{\mu}^{-1}\bfB^\top }{1+d}\bigg)\\
    &= \bfB_{\mu}^+\bfB^\top  + \frac{\bfB_{\mu}^+rw^\top }{1+d} - \frac{\bfB_{\mu}^+rr^\top (\bfB_{\mu}^+)^\top }{1+d} + wr^\top (\bfB_{\mu}^+)^\top \\
     &\hspace{5cm} + \frac{d}{1+d}ww^\top  - \frac{d}{1+d}wr^\top (\bfB_{\mu}^+)^\top  \\
    &= \bfB_{\mu}^+\bfB^\top  + \frac{\bfB_{\mu}^+rw^\top }{1+d} - \frac{\bfB_{\mu}^+rr^\top (\bfB_{\mu}^+)^\top }{1+d} +  \frac{wr^\top (\bfB_{\mu}^+)^\top }{1+d} + \frac{d}{1+d}ww^\top  \\
    &= \bfB_{\mu}^+\bfB^\top  + \frac{d}{1+d}ww^\top  - \frac{(\bfB_{\mu}^+r - w)(\bfB_{\mu}^+r-w)^\top  - ww^\top }{1+d} \\
    &= \bfB_{\mu}^+\bfB^\top  + ww^\top  - \frac{(\bfB_{\mu}^+r - w)(\bfB_{\mu}^+r-w)^\top }{1+d},
\end{align*}
which concludes the proof.
\end{proof}

\begin{proof}[Proof of Lemma \ref{lemma:simplify}] Observe first that $\bfB=\pi^{\perp}_{v}\pi^{\perp}_{\mathbbm{1}}\bfV$, with $\pi^{\perp}_{v}\pi^{\perp}_{\mathbbm{1}} = \mathbf{I}_{n+1} - (n+1)^{-1}\mathbbm{1}\mathbbm{1}^\top - ww^\top$, since $w^\top\mathbbm{1} = 0$. Now, denoting $\mathbbm{1}_n\in\mathbb{R}^n$ the vector made up of ones (the difference with $\mathbbm{1}$ is that $\mathbbm{1}\in\mathbb{R}^{n+1}$), a simple computation shows that
\begin{align}
    ww^\top = \begin{pmatrix}
             \frac{1}{n(n+1)}\mathbbm{1}_n\mathbbm{1}_n^\top & -\frac{1}{n+1}\mathbbm{1}_n\\
             -\frac{1}{n+1}\mathbbm{1}_n^\top & \frac{n}{n+1}
    \end{pmatrix} = \frac{1}{n+1}\begin{pmatrix}
             \frac{1}{n}\mathbbm{1}_n\mathbbm{1}_n^\top & -\mathbbm{1}_n\\
             -\mathbbm{1}_n^\top & n
    \end{pmatrix}.
\end{align}
Therefore,
\begin{align}
    \mathbf{I}_{n+1} - \frac{1}{n+1}\mathbbm{1}\mathbbm{1}^\top - ww^\top &= \begin{pmatrix}
             \mathbf{I}_{n} & \mathbf{0}_{n,1}\\
             \mathbf{0}_{1,n} & 1
    \end{pmatrix} - \frac{1}{n+1}\begin{pmatrix}
             \mathbbm{1}_n\mathbbm{1}_n^\top & \mathbbm{1}_n \\
             \mathbbm{1}_n^\top & 1 
    \end{pmatrix} -\frac{1}{n+1}\begin{pmatrix}
             \frac{1}{n}\mathbbm{1}_n\mathbbm{1}_n^\top & -\mathbbm{1}_n\\
             -\mathbbm{1}_n^\top & n
    \end{pmatrix}\nonumber\\
    &=\begin{pmatrix}
             \mathbf{I}_n - \frac{1}{n}\mathbbm{1}_n\mathbbm{1}_n^\top & \mathbf{0}_{n,1}\\
             \mathbf{0}_{1,n} & 0
    \end{pmatrix}.
\end{align}
In particular, writing $\bfV$ blockwise as $\bfV^\top = (\bfV_0^\top \ \bfV_{n+1}^\top)$, where $\bfV_0\in\calM_{n,p}$ and $\bfV_{n+1}\in\calM_{1,p}$, then
\begin{align}
    \bfB = \bigg(\mathbf{I}_{n+1} - \frac{1}{n+1}\mathbbm{1}\mathbbm{1}^\top - ww^\top\bigg)\bfV = \begin{pmatrix}
            (\mathbf{I}_n - \frac{1}{n}\mathbbm{1}_n\mathbbm{1}_n^\top)\bfV_0 \\ \mathbf{0}_{1,n+1}
    \end{pmatrix},
\end{align}
which finishes the proof of equation \eqref{eq:B_ij}.
As a consequence, the last coordinate of $\bfD_{\mu}^{+}r$ is null: $(\bfD_{\lambda}^{+}r)_{n+1} = \big(\bfB\bfD_{\mu}^{-1}r\big)_{n+1} = (e_{n+1}^\top\bfB)\bfD_{\mu}^{-1}r = 0$.
Thus,
\begin{align}
    \big(bb^\top \big)_{n+1,n+1} = \Big((\bfB_{\mu}^+r-w)(\bfB_{\mu}^+r-w)^\top \Big)_{n+1,n+1} = \Big((\bfB_{\mu}^+r-w)_{n+1}\Big)^2 = w_{n+1}^2 = \frac{n}{n+1}.
\end{align}
Using the same reasoning, $(\bfB_{\mu}^+\bfB^\top )_{n+1,n+1} = 0$ and thus, $\bfC_{n+1,n+1} = w_{n+1}^2 = n/(n+1)$.
\end{proof}
\begin{proof}[Proof of Lemma \ref{lemma:expr_bn_dn_rn}]
Given $z\in\mathbb{R}^\ell$, we apply Lemma \ref{lemma:scores_transpose} to $n^{-1}\bfS(z)$ (equation \eqref{eq:bad_score}) by setting $\bfA = \bfW(0)$, $u = \|v\|\bfL z$, $w = v/\|v\|$ and $\mu = n\lambda$. We thus express $ n^{-1}\bfS(z)$ in the form
\begin{align*}
    n^{-1}\bfS(z) = \bfC - \frac{bb^\top}{1+d},
\end{align*}
where $\bfC, b$ and $d$ are given in Lemma \ref{lemma:scores_transpose}.
We next apply Lemma \ref{lemma:simplify}, using that $\bfA = \bfW(0) = \pi_\mathbbm{1}^\perp\bfV(0)$. The matrix $\bfB$ is thus given by
\begin{align}
    \bfB &= \begin{pmatrix}
         \bfB_n \\ 
         \mathbf{0}_{1,p} 
\end{pmatrix} \in \calM_{n+1,p},\label{eq:B_block}
\end{align}
where $\bfB_n$ is given in equation \eqref{eq:def_Bn}.
From equation \eqref{eq:B_block} and Lemma \ref{lemma:scores_transpose}, we obtain the expression of $\bfC_n$ (equation \eqref{eq:decomp_Cn}). Next, from Lemma \ref{lemma:scores_transpose}, $d_n(z) = r_n(z)^\top \bfD_{n\lambda}^{-1}r_n(z)$, where $r_n(z) = \|v\|\bfL z + \bfW(0)^\top w$. We observe that $\bfD_{n\lambda}^{-1} = n^{-1} \widehat{\mathbf{\Sigma}}_{n,\lambda}^{-1}$, which yields the expression of $d_n(z)$. The same observation and equation \eqref{eq:B_block} yield the expression of $b^n(z)$. Finally, we simplify $r_n(z)$, as
\begin{align}
   r_n(z) &= \|v\| \bfL z + W(0)^\top w = \|v\|\begin{pmatrix}
         0 \\  
         z
\end{pmatrix} + \|v\|^{-1}\bfV(0)^\top\pi_\mathbbm{1}^\perp v\nonumber \\
&= \|v\|\begin{pmatrix}
         0 \\  
         z
\end{pmatrix} + \|v\|^{-1}\bfV(0)^\top v = \|v\|\begin{pmatrix}
         0 \\  
         z
\end{pmatrix} + \|v\|^{-1}\begin{pmatrix}
         X_{n+1} - (n+1)^{-1}\sum_{i=1}^{n+1}X_i \\
         - (n+1)^{-1}\sum_{i=1}^{n}R_i
\end{pmatrix} \nonumber \\
&= \|v\|\bigg[\begin{pmatrix}
         0 \\  
         z
\end{pmatrix} + \|v\|^{-2}\begin{pmatrix}
         X_{n+1} - (n+1)^{-1}\sum_{i=1}^{n+1}X_i \\
         - (n+1)^{-1}\sum_{i=1}^{n}R_i
\end{pmatrix}\bigg].
\end{align}
Equation \eqref{eq:expr_rn} is finally obtained by further noticing that
\begin{align}
    \|v\|^{-2}\bigg(X_{n+1} - \frac{1}{n+1}\sum_{i=1}^{n+1}X_i\bigg) &= \frac{n+1}{n}\bigg(\frac{n}{n+1}X_{n+1} - \frac{1}{n+1}\sum_{i=1}^{n}X_i\bigg) = X_{n+1} - \frac{1}{n}\sum_{i=1}^{n}X_i,
\end{align}
and performing a similar computation for $(n+1)^{-1}\sum_{i=1}^{n}R_i$.
\end{proof}
\iain{
\begin{proof}[Proof of Lemma \ref{lemma:expr_bnx_dnx_rnx}]
The expression for $\bfS_X$ is obtained using that
\begin{align*}
    \bfX = (X_1 \ \ldots \ X_n 0_{k,1})^\top + e_{n+1}X_{n+1}^\top.
\end{align*}
Hence we can obtain an expression for $\bfS_X$ similar to equation \eqref{eq:scores_ simple} by performing the three substitutions
\begin{align}\label{eq:substitution}
    \bfV \xleftrightarrow[]{} \bfX, \ \ \ \bfL \xleftrightarrow[]{} \bfI_k, \ \ \ z \xleftrightarrow[]{} X_{n+1}.
\end{align}
With the substitution \eqref{eq:substitution}, the vector $r_n(z)$ in Lemma \ref{lemma:expr_bn_dn_rn} is replaced with
\begin{align*}
    r_n = \|v\|(X_{n+1}-\overline{X}_n) = \|v\|X_{n+1}^\rmc.
\end{align*}
Next, $\bfB_n$ is replaced with its $n\times k$ left sub-block which we denote by $\bfB_n^X$, which verifies
\begin{align*}
    (\bfB_n^X)_{ij} = (X_i)_j - \frac{1}{n}\sum_{k=1}^n(X_k)_j, \ \ \ 1\leq i \leq n, \ \ \ 1\leq j \leq k.
\end{align*}
The matrix $\widehat{\mathbf{\Sigma}}_{n,\lambda}$ is next replaced with its upper $k\times k$ sub-block $\widehat{\mathbf{\Sigma}}_{n,\lambda}^{11}$, since
\begin{align*}
    \widehat{\mathbf{\Sigma}}_{n,\lambda}^{11} = \frac{1}{n}(\bfB_n^X)^\top \bfB_n^X + \lambda \bfI_k.
\end{align*}
Next, $d_n(z)$ is replaced with
\begin{align*}
    d_n^X = \frac{1}{n}r_n^\top(\widehat{\mathbf{\Sigma}}_{n,\lambda}^{11})^{-1}r_n = \frac{\|v\|^2}{n}(X_{n+1}^\rmc)^\top (\widehat{\mathbf{\Sigma}}_{n,\lambda}^{11})^{-1}X_{n+1}^\rmc = \frac{1}{n+1}(X_{n+1}^\rmc)^\top (\widehat{\mathbf{\Sigma}}_{n,\lambda}^{11})^{-1}X_{n+1}^\rmc.
\end{align*}
The vector $b_n(z)$ is replaced with
\begin{align*}
    b_X^n = \frac{1}{n}\begin{pmatrix}
             \bfB_n^X(\widehat{\mathbf{\Sigma}}_{n,\lambda}^{11})^{-1}r_n \\ 0
    \end{pmatrix} - w = \frac{\|v\|}{n}\begin{pmatrix}
             \bfB_n^X(\widehat{\mathbf{\Sigma}}_{n,\lambda}^{11})^{-1}X_{n+1}^\rmc \\ 0
    \end{pmatrix} - w.
\end{align*}
Finally, the matrix $\bfP_{n,\lambda}$ is replaced with
\begin{align*}
    \bfP_{n,\lambda}^{XX} \coloneqq \bfB_n^X\Big((\bfB_n^X)^\top\bfB_n^X + n \lambda \bfI_k\Big)^{-1}(\bfB_n^X)^\top = \frac{1}{n}\bfB_n^X(\widehat{\mathbf{\mathbf{\Sigma}}}_{n,\lambda}^{11})^{-1}(\bfB_n^X)^\top.
\end{align*}
This finishes the proof.
\end{proof}
}
\begin{proof}[Proof of Lemma \ref{lemma:approx_score}]
First, observe from Lemma \ref{lemma:simplify} that
\begin{align}\label{eq:snp1_simplify}
   \frac{1}{n} S_{n+1}(z) = \frac{n}{n+1}\bigg(1-\frac{1}{1 + d_n(z)}\bigg).
\end{align}
Now, recall that the standard conformal region $\calC_{\alpha}^n$ corresponding to our score is given by
\begin{align}
    \calC_{\alpha}^n = \{ z\in\mathbb{R}^{\ell}: S_{n+1}(z) \leq S_{(n_{\alpha})}(z) \},
\end{align}
where $S_{(n_{\alpha})}(z)$ is the order statistic of order $n_{\alpha}$ of the $n$-tuple $(S_1(z), \dots , S_n(z))$.
Next, denote $C_{(n_{\alpha})}$ the order statistic of the $n$-tuple $((\bfC_n)_{11}, \dots , (\bfC_n)_{nn})$. Observe that, by definition, at least $100\times (1-\alpha)\frac{n+1}{n}\%$ values of this tuple are less than or equal to $C_{(n_{\alpha})}$; but for each of such values $(\bfC_n)_{ii}$ and for all $z$, we have $S_i(z) \leq n(\bfC_n)_{ii}$ (equation \eqref{eq:scores_ simple}). Hence, at least $100\times (1-\alpha)\frac{n+1}{n}\%$ values of $(S_1(z), \dots , S_n(z))$ are less or equal than $nC_{(n_{\alpha})}$. Thus, $S_{(n_{\alpha})}(z) \leq nC_{(n_{\alpha})}$, and
\begin{align}\label{eq:ctilde_interm}
    \calC_{\alpha}^n \subset \{ z \in \mathbb{R}^{\ell}: S_{n+1}(z) \leq nC_{(n_{\alpha})} \}.
\end{align}
To finish, recall that $(\bfC_n)_{ii} = p_{i,n} + 1/n(n+1)$: in particular, $nC_{(n_{\alpha})} = q_{n,\alpha} + 1/(n+1)$. This shows that the set on the right hand side of equation \eqref{eq:ctilde_interm} is $\widetilde{\calC}_{\alpha}^n$.
\end{proof}
\begin{proof}[Proof of Proposition \ref{prop:c_a_is_e_a_n}] We start by rewriting the equation defining $\widetilde{\calC}_{\alpha}^n$. Starting from equations \eqref{eq:snp1_simplify} and \eqref{eq:ctilde_interm},
\begin{align}\label{eq:start_eq_cna}
    z\in \widetilde{\calC}_{\alpha}^n \iff -\bigg(\frac{n^2}{n+1}\bigg)\frac{1}{1 + d_n(z)} \leq q_{n,\alpha} + \frac{1}{n+1} - \frac{n^2}{n+1} = q_{n,\alpha}-(n-1).
\end{align}
From the equation above, if $q_{n,\alpha} \geq n-1$, then $\widetilde{\calC}_{\alpha}^n = \Rbb^{\ell}$. Assume now that $q_{n,\alpha} < n-1$, then
\begin{align}\label{eq:rewrite_cna}
    z\in \widetilde{\calC}_{\alpha}^n \iff 1 + d_n(z) \leq \bigg(\frac{n^2}{n+1}\bigg)\frac{1}{{n-1} - {q_{n,\alpha}} } \iff d_n(z) \leq \frac{q_{n,\alpha} + \frac{1}{n+1}}{{n} - 1 -{q_{n,\alpha}} }.
\end{align}
Now, recall from Lemma \ref{lemma:expr_bn_dn_rn} that $d_n(z) = n^{-1}r_n(z)^\top \widehat{\mathbf{\Sigma}}_{n,\lambda}^{-1}r_n(z)$. Following equation \eqref{eq:expr_rn} and the notation $X_{n+1}^\rmc \coloneqq X_{n+1}-\overline{X}_n$, we set $z_\rmc \coloneqq z - \overline{R}_n$, so that
\begin{align}\label{eq:recover_rnz}
    r_n(z) = \|v\|\begin{pmatrix}
             X_{n+1}^\rmc \\
             z_\rmc
    \end{pmatrix}.
\end{align}
Using that $\|v\|^2 = n/(n+1)$ and equation \eqref{eq:apply_block_lemma}, $d_n(z)$ is rewritten as
\begin{align}
    (n+1) d_n(z) &= \frac{n+1}{n} r_n(z)^\top \widehat{\mathbf{\Sigma}}_{n,\lambda}^{-1}r_n(z) = \frac{n+1}{n}\|v\|^2\begin{pmatrix}
         X_{n+1}^\rmc\\z_\rmc
    \end{pmatrix}^\top\widehat{\mathbf{\Sigma}}_{n,\lambda}^{-1}\begin{pmatrix}
         X_{n+1}^\rmc\\z_\rmc
    \end{pmatrix} = \begin{pmatrix}
         X_{n+1}^\rmc\\z_\rmc
    \end{pmatrix}^\top\widehat{\mathbf{\Sigma}}_{n,\lambda}^{-1}\begin{pmatrix}
         X_{n+1}^\rmc\\z_\rmc
    \end{pmatrix}
    \\
    &= (z_\rmc - z_0)^\top \boldcalA_n^{-1}(z_\rmc-z_0) + (X_{n+1}^\rmc)^\top(\widehat{\mathbf{\Sigma}}_{n,\lambda}^{11})^{-1}X_{n+1}^\rmc,
\end{align}
with $\boldcalA_n = \widehat{\mathbf{\Sigma}}_{n,\lambda}/\widehat{\mathbf{\Sigma}}_{n,\lambda}^{11}$ and $z_0 = \widehat{\mathbf{\Sigma}}_{n}^{21}(\widehat{\mathbf{\Sigma}}_{n,\lambda}^{11})^{-1}X_{n+1}^\rmc$.
Going back to equation \eqref{eq:rewrite_cna}, the final equation of $\widetilde{\calC}_{\alpha}^n$ is obtained by setting $ Z_0^n \coloneqq z_0 + \overline{R}_n$, and writing
\begin{align*}
   z\in \widetilde{\calC}_{\alpha}^n \iff (z - Z_0^n)^\top \boldcalA_n^{-1}(z-z_0^n) \leq (n+1)\frac{q_{n,\alpha} + 1/(n+1)}{{n} - 1 -{q_{n,\alpha}} } - (X_{n+1}^\rmc)^\top(\widehat{\mathbf{\Sigma}}_{n,\lambda}^{11})^{-1}X_{n+1}^\rmc.
\end{align*}
The quantile term is simplified as
\begin{align}
    (n+1)\frac{q_{n,\alpha} + 1/(n+1)}{{n} - 1 -{q_{n,\alpha}} } = \frac{(n+1)q_{n,\alpha} + 1 + n -n}{{n} - 1 -{q_{n,\alpha}} } = \frac{n(q_{n,\alpha}+1)}{n-1-q_{n,\alpha}} - 1 = \frac{q_{n,\alpha}+1}{1-(q_{n,\alpha}+1)/n} - 1,
\end{align}
which finishes to show that $\widetilde{C}_{\alpha}^n = \calE_{\alpha}^n$.
\end{proof}
\begin{proof}[Proof of Proposition \ref{prop:bound}]
\adri{The core idea for this result is the observation that for all $z\in K$,
\begin{align}
    q_{n,\alpha}  - \sup_{z\in K}\max_{1\leq i \leq n}nb_i^n(z)^2 \leq S_{(n_{\alpha})}(z)-1/(n+1)\leq q_{n,\alpha},
\end{align}
and the fact that under our moment assumptions,} 
\begin{align}
 b^n(K) \coloneqq\sup_{z\in K}\max_{1\leq i \leq n}nb_i^n(z)^2 \xlongrightarrow[n\rightarrow \infty]{\mathbb{P}} 0.
\end{align}
Let us prove the equation above.
For this, denote $Q_n(z)  \coloneqq\big(\bfB_n \widehat{\mathbf{\Sigma}}_{n,\lambda}^{-1}r_n(z)\big)_1 = (V_1^\rmc)^\top\widehat{\mathbf{\Sigma}}_{n,\lambda}^{-1}r_n(z)$, where $V_1^\rmc = V_1 - n^{-1}\sum_{i=1}^nV_i$ and $r_n(z)$ is given in equation \eqref{eq:expr_rn}. Next, following equation \eqref{eq:d_n_b_n}, write that for all $z\in\mathbb{R}^{\ell}$,
\begin{align}
    b_1^n(z)  &= \frac{Q_n(z)}{n} - w_1 = \frac{Q_n(z)}{n} - \frac{1}{\sqrt{n(n+1)}}, \nonumber\\
    \big(nb_1^n(z)^2\big)^q &= \big(\sqrt{n}b_1^n(z)\big)^{2q} = \bigg(\frac{Q_n(z)}{\sqrt{n}}-\frac{1}{\sqrt{n+1}}\bigg)^{2q} \lesssim \frac{Q_n(z)^{2q}}{n^q} + \frac{1}{n^q}.\label{eq:conv_3p}
\end{align}
Above, we used that $|a+b|^{2q}\leq 2^{q-1}(|a|^{2q}+|b|^{2q})$. Observe now that almost surely, because of the continuity of maps of the form $z\mapsto \bfA z + b$,
\begin{align}
    b^n(K) = \sup_{z\in K} \max_{1\leq i \leq n}nb_i^n(z)^2 = \sup_{z\in K\cap \mathbb{Q}^{\ell}} \max_{1\leq i \leq n}nb_i^n(z)^2.
\end{align}
Hence, $b^n(K)$ is a well-defined random variable (we apply the same reasoning to $\sup_{z\in K}|Q_n(z)|$). Next (explanation below), 
\begin{align}
    \mathbb{E}[b^n(K)]^q = \mathbb{E}\Big[\sup_{z\in K}\max_{1\leq i \leq n}nb_i^n(z)^2\Big]^q &\leq \mathbb{E}\Big[\sup_{z\in K}\max_{1\leq i \leq n}\big(nb_i^n(z)^2\big)^q\Big] \leq \mathbb{E}\bigg[\sup_{z\in K}\sum_{i=1}^n\big(nb_i^n(z)^2\big)^q\bigg]\nonumber\\
    &\leq \mathbb{E}\bigg[\sum_{i=1}^n\sup_{z\in K}\big(nb_i^n(z)^2\big)^q\bigg] = n\mathbb{E}\Big[\sup_{z\in K}\big(nb_1^n(z)^2\big)^q\Big]\nonumber\\
    &\lesssim \frac{1}{n^{q-1}}\mathbb{E}\Big[\sup_{z\in K}|Q_n(z)|^{2q}\Big] + \frac{1}{n^{q-1}}. \label{eq:use_3p}
\end{align}
We used Jensen's inequality in the first inequality, and equation \eqref{eq:conv_3p} in equation \eqref{eq:use_3p}.
We now prove that the assumption yields that $\sup_n \mathbb{E}[\sup_{z\in K}|Q_n(z)|^{2q}]<+\infty$. For this, use equation \eqref{eq:expr_rn} to decompose $r_n(z)$ as
\begin{align}\label{eq:decomp_rn}
    r_n(z) = \|v\|(\bfL z + v_n), \ \ v_n = \begin{pmatrix}
             X_{n+1} \\ 0
    \end{pmatrix} - \frac{1}{n}\sum_{i=1}^nV_i.
\end{align}
(Be wary that $v_n$ above has no link with $v$.) Next (explanation below),
\begin{align}
    |Q_n(z)|^2 &= \Big((V_1^\rmc)^\top\widehat{\mathbf{\Sigma}}_{n,\lambda}^{-1}r_n(z)\Big)^2 \leq (V_1^\rmc)^\top\widehat{\mathbf{\Sigma}}_{n,\lambda}^{-1}V_1^\rmc \times r_n(z)^\top\widehat{\mathbf{\Sigma}}_{n,\lambda}^{-1}r_n(z)\label{eq:cs_matrix} \\
    &\leq \lambda^{-2}\|V_1^\rmc\|^2\|r_n(z)\|^2 \lesssim\|V_1^\rmc\|^2(\|z\|^2 + \|v_n\|^2),\label{eq:sn_interm} \\
    \sup_{z\in K}|Q_n(z)|^2 &\lesssim\|V_1^\rmc\|^2\Big(\sup_{z\in K}\|z\|^2 + \|v_n\|^2\Big).
    \end{align}
    Above, we applied the Cauchy-Schwarz inequality for the inner product $(u,v) \mapsto u^\top \bfA v$ where $\bfA$ is symmetric PSD in equation \eqref{eq:cs_matrix}. In equation \eqref{eq:sn_interm}, we used that $\widehat{\mathbf{\Sigma}}_{n,\lambda}^{-1} \preccurlyeq \lambda^{-1}I$, $\|v\|\leq 1$ and $\|r_n(z)\|^2 \leq 2(\|z\|^2 + \|v_n\|^2)$, the latter inequality being easily deduced from equation \eqref{eq:decomp_rn}.
    Applying the Cauchy-Schwarz inequality for the expectation and the fact that $|a+b|^{2q}\lesssim |a|^{2q}+|b|^{2q}$,
    \begin{align}
    \mathbb{E}\Big[\sup_{z\in K}|Q_n(z)|^{2q}\Big] &\lesssim\mathbb{E}[\|V_1^\rmc\|^{4q}]^{1/2}\mathbb{E}\bigg[\Big(\sup_{z\in K}\|z\|^2 + \|v_n\|^2\Big)^{2q}\bigg]^{1/2}\nonumber\\
    &\lesssim\mathbb{E}[\|V_1^\rmc\|^{4q}]^{1/2}\bigg(\sup_{z\in K}\|z\|^{4q} +  \mathbb{E}\big[\|v_n\|^{4q}\big]\bigg)^{1/2}. \label{eq:prod_esperance}
\end{align}
Similarly, from the triangle inequality and the convexity of $(\cdot)^{4q}$, the right-hand side of equation \eqref{eq:prod_esperance} is bounded as
\begin{align}
   \mathbb{E}[\|V_1^\rmc\|^{4q}] &\lesssim \mathbb{E}[\|V_1\|^{4q}] + \mathbb{E}\bigg[\Big\|\frac{1}{n}\sum_{i=1}^nV_i\Big\|^{4q}\bigg] \leq 2\mathbb{E}[\|V_1\|^{4q}],\nonumber\\
    \mathbb{E}[\|v_n\|^{4q}] &\lesssim \mathbb{E}[\|X_{n+1}\|^{4q}] + \mathbb{E}\bigg[\Big\|\frac{1}{n}\sum_{i=1}^nV_i\Big\|^{4q}\bigg]  \leq 2\mathbb{E}[\|V_1\|^{4q}].\nonumber
\end{align}
Hence, from the assumption, $\sup_n\mathbb{E}\Big[\sup_{z\in K}|Q_n(z)|^{2q}\Big] < + \infty$ and from equation \eqref{eq:use_3p}, $b^n(K) \rightarrow 0$ in $L^1(\mathbb{P})$. Thus,
\begin{align}\label{eq:cv_bnK}
    b^n(K)  \xlongrightarrow[n\rightarrow \infty]{\mathbb{P}} 0.
\end{align}
We now prove equation \eqref{eq:vol_surplus}. For this, we introduce the increasing function $f_n$ and the nonnegative random variable $T_n$, such that $\rho_{n,\alpha} = f_n(q_{n,\alpha}) - T_n$, following equation \eqref{eq:def_rho_n}. We also denote $\overline{q}_{n,\alpha}\coloneqq q_{n,\alpha} -b^n(K)$ $(\overline{q}_{n,\alpha}\leq q_{n,\alpha})$ and $\overline{\rho}_{n,\alpha} \coloneqq f_n(\overline{q}_{n,\alpha}) - T_n$ ($\overline{\rho}_{n,\alpha} \leq \rho_{n,\alpha}$). Finally we define $\overline{\calE}_{\alpha}^n$ as the following ellipsoid,
\begin{align}
    \overline{\calE}_{\alpha}^n \coloneqq \{z \in \mathbb{R}^{\ell}: (z - Z_0^n)^\top \boldcalA_n^{-1}(z - Z_0^n) \leq (\overline{\rho}_{n,\alpha})_+\}.
\end{align}
By copying the proof of Proposition \ref{prop:c_a_is_e_a_n}, $ \overline{\calE}_{\alpha}^n$ is also the set defined by $\{z\in\mathbb{R}^\ell : S_{n+1}(z) \leq \overline{q}_{n,\alpha} + (n+1)^{-1} \}$.
For all $z\in K$, the inequality $\overline{q}_{n,\alpha} + (n+1)^{-1} \leq S_{(n_{\alpha})}(z) \leq q_{n,\alpha} + (n+1)^{-1}$ then yields that
\begin{align}\label{eq:inclusions}
    (\overline{\calE}_{\alpha}^n \cap K) \subset (\calC_{\alpha}^n \cap K) \subset (\calE_{\alpha}^n\cap K).
\end{align}
We now study the associated volumes. First, given $s\geq 1$, recall that from the mean value theorem, $|x^s-y^s|\leq s\max (|x|,|y|)^{s-1}|x-y|, x,y \geq 0$, and observe that, \iain{because $f_n$ is increasing and, for $x,t \geq 0$, $(x-t)_+ \leq x$,}
\begin{align}\label{eq:control_max_fn}
    \max((\rho_{n,\alpha})_+, (\overline{\rho}_{n,\alpha})_+)^{s-1} \leq \max(f_n(q_{n,\alpha}),f_n(\overline{q}_{n,\alpha}))^{s-1} = f_n(q_{n,\alpha})^{s-1}.
\end{align}
If $\ell \geq 2$, this fact and equation \eqref{eq:inclusions} together imply that
\begin{align}
   0 \leq  \Vol\big((\calE_{\alpha}^n\setminus \calC_{\alpha}^n)\cap K\big) &\leq \Vol\big((\calE_{\alpha}^n\setminus \overline{\calE}_{\alpha}^n)\cap K\big) \leq \Vol(\calE_{\alpha}^n\setminus \overline{\calE}_{\alpha}^n) = \Vol(\calE_{\alpha}^n) - \Vol(\overline{\calE}_{\alpha}^n)\nonumber\\
    &\leq \det(\boldcalA_n)^{1/2}((\rho_{n,\alpha})_+^{\ell/2} - (\overline{\rho}_{n,\alpha})_+^{\ell/2})
     \leq \det(\boldcalA_n)^{1/2}\frac{\ell}{2}f_n(q_{n,\alpha})^{\ell/2-1}|(\rho_{n,\alpha})_+ - (\overline{\rho}_{n,\alpha})_+|\nonumber \\
    &\leq \det(\boldcalA_n)^{1/2}\frac{\ell}{2}f_n(q_{n,\alpha})^{\ell/2-1}|\rho_{n,\alpha} - \overline{\rho}_{n,\alpha}| \label{eq:lpz}\\
    &\leq \det(\boldcalA_n)^{1/2}\frac{\ell}{2}f_n(q_{n,\alpha})^{\ell/2-1}|f_n(q_{n,\alpha}) - f_n(\overline{q}_{n,\alpha})|.\label{eq:control_diff_vol}
\end{align}
(We used that $(\cdot)_+$ is $1$-Lipschitz in equation \eqref{eq:lpz}.)
But it is clear, from the definition of $f_n$ as well as Lemma \ref{lemma:cv_quantile}, that $f_n(q_{n,\alpha}) \xrightarrow[]{} q_{1-\alpha}^{\calE}$ in probability, while Lemma \ref{lemma:cv_quantile} and equation \eqref{eq:cv_bnK} imply that $f_n(\overline{q}_{n,\alpha}) \xrightarrow[]{} q_{1-\alpha}^{\calE}$ in probability. Combined with equation \eqref{eq:cv_An} and the continuity of $\det(\cdot)$, equation \eqref{eq:control_diff_vol} implies the desired equation \eqref{eq:vol_surplus}.
If $\ell = 1$, the same proof can be adapted using this time that $|x^{1/2}-y^{1/2}|\leq |x-y|^{1/2}, x,y \geq 0$.
\end{proof}
\begin{proof}[Proof of Lemma \ref{lemma:q_bound}] From equation \eqref{eq:decomp_Cn} and observing that $0 \preccurlyeq \bfP_{n,\lambda} \preccurlyeq \bfP_{n,0}$, we may first write that
\begin{align}\label{eq:sum_cii}
 \text{Tr}(\bfP_{n,\lambda}) \leq  \text{Tr}(\bfP_{n,0}) \leq p.
  \end{align}
 Now, denote $p_{(i)}$ the $\ith$ order statistic of $(p_{1,n}, \dots , p_{n,n})$, $p_{(1)} \leq \dots  \leq p_{(n)}$. Then
\begin{align}\label{eq:q_0}
\text{Tr}(\bfP_{n,\lambda}) =  \sum_{i=1}^n p_{i,n} =  \sum_{i=1}^n p_{(i)} \geq \sum_{i=n_{\alpha}}^{n} p_{(i)} \geq p_{(n_{\alpha})} (n +1 - n_{\alpha}) = \frac{q_{n,\alpha}}{n}(n +1 - n_{\alpha}).
\end{align}
Moreover, 
\begin{align}
n+1-n_{\alpha} &=  n+1 -\lceil (1-\alpha)(n+1)\rceil = n+1 + \lfloor - (1-\alpha)(n+1)\rfloor\nonumber \\
&= \lfloor n+1- (1-\alpha)(n+1)\rfloor = \lfloor \alpha (n+1) \rfloor.\label{eq:simpmify_nalpha}
\end{align}
Thus, combining equations \eqref{eq:sum_cii}, \eqref{eq:q_0} and \eqref{eq:simpmify_nalpha},
\begin{align}\label{eq:control_qnalpha}
    q_{n,\alpha} \leq \frac{np}{n+1-n_{\alpha}} = \frac{np}{\lfloor \alpha (n+1) \rfloor}.
\end{align}
Hence, a sufficient condition for $q_{n,\alpha} < n-1$ is $np/\lfloor \alpha (n+1) \rfloor < n-1$, that is,
\begin{align}
    \frac{np}{n-1} < \lfloor \alpha (n+1)\rfloor.
\end{align}
But this is equivalent to  $\lfloor {np}/({n-1})\rfloor +1 \leq \lfloor\alpha (n+1)\rfloor$, which is also equivalent to
\begin{align}
 \bigg\lfloor \frac{np}{n-1}\bigg\rfloor +1  = p + \bigg\lfloor \frac{p}{n-1}\bigg\rfloor + 1 &\leq \alpha (n+1).
\end{align}
Thus, if $p < n-1$, then $\lfloor {p}/(n-1)\rfloor = 0$ and we obtain the sufficient condition $\alpha \geq (p+1)/(n+1)$.
\end{proof}
\begin{proof}[Proof of Theorem \ref{thm:ellip_prime}]
\iain{The computations follow those of Proposition \ref{prop:c_a_is_e_a_n}. Using Lemmas \ref{lemma:scores_transpose} and \ref{lemma:simplify} on $\bfS(z)$ and $\bfS_X$, we rewrite the score matrix as
\begin{align}
    \frac{1}{n}\bfS'(z) &= \begin{pmatrix}
             \bfP_{n,\lambda}  &\mathbf{0}_{n,1} \\
             \mathbf{0}_{1,n}  &0 
    \end{pmatrix} + ww^\top - \frac{b^n(z)b^n(z)^\top }{1 + d_n(z)} - \Bigg(\begin{pmatrix}
             \bfP_{n,\lambda}^{XX}  &\mathbf{0}_{n,1} \\
             \mathbf{0}_{1,n}  &0 
    \end{pmatrix} + ww^\top - \frac{b_X^n(b_X^n)^\top }{1 + d_n^X}\Bigg)\nonumber \\
    &= \begin{pmatrix}
             \bfP_{n,\lambda} - \bfP_{n,\lambda}^{XX}  &\mathbf{0}_{n,1} \\
             \mathbf{0}_{1,n}  &0 
    \end{pmatrix} - \frac{b^n(z)b^n(z)^\top }{1 + d_n(z)} + \frac{b_X^n(b_X^n)^\top }{1 + d_n^X}.
\end{align}
In particular,
\begin{align}
    \forall i\in\{1, \ldots, n\}, \ \ \ S_i'(z) &= n(\bfP_{n,\lambda} - \bfP_{n,\lambda}^{XX})_{ii} + n\frac{(b_X^n)_i^2}{1+d_n^X} - n\frac{b^n(z)_i^2}{1+d_n(z)}, \\
    S_{n+1}'(z) &= \frac{n^2}{n+1}\bigg(\frac{1}{1+d_n^X} - \frac{1}{1+d_n(z)}\bigg). \label{eq:s_np1_prime}
\end{align}
Equation \eqref{eq:s_np1_prime} is a consequence of Lemma \ref{lemma:simplify} : $b^n(z)_{n+1}^2 = (b_X^n)_{n+1}^2 = n/(n+1)$. Now, since $d_n(z)\geq 0$, the following uniform bound in $z$ holds for all $i\in\{1, \ldots, n\}$ :
\begin{align}
     S_i'(z) \leq n(\bfP_{n,\lambda} - \bfP_{n,\lambda}^{XX})_{ii} + n\frac{(b_X^n)_i^2}{1+d_n^X} = np_{i,n}'.
\end{align}
Using the same arguments as in Lemma \ref{lemma:approx_score}, we have that $S_{(n_\alpha)}'(z) \leq np_{(n_\alpha)}' = q_{n,\alpha}'$ (see equation \eqref{eq:def_pn_qn_prime} for $q_{n,\alpha}'$), hence
\begin{align}\label{eq:def_calf_n_alpha}
    \{z : S_{n+1}'(z) \leq S_{(n_\alpha)}'(z) \} \subset  \{z : S_{n+1}'(z) \leq q_{n,\alpha}' \} \eqqcolon \calF_\alpha^n.
\end{align}
We now show that the set $\calF_\alpha^n$ above corresponds to that of Theorem \ref{thm:ellip_prime}. First, observe from equation \eqref{eq:apply_block_lemma} and the definitions of $d_n(z)$ and $d_n^X$ that
\begin{align*}
    d_n(z) = \frac{1}{n+1}(z - Z_0^n)^\top \boldcalA_n^{-1}(z-Z_0^n) + d_n^X \geq 0.
\end{align*}
Since $d_n(z)$ is quadratic in $z$ with $\min_zd_n(z) = d_n(Z_0^n) = d_n^X$, equation \eqref{eq:s_np1_prime} implies that
\begin{align*}
    \max_{z\in\mathbb{R}^\ell}S_{n+1}'(z) = \bigg(\frac{n^2}{n+1}\bigg)\frac{1}{1+d_n^X} = \frac{n}{t_{n}}.
\end{align*}
Hence, from the definition of $\calF_\alpha^n$ (equation \eqref{eq:def_calf_n_alpha}), $\calF_\alpha^n = \mathbb{R}^\ell$ if and only if $n/t_{n} \leq q_{n,\alpha}'$, which settles the case where $\calF_\alpha^n = \mathbb{R}^\ell$. Assume now that $n > q_{n,\alpha}'t_{n}$. Then,
\begin{align*}
z \in \calF_\alpha^n &\iff \frac{n^2}{n+1}\bigg(\frac{1}{1+d_n^X} - \frac{1}{1+d_n(z)}\bigg) \leq q_{n,\alpha}' \iff \frac{1}{1+d_n(z)} \geq \frac{1}{1+d_n^X} - \frac{n+1}{n^2}q_{n,\alpha}' \\
&\iff 1 +d_n(z) \leq \frac{1}{\frac{1}{1+d_n^X} - \frac{n+1}{n^2}q_{n,\alpha}'} = \frac{1+d_n^X}{1 - \frac{n+1}{n^2}(1+d_n^X)q_{n,\alpha}'} = \frac{1+d_n^X}{1-{t_{n}q_{n,\alpha}'}/n} \\
&\iff (n+1)d_n(z) \leq (n+1)\bigg(\frac{1+d_n^X}{1-{t_{n}q_{n,\alpha}'}/n} -1\bigg)\\
&\iff (z_\rmc - z_0)^\top \boldcalA_n^{-1}(z_\rmc-z_0) \leq (n+1)\bigg(\frac{1+d_n^X}{1-{t_{n}q_{n,\alpha}'}/n} -1\bigg) - (n+1)d_n^X \eqqcolon \rho_{n,\alpha}'.
\end{align*}
Moreover, the right-hand side $\rho_{n,\alpha}'$ above can be rewritten as
\begin{align*}
    \rho_{n,\alpha}' &= (n+1)\bigg(\frac{1+d_n^X}{1-{t_{n}q_{n,\alpha}'}/n} -1\bigg) - (n+1)d_n^X = (n+1)\bigg(\frac{1+d_n^X}{1-{t_{n}q_{n,\alpha}'}/n} -1-d_n^X\bigg)\\
    &= (n+1)(1+d_n^X)\bigg(\frac{1}{1-{t_{n}q_{n,\alpha}'}/n} -1\bigg) =(n+1)(1+d_n^X)\frac{t_{n}q_{n,\alpha}'/n}{1-{t_{n}q_{n,\alpha}'}/n} = t_{n}^2\frac{q_{n,\alpha}'}{1-{t_{n}q_{n,\alpha}'}/n},
    \end{align*}
    which finishes the proof.
}
\end{proof}
\begin{proof}[Proof of Proposition \ref{prop:calF_full_space}]
\iain{
We first prove that $S_i'(z) \geq 0$ for all $1\leq i \leq n+1$. We first introduce the matrices $\bfF$ and $\bfG(z)$ such that $\bfW(z) = \pi_\mathbbm{1}^\perp(\bfX\ \bfR(z)) = (\pi_\mathbbm{1}^\perp\bfX \ \pi_\mathbbm{1}^\perp\bfR(z)) \eqqcolon (\bfF \ \bfG(z))$ and $\widehat{\mathbf{\Sigma}}_{\lambda}$ in equation \eqref{eq:def_sig_hat} as
\begin{align*}
    \widehat{\mathbf{\Sigma}}_{\lambda} = \frac{1}{n}\bfW(z)^\top\bfW(z) + \lambda \bfI_p = \begin{pmatrix}
                \widehat{\mathbf{\Sigma}}_{\lambda}^{11}   & \widehat{\mathbf{\Sigma}}^{12} \\
                \widehat{\mathbf{\Sigma}}^{21}    &\widehat{\mathbf{\Sigma}}_{\lambda}^{22}
\end{pmatrix}, \ \ \ \text{with} \ \ \ \bfS(z) = (\bfF \ \bfG(z)) \ \widehat{\mathbf{\Sigma}}_{\lambda}^{-1} \begin{pmatrix}
         &\bfF^\top \\
         &\bfG(z)
\end{pmatrix}.
\end{align*}
Generalizing equation \eqref{eq:apply_block_lemma} to matrices (using the block inversion lemma \ref{lemma:block_inversion}) and observing that $\bfS_X = \bfF(\widehat{\mathbf{\Sigma}}_{\lambda}^{11})^{-1}\bfF^\top$,
\begin{align*}
    \bfS(z) &= \bfF(\widehat{\mathbf{\Sigma}}_{\lambda}^{11})^{-1}\bfF^\top + (\bfG(z) -  \widehat{\mathbf{\Sigma}}^{21} (\widehat{\mathbf{\Sigma}}_{\lambda}^{11})^{-1}\bfF)(\widehat{\mathbf{\Sigma}}_\lambda/\widehat{\mathbf{\Sigma}}_\lambda^{11})^{-1}(\bfG(z) -(\widehat{\mathbf{\Sigma}}_{\lambda}^{11})^{-1}\bfF)^\top, \\
   \bfS(z)-\bfS_X &= (\bfG(z) -  \widehat{\mathbf{\Sigma}}^{21} (\widehat{\mathbf{\Sigma}}_{\lambda}^{11})^{-1}\bfX)(\widehat{\mathbf{\Sigma}}_\lambda/\widehat{\mathbf{\Sigma}}_\lambda^{11})^{-1}(\bfG(z) -(\widehat{\mathbf{\Sigma}}_{\lambda}^{11})^{-1}\bfX)^\top \succcurlyeq 0.
\end{align*}
Hence, for all $i\in \{1, \ldots, n+1\},$
\begin{align*}
    S_i'(z) = (\bfS(z)-\bfS_X)_{ii} = e_i^\top (\bfS(z)-\bfS_X)e_i \geq 0.
\end{align*}
In particular, for all $i\in \{1,\ldots,n\}$,
\begin{align*}
    np_{i,n}' \geq S_i'(z) \geq 0,
\end{align*}
and as a consequence, $q_{n,\alpha}' \geq 0$. From the definition of $\rho_{n,\alpha}'$, we deduce that $ \rho_{n,\alpha}' \geq 0$. Finally, since $d_n(Z_0^n) = d_n^X$,
we have from equation \eqref{eq:s_np1_prime} that
\begin{align*}
 S_{n+1}'(Z_0^n) = 0 \leq q_{n,\alpha}',   
\end{align*}
and thus, from the definition of $\calF_\alpha^n$ (equation \eqref{eq:def_calf_n_alpha}), $Z_0^n \in \calF_\alpha^n$.
We now study the quantity $\mathbb{P}(\calF_\alpha^n = \mathbb{R}^\ell)$. We first write
\begin{align*}
    \mathbb{P}(\calF_\alpha^n = \mathbb{R}^\ell) &= \mathbb{P}\bigg(\frac{n}{n+1}\frac{1}{1+d_n^X}\leq \frac{q_{n,\alpha}'}{n}\bigg).
\end{align*}
Set $ p_{i,n}'' \coloneqq (\bfP_{n,\lambda} - \bfP_{n,\lambda}^{XX})_{ii}$ for $i\in\{1, \ldots,n\}$. Because $d_n^X\geq 0$, we have the bound
\begin{align*}
    p_{i,n}' &= (\bfP_{n,\lambda} - \bfP_{n,\lambda}^{XX})_{ii} + \frac{(b_X^n)_i^2}{1+d_n^X} \leq p_{i,n}'' + \sum_{j=1}^n(b_X^n)_j^2.
\end{align*}
Hence,
\begin{align}
    q_{n,\alpha}' &\leq q_{n,\alpha}'' +  n\sum_{j=1}^n(b_X^n)_j^2, \label{eq:control_qnaprime_qnasecond}
\end{align}
where $q_{n,\alpha}''$ is the order statistic of order $n_\alpha$ of the $n$-tuple $(np_{1,n}'', \ldots, np_{n,n}'')$. Using equation \eqref{eq:control_qnaprime_qnasecond},
\begin{align*}
    \mathbb{P}(\calF_\alpha^n = \mathbb{R}^\ell) &\leq \mathbb{P}\bigg(\frac{n}{n+1}\frac{1}{1+d_n^X} \leq \frac{q_{n,\alpha}''}{n} + \sum_{j=1}^n(b_X^n)_j^2\bigg) =\mathbb{P}\bigg(\frac{q_{n,\alpha}''}{n} + \sum_{j=1}^n(b_X^n)_j^2 + \bigg(1-\frac{n}{n+1}\frac{1}{1+d_n^X}\bigg) \geq 1\bigg).
\end{align*}
Now, given $X, Y, Z$ three random variables, we have the event inclusion
\begin{align*}
    \{X+Y+Z\geq 1\} \subset \{ X\geq 1/3\}\cup \{ Y\geq 1/3\}\cup \{ Z\geq 1/3\}.
\end{align*}(If both $X<1/3$ and $Y<1/3$, then $Z\geq 1-X-Y \geq 1/3$.)
In particular, using a union bound, 
\begin{align}\label{eq:union_bound}
    \mathbb{P}(\calF_\alpha^n = \mathbb{R}^\ell) \leq \mathbb{P}\bigg(\frac{q_{n,\alpha}''}{n} \geq \frac{1}{3}\bigg) + \mathbb{P}\bigg(\sum_{j=1}^n(b_X^n)_j^2\geq \frac{1}{3}\bigg) + \mathbb{P}\bigg(1-\frac{n}{n+1}\frac{1}{1+d_n^X}\geq \frac{1}{3}\bigg).
\end{align}
We now show that the three random variables in the equation above converge to $0$ in probability, which will imply that $ \mathbb{P}(\calF_\alpha^n = \mathbb{R}^\ell) \rightarrow 0$ when $n \rightarrow \infty$. First, from Lemma \ref{lemma:cv_quantile}, we have that $q_{n,\alpha}''$ converges a.s. to a constant, so a.s., $q_{n,\alpha}''/n \rightarrow 0$ when $n\rightarrow \infty$, in particular
\begin{align}\label{eq:cv_qnasecond_over_n}
    \mathbb{P}\bigg(\frac{q_{n,\alpha}''}{n} \geq \frac{1}{3}\bigg) \xlongrightarrow[n\rightarrow \infty]{} 0.
\end{align}
Next, denoting $\lambda_{\min,n} = \min \Spec (\mathbf{\Sigma}_{n,\lambda=0}^{11})$, we have\begin{align}
    0 \leq d_n^X &= \frac{1}{n+1}\mathbb{E}[X_{n+1}^\rmc (\mathbf{\Sigma}_{n,\lambda}^{11})^{-1}X_{n+1}^\rmc \leq \frac{1}{n+1} \frac{1}{\lambda + \lambda_{\min,n}}\|X_{n+1}-\overline{X}_n\|^2\nonumber \\
    &\leq 
    \frac{2}{(n+1)(\lambda + \lambda_{\min,n})}(\|X_{n+1}\|^2 + \|\overline{X}_n\|^2) =  \frac{2\|X_{n+1}\|^2}{(n+1)(\lambda + \lambda_{\min,n})}+  \frac{2\|\overline{X}_n\|^2}{(n+1)(\lambda + \lambda_{\min,n})}.\label{eq:control_dnx}
\end{align}
From the SLLN and the continuity of the eigenvalues of self adjoint matrices (\cite{horn}, Problem 1 p. 198), we have a.s., $\lambda_{\min,n}\rightarrow \lambda_{\min}$ where $\lambda_{\min,n} = \min \Spec (\mathbf{\Sigma}^{11})$.Thus the second term in equation converges a.s. to $0$. Moreover, $X_{n+1} \stackrel{\rmd}{=} X_1$. Thus, from Slutsky's lemma and the continuous mapping theorem,
\begin{align*}
    \frac{2\|X_{n+1}\|^2}{(n+1)(\lambda + \lambda_{\min,n})} \xlongrightarrow[n\rightarrow\infty]{\rmd} 0 \times \|X_1\|^2 = 0.
\end{align*}
Because the limit above is constant, the convergence above also holds in probability. Thus, $d_n^X \rightarrow 0$ in probability, when $n\rightarrow\infty$. Hence, assuming $n\geq 3$,
\begin{align}\label{eq:cv_dnx}
    \mathbb{P}\bigg(1-\frac{n}{n+1}\frac{1}{1+d_n^X}\geq \frac{1}{3}\bigg) = \mathbb{P}\bigg(d_n^X\geq \frac{3}{2}\frac{n}{n+1}-1\bigg)\leq \mathbb{P}\bigg(d_n^X\geq \frac{9}{8}-1\bigg) = \mathbb{P}\bigg(d_n^X\geq \frac{1}{8}\bigg) \xlongrightarrow[n\rightarrow\infty]{} 0.
\end{align}
Finally, using that $(b_X^n)_{n+1}^2 = w_{n+1}^2 = n/(n+1)$ (see Lemma \ref{lemma:expr_bnx_dnx_rnx}),
\begin{align}
    \|b_X^n\|^2 &= (b_X^n)^\top b_X^n = (b_X^n)_{n+1}^2 + \sum_{j=1}^n(b_X^n)_j^2 = \frac{n}{n+1} + \sum_{j=1}^n(b_X^n)_j^2.  \label{eq:norm_bnx}
\end{align}
Now, denote $\mathbbm{1}_n$ the vector of ones of $\mathbb{R}^n$, and observe that $\bfB_n^X=\pi_{\mathbbm{1}_n}^\perp \bfM$ where $\bfM\in\calM_{n,k}$ is given by $\bfM_{ij} = (X_i)_j$. Observe further that $w = c \times  (-1 \ \ldots -1 \ n)^\top = c \times  (-\mathbbm{1}_n^\top\ n)^\top$, where $c = (n+1)^{-1}\|v\|^{-1}.$  Using that $\pi_{\mathbbm{1}_n}^\perp\mathbbm{1}_n = 0$ and that the matrix $\pi_{\mathbbm{1}_n}^\perp$ is symmetric,
\begin{align*}
    w^\top\begin{pmatrix}
             \bfB_n^X(\mathbf{\Sigma}_{n,\lambda}^{11})^{-1}X_{n+1}^\rmc \\
             0
             \end{pmatrix} = -c\Big(\mathbbm{1}_n^\top\pi_{\mathbbm{1}_n}^\perp\Big)\bfM(\mathbf{\Sigma}_{n,\lambda}^{11})^{-1}(X_{n+1}^\rmc) = 0.
\end{align*}
Hence,
\begin{align}
    \|b_X^n\|^2 &= \bigg(\frac{\|v\|}{n}\begin{pmatrix}
             \bfB_n^X(\mathbf{\Sigma}_{n,\lambda}^{11})^{-1}X_{n+1}^\rmc\\0
    \end{pmatrix}-w\bigg)^\top\bigg(\frac{\|v\|}{n}\begin{pmatrix}
             \bfB_n^X(\mathbf{\Sigma}_{n,\lambda}^{11})^{-1}X_{n+1}^\rmc\\0
    \end{pmatrix}-w\bigg)\nonumber \\
    &= \frac{\|v\|^2}{n}(X_{n+1}^\rmc)^\top(\mathbf{\Sigma}_{n,\lambda}^{11})^{-1}\frac{(\bfB_n^X)^\top\bfB_n^X}{n}(\mathbf{\Sigma}_{n,\lambda}^{11})^{-1}X_{n+1}^\rmc + \|w\|^2\nonumber \\
    &\leq \frac{\|v\|^2}{n}(X_{n+1}^\rmc)^\top(\mathbf{\Sigma}_{n,\lambda}^{11})^{-1}X_{n+1}^\rmc + 1 = d_n^X + 1. \label{eq:bound_norm_b_nx}
\end{align}
We used that $n^{-1}(\bfB_n^X)^\top\bfB_n^X(\mathbf{\Sigma}_{n,\lambda}^{11})^{-1} \preccurlyeq \bfI_k$ in equation \eqref{eq:bound_norm_b_nx}.
Combining equations \eqref{eq:norm_bnx} and \eqref{eq:bound_norm_b_nx},
\begin{align}\label{eq:control_sum_bnx_dnx}
    0 \leq \sum_{j=1}^n(b_X^n)_j^2 \leq d_n^X + \frac{1}{n+1},
\end{align}
and thus $\sum_{j=1}^n(b_X^n)_j^2 \rightarrow 0$ in probability, when $n\rightarrow\infty$. As a consequence,
\begin{align}\label{eq:cv_sum_bnx}
    \mathbb{P}\bigg(\sum_{j=1}^n(b_X^n)_j^2\geq \frac{1}{3}\bigg) \xlongrightarrow[n\rightarrow\infty]{} 0.
\end{align}
Combining equations \eqref{eq:union_bound}, \eqref{eq:cv_qnasecond_over_n}, \eqref{eq:cv_dnx} and \eqref{eq:cv_sum_bnx}, we finally obtain that
\begin{align}
    \mathbb{P}(\calF_n^\alpha = \mathbb{R}^\ell) \xlongrightarrow[n\rightarrow\infty]{} 0,
\end{align}
which finishes the proof.
}
\end{proof}
\begin{proof}[Proof of Proposition \ref{prop:asymptotics}]
We clearly have, from the SLLN,
\begin{align}\label{eq:as_cov}
    \widehat{\mathbf{\Sigma}}_{n,\lambda} = \frac{1}{n}\bfB_n^\top \bfB_n + \lambda \mathbf{I}_p \xrightarrow[n\rightarrow \infty]{a.s.} \mathbf{\Sigma}_{\lambda}.
\end{align}
In particular, since we chose $\lambda > 0$, the continuous mapping theorem implies that
\begin{align}\label{eq:cv_inverse}
        \boldcalA_n  &\xrightarrow[n\rightarrow \infty]{a.s.}  \boldcalA_{\infty}.
\end{align}
Next, the fact that $q_{n,\alpha} \rightarrow q_{1-\alpha}^{\calE}$ almost surely is deferred in Lemma \ref{lemma:cv_quantile}.
This fact also implies that $q_{n,\alpha}/n \xrightarrow[n\rightarrow \infty]{a.s.} 0.$
Thus, using Slutsky's lemma,
\begin{align}
\rho_{n,\alpha} \xlongrightarrow[n\rightarrow \infty]{\rmd} q_{1-\alpha}^{\calE} - X_\rmc^\top (\mathbf{\Sigma}_{\lambda}^{11})^{-1}X_\rmc = \rho_{\infty,\alpha}.
\end{align}
Likewise, the convergence in distribution of $Z_0^n$ is also obtained via Slutsky's lemma. Next, from the continuous mapping theorem, the volume converges in distribution to the random limit volume given by
\begin{align}
    \Vol(\calE_{\alpha}^{\infty}) &= v_{\ell} \det\big((\rho_{\infty,\alpha})_+\boldcalA_{\infty}\big)^{1/2} = v_{\ell}\sqrt{\det(\mathbf{\Sigma}_{\lambda}/\mathbf{\Sigma}_{\lambda}^{11})}\Big(q_{1-\alpha}^{\calE} - X_\rmc^\top (\mathbf{\Sigma}_{\lambda}^{11})^{-1}X_\rmc\Big)^{\ell/2}_+.
\end{align}
The limit probability in equation \eqref{eq:proba_empty} corresponds to $\rho_{\infty,\alpha} < 0$. We now show that it is $\leq \alpha$. For this, we introduce the simplified notations
\begin{align}
    \mathbf{\Sigma}_{\lambda} =
    \begin{pmatrix}
             &\bfA   &\bfB \\
             &\bfB^\top    &\bfC
    \end{pmatrix}, \ \ \bfA = \mathbf{\Sigma}_{\lambda}^{11}.\label{eq:decomp_sigma_lambda_proof}
\end{align}
For conciseness, denote $A =  V_\rmc^\top \mathbf{\Sigma}_{\lambda}^{-1}V_\rmc, \ B = X_\rmc^\top (\mathbf{\Sigma}_{\lambda}^{11})^{-1}X_\rmc$ and $F_A, F_B$ their CDF: we first show that $A \geq B$.
From equation \eqref{eq:apply_block_lemma},
\begin{align}
    A &= X_\rmc^\top \bfA^{-1}X_\rmc + (R_\rmc - \bfB^\top \bfA^{-1} X_\rmc)^\top (\mathbf{\Sigma}_{\lambda}/\bfA)^{-1}(R_\rmc - \bfB^\top \bfA^{-1} X_\rmc)\nonumber\\
    &= B + (R_\rmc - \bfB^\top \bfA^{-1} X_\rmc)^\top (\mathbf{\Sigma}_{\lambda}/\bfA)^{-1}(R_\rmc - \bfB^\top \bfA^{-1} X_\rmc).\label{eq:facto_quad_conditionnel}
    \end{align}
Since $(\mathbf{\Sigma}_{\lambda}/\bfA)\succcurlyeq 0$, the equation above shows that $A \geq B$ and thus $F_A \leq F_B$. Hence, denoting $Q_A$ the quantile function of $A$ and using that $Q_A(p)\leq x \iff p\leq F_A(x)$,
\begin{align}
    \mathbb{P}(\calE_{\alpha}^{\infty} = \varnothing) &= \mathbb{P}(X_\rmc^\top (\mathbf{\Sigma}_{\lambda}^{11})^{-1}X_\rmc > q_{1-\alpha}^{\calE}) =1 - \mathbb{P}(X_\rmc^\top (\mathbf{\Sigma}_{\lambda}^{11})^{-1}X_\rmc \leq q_{1-\alpha}^{\calE}) \nonumber\\
    &= 1 - F_B(Q_A(1-\alpha)) \leq 1 - F_A(Q_A(1-\alpha)) \leq \alpha. 
\end{align}
To finish, if $\lambda = 0$, we assume that $\min \Spec(\mathbf{\Sigma}) > 0$. From the continuity of the smallest eigenvalue over the set of Hermitian matrices \cite{horn}, Problem 1 p. 198, and equation \eqref{eq:as_cov}, we deduce that almost surely, $\min\Spec(\widehat{\mathbf{\Sigma}}_n) >0$ for $n$ large enough ($n$ depends on the sample). Thus, equation \eqref{eq:cv_inverse} also holds, from the continuity of the map $\bfA\mapsto \bfA^{-1}$ over the set of invertible matrices of size $p$. Since $\min\Spec(\mathbf{\Sigma}^{11})\geq \min\Spec(\mathbf{\Sigma})>0$ (\cite{horn}, Theorem 4.3.15), the same argument shows that $(\widehat{\mathbf{\Sigma}}_n^{11})^{-1} \rightarrow (\mathbf{\Sigma}^{11})^{-1}$ almost surely. The rest of the proof is identical to the case $\lambda>0$.
\end{proof}
\begin{proof}[Proof of Proposition \ref{prop:cas_gaussien}]
Under the assumptions, we have $S\coloneqq X_\rmc^\top (\mathbf{\Sigma}^{11})^{-1}X_\rmc \sim \chi^2(k)$, $ V_\rmc^\top \mathbf{\Sigma}^{-1}V_\rmc \sim \chi^2(k+\ell)$ and $q_{1-\alpha}^{\calE} = F^{-1}_{\chi^2(k+\ell)}(1-\alpha).$ 
We begin with computing $\mathbb{E}[\Vol(\calE_{\alpha}^{\infty})^q]$.
Denoting $t = q_{1-\alpha}^{\calE}$, we have that $\mathbb{E}[\Vol(\calE_{\alpha}^{\infty})^q] = v_{\ell}^q\det(\mathbf{\Sigma}/\mathbf{\Sigma}^{11})^{q/2}\mathbb{E}\big[(t-S)_+^{q\ell/2}\big]$. Setting $C_k = 2^{k/2}\Gamma(k/2)$, the expectation is further simplified as
\begin{align}
    \mathbb{E}\big[(t-S)_+^{q\ell/2}\big] &= \int_{\Rbb^+}(t-s)_+^{q\ell/2}\frac{s^{k/2-1}e^{-s/2}}{C_k}ds = C_k^{-1}t^{q\ell/2}\int_0^t(1-s/t)_+^{q\ell/2}s^{k/2-1}e^{-s/2}ds\nonumber\\
    &= C_k^{-1}t^{q\ell/2}\int_0^1(1-v)^{q\ell/2}(tv)^{k/2-1}e^{-tv/2}tdv\nonumber\\
    &= C_k^{-1}t^{(k+q\ell)/2}\int_0^1(1-v)^{q\ell/2}v^{k/2-1}e^{-tv/2}dv\nonumber \\
    &= C_k^{-1}B\bigg(\frac{k}{2},\frac{q\ell}{2}+1\bigg)t^{(k+q\ell)/2}\Phi_{\mathrm{Beta}(\frac{k}{2},\frac{q\ell}{2}+1)}(it/2)\nonumber\\
    &= \frac{\Gamma(q\ell/2 + 1)}{2^{k/2}\Gamma((k+q\ell)/2+1)}t^{(k+q\ell)/2}{_1F_1}\bigg(\frac{k}{2}, \frac{k+q\ell}{2}+1, -\frac{t}{2}\bigg).
\end{align}
Above, $B(x,y) = \Gamma(x)\Gamma(y)/\Gamma(x+y)$ is the Euler Beta function, and $\Phi_{\mu}(t) = \int e^{itx}\mu(dx)$ is the characteristic function of a given measure $\mu$. Here, we recognize the characteristic function of the Beta distribution $\mathrm{Beta}({k}/{2},{q\ell}/{2}+1)$ \cite{continuous_distrib_2}, p 218. For the proof of the convergence statement, we begin with observing that $\lfloor \alpha(n+1)\rfloor >\alpha(n+1)-1$. From equation \eqref{eq:control_qnalpha}, when $n \geq 2(1-\alpha)/\alpha$,
\begin{align}
q_{n,\alpha} \leq \frac{np}{\alpha(n+1)-1}  = \frac{np}{\alpha n-(1-\alpha)} = \frac{p}{\alpha -(1-\alpha)/n} \leq \frac{p}{\alpha -\frac{(1-\alpha)\alpha}{2(1-\alpha)}} =  \frac{2p}{\alpha}.
\end{align}
From this we deduce that, for $n$ large enough so that $1-(1+2p/\alpha)/n >1/2$,
\begin{align}
    \rho_{n,\alpha} &= \bigg(\frac{n(q_{n,\alpha} + 1)}{n- (q_{n,\alpha}+1)}-1 - (X^{n+1}_\rmc)^\top (\widehat{\mathbf{\Sigma}}_{n,\lambda}^{11})^{-1}X_{n+1}^\rmc\bigg)_+ \leq \bigg( \frac{n(q_{n,\alpha} + 1)}{n- (q_{n,\alpha}+1)}\bigg)_+\nonumber\\
    &\leq \frac{q_{n,\alpha} + 1}{1- (q_{n,\alpha}+1)/n} \leq \frac{1+2p/\alpha}{1-(1+2p/\alpha)/n} \leq 2(1+2p/\alpha).
\end{align}
Hence, using that $\det(\widehat{\mathbf{\Sigma}}_n/\widehat{\mathbf{\Sigma}}_n^{11})\leq \det(\widehat{\mathbf{\Sigma}}_n^{22})$, we obtain that
\begin{align}
    \Vol(\calE_{\alpha}^n)& = v_\ell \det(\widehat{\mathbf{\Sigma}}_n/\widehat{\mathbf{\Sigma}}_n^{11})^{1/2}(\rho_{n,\alpha})_+^{\ell/2} \lesssim \det(\widehat{\mathbf{\Sigma}}_n^{22})^{1/2}.
\end{align}
Now, recall from \cite{mardia_kent}, Theorem 3.4.8, that $\det(\widehat{\mathbf{\Sigma}}_n^{22})$ is equal, in distribution, to $\det(\mathbf{\Sigma}^{22})U_0\times \ldots \times U_{\ell-1}$, where $nU_i \sim \chi^2(n-i)$ and the $U_i$ are independent. In particular, for all $q>0$ \cite{continuous_distrib_1}, p 420,
\begin{align}
    \mathbb{E}[\det(\widehat{\mathbf{\Sigma}}_n^{22})^q] = \det(\mathbf{\Sigma}^{22})^q\prod_{i=0}^{\ell-1}n^{-q}\mathbb{E}[(nU_i)^q] = \det(\mathbf{\Sigma}^{22})^q \prod_{i=0}^{\ell-1}\frac{2^q\Gamma(q+(n-i)/2)}{n^q\Gamma((n-i)/2)}.
\end{align}
But, when $s\rightarrow +\infty$ and $q$ is fixed, $\Gamma(s+q)/\Gamma(s)\sim s^{q}$ (\cite{stegun}, Section 6.1.39). Applying this result to each term in the product above, in the regime where $n\rightarrow +\infty$, we obtain that for all $q>0$, $\sup_n\mathbb{E}[\det(\widehat{\mathbf{\Sigma}}_n^{22})^q]<+\infty$. From \cite{van2000asymptotic}, Example 2.21,  and equation \eqref{eq:asymptotic_vol}, all the moments of $\Vol(\calE_{\alpha}^n)$ converge toward those of $\Vol(\calE_{\alpha}^{\infty})$. In particular, this proves equation \eqref{eq:exp_vol_gauss}.
\end{proof}
\begin{proof}[Proof of Proposition \ref{prop:calf_asymptotic}]
\iain{
The convergence of $q_{n,\alpha}'$ is given by Lemma \ref{lemma:cv_quantile}, equation \eqref{eq:cvqna_prime}. It is also proved in Proposition \ref{prop:calF_full_space} that, if $\mathbb{E}[\|X_1\|^2]<+\infty$, then $d_n^X \rightarrow 0$ in probability (see e.g. equation \eqref{eq:cv_dnx}), so that from the definition of $t_{n}$ (equation \eqref{eq:def_tna}), $t_{n} \rightarrow 1$ in probability. Thus, from the definition of $\rho_{n,\alpha}'$ (equation \eqref{eq:def_rho_n_prime}), we obtain that $\rho_{n,\alpha}' \rightarrow q_{1-\alpha}^\calF$ in probability.}
\end{proof}
\begin{proof}[Proof of Proposition \ref{prop:conditional_gaussian}]
\iain{Decompose $\mathbf{\Sigma}$ as in equation \eqref{eq:decomp_sigma}.
As in Propositions \ref{prop:asymptotics} and \ref{prop:calf_asymptotic}, set $V_\rmc \coloneqq V_1 - \mathbb{E}[V_1], \ V_\rmc = (X_\rmc^\top R_\rmc^\top)^\top$ and $T_\rmc = R_\rmc - \mathbf{\Sigma}^{21}(\mathbf{\Sigma}^{11})^{-1}X_\rmc$. Then
\begin{align}\label{eq:t_c_calf}
    Y_{1} - \widehat{Y}_{1} \in \calF_\alpha^\infty \iff R_1 \in \calF_\alpha^\infty \iff T_\rmc^\top(\mathbf{\Sigma}/\mathbf{\Sigma}^{11})^{-1}T_\rmc \leq q_{1-\alpha}^\calF.
\end{align}
Under the assumption that $V_\rmc \sim \calN(0,\mathbf{\Sigma})$, using the bilinearity of $\Cov(\cdot,\cdot)$,
\begin{align*}
    \Cov(T_\rmc,X_1) = \Cov(T_\rmc,X_\rmc) = \Cov(R_\rmc,X_\rmc) -  \mathbf{\Sigma}^{21}(\mathbf{\Sigma}^{11})^{-1}\Cov(X_\rmc,X_\rmc) = \mathbf{\Sigma}^{21} -\mathbf{\Sigma}^{21}(\mathbf{\Sigma}^{11})^{-1}\mathbf{\Sigma}^{11} = 0.
\end{align*}
Because $V_\rmc$ is a Gaussian random vector, this implies that $T_\rmc$ and $X_1$ are independent. In particular, from equation \eqref{eq:t_c_calf} and the definition of $q_{1-\alpha}^\calF$,
\begin{align*}
    \mathbb{P}(Y_{1} - \widehat{Y}_{1} \in \calF_\alpha^\infty |X_1=x) = \mathbb{P}(T_\rmc^\top\boldcalA_\infty^{-1}T_\rmc \leq q_{1-\alpha}^\calF|X_1=x) = \mathbb{P}(T_\rmc^\top\boldcalA_\infty^{-1}T_\rmc \leq q_{1-\alpha}^\calF) = 1-\alpha, 
\end{align*}
which finishes the proof.
}
\end{proof}
\begin{proof}[Proof of Proposition \ref{prop:pred_unbiased}]  
{Denote $\overline{R}_n \coloneqq n^{-1}\sum_{i=1}^nR_i$ and $ \overline{X}_n\coloneqq n^{-1}\sum_{i=1}^nX_i$, and write
\begin{align}
    \mathbb{E}[\widetilde{Y}_{n+1}] - \mathbb{E}[Y_{1}] &= \mathbb{E}[\widehat{Y}_{n+1} - Y_{n+1}] + \mathbb{E}\big[\ \overline{R}_n\ \big] + \mathbb{E}\big[\widehat{\mathbf{\Sigma}}_n^{21}(\widehat{\mathbf{\Sigma}}_{n,\lambda}^{11})^{-1}(X_{n+1}-\overline{X}_n)\big]\nonumber \\
    &=-\mathbb{E}[R_{n+1}] + \mathbb{E}\big[\ \overline{R}_n\ \big] + \mathbb{E}\big[\widehat{\mathbf{\Sigma}}_n^{21}(\widehat{\mathbf{\Sigma}}_{n,\lambda}^{11})^{-1}X_{n+1}\big] - \mathbb{E}\big[\widehat{\mathbf{\Sigma}}_n^{21}(\widehat{\mathbf{\Sigma}}_{n,\lambda}^{11})^{-1}\overline{X}_n\big]\nonumber \\
    &=\mathbb{E}\big[\widehat{\mathbf{\Sigma}}_n^{21}(\widehat{\mathbf{\Sigma}}_{n,\lambda}^{11})^{-1}\big]\mathbb{E}[X_{n+1}] - \mathbb{E}\big[\widehat{\mathbf{\Sigma}}_n^{21}(\widehat{\mathbf{\Sigma}}_{n,\lambda}^{11})^{-1}\overline{X}_n\big]\label{eq:indep} \\
    &= \mathbb{E}\big[\widehat{\mathbf{\Sigma}}_n^{21}(\widehat{\mathbf{\Sigma}}_{n,\lambda}^{11})^{-1}(\mathbb{E}[X_{1}]-\overline{X}_n)\big].\nonumber
\end{align}
In equation \eqref{eq:indep}, we used that $\widehat{\mathbf{\Sigma}}_n^{21}(\widehat{\mathbf{\Sigma}}_{n,\lambda}^{11})^{-1}$ and $X_{n+1}$ are independent.
Hence (explanation below),
\begin{align}
    \|\mathbb{E}[\widetilde{Y}_{n+1}] - \mathbb{E}[Y_{1}] \|_2^2 &\leq \mathbb{E}\big[\|\widehat{\mathbf{\Sigma}}_n^{21}(\widehat{\mathbf{\Sigma}}_{n,\lambda}^{11})^{-1/2}(\widehat{\mathbf{\Sigma}}_{n,\lambda}^{11})^{-1/2}(\mathbb{E}[X_{1}]-\overline{X}_n)\|_2\big]^2 \label{eq:triangle}\\
    &\leq \mathbb{E}\big[\|\widehat{\mathbf{\Sigma}}_n^{21}(\widehat{\mathbf{\Sigma}}_{n,\lambda}^{11})^{-1/2}\|_{\Op}  \|(\widehat{\mathbf{\Sigma}}_{n,\lambda}^{11})^{-1/2}(\mathbb{E}[X_{1}]-\overline{X}_n)\|_2\big]^2 \label{eq:matrix_norm}\\
    &\lesssim\mathbb{E}\big[\|\widehat{\mathbf{\Sigma}}_n^{21}(\widehat{\mathbf{\Sigma}}_{n,\lambda}^{11})^{-1/2}\|_F  \|(\widehat{\mathbf{\Sigma}}_{n,\lambda}^{11})^{-1/2}(\mathbb{E}[X_{1}]-\overline{X}_n)\|_2\big] ^2\label{eq:frob}\\
    &\lesssim\mathbb{E}[\|\widehat{\mathbf{\Sigma}}_n^{21}(\widehat{\mathbf{\Sigma}}_{n,\lambda}^{11})^{-1/2}\|_F^2]\times \mathbb{E}\big[\|(\widehat{\mathbf{\Sigma}}_{n,\lambda}^{11})^{-1/2}(\mathbb{E}[X_{1}]-\overline{X}_n)\|_2^2\big].\label{eq:cs_frob}
\end{align}
We used the triangle inequality in equation \eqref{eq:triangle}, the operator norm such that $\|\bfM x\|_2\leq\|\bfM\|_{\Op}\|x\|_2$ in equation \eqref{eq:matrix_norm}, the equivalence of $\|\cdot\|_{\Op}$ with the Frobenius norm $\|\bfM\|_F = \Tr(\bfM\bfM^\top)^{1/2}$ in equation \eqref{eq:frob}, and the Cauchy-Schwarz inequality in equation \eqref{eq:cs_frob}.
But, since $\widehat{\mathbf{\Sigma}}_{n,\lambda}^{22} - \widehat{\mathbf{\Sigma}}_n^{21}(\widehat{\mathbf{\Sigma}}_{n,\lambda}^{11})^{-1}\widehat{\mathbf{\Sigma}}_n^{12} = \widehat{\mathbf{\Sigma}}_{n,\lambda}/\widehat{\mathbf{\Sigma}}_{n,\lambda}^{11}\succcurlyeq 0$,
\begin{align}
    \mathbb{E}[\|\widehat{\mathbf{\Sigma}}_n^{21}(\widehat{\mathbf{\Sigma}}_{n,\lambda}^{11})^{-1/2}\|_F^2] &= \mathbb{E}[\Tr(\widehat{\mathbf{\Sigma}}_n^{21}(\widehat{\mathbf{\Sigma}}_{n,\lambda}^{11})^{-1}\widehat{\mathbf{\Sigma}}_n^{12})] \leq \mathbb{E}[\Tr(\widehat{\mathbf{\Sigma}}_{n,\lambda}^{22})]\nonumber\\
    &\leq\frac{1}{n}\sum_{i=1}^n \mathbb{E}[ \Tr((R_i-\overline{R}_n)(R_i-\overline{R}_n)^T)] + \lambda\ell\nonumber \\
    &\leq \mathbb{E}[\|R_1-\overline{R}_n\|_2^2] + \lambda\ell = \frac{n-1}{n}\Tr(\boldsymbol{\Sigma^{22}}) + \lambda\ell.\label{eq:frob_Y}
\end{align} 
Moreover, using that  $\|(\widehat{\mathbf{\Sigma}}_{n,\lambda}^{11})^{-1/2}\|_F^2 = \Tr((\widehat{\mathbf{\Sigma}}_{n,\lambda}^{11})^{-1}) \leq \Tr(\lambda^{-1}\mathbf{I}_{k}) = k/\lambda \lesssim 1$ conjointly with the operator and Frobenius norms,
\begin{align}
    \mathbb{E}\big[\|(\widehat{\mathbf{\Sigma}}_{n,\lambda}^{11})^{-1/2}(\mathbb{E}[X_{1}]-\overline{X}_n)\|_2^2\big] &\lesssim \mathbb{E}\big[\|(\widehat{\mathbf{\Sigma}}_{n,\lambda}^{11})^{-1/2}\|_F^2\|\mathbb{E}[X_{1}]-\overline{X}_n\|_2^2\big]\nonumber \\
    &\lesssim \mathbb{E}\big[\|\mathbb{E}[X_{1}]-\overline{X}_n\|_2^2\big] = {\Tr(\mathbf{\Sigma})}/{n}.\label{eq:cv_xbar_l2}
\end{align}
Equations \eqref{eq:cs_frob}, \eqref{eq:frob_Y} and \eqref{eq:cv_xbar_l2} then yield equation \eqref{eq:unbiased}. We now show equation \eqref{eq:cov_assymptotic}. Assuming that there exists $q>1$ such that $\mathbb{E}[\|V_1\|^{4q}]<+\infty$, we first show that the sequence $\| \widetilde{Y}_{n+1} - Y_{n+1}\|_2^{2}$ is uniformly integrable. Write
\begin{align}
    \widetilde{Y}_{n+1} - Y_{n+1} &= \widehat{Y}_{n+1} - Y_{n+1} + \overline{R}_n + \widehat{\mathbf{\Sigma}}_{n}^{21}(\widehat{\mathbf{\Sigma}}_{n,\lambda}^{11})^{-1}(X_{n+1} - \overline{X}_n) \nonumber \\
    &= \overline{R}_n - R_{n+1} + \widehat{\mathbf{\Sigma}}_{n}^{21}(\widehat{\mathbf{\Sigma}}_{n,\lambda}^{11})^{-1}(X_{n+1} - \overline{X}_n).\label{eq:decomp_reste_corrige}
\end{align}
Hence, starting from equation \eqref{eq:decomp_reste_corrige} and following the steps leading to equation \eqref{eq:cs_frob},
\begin{align}
    \| \widetilde{Y}_{n+1} - Y_{n+1}\|_2^{2q} &\lesssim \Big(\| \overline{R}_n - R_{n+1}\|_2 + \|\widehat{\mathbf{\Sigma}}_{n}^{21}(\widehat{\mathbf{\Sigma}}_{n,\lambda}^{11})^{-1/2}\|_{F}\|(\widehat{\mathbf{\Sigma}}_{n,\lambda}^{11})^{-1/2}(X_{n+1} - \overline{X}_n)\|_2\Big)^{2q}\nonumber\\
    &\lesssim \| \overline{R}_n - R_{n+1}\|_2^{2q} + \|\widehat{\mathbf{\Sigma}}_{n}^{21}(\widehat{\mathbf{\Sigma}}_{n,\lambda}^{11})^{-1/2}\|_{F}^{2q}\|(\widehat{\mathbf{\Sigma}}_{n,\lambda}^{11})^{-1/2}(X_{n+1} - \overline{X}_n)\|_2^{2q}.
\end{align}
Taking the expectation and using the Cauchy-Schwarz inequality,
\begin{align}
    \mathbb{E}[ \| \widetilde{Y}_{n+1} &- Y_{n+1}\|_2^{2q}]\lesssim \mathbb{E}[\| \overline{R}_n - R_{n+1}\|_2^{2q}] + \mathbb{E}[\|\widehat{\mathbf{\Sigma}}_{n}^{21}(\widehat{\mathbf{\Sigma}}_{n,\lambda}^{11})^{-1/2}\|_{F}^{4q}]^{1/2}\mathbb{E}[\|X_{n+1} - \overline{X}_n\|_2^{4q}]^{1/2}.\nonumber
\end{align}
Using the convexity of $(\cdot)^{2q}$ and $(\cdot)^{4q}$, we have that $\mathbb{E}[\| \overline{R}_n - R_{n+1}\|_2^{2q}] \lesssim \mathbb{E}[\|R_1\|^{2q}]$ and $\mathbb{E}[\|X_{n+1} - \overline{X}_n\|_2^{4q}] \lesssim \mathbb{E}[\|X_1\|^{4q}]$. Likewise,
\begin{align}
    \mathbb{E}[\|\widehat{\mathbf{\Sigma}}_{n}^{21}(\widehat{\mathbf{\Sigma}}_{n,\lambda}^{11})^{-1/2}\|_{F}^{4q}] &= \mathbb{E}[\Tr(\widehat{\mathbf{\Sigma}}_{n}^{21}(\widehat{\mathbf{\Sigma}}_{n,\lambda}^{11})^{-1}\widehat{\mathbf{\Sigma}}_{n}^{12})^{2q}]\leq  \mathbb{E}[\Tr(\widehat{\mathbf{\Sigma}}_{n,\lambda}^{22})^{2q}]\nonumber \\
    &\leq \mathbb{E}\bigg[\bigg(\ell\lambda + \frac{1}{n}\sum_{i=1}^n\Tr\Big((R_i-\overline{R}_n)(R_i-\overline{R}_n)^\top\Big)\bigg)^{2q}\bigg] \nonumber\\
    &\leq \mathbb{E}\bigg[\Big(\frac{1}{n}\sum_{i=1}^n(\|R_i-\overline{R}_n\|^2+\ell\lambda)\Big)^{2q}\bigg]\nonumber\\
    &\leq \mathbb{E}\bigg[\frac{1}{n}\sum_{i=1}^n(\|R_i-\overline{R}_n\|^2+\ell\lambda)^{2q}\bigg]\nonumber\\
    &\leq \mathbb{E}[(\|R_1-\overline{R}_n\|^2+\ell\lambda)^{2q}] \lesssim \mathbb{E}[\|R_1\|^{4q}] + \lambda^{2q}.
\end{align}
Hence, $\sup_n\mathbb{E}[ \| \widetilde{Y}_{n+1} - Y_{n+1}\|_2^{2q}] < +\infty$. Setting $S \coloneqq \mathbf{\Sigma}^{21}(\mathbf{\Sigma}_{\lambda}^{11})^{-1}X_1 - R_1$ and observing from Proposition \ref{prop:asymptotics} that 
\begin{align}
     \widetilde{Y}_{n+1} - Y_{n+1} \xlongrightarrow[n\rightarrow + \infty]{d} S-\mathbb{E}[S],
\end{align}
we may use \cite{van2000asymptotic}, Example 2.21, so that $\| \widetilde{Y}_{n+1} - Y_{n+1}\|_2^{2}$ is uniformly integrable, with
\begin{align}\label{eq:cv_cov_waart}
    \mathbb{E}[\| \widetilde{Y}_{n+1} - Y_{n+1}\|_2^{2}] \xlongrightarrow[n\rightarrow\infty]{} \mathbb{E}[\|S-\mathbb{E}[S]\|^2] = \Tr\big(\Cov(S)\big).
\end{align}
Finally,
\begin{align}
    \Cov(S) &= \mathbf{\Sigma}^{22} + \mathbf{\Sigma}^{21}(\mathbf{\Sigma}_{\lambda}^{11})^{-1}\mathbf{\Sigma}^{11}(\mathbf{\Sigma}_{\lambda}^{11})^{-1}\mathbf{\Sigma}^{12} - 2\mathbf{\Sigma}^{21}(\mathbf{\Sigma}_{\lambda}^{11})^{-1}\mathbf{\Sigma}^{12}\nonumber\\
    &=\mathbf{\Sigma}^{22} + \lambda \mathbf{I}_{\ell} - \lambda \mathbf{I}_{\ell} + \mathbf{\Sigma}^{21}(\mathbf{\Sigma}_{\lambda}^{11})^{-1}(\mathbf{\Sigma}^{11}+\lambda \mathbf{I}_k-\lambda \mathbf{I}_k)(\mathbf{\Sigma}_{\lambda}^{11})^{-1}\mathbf{\Sigma}^{12}\nonumber \\&\hspace{2cm} - 2\mathbf{\Sigma}^{21}(\mathbf{\Sigma}_{\lambda}^{11})^{-1}\mathbf{\Sigma}^{12} \nonumber\\
    &= \mathbf{\Sigma}_{\lambda}^{22} - \mathbf{\Sigma}^{21}(\mathbf{\Sigma}_{\lambda}^{11})^{-1}\mathbf{\Sigma}^{12} - \lambda(\mathbf{I}_{\ell} + \mathbf{\Sigma}^{21}(\mathbf{\Sigma}_{\lambda}^{11})^{-2}\mathbf{\Sigma}^{12}) \nonumber\\
    &= \mathbf{\Sigma}_{\lambda}/\mathbf{\Sigma}_{\lambda}^{11}  - \lambda(\mathbf{I}_{\ell} + \mathbf{\Sigma}^{21}(\mathbf{\Sigma}_{\lambda}^{11})^{-2}\mathbf{\Sigma}^{12}) = \bfM_{\lambda}.
\end{align}
This finishes to prove that $\mathbb{E}[\| \widetilde{Y}_{n+1} - Y_{n+1}\|_2^{2}] \xlongrightarrow[]{}\Tr (\bfM_{\lambda})$. Using the same previous arguments for extra-diagonal entries of $\Cov(\widetilde{Y}_{n+1}-Y_{n+1})$ and equation \eqref{eq:unbiased}, we obtain equation \eqref{eq:cov_assymptotic}.
}
\end{proof}
\begin{proof}[Proof of Proposition \ref{prop:asymptotic_ball}] For equation \eqref{eq:cv_quantile_iid}, we follow the proof of Théorème 8.9, p. 90 of the lecture notes \cite{agreg_quantiles}. We denote $F$ the CDF of $\|R_1\|^2$ and $F_n$ the empirical CDF obtained from the iid sample $(\|R_1\|,  \ldots, \|R_n\|^2)$. We can write
\begin{align}
    |F(\beta_{n,\alpha}) - F(q_{1-\alpha}(\|R_1\|^2))|&\leq |F(\beta_{n,\alpha}) - F_n(\beta_{n,\alpha})| + |F_n(\beta_{n,\alpha}) - F(q_{1-\alpha}(\|R_1\|^2))|\nonumber\\
    &\leq \|F-F_n\|_{\infty} + \bigg|\frac{\lceil (1-\alpha)(n+1)\rceil}{n} - (1-\alpha)\bigg| \rightarrow 0 \ \ a.s.,\label{eq:GK}
\end{align}
where we used the Glivenko-Cantelli theorem in equation \eqref{eq:GK}.
Hence, a.s., $F(q_{n,\alpha}')$ converges to $F(q_{1-\alpha}(\|R_1\|^2)) = 1-\alpha$.
We obtain equation \eqref{eq:cv_quantile_iid} from the continuity of the quantile function of $\|R_1\|^2$ on a neighbourhood of $1-\alpha$. We then deduce equation \eqref{eq:vol_ball} from the continuous mapping theorem, as $\Vol(\calB_{\alpha}^n) = v_{\ell}(\beta_{n,\alpha})^{\ell/2}$.
\end{proof}
\begin{proof}[Proof of Proposition \ref{prop:barthe}]
Let $B$ be the centered ball of $\mathbb{R}^{\ell}$, of radius $t$. Let $A \subset \mathbb{R}^{\ell}$ be a measurable set such that $\lambda(A) = \lambda(B)$, where $\lambda$ stands for the Lebesgue measure of $\mathbb{R}^{\ell}$. Since $A = (A \cap B)\cup (A\cap B^\rmc)$ where the union is disjoint (and symmetrically for $B$), we also have that
\begin{align}\label{eq:lebesgue_complementaire}
    \lambda(A\cap B^\rmc) = \lambda (B\cap A^\rmc).
\end{align}
Now, let $(\delta_1, \dots , \delta_{\ell})\in\mathbb{R}_+^{\ell}$ be such that $\delta_1\dots \delta_{\ell} = 1$, and $s\in\mathbb{R}^\ell$. Observe that $ \sum_{i=1}^{\ell}\delta_i (z_i+s_i)^2 = (z+s)^\top \bfD_{\delta}(z+s)$ where $\bfD_{\delta}$ is the diagonal matrix such that $(\bfD_{\delta})_{i} = \delta_i$. The set $E_{\delta} \coloneqq \{z \in \mathbb{R}^{\ell}: (z+s)^\top \bfD_{\delta}(z+s) \leq t\}$ is an ellipsoid with volume $\lambda(E_{\delta}) = v_{\ell}\det (\bfD_{\delta})^{-1/2}t^{\ell} = v_{\ell}t^{\ell} = \lambda(B)$. Hence,
\begin{align}
\mathbb{P}\bigg(\sum_{i=1}^{\ell} T_i^2 \leq t\bigg) - \mathbb{P}\bigg(\sum_{i=1}^{\ell} \delta_i (T_i+s_i)^2 \leq t\bigg) &= \int_{\mathbb{R}^{\ell}}\mathbbm{1}_{z^\top z\leq t}g(\|z\|^2)dz \nonumber \\
&\hspace{2cm}- \int_{\mathbb{R}^{\ell}}\mathbbm{1}_{(z+s)^\top \bfD_{\delta}(z+s)\leq t}g(\|z\|^2)dz\nonumber\\
&= \int_B g(\|z\|^2)dz - \int_{E_{\delta}}g(\|z\|^2)dz\nonumber\\
&= \int_{B\cap E_{\delta}^\rmc} g(\|z\|^2)dz - \int_{E_{\delta}\cap B^\rmc}g(\|z\|^2)dz.\label{eq:compare_cdfs}
\end{align}
But since $g$ is decreasing, $B$ is a superlevel set of $g$: $g(\|z\|^2) \geq g(t^2)$ for $z \in B$ and $g(\|z\|^2) \leq g(t^2)$ for $z\in B^\rmc$. Thus, from equations \eqref{eq:lebesgue_complementaire} and \eqref{eq:compare_cdfs},
\begin{align}
    \mathbb{P}\bigg(\sum_{i=1}^{\ell} T_i^2 \leq t\bigg) - \mathbb{P}\bigg(\sum_{i=1}^{\ell} \delta_i (T_i+s_i)^2 \leq t\bigg) \geq g(t)\big(\lambda(B\cap E_{\delta}^\rmc) - \lambda(E_{\delta}\cap B^\rmc)\big) = 0.
\end{align}
Hence, for all $t>0,\ \mathbb{P}(\sum_{i=1}^{\ell} \delta_i(T_i+s_i)^2 \leq t) \leq  \mathbb{P}(\sum_{i=1}^{\ell} T_i^2 \leq t)$. Since $f \leq g$ implies $g^{-1} \leq f^{-1}$, we obtain that for all $\alpha\in (0,1)$,
\begin{align}
 {q_{1-\alpha}\bigg(\sum_{i=1}^{\ell} T_i^2\bigg)} \leq {q_{1-\alpha}\bigg(\sum_{i=1}^{\ell}\delta_i (T_i+s_i)^2\bigg)}.
\end{align}
This finishes the proof.
\end{proof}
\begin{proof}[Proof of Lemma \ref{lemma:distrib_norm_R1}] Given a random vector $X$, we denote $\Phi_X(t) \coloneqq \mathbb{E}[\exp(i t^\top X)]$. Given that the distribution of $T$ is spherical, introduce $\psi$ the function such that $\Phi_T(t) = \psi(\|t\|^2), \ t\in\mathbb{R}^p$. Since $V_1 = \mu + \mathbf{\Sigma}^{1/2}T$, we have $\Phi_{V_1}(t) = \exp(i t^\top\mu)\psi(t^\top\mathbf{\Sigma} t)$ \cite{muirhead1982}, p 34. We also denote $T' \coloneqq (T_1, \ldots, T_\ell)^\top\in\mathbb{R}^\ell$. Decomposing $\mu = (\mu_X^\top \ \mu_R^\top)^\top$ where $\mu_R\in\mathbb{R}^\ell$, we have for $t'\in\mathbb{R}^\ell$ \cite{muirhead1982}, p 34-35,
\begin{align}
    \Phi_{R_1}(t') &= \Phi_{V_1}((0, t')) = e^{i(t')^\top\mu_R}\psi((t')^\top\mathbf{\Sigma}^{22}t'), \\
    \Phi_{T'}(t') &= \Phi_{T}((0, t')) = \psi(\|t'\|^2).
\end{align}
Hence, $R_1 = \mu_R + (\mathbf{\Sigma}^{22})^{1/2}\widetilde{T}$ for some $\widetilde{T}$ with a spherical distribution. It is also clear from the equations above that $\Phi_{\widetilde{T}}(t') = \psi(\|t'\|^2) = \Phi_{T'}(t')$, so that $\widetilde{T}$ and $T'$ are equal in distribution. Write $\mathbf{\Sigma}^{22} = \bfP\bfD\bfP^\top$ an eigendecomposition of $\mathbf{\Sigma}^{22}$, where $\bfP$ is orthogonal and $\bfD$ is diagonal, $\bfD_{ii} = \lambda_i$. Then, setting $s\coloneqq \bfD^{-1/2}\bfP^\top\mu$ and using that $\bfP^\top T'\stackrel{\rmd}{=}T'$ from the sphericity of $T'$,
\begin{align}
    \|R_1\|^2 &= \|\mu_R + \bfP\bfD^{1/2}\bfP^\top \widetilde{T}\|^2 \stackrel{\rmd}{=} \|\mu_R + \bfP\bfD^{1/2}\bfP^\top T'\|^2 \stackrel{\rmd}{=} \|\mu_R + \bfP\bfD^{1/2} T'\|^2\nonumber
    \\ &=  \|\bfD^{1/2}(s +  T')\|^2
    = \sum_{i=1}^\ell \lambda_i(T_i+s_i)^2.
\end{align}
This finishes the proof.
\end{proof}
\begin{proof}[Proof of Proposition \ref{prop:gen_k}] 
Using Lemma \ref{lemma:distrib_norm_R1}, $q_{1-\alpha}(\|R_1\|^2) = q_{1-\alpha}(\sum_{i=1}^\ell\lambda_i(T_i+s_i)^2)$ where the $\lambda_i$ are the eigenvalues of $\mathbf{\Sigma}^{22}$ and from Proposition \ref{prop:asymptotic_ball}, 
\begin{align}
    \bigg(\frac{\Vol(\calE_{\alpha}^{\infty})}{\Vol(\calB_{\alpha}^{\infty})}\bigg)^{2/\ell} &\leq \det(\mathbf{\Sigma}/\mathbf{\Sigma}^{11})^{1/\ell} \frac{q_{1-\alpha}(\sum_{i=1}^{k+\ell} T_i^2)}{q_{1-\alpha}(\sum_{i=1}^{\ell}\lambda_{i} (T_i+s_i)^2)}\label{eq:first_tradeoff}
     \\
    &\leq c_{\alpha}(k,\ell)\bigg(\frac{\det(\mathbf{\Sigma}/\mathbf{\Sigma}^{11})}{\det(\mathbf{\Sigma}^{22})}\bigg)^{1/\ell}\frac{q_{1-\alpha}(\sum_{i=1}^{\ell}T_{i}^2)}{\det(\mathbf{\Sigma}^{22})^{-1/\ell}q_{1-\alpha}(\sum_{i=1}^{\ell}\lambda_{i}(T_{i}+s_i)^2)}\nonumber \\
&\leq c_{\alpha}(k,\ell)\bigg(\frac{\det(\mathbf{\Sigma}/\mathbf{\Sigma}^{11})}{\det(\mathbf{\Sigma}^{22})}\bigg)^{1/\ell}\frac{q_{1-\alpha}(\sum_{i=1}^{\ell} T_{i}^2)}{q_{1-\alpha}(\sum_{i=1}^{\ell}\delta_{i} (T_{i}+s_i)^2)}\leq c_{\alpha}(k,\ell)\bigg(\frac{\det(\mathbf{\Sigma}/\mathbf{\Sigma}^{11})}{\det(\mathbf{\Sigma}^{22})}\bigg)^{1/\ell},\label{eq:cond_vol_gen}
\end{align}
where $\delta_i = \lambda_i/(\lambda_1 \ldots \lambda_\ell)^{1/\ell}$ and where we applied Proposition \ref{prop:barthe} in the last inequality. We have thus proved Proposition \ref{prop:gen_k} when $\lambda = 0$. We now assume that $\lambda>0$. For this, we follow the steps that lead to equation \eqref{eq:cond_vol_gen}, so that
\begin{align}
   \Vol(\calE_{\alpha,\lambda}^{\infty})^{2/\ell} &= v_{\ell}^{2/\ell} \det(\mathbf{\Sigma}_{\lambda}/\mathbf{\Sigma}_{\lambda}^{11})^{1/\ell}(q_{1-\alpha}(V_\rmc^\top \mathbf{\Sigma}_{\lambda}^{-1}V_\rmc) - X_\rmc^\top (\mathbf{\Sigma}_{\lambda}^{11})^{-1}X_\rmc)_+ \nonumber\\
   &\leq v_{\ell}^{2/\ell}\det(\mathbf{\Sigma}_{\lambda}/\mathbf{\Sigma}_{\lambda}^{11})^{1/\ell}q_{1-\alpha}(V_\rmc^\top \mathbf{\Sigma}_{\lambda}^{-1}V_\rmc)\nonumber\\
   &\leq v_{\ell}^{2/\ell}\det(\mathbf{\Sigma}_{\lambda}/\mathbf{\Sigma}_{\lambda}^{11})^{1/\ell}q_{1-\alpha}(V_\rmc^\top \mathbf{\Sigma}^{-1}V_\rmc)\nonumber\\
   &\leq v_{\ell}^{2/\ell}\det(\mathbf{\Sigma}_{\lambda}/\mathbf{\Sigma}_{\lambda}^{11})^{1/\ell} q_{1-\alpha}\bigg(\sum_{i=1}^{k+\ell}T_i^2\bigg) \leq \det(\mathbf{\Sigma}_{\lambda}/\mathbf{\Sigma}_{\lambda}^{11})^{1/\ell}c_{\alpha}(k,\ell)\Vol(\calB_{\alpha}^{\infty})^{2/\ell}.\label{eq:vol_noise}
\end{align}
In view of equation \eqref{eq:vol_noise}, we show that the map $\lambda \mapsto \det(\mathbf{\Sigma}_{\lambda}/\mathbf{\Sigma}_{\lambda}^{11})$ is strictly increasing. For this, denote $0 \leq \lambda_1 \leq \dots \leq \lambda_p $ the eigenvalues of $\mathbf{\Sigma}$, and $0 \leq \lambda_1^1 \leq \dots \leq \lambda_k^1 $ the eigenvalues of $\mathbf{\Sigma}^{11}$. Then, setting $f_i(\lambda) \coloneqq (\lambda_i + \lambda)/(\lambda_i^1 + \lambda)$ for $i = 1, \ldots, k$,
\begin{align}
    \det(\mathbf{\Sigma}_{\lambda}/\mathbf{\Sigma}_{\lambda}^{11}) = \frac{\det(\mathbf{\Sigma}_{\lambda})}{\det(\mathbf{\Sigma}_{\lambda}^{11})} =\prod_{i=1}^k \bigg(\frac{\lambda_i + \lambda}{\lambda_i^1 + \lambda}\bigg)\prod_{i=k+1}^{k+\ell}(\lambda+\lambda_i) = \prod_{i=1}^k f_i(\lambda)\prod_{i=k+1}^{k+\ell}(\lambda+\lambda_i).\label{eq:schur_lambda}
\end{align}
But from the interlacing theorem (\cite{horn}, Theorem 4.3.15), for all $i\in\{1, \dots , k\}$, $\lambda_i \leq \lambda_i^1$. Hence, $f_i'(\lambda) = (\lambda_i^1-\lambda_i)/(\lambda_i^1+\lambda)^2 \geq 0$, and from equation \eqref{eq:schur_lambda}, the map $\lambda \mapsto \det(\mathbf{\Sigma}_{\lambda}/\mathbf{\Sigma}_{\lambda}^{11})$ is strictly increasing (and goes to $+\infty$ when $\lambda\rightarrow+\infty$). As a consequence, if the inequality in equation \eqref{eq:tradeoff} is strict, then there exists an unique $\lambda_0>0$ verifying
\begin{align}
    \det(\mathbf{\Sigma}_{\lambda_0})/\det(\mathbf{\Sigma}_{\lambda_0}^{11}) = \det(\mathbf{\Sigma}_{\lambda_0}/\mathbf{\Sigma}_{\lambda_0}^{11}) = c_{\alpha}(k,\ell)^{-\ell}\det(\mathbf{\Sigma}^{22}),
\end{align}
which corresponds to the announced criterion.
\end{proof}
\begin{proof}[Proof of Proposition \ref{prop:vol_fna}]
\iain{We use notations exclusive to this proof.
Let $T = (X^\top \ Y^\top)^\top\in \mathbb{R}^p$ have a spherical distribution, with density
\begin{align*}
    f_T(x,y) = g(\|x\|^2 + \|y\|^2), \ \ \ x \in \mathbb{R}^k, \ y\in\mathbb{R}^\ell.
\end{align*}
Then $Y$ has a spherical distribution with density
\begin{align}\label{eq:generator_y}
    f_Y(y) = \int_{\mathbb{R}^k}g(\|x\|^2 + \|y\|^2)dx \eqqcolon g_Y(\|y\|^2).
\end{align}
Define now $U\in\mathbb{R}^k, \ V\in\mathbb{R}^\ell$ and $Z\in\mathbb{R}^\ell$ by
\begin{align*}
   \begin{pmatrix}
            U\\ V
   \end{pmatrix}  &= \begin{pmatrix}
            \mu_u \\ \mu_v
   \end{pmatrix} + \mathbf{\Sigma}^{1/2}T, \ \ \ Z \coloneqq (\mathbf{\Sigma}/\mathbf{\Sigma}^{11})^{-1/2}(V-\mu_v - \mathbf{\Sigma}^{21}(\mathbf{\Sigma}^{11})^{-1}(U-\mu_u)).
\end{align*}
We now show that $Z\stackrel{\rmd}{=}Y$. For this, observe first that the density of $(U^\top \ V^\top)^\top$ is
\begin{align*}
f_{(U,V)}(u,v) = \det(\mathbf{\Sigma})^{-1/2}g\bigg(\begin{pmatrix}
         u-\mu_u\\
         v-\mu_v
\end{pmatrix}^\top\mathbf{\Sigma}^{-1}\begin{pmatrix}
         u-\mu_u\\
         v-\mu_v
\end{pmatrix}\bigg).
\end{align*}
Setting $w = v-\mu_v - \mathbf{\Sigma}^{21}(\mathbf{\Sigma}^{11})^{-1}(u-\mu_u)$, we have from equation \eqref{eq:apply_block_lemma} that
\begin{align}\label{eq:block_inversion_w}
    \begin{pmatrix}
         u-\mu_u\\
         v-\mu_v
\end{pmatrix}^\top\mathbf{\Sigma}^{-1}\begin{pmatrix}
         u-\mu_u\\
         v-\mu_v
\end{pmatrix} = (u-\mu_u)^\top(\mathbf{\Sigma}^{11})^{-1}(u-\mu_u) + w^\top(\mathbf{\Sigma}/\mathbf{\Sigma}^{11})^{-1}w.
\end{align}
Taking the definition of $w$ above as a function of $u$ and $v$, we have, for any bounded measurable function $h$ (explanation below),
\begin{align}
    \mathbb{E}[h(Z)] &=  \int_{\mathbb{R}^k}\int_{\mathbb{R}^\ell}h((\mathbf{\Sigma}/\mathbf{\Sigma}^{11})^{-1/2}w)f_{(U,V)}(u,v)dvdu \label{eq:transfer_lemma}\\
    &= \det(\mathbf{\Sigma})^{-1/2} \int_{\mathbb{R}^k}\int_{\mathbb{R}^\ell}h((\mathbf{\Sigma}/\mathbf{\Sigma}^{11})^{-1/2}w)g((u-\mu_u)^\top(\mathbf{\Sigma}^{11})^{-1}(u-\mu_u) + w^\top(\mathbf{\Sigma}/\mathbf{\Sigma}^{11})^{-1}w)dvdu\label{eq:use_block_inversion_w}\\
    &= \det(\mathbf{\Sigma})^{-1/2}\det(\mathbf{\Sigma}/\mathbf{\Sigma}^{11})^{1/2} \int_{\mathbb{R}^k}\int_{\mathbb{R}^\ell}h(z)g((u-\mu_u)^\top(\mathbf{\Sigma}^{11})^{-1}(u-\mu_u) + \|z\|^2)dzdu \label{eq:change_var_v_z} \\
    &= \int_{\mathbb{R}^\ell}h(z)\bigg(\int_{\mathbb{R}^k}g(\|x\|^2 + \|z\|^2)dx\bigg)dz.\label{eq:change_var_u_x}
\end{align}
We used the transfer lemma in equation \eqref{eq:transfer_lemma}, equation \eqref{eq:block_inversion_w} in equation \eqref{eq:use_block_inversion_w}, the change of variable $z = (\mathbf{\Sigma}/\mathbf{\Sigma}^{11})^{-1/2}w$ (where $u$ is fixed) in the inner integral in equation \eqref{eq:change_var_v_z}, and the change of variable $x = (\mathbf{\Sigma}^{11})^{-1/2}(u-\mu_u)$ and Fubini's theorem in equation \eqref{eq:change_var_u_x}.
Because equation \eqref{eq:change_var_u_x} holds for all bounded measurable function $h$, the density of $Z$ is that of $Y$, hence $Z\stackrel{\rmd}{=}Y$ and
\begin{align}\label{eq:eq_law_z_y}
    \|Z\|^2\stackrel{\rmd}{=}\|Y\|^2.
\end{align}
Observe finally that the equation above still holds if $Y$ is replaced with any subvector of $T$ of dimension $\ell$, since the distribution of $T$ is spherical.
}

\iain{We now go back to the vector $V_1 = (X_1^\top \ R_1^\top)^\top$ of Proposition \ref{prop:vol_fna}, which we assume to be of the form $V_1 = \mu +\mathbf{\Sigma}^{1/2}T$, where $T = (T_1, \ldots, T_{p})^\top$ has a spherical distribution with a non-increasing generator $g$. Using the notation of Proposition \ref{prop:calf_asymptotic} for $T_\rmc$ and $\boldcalA_\infty$, equation \eqref{eq:eq_law_z_y} and Lemma \ref{lemma:distrib_norm_R1} respectively show that
\begin{align*}
    T_\rmc^\top\boldcalA_\infty^{-1}T_\rmc \stackrel{\rmd}{=}\sum_{i=1}^\ell T_i^2, \ \ \ \text{and} \ \ \ \|R_1\|^2 \stackrel{\rmd}{=}\sum_{i=1}^\ell\lambda_i(T_i+s_i)^2,
\end{align*}
for some vector $s\in\mathbb{R}^\ell$ and where the $\lambda_i$ are the eignevalues of $\mathbf{\Sigma}^{22}$.
Then, denoting $\delta_i \coloneqq \det(\mathbf{\Sigma}^{22})^{-1/\ell}\lambda_i$, and using that $\det(\mathbf{\Sigma}^{22}-\mathbf{\Sigma}^{21}(\mathbf{\Sigma}^{11})^{-1}\mathbf{\Sigma}^{12})\leq\det(\mathbf{\Sigma}^{22})$,
\begin{align}
    \bigg(\frac{\Vol(\calF_\alpha^\infty)}{\Vol(\calB_\alpha^\infty)}\bigg)^{2/\ell}& = \det(\mathbf{\Sigma}/\mathbf{\Sigma}^{11})^{1/\ell}\frac{q_{1-\alpha}(T_\rmc^\top\boldcalA_\infty^{-1}T_\rmc)}{q_{1-\alpha}(\|R_1\|^2)} \nonumber \\
    &= \bigg(\frac{\det(\mathbf{\Sigma}^{22}-\mathbf{\Sigma}^{21}(\mathbf{\Sigma}^{11})^{-1}\mathbf{\Sigma}^{12})}{\det(\mathbf{\Sigma}^{22})}\bigg)^{1/\ell}\frac{q_{1-\alpha}(\sum_{i=1}^\ell T_i^2)}{q_{1-\alpha}(\sum_{i=1}^\ell\delta_i(T_i+s_i)^2)}\\
    &\leq \frac{q_{1-\alpha}(\sum_{i=1}^\ell T_i^2)}{q_{1-\alpha}(\sum_{i=1}^\ell\delta_i(T_i+s_i)^2)}.\label{eq:wish_barthe}
\end{align}
We now wish to apply Proposition \ref{prop:barthe} on the spherical vector $Y \coloneqq (T_1, \ldots, T_\ell)$ to conclude that the ratio above is smaller than $1$. For this we need to check that the generator of $Y$ is non-increasing. Since $g$ is assumed non-increasing we can use that, for all $0\leq r\leq r'$ and $x\in\mathbb{R}^k, g(\|x\|^2 + r) - g(\|x\|^2 + r')\geq 0$ so that, using equation \eqref{eq:generator_y},
\begin{align*}
   g_Y(r) - g_Y(r') = \int_{\mathbb{R}^k}g(\|x\|^2 + r) - g(\|x\|^2 + r')dx \geq 0.
\end{align*}
Thus Proposition \ref{prop:barthe} applies to equation \eqref{eq:wish_barthe}, which finishes the proof.
}
\end{proof}
\begin{proof}[Proof of Proposition \ref{prop:contre_exemple}]
For all $i$, define $\delta_i \coloneqq \lambda_i/(\lambda_1 \cdots\lambda_{\ell})^{1/\ell}$ and $(G_1, \ldots, G_\ell)$ to be a random vector with independent components, such that $G_i$ follows the gamma distribution $\gamma(\delta_i,\delta_i^{-1})$. Next, introduce $T \coloneqq (T_1, \ldots, T_{\ell})^\top$ such that
 \begin{align}
    T_i = \varepsilon_i\sqrt{G_i} - (1-\varepsilon_i)\sqrt{G_i},
 \end{align}
 where $\varepsilon_i\sim\mathrm{Bernoulli}(1/2)$, $(\varepsilon_1,\ldots,\varepsilon_\ell)$ are mutually independent and are independent of $(G_1,\ldots,G_\ell)$. Then $(T_1,\ldots, T_\ell)$ are independent, $\mathbb{E}[T_i] = 0$ and $T_i^2 = G_i\sim\gamma(\delta_i,\delta_i^{-1})$, so that $\mathrm{Var}(T_i) = \delta_i\delta_i^{-1} = 1$. Next, define $\bfD$ to be the diagonal matrix such that $\bfD_{ii} = \lambda_i^{1/2}$ and define $R_1\coloneqq \bfD T$. Then $\mathbb{E}[R_1] = 0$, and because $\mathrm{Var}(T_i) = 1$, we have $\mathbf{\Sigma} \coloneqq \Cov(R_1) = \bfD^2$, and
 \begin{align}
    \|\mathbf{\Sigma}^{-1/2}R_1\|^2 &= R_1^\top \mathbf{\Sigma}^{-1} R_1 = \|T\|^2 = \sum_{i=1}^{\ell}T_i^2,\\
     \det(\mathbf{\Sigma})^{-1/\ell}\|R_1\|^2 &= \det(\mathbf{\Sigma})^{-1/\ell}T^\top \bfD^2T = \sum_{i=1}^{\ell}\delta_iT_i^2.
 \end{align}
 We now apply Theorem 1.2 from \cite{mimica}, in order to describe the tail behaviour of both random variables above. For this, observe that $\delta_iT_i^2\sim\gamma(\delta_i,1)$. The Laplace transforms of $\sum_{i=1}^{\ell}T_i^2$ and $\sum_{i=1}^{\ell}\delta_iT_i^2$ are then given by
 \begin{align}
     \calL_1(\omega) &\coloneqq\calL\bigg(\sum_{i=1}^{\ell}T_i^2\bigg)(\omega) = \prod_{i=1}^{\ell}\frac{1}{(1+\delta_i^{-1}\omega)^{\delta_i}},\label{eq:laplace_L1}\\
     \calL_2(\omega) &\coloneqq \calL\bigg(\sum_{i=1}^{\ell}\delta_iT_i^2\bigg)(\omega) = \prod_{i=1}^{\ell}\frac{1}{(1+\omega)^{\delta_i}}.\label{eq:laplace_L2}
 \end{align}
 To apply Theorem 1.2 from \cite{mimica}, we need to identify the abscissa of convergence of $\calL_1$ and $\calL_2$. The abscissa of convergence of a (probability) measure $\nu$ over $[0,\infty)$ is the unique scalar $\sigma_0\in\mathbb{R}\cup\{-\infty\}$ such that the integral $f(z) = \int_{[0,+\infty)} e^{-zt}\nu(dt)$ converges for $\mathrm{Re}(z)>\sigma_0$, diverges for $\mathrm{Re}(z)<\sigma_0$ and has a singularity at $\sigma_0$ \cite{mimica}, p 267.
 For $\calL_1$ and $\calL_2$, it is clear from equations \eqref{eq:laplace_L1} and \eqref{eq:laplace_L2} that their abscissa of convergence are given by $\sigma_1 \coloneqq -\min_i \delta_i<0$ and $\sigma_2 \coloneqq -1$ respectively. We now apply \cite{mimica}, Theorem 1.2, on $\calL_1$ and $\calL_2$ (we check the conditions for applying this result at the end of the proof). This yields 
 \begin{align}
     \lim_{t\rightarrow + \infty}\frac{\log \mathbb{P}(\|\mathbf{\Sigma}^{-1/2}R_1\|^2>t)}{\log\mathbb{P}(\det(\mathbf{\Sigma})^{-1/\ell}\|R_1\|^2>t)} = \frac{\sigma_1}{\sigma_2} = \frac{-\min_i\delta_i}{-1} = \min_i\delta_i < 1. 
 \end{align}
 Indeed, if $\min_i\delta_i\geq 1$, then we would have $\lambda_1 =  \ldots =\lambda_{\ell}$ which contradicts our assumption.
 Because both logarithms above are negative, this yields
 \begin{align}
     \log \mathbb{P}(\|\mathbf{\Sigma}^{-1/2}R_1\|^2>t) > \log\mathbb{P}(\det(\mathbf{\Sigma})^{-1/\ell}\|R_1\|^2>t) \ \ \ \text{for large $t$}.\label{eq:compare_logs}
 \end{align}
 We then take the exponential of the equation above, we compare the resulting CDFs (this reverses the ordering w.r.t. ``$>$'') and we compare the inverses of those CDFs (this reverses back the ordering). We finally obtain that for $\alpha$ small enough,
 \begin{align}
     q_{1-\alpha}(\|\mathbf{\Sigma}^{-1/2}R_1\|^2) > q_{1-\alpha}(\det(\mathbf{\Sigma})^{-1/\ell}\|R_1\|^2).\label{eq:compare_quantiles_mimica}
 \end{align}
In particular, for $\alpha$ small enough, $\Vol(\calE_{\alpha}^{\infty}) > \Vol(\calB_{\alpha}^{\infty})$, which is the announced result.
 We now check the two technical conditions of \cite{mimica}, Theorem 1.2, on $\calL_2$ (the proof is similar for $\calL_1$). First, for $\lambda>0$,
 \begin{align}
     \lambda\log\calL_2(\sigma_2 + \lambda) &= -\bigg(\sum_{i=1}^{\ell}\delta_i\bigg)\lambda\log \lambda  \xrightarrow[\lambda \rightarrow 0^+]{} 0.
 \end{align}
 Second, for all $0<\lambda_1\leq \lambda_2$ sufficiently small,
 \begin{align}
     \frac{\calL_2(\sigma_2 + \lambda_2)}{\calL_2(\sigma_2 + \lambda_1)} &= \prod_{i=1}^{\ell}\bigg(\frac{\lambda_1}{\lambda_2}\bigg)^{-\delta_i} = \bigg(\frac{\lambda_2}{\lambda_1}\bigg)^{-\gamma},
 \end{align}
 with $\gamma \coloneqq \sum_{i=1}^{\ell}\delta_i >0$. From \cite{mimica}, equation (1.3) and Lemma 3.1, this finishes the proof.
\end{proof}
\begin{proof}[Proof of Lemma \ref{lemma:cv_quantile}]
In this proof, we denote $m \coloneqq \mathbb{E}[V_1]$, $\widehat{m}_n \coloneqq n^{-1}\sum_{i=1}^nV_i$ for notational conciseness. We also denote
\begin{align}
    T_i &= \mathbf{\Sigma}_{\lambda}^{-1/2}(V_i - m),\ \ \ \widehat{T}_i = \widehat{\mathbf{\Sigma}}_{n,\lambda}^{-1/2}(V_i - \widehat{m}_n),\ \ \ \nu_n = \frac{1}{n}\sum_{j=1}^n\delta_{T_j},\ \ \ \widehat{\nu}_n = \frac{1}{n}\sum_{j=1}^n\delta_{\widehat{T}_j}.
\end{align}
Note that we are interested in the quantiles of $(\|\widehat{T}_1\|^2,\ldots,\|\widehat{T}_n\|^2)$, since $q_{n,\alpha} = np_{(n_{\alpha})}$ and $np_{i,n} = \|\widehat{T}_i\|^2$. 
Below we borrow the empirical process notation, $\nu_n(f) = n^{-1}\sum_{j=1}^nf(T_j)$.
The proof is carried out by studying the empirical characteristic function. Let $t\in\mathbb{R}^p$. From the SLLN,
\begin{align}\label{eq:carac_LLN_t}
    \nu_n\Big(e^{i\langle t, \cdot \rangle}\Big) = \frac{1}{n}\sum_{j=1}^n e^{i\langle t, T_j \rangle} \xrightarrow[]{a.s.} \mathbb{E}\big[e^{i\langle t, T_1\rangle}\big].
\end{align}
Thus, for all $t\in\mathbb{R}$, $\nu_n(e^{i\langle t, \cdot \rangle}) \rightarrow \mathbb{E}\big[e^{i\langle t, T_1\rangle}\big]$ a.s..  We then prove below that thanks to the continuity of the characteristic functions and the separability of $\mathbb{R}^p$, we can interchange the ``a.s.'' and ``$\forall t \in \mathbb{R}^p$'' to obtain that almost surely,
\begin{align}\label{eq:cv_charac_easy}
    \forall t \in \mathbb{R}^p, \ \ \ \nu_n\Big(e^{i\langle t, \cdot \rangle}\Big) \xrightarrow[n\rightarrow \infty]{} \mathbb{E}\big[e^{i\langle t, T_1\rangle}\big].
\end{align}
Let us prove this fact. Set $\calD = \mathbb{Q}^p$, which is a countable dense subset of $\mathbb{R}^p$. From equation \eqref{eq:carac_LLN_t}, for all $t\in \calD$, there exists $\Omega_t$ with $\Pb(\Omega_t) = 1$ such that for all $\omega \in \Omega_t, \ \ \nu_n(e^{i\langle t, \cdot \rangle}) \xrightarrow[]{a.s.} \mathbb{E}\big[e^{i\langle t, T_1\rangle}\big]$. We set $\Omega' \coloneqq \cap_{t\in \calD}\Omega_t$, which has probability one and aim at proving that for all $\omega\in\Omega'$, equation \eqref{eq:cv_charac_easy} holds. Let $\omega\in\Omega'$, $t\in\mathbb{R}^p$, $\varepsilon > 0$ and $t'\in \calD$ to be chosen later. We can write
\begin{align*}
    \Big|\nu_n\Big(e^{i\langle t, \cdot \rangle}\Big) - \mathbb{E}\big[e^{i\langle t, T_1\rangle}\big]\Big| &\leq \Big|\nu_n\Big(e^{i\langle t, \cdot \rangle}\Big) - \nu_n\Big(e^{i\langle t', \cdot \rangle}\Big)\Big| + \Big|\nu_n\Big(e^{i\langle t', \cdot \rangle}\Big) - \mathbb{E}\big[e^{i\langle t', T_1\rangle}\big]\Big| + \big|\mathbb{E}\big[e^{i\langle t', T_1\rangle}\big] - \mathbb{E}\big[e^{i\langle t, T_1\rangle}\big]\big|.
\end{align*}
In the last term above, $t'$ can be chosen such that this term is smaller than $\varepsilon/3$ from the continuity of $t \mapsto \mathbb{E}\big[e^{i\langle t, T_1\rangle}\big]$ and the density of $\calD$, and the second term is smaller than $\varepsilon/3$ for all $n$ large enough, since equation \eqref{eq:carac_LLN_t} applies for $t'\in \calD$. Finally, for the first term,
\begin{align}
    \Big|\nu_n\Big(e^{i\langle t, \cdot \rangle}\Big) - \nu_n\Big(e^{i\langle t', \cdot \rangle}\Big)\Big| &\leq \frac{1}{n}\sum_{j=1}^n \Big|e^{i\langle t, T_j \rangle} - e^{i\langle t', {T}_j \rangle}\Big| = \frac{2}{n}\sum_{j=1}^n\bigg| \sin\bigg(\frac{\langle t-t', {T}_j\rangle}{2}\bigg)\bigg|\leq \bigg(\frac{1}{n}\sum_{j=1}^n \|T_j\|\bigg)\|t - t'\|. \label{eq:cv_nu_ttp}
\end{align}
From the SLLN, the sum above is bounded by some $M_{\omega} > 0$ for $n$ large enough, and thus $t'$ can also be chosen (as a function of $\omega$) such that this term is smaller than $\varepsilon/3$. We have thus proved that almost surely, equation \eqref{eq:cv_charac_easy} holds. 

We now prove that equation \eqref{eq:cv_charac_easy} also holds for $\widehat{\nu}_n$, and work similarly as above. For this, observe that for all $t\in\mathbb{R}^p$,
\begin{align}
    \Big|\widehat{\nu}_n\Big(e^{i\langle t, \cdot \rangle}\Big)  - \nu_n\Big(e^{i\langle t, \cdot \rangle}\Big) \Big| &\leq \frac{1}{n}\sum_{j=1}^n \Big|e^{i\langle t, \widehat{T}_j \rangle} - e^{i\langle t, {T}_j \rangle}\Big| = \frac{2}{n}\sum_{j=1}^n\bigg| \sin\bigg(\frac{\langle t, \widehat{T}_j - {T}_j\rangle}{2}\bigg)\bigg|\leq \frac{\|t\|}{n}\sum_{j=1}^n \| \widehat{T}_j - {T}_j\|.\label{eq:control_charac_sinus} \end{align}
Furthermore, using the matrix operator norm such that $\|Mx\|\leq\|M\|_\Op\|x\|$,
\begin{align*}
    \| \widehat{T}_j - {T}_j\| &= \Big\| \Big(\mathbf{\Sigma}_{\lambda}^{-1/2} - \widehat{\mathbf{\Sigma}}_{n,\lambda}^{-1/2}\Big)V_j - \mathbf{\Sigma}_{\lambda}^{-1/2}m + \widehat{\mathbf{\Sigma}}_{n,\lambda}^{-1/2}\widehat{m}_n\Big\| \\
    &\leq \Big\|\mathbf{\Sigma}_{\lambda}^{-1/2} - \widehat{\mathbf{\Sigma}}_{n,\lambda}^{-1/2}\Big\|_{\Op} \|V_j\| +  \big\|\mathbf{\Sigma}_{\lambda}^{-1/2}m - \widehat{\mathbf{\Sigma}}_{n,\lambda}^{-1/2}\widehat{m}_n\Big\|.
\end{align*}
From equation \eqref{eq:as_cov}, the continuous mapping theorem and the SLLN, for all $t\neq 0$ (the case $t=0$ is trivial),
\begin{align}
     \frac{1}{\|t\|}\Big|\widehat{\nu}_n\Big(e^{i\langle t, \cdot \rangle}\Big)  - \nu_n\Big(e^{i\langle t, \cdot \rangle}\Big) \Big| &\leq \Big\|\mathbf{\Sigma}_{\lambda}^{-1/2} - \widehat{\mathbf{\Sigma}}_{n,\lambda}^{-1/2}\Big\|_{\Op}\bigg(\frac{1}{n}\sum_{j=1}^n\|V_j\|\bigg)+\big\|\mathbf{\Sigma}_{\lambda}^{-1/2}m - \widehat{\mathbf{\Sigma}}_{n,\lambda}^{-1/2}\widehat{m}_n\Big\| \ \ \xrightarrow[n\rightarrow \infty]{a.s.} 0. \label{eq:cv_hat_t}
\end{align}
Note that for $\widehat{\mathbf{\Sigma}}_{n,\lambda}^{-1/2} \rightarrow \mathbf{\Sigma}_{\lambda}^{-1/2}$ to hold a.s., we either require that $\lambda>0$ or $\min\Spec \mathbf{\Sigma} > 0$, so that we can apply the same arguments as those at the end of the proof of Proposition \ref{prop:asymptotics}. Also, the a.s. convergence of equation \eqref{eq:cv_hat_t} does not depend on $t$, as the bound on the right-hand side of equation \eqref{eq:cv_hat_t} does not depend on $t$. Hence, almost surely,
\begin{align}\label{eq:cv_charac_hard}
    \forall t \in \mathbb{R}^p, \ \ \ \Big|\widehat{\nu}_n\Big(e^{i\langle t, \cdot \rangle}\Big)  - \nu_n\Big(e^{i\langle t, \cdot \rangle}\Big) \Big| \xrightarrow[n\rightarrow \infty]{} 0.
\end{align}
Combining equation \eqref{eq:cv_charac_hard} with equation \eqref{eq:cv_charac_easy}, we finally obtain that almost surely,
\begin{align}\label{eq:cv_charac_hard_combine}
    \forall t \in \mathbb{R}^p, \ \ \ \widehat{\nu}_n\Big(e^{i\langle t, \cdot \rangle}\Big) \xrightarrow[n\rightarrow \infty]{} \mathbb{E}\big[e^{i\langle t, T_1\rangle}\big].
\end{align}
Applying Lévy's theorem sample by sample for $\omega\in\Omega'$, we deduce that almost surely, in the Prokhorov metric for the weak convergence of measures over $\mathbb{R}^p$,
\begin{align}
    \widehat{\nu}_n \xlongrightarrow[n\rightarrow \infty]{} \nu_{T_1}.
\end{align}
Above, $\nu_{T_1}$ is the probability measure of $T_1$, defined over $\mathbb{R}^p$.
Applying pushforward integration with the continuous map $N(x) = \|x\|^2$, almost surely (in the Prokhorov metric for probability measures over $\mathbb{R}$),
\begin{align}
    N_{\#}\widehat{\nu}_n = \frac{1}{n}\sum_{j=1}\delta_{\|\widehat{T}_i\|^2} \xlongrightarrow[n\rightarrow \infty]{} N_{\#}\nu_{T_1} \eqqcolon \nu_{\|T_1\|^2}.
\end{align}
Hence, almost surely, for all $t\in\mathbb{R}$ which is a continuity point of $F_{\|T_1\|^2}$, where $F_{\|T_1\|^2}$ denotes the CDF of $\|T_1\|^2$,
\begin{align}\label{eq:cv_empirical_cdf}
    \widehat{F}_n(t) = \frac{1}{n}\sum_{j=1}^n \mathbbm{1}({\|\widehat{T}_i\|^2 \leq t}) \xlongrightarrow[n\rightarrow \infty]{} F_{\|T_1\|^2}(t).
\end{align}
We now denote $Q_{\|T_1\|^2}$ the quantile function of $\|T_1\|^2$ and $\widehat{Q}_n$ the empirical quantile function of $(\|\widehat{T}_1\|^2,  \ldots, \|\widehat{T}_n\|^2)$.
From equation \eqref{eq:cv_empirical_cdf} and \cite{van2000asymptotic}, Lemma 21.2, we have that, almost surely, for all $\beta \in (0,1)$ which is a continuity point of $Q_{\|T_1\|^2}$,
\begin{align}\label{eq:cv_quant_still}
   \widehat{Q}_n(\beta) = \widehat{F}_n^{-1}(\beta) \xlongrightarrow[n\rightarrow \infty]{} Q_{\|T_1\|^2}(\beta).
\end{align}
In particular, this holds for $\beta = 1 -\alpha$, since we have assumed that $1-\alpha$ was a continuity point of $Q_{\|T_1\|^2}$. To finish, we need to show that equation \eqref{eq:cv_quant_still} also holds for the order statistics $q_{n,\alpha} = \widehat{Q}_n(\frac{n+1}{n}(1-\alpha))$. For this, from the assumptions, let $\mathcal{V}$ be a neighbourhood of $1-\alpha$ such that $Q_{\|T_1\|^2}$ is continuous on $\mathcal{V}$, and let $M > 0$ be such that $1 - \alpha + 1/M \in \mathcal{V}$. There exists $n$ large enough such that $\frac{n+1}{n}(1-\alpha) \leq 1-\alpha + 1/M$. Hence, from the monotony of the empirical quantile, we have that for all such $n$, 
\begin{align}
    \widehat{Q}_n(1-\alpha) \leq  \widehat{Q}_n\bigg(\frac{n+1}{n}(1-\alpha)\bigg) \leq \widehat{Q}_n(1-\alpha + 1/M).
\end{align}
From equation \eqref{eq:cv_quant_still} and the continuity of $Q_{\|T_1\|^2}$ at both $1-\alpha$ and $1-\alpha + 1/M$, almost surely,
\begin{align*}
    Q_{\|T_1\|^2}(1-\alpha) &\leq \liminf_n \widehat{Q}_n\bigg(\frac{n+1}{n}(1-\alpha)\bigg) \leq \limsup_n \widehat{Q}_n\bigg(\frac{n+1}{n}(1-\alpha)\bigg) \leq Q_{\|T_1\|^2}(1-\alpha + 1/M).
\end{align*}
Since this equation holds for all $M>0$ large enough, we have that $\liminf_n \widehat{Q}_n(\frac{n+1}{n}(1-\alpha))=\limsup_n \widehat{Q}_n(\frac{n+1}{n}(1-\alpha))$, hence the sequence $(\widehat{Q}_n(\frac{n+1}{n}(1-\alpha)))_n$ is convergent and almost surely,
\begin{align}
    q_{n,\alpha} = \widehat{Q}_n\bigg(\frac{n+1}{n}(1-\alpha)\bigg) \xlongrightarrow[n\rightarrow \infty]{} Q_{\|T_1\|^2}(1-\alpha) = q_{1-\alpha}^\calE.
\end{align}
This finishes the proof of equation \eqref{eq:cvqna_e}. \iain{We now prove equation \eqref{eq:cvqna_second}. For this, introduce $m^X = \mathbb{E}[X_1]$, $\widehat{m}_n^X \coloneqq n^{-1}\sum_{j=1}^nX_i$, and
\begin{align}
    U_i &\coloneqq (\mathbf{\Sigma}_\lambda^{11})^{-1/2}(X_i - m^X), \ \ \ \widehat{U}_i \coloneqq (\widehat{\mathbf{\Sigma}}_{n,\lambda}^{11})^{-1/2}(X_i - \widehat{m}_n^X), \\
    K_i &\coloneqq\begin{pmatrix}
             T_i \\
             U_i
    \end{pmatrix}, \ \ \ \widehat{K}_i  \coloneqq\begin{pmatrix}
             \widehat{T}_i \\
             \widehat{U}_i
    \end{pmatrix}, \ \ \ \mu_n = \frac{1}{n}\sum_{j=1}^n\delta_{K_j},\ \ \ \widehat{\mu}_n = \frac{1}{n}\sum_{j=1}^n\delta_{\widehat{K}_j}.
\end{align}
We quickly show why we still have that almost surely,
    \begin{align}\label{eq:cv_carac_seconddd}
    \forall t\in\mathbb{R}^{p+k}, \ \ \  \widehat{\mu}_n\Big(e^{i\langle t,\cdot\rangle}\Big) \xlongrightarrow[n\rightarrow \infty]{} \mathbb{E}\big[e^{i\langle t,K_1\rangle}\big].
    \end{align}
    First, equation \eqref{eq:cv_charac_easy} still holds when replacing $T_i$ with $K_i$ because the $K_i$ remain iid, and the separability of $\mathbb{R}^{p+k}$ and the continuity of $t\mapsto \mathbb{E}\big[e^{i\langle t,K_1\rangle}\big]$ still hold : that is,
        \begin{align}\label{eq:cv_carac_easy_seconddd}
    \forall t\in\mathbb{R}^{p+k}, \ \ \  \mu_n\Big(e^{i\langle t,\cdot\rangle}\Big) \xlongrightarrow[n\rightarrow \infty]{} \mathbb{E}\big[e^{i\langle t,K_1\rangle}\big].
    \end{align}
    We now show that equation \eqref{eq:cv_charac_hard} still holds when replacing $T_i$ with $K_i$ and $\widehat{T}_i$ with $\widehat{K}_i$. We start from the bound
    \begin{align}
        \|\widehat{K}_j - K_j\| = \sqrt{\|\widehat{T}_i-T_i\|^2 + \|\widehat{U}_i-U_i\|^2} \leq \|\widehat{T}_i-T_i\| + \|\widehat{U}_i-U_i\|.
    \end{align}
    This bound implies that (see equation \eqref{eq:control_charac_sinus}) for all $t\in\mathbb{R}^{p+k}, t \neq 0$,
    \begin{align}
        \frac{1}{\|t\|}\Big|\widehat{\mu}_n\Big(e^{i\langle t, \cdot \rangle}\Big)  - \mu_n\Big(e^{i\langle t, \cdot \rangle}\Big) \Big| \leq \frac{1}{n}\sum_{j=1}^n \| \widehat{K}_j - K_j\| \leq \frac{1}{n}\sum_{j=1}^n \| \widehat{T}_j - T_j\| + \frac{1}{n}\sum_{j=1}^n \| \widehat{U}_j - U_j\|.
    \end{align}
    We can then use the same arguments on the two sums above as the ones that lead to equation \eqref{eq:cv_hat_t}, to obtain that a.s.,
     \begin{align}\label{eq:cv_hard_second}
        \forall t \in \mathbb{R}^{p+k}, \ \ \  \Big|\widehat{\mu}_n\Big(e^{i\langle t, \cdot \rangle}\Big)  - \mu_n\Big(e^{i\langle t, \cdot \rangle}\Big) \Big|  \xrightarrow[n\rightarrow \infty]{} 0.
     \end{align}
Combining equations \eqref{eq:cv_carac_easy_seconddd} and \eqref{eq:cv_hard_second}, we obtain equation \eqref{eq:cv_carac_seconddd}. Equation \eqref{eq:cv_carac_seconddd} implies that in the Prokhorov metric, almost surely,
\begin{align*}
    \widehat{\mu}_n \xlongrightarrow[n\rightarrow
    \infty]{} \mu_K
\end{align*}
Using pushforward integration with the continuous map $(x,y)\mapsto \|x\|^2-\|y\|^2$, we obtain again that almost surely, for all $t\in\mathbb{R}$ which is a continuity point of $F_{\|T_1\|^2- \|U_1\|^2}$, where $F_{\|T_1\|^2- \|U_1\|^2}$ denotes the CDF of $\|T_1\|^2- \|U_1\|^2$,
\begin{align}\label{eq:cv_empirical_cdf_second}
    \widehat{F}_n(t) = \frac{1}{n}\sum_{j=1}^n \mathbbm{1}(\|\widehat{T}_i\|^2- \|\widehat{U}_i\|^2\leq t) \xlongrightarrow[n\rightarrow \infty]{} F_{\|T_1\|^2 - \|U_1\|^2}(t).
\end{align} 
In particular, using the same arguments as for $q_{n,\alpha}$, we obtain that a.s.,
\begin{align} \label{eq:cv_qn_second}
    q_{n,\alpha}'' \xlongrightarrow[n\rightarrow\infty]{}Q_{\|T_1\|^2 - \|U_1\|^2}(1-\alpha) = q_{1-\alpha}^\calF.
\end{align}
We finish with proving equation \eqref{eq:cvqna_prime}. For this, observe that for all $i\in\{1,\ldots,n\},$
\begin{align}
p_{i,n}''\leq p_{i,n}' \leq p_{i,n}'' + \max_{1\leq j\leq n} n(b_X^n)_j^2,
\end{align}
hence
\begin{align}\label{eq:controlqn_prime_qn_second}
     q_{n,\alpha}'' \leq q_{n,\alpha}' \leq q_{n,\alpha}'' + \max_{1\leq j\leq n} n(b_X^n)_j^2.
\end{align}
Using the same arguments as in the proof of Proposition \ref{prop:bound}, the assumption that $\mathbb{E}[\|X_1\|^{4q}]<+\infty$ for some $q>1$ implies that
\begin{align}
    \max_{1\leq j\leq n} n(b_X^n)_j^2 \xlongrightarrow[n\rightarrow \infty]{\mathbb{P}} 0.
\end{align}
Combining equations \eqref{eq:cv_qn_second} and \eqref{eq:controlqn_prime_qn_second}, and using that a.s. convergence implies convergence in probability, we have that $q_{n,\alpha}' \xlongrightarrow[n\rightarrow \infty]{\mathbb{P}} q_{1-\alpha}^\calF$, which finishes the proof.
}
\end{proof}


%
%
\end{appendix}

\paragraph{Acknowledgments}
The authors would like to sincerely thank Mathieu Riou, Elie Goudout and Franck Barthe for fruitful discussions, and F. Barthe's proof of Proposition \ref{prop:barthe} in particular.


\paragraph{Funding}
This work was funded by the project ROMEO (ANR-21-ASIA-0001) from the ASTRID joint program of the French National Research Agency (ANR) and the French Innovation and Defence Agency (AID).

\bibliographystyle{abbrv}
\bibliography{biblio}

\end{document}